\definecolor{green}{RGB}{0,127,0}
\definecolor{red}{RGB}{191,0,0}
\newcommand{\Gremoved}[1]{\color{cyan}{ --REMOVED--}\color{black}}
\theoremstyle{plain}
\newtheorem{lemma}{Lemma}[section]
\newtheorem{theorem}[lemma]{Theorem}
\newtheorem{corollary}[lemma]{Corollary}
\newtheorem{proposition}[lemma]{Proposition}
\newtheorem{defprop}[lemma]{Definition-Proposition}
\newtheorem{definition}[lemma]{Definition}
\newtheorem{definition-lemma}[lemma]{Definition-Lemma}
\theoremstyle{remark}
\newtheorem{remark}{Remark}
\newtheorem{example}{Example}
\newcommand{\xdownarrow}[1]{{\left\downarrow\vbox to #1{}\right.\kern-\nulldelimiterspace}}
\newcommand{\splus}{\!+\!}
\newcommand{\sminus}{\!-\!}
\newcommand{\ssum}{\!\!\sum}
\newcommand{\bI}[1]{\mathbf{1}_{#1}}
\newcommand{\R}{\mathcal{R}}
\newcommand{\A}{\mathcal{A}}
\newcommand{\N}{\mathbb{N}}
\newcommand{\Sym}[1]{\mathfrak{S}_{#1}}
\newcommand{\QQ}{\mathbb{Q}}
\newcommand{\bM}{\mathbf{M}}
\newcommand{\bN}{\mathbf{N}}
\newcommand{\xx}{\bm{x}}
\newcommand{\yy}{\bm{y}}
\newcommand{\pp}{\mathbf{p}}
\newcommand{\qq}{\mathbf{q}}
\newcommand{\rr}{\mathbf{r}}
\newcommand{\JJ}{\bm{J}}
\newcommand{\Y}{\mathbb{Y}}
\newcommand{\YY}{\tilde{Y}}
\newcommand{\ZZ}{\tilde{Z}}
\newcommand{\Sphere}{\mathbb{S}^2}
\newcommand{\PPP}{\mathcal{P}}
\def\MultiJack{\tau^{(k)}_{b}}
\def\la{\lambda}
\def\La{\Lambda}
\def\a{\alpha}
\DeclareMathOperator{\RHS}{R.H.S.}
\DeclareMathOperator{\End}{End}
\newcommand{\zz}{\mathbf{z}}
\DeclareMathOperator{\Span}{Span}
\DeclareMathOperator{\Symm}{Sym}
\DeclareMathOperator{\hook}{hook}
\def\uu{\bm{u}}
\title[Non-orientable branched coverings, $b$-Hurwitz numbers,  and positivity]{Non-orientable branched coverings, $b$-Hurwitz numbers,  and positivity for multiparametric Jack expansions}
\author{Guillaume Chapuy}
\address{CNRS, IRIF UMR 8243, Université Paris Cité.}%
\author{Maciej Dołęga}
\address{
Institute of Mathematics, 
Polish Academy of Sciences, 
ul. Śniadeckich 8, 
00-956 Warszawa, Poland.
}
\thanks{This project has received funding from the European Research
  Council (ERC) under the European Union’s Horizon 2020 research and
  innovation programme (grant agreement No. ERC-2016-STG 716083
  “CombiTop”). MD is supported from {\it Narodowe Centrum Nauki}, grant UMO-2017/26/D/ST1/00186. Emails: {\tt guillaume.chapuy@irif.fr,  mdolega@impan.pl}.  }
\begin{document}

\begin{abstract}
	We introduce a one-parameter deformation of the 2-Toda
        tau-function \sloppy of (weighted) Hurwitz numbers, obtained by deforming Schur functions into Jack symmetric functions. We show that its coefficients are polynomials in the deformation parameter $b$ with nonnegative integer coefficients.
	These coefficients count generalized branched coverings of the sphere by an arbitrary surface, orientable or not, with an appropriate $b$-weighting that ``measures'' in some sense their non-orientability.

	Notable special cases include
	non-orientable {\it dessins d'enfants} for which we prove the
        most general result so far towards the Matching-Jack conjecture
        and the ``$b$-conjecture'' of Goulden and Jackson from 1996, expansions of the $\beta$-ensemble matrix model, deformations of the HCIZ integral, and $b$-Hurwitz numbers that we
        introduce here and that are $b$-deformations of classical
        (single or double) Hurwitz numbers obtained for $b=0$.

	A key role in our proof is played by a combinatorial model of non-orientable constellations equipped with a suitable $b$-weighting, whose partition function satisfies an infinite set of PDEs. These PDEs have two definitions, one given by Lax equations, the other one following an explicit combinatorial decomposition. 
      \end{abstract}

\maketitle

\section{Introduction}

{\bf Hurwitz numbers and tau-functions.}
Hurwitz numbers, in their most general sense, count the number of
combinatorially inequivalent branched coverings of the sphere by an
orientable surface with a given number of ramification points and
given ramification profiles. Hurwitz numbers and their variants (dessins d'enfants, weighted, monotone, orbifold Hurwitz numbers)  have numerous connections to mathematical physics, combinatorics, and the moduli spaces of curves~\cite{Kontsevich1992,GouldenJackson1997a,EkedahlLandoShapiroVainshtein2001,GraberVakil2003,GouldenJacksonVakil2005,OkounkovPandharipande2006,Mirzakhani2007, Guay-PaquetHarnad2017}.

	Hurwitz himself~\cite{Hurwitz1891} showed that Hurwitz numbers can be expressed in terms of characters of the symmetric group. Equivalently,
	generating functions of Hurwitz numbers can be expressed
        explicitly in terms of Schur functions, which gives them a
        rich structure. A fundamental fact in the field, whose origins go back to 
        Pandariphande~\cite{Pandharipande2000},
        Okounkov~\cite{Okounkov2000a}, Orlov and Scherbin \cite{OrlovScherbin2000},
 and now understood in a wide generality (see e.g.~\cite{GouldenJackson2008,Guay-PaquetHarnad2017})  is that Hurwitz numbers can be used to define a formal power series which is a tau-function of the KP, or more generally 2-Toda hierarchy~\cite{MiwaJimboDate2000}. Explicitly, in the case of $k+2$ ramification points, this tau-function has the form
\begin{align}\label{eq:SchurIntro}
	\tau^{(k)}(t;\pp,\qq,u_1,\dots,u_k) := \sum_{n \geq
	0}t^n \sum_{\la \vdash n} \left(\frac{f_\lambda}{n!}\right)^2 \tilde{s}_\la(\pp)
	\tilde{s}_\la(\qq)\tilde{s}_\la(\underline{u_1})\tilde{s}_\la(\underline{u_2})\dots \tilde{s}_\la(\underline{u_k}),
\end{align}
    where $\tilde{s}_\la=\frac{n!}{f_\la}\cdot s_\la$ %
    is the normalized Schur function indexed by the
    integer partition $\lambda$ of $n$, expressed as a polynomial in the
    power-sum variables $\mathbf{p}=(p_i)_{i\geq 1}$ or
    $\mathbf{q}=(q_i)_{i\geq 1}$,  where $\underline{u}=(u,u,\dots),$
    and where $f_\lambda$ is the dimension of the irreducible representation of the symmetric group indexed by $\lambda$.
From this function (or more precisely its logarithm) one can extract all the forms $\omega_{g,n}$ associated to the contribution of coverings from surfaces of genus $g$ with $n$ boundaries, which obey the Chekhov-Eynard-Orantin topological recursion~\cite{ChekhovEynardOrantin2006, EynardOrantin2007, AlexandrovChapuyEynardHarnad2020}.
Weighted Hurwitz numbers~\cite{Guay-PaquetHarnad2017} correspond in some sense to the case $k=\infty$, which contains the Okounkov-Pandariphande Hurwitz numbers as a special case (see~\cref{sec:infinite}).
The case $k=1$ (three ramification points) corresponds to \emph{dessins d'enfants} or Belyi curves (\emph{bipartite maps} in the language of combinatorialists).

\smallskip
{\bf Jack polynomials and $b$-deformations.}
In this paper we consider the one-parameter deformation, or \emph{$b$-deformation}, of the function $\tau^{(k)}$ defined by 
\begin{align}\label{eq:JackIntro}
  \MultiJack(t;\pp,\qq,u_1,\dots,u_k) :&=
	\sum_{n \geq 0}t^n\sum_{\la
	\vdash n}\frac{1}{j_\la^{(\alpha)}} J_\la^{(\alpha)}(\pp)J_\la^{(\alpha)}(\qq) J_\la^{(\alpha)}(\underline{u_1}) \dots J_\la^{(\alpha)}(\underline{u_k}) ,
   \end{align}
where $J^{(\alpha)}_\la$ is the Jack symmetric function of parameter
$\alpha = 1+b$, for formal or complex $b$, and where
$j_\la^{(\alpha)}$ is a natural $b$-deformation of
$n!^2/{f_\lambda^2}$, see~\cref{sec:Jack}.

Jack functions are obtained as a one-parameter limit of Macdonald polynomials that interpolates between Schur and zonal polynomials, respectively for $b=0,1$~\cite{Jack1970/1971,Macdonald1995}.
In particular the function~$\tau^{(k)}_b$ is equal to~$\tau^{(k)}$ for $b=0$.
Many classical problems in algebraic combinatorics (dealing with symmetric functions, maps, coverings, tableaux, partitions) are connected to Schur or zonal polynomials. Understanding how to use Jack symmetric functions to build continuous deformations between them has become an important research goal in the last decades, see~\cite{Stanley1989, HanlonStanleyStembridge1992,GouldenJackson1996, DolegaFeray2016,BorodinGorinGuionnet2017,GuionnetHuang2019}. It often requires to develop new methods that shed new light even on the most classical results.
In our context, the deformation~\eqref{eq:JackIntro} was introduced by
Goulden and Jackson~\cite{GouldenJackson1996} in the case $k=1$ of
dessins d'enfants and is strongly related to the Matching-Jack
conjecture and the $b$-conjecture\footnote{this conjecture was
  originally called the \emph{Hypermap-Jack conjecture}
  in~\cite{GouldenJackson1996}, but later La Croix referred to it as the
  $b$-conjecture in~\cite{LaCroix2009} --- this name turned to be the
  one used in the literature afterwards} of these authors, see the discussion below.

\smallskip
{\bf Non-orientable branched coverings.}
Our main result gives a geometric (and combinatorial) meaning to the
coefficients of $\MultiJack$ in terms of generalized branched
coverings of the sphere. Let $\mathcal{S}$
  be a compact connected surface, orientable or not, and let $\Sphere$
  denote the two-dimensional sphere.
Let $\widetilde{\mathcal{S}}$ be the orientation-double-cover of $\mathcal{S}$.
A \emph{generalized branched
  covering of $\Sphere$ by $\mathcal{S}$} is a continuous function 
  $f: \mathcal{S} \rightarrow\Sphere_+$ from $\mathcal{S}$ to the closed upper hemisphere $\Sphere_+$, 
which can be lifted to a branched covering 
$\tilde{f}:\widetilde{\mathcal{S}} \rightarrow \Sphere$ in a certain sense.
A precise definition, together with the definition of degree, ramification points and
ramification profiles, is given in~\cref{sec:constellations}.

 Generalized branched coverings with $k+2$ ramification points are in bijection (\cref{sec:constellations}) with some combinatorial embedded graphs on the surface $\mathcal{S}$ that we call $k$-constellations. These objects come with a natural notion of \emph{rooting} which consists in marking and orienting an angular sector (\cref{sec:constellations}). Our main result can be summarized as follows (see in particular \cref{thm:mainInSection5} page~\pageref{thm:mainInSection5} and \cref{rem:translation} page~\pageref{rem:translation}). 
In this paper, if $\lambda=(\lambda_1\geq \lambda_2\geq \dots \geq \lambda_\ell)$ is an integer partition and $(p_i)_{i\geq 1}$ is a sequence of variables, we write $p_\lambda= p_{\lambda_1}p_{\lambda_2}\dots p_{\lambda_\ell}$.

\begin{theorem}[Main result -- abbreviated]\label{thm:abbreviated}
For every $k\geq 1$, we have 
	\begin{align}\label{eq:logIntro}
		(1+b)\frac{t\partial }{\partial t} \ln \MultiJack(t;\pp,\qq,u_1,&\dots,u_k) =
		\sum_{f: \mathcal{S} \rightarrow \Sphere_+}
		 		 \kappa(f) t^{|f|} b^{\nu_\rho(f)}, 
	\end{align}
	where the sum is taken over all rooted generalized branched coverings $f$ of the sphere $\Sphere$ by a connected compact surface, orientable or not, with $k+2$ ramification points.  Here $|f|$ is the degree of the covering, and
	$$	\kappa(f)=p_{\lambda^{-1}(f)}q_{\lambda^{0}(f)} u_1^{v_1(f)} \dots u_k^{v_k(f)} 
$$
where
	the integer partitions $\lambda^{-1}(f)$ and $\lambda^{0}(f)$ are respectively the ramification profile of the first two points, and $v_1(f), \dots v_k(f)$ are the multiplicities of the $k$ other points. 
	Moreover $\nu_\rho(f)$ is a nonnegative integer attached to $f$ which is zero if and only if the base surface $\mathcal{S}$ is orientable.

	In particular, the coefficients of the LHS of~\eqref{eq:logIntro} are polynomials in $b$, and they have nonnegative integer coefficients. 
\end{theorem}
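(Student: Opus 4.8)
The plan is to deduce the positivity conclusion directly from the identity~\eqref{eq:logIntro}: once that identity is established, its right-hand side is a sum of monomials $\kappa(f)\,t^{|f|}b^{\nu_\rho(f)}$ ranging over rooted generalized branched coverings, each contributing coefficient $1$, so the coefficient of any fixed monomial in $t,\pp,\qq,u_1,\dots,u_k$ equals $\sum_f b^{\nu_\rho(f)}$ --- a finite sum of nonnegative powers of $b$, hence a polynomial in $b$ with nonnegative integer coefficients. Everything thus reduces to proving the equality~\eqref{eq:logIntro}, and for this I would show that the two sides, regarded as a single generating series, are the unique solution of a common system of partial differential equations.

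Concretely, I would introduce a purely combinatorially-defined series $Z(t;\pp,\qq,u_1,\dots,u_k)$ equal to the right-hand side of~\eqref{eq:logIntro}, namely the generating function of rooted non-orientable $k$-constellations carrying the $b$-weighting $b^{\nu_\rho}$, using the bijection with generalized branched coverings from~\cref{sec:constellations}. The logarithm on the left of~\eqref{eq:logIntro} is what restricts to \emph{connected} objects, and the operator $(1+b)\,t\,\partial_t$ is what performs the \emph{rooting} (marking and orienting an angular sector): the operator $t\,\partial_t$ brings down the degree $|f|$, accounting for the choice of root, while the prefactor $1+b=\alpha$ supplies the normalization forced by the Jack scalar product (so that coefficients land in $\mathbb{Z}[b]$ rather than $\mathbb{Q}(\alpha)$). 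The first task is therefore to make this dictionary precise and to check that the left-hand side, expanded via~\eqref{eq:JackIntro}, records the same data $\kappa(f)$ after these operations.

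The heart of the argument is then to produce the PDE system in two independent but equivalent forms. On the combinatorial side, I would peel the rooted constellation: removing the marked angular sector (equivalently, the root edge of the underlying bipartite map) either keeps the surface connected or disconnects it, and either preserves orientability or introduces a M\"obius-type twist. The weight $\nu_\rho$ must be set up so that an orientation-preserving move carries weight $1$ while each twist contributes a factor $b$; this decomposition yields a Tutte-type (loop) equation for $Z$, and summing over roots assembles these into an infinite family of PDEs. On the algebraic side, I would show that $\MultiJack$ satisfies the very same equations by exploiting that each $J_\la^{(\alpha)}$ is a joint eigenfunction of the Sekiguchi--Debiard / Laplace--Beltrami operators, together with the explicit action of multiplication by $p_k$ and of $\alpha\,\partial/\partial p_k$ on Jack functions; packaging these into Lax-operator form produces exactly the differential operators occurring on the combinatorial side. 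Matching the two forms, together with the shared initial condition at low degree in $t$ and the recursive (triangular) structure of the system, forces $Z=(1+b)\,t\,\partial_t\ln\MultiJack$.

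I expect the main obstacle to be the algebraic (Lax) derivation and its exact matching with the combinatorial $b$-weighting. For $b=0$ the analogous identity for~\eqref{eq:SchurIntro} rests on the representation theory of the symmetric group (the Frobenius character formula turns coverings into characters, and the logarithm into connected cumulants), and none of this survives for general $\alpha$: there is no group whose characters are governed by Jack functions. The PDE must instead be extracted purely from Jack-polynomial operator identities, and every term of the $b$-deformation has to be reproduced by a combinatorially meaningful twist weight. Showing that $\nu_\rho$ is well defined --- independent of the order in which sectors are peeled --- and that it reproduces the $(1+b)$-deformation of the Lax equation term by term is the delicate point; this is precisely the difficulty underlying the still-open Matching--Jack and $b$-conjectures, which is why controlling it here yields the strongest progress so far towards them.
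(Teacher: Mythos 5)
Your high-level strategy is the one the paper actually follows: introduce the constellation generating series for the right-hand side, show that it and $(1+b)\,t\partial_t\ln\MultiJack$ satisfy one and the same system of PDEs (a combinatorial decomposition on one side, Jack-polynomial operator identities on the other), and conclude by the inductive, order-by-order-in-$t$ structure of that system. You also correctly locate where the difficulty sits. But there are two concrete gaps in how you propose to get there. First, the decomposition you describe --- deleting the marked sector/root edge, with a connected/disconnecting and twisted/untwisted case analysis --- is not the decomposition that produces equations comparable to the Jack side. For $k$-constellations the paper must delete an entire \emph{right-path} of $k$ edges (one of each colour $\{i-1,i\}$) at a time, iterate this around the root vertex, and, whenever components split off, re-root them and pass to their \emph{dual} constellations; this primal/dual alternation is forced by the fact that the differential system mixes the $\pp$- and $\qq$-variables (\cref{prop:mixDiffEqs}). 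A single-edge Tutte recursion does not produce the operator products $\big(Y_+\prod_l(\Lambda_Y+u_l+\cdots)\big)^m$ of \cref{thm:TutteConnected} that can be matched against the Pieri/Laplace--Beltrami action on Jack polynomials.

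Second, and more seriously, your route to the matching step is to prove that $\nu_\rho$ is well defined independently of the order in which edges are peeled, and to verify the $(1+b)$-weights term by term. In the paper $\nu_\rho$ is \emph{defined} by a fixed canonical deletion order (the combinatorial decomposition), so well-definedness is not the issue; and order-independence of the $b$-weight is precisely the statement that \emph{fails} for general $b$: as explained in \cref{subsec:heuristic}, adding the same edges in different orders can give different contributions to the $b$-weight, which is exactly why the exchange-lemma argument that works for $b\in\{0,1\}$ breaks down in general. The paper's actual resolution of this obstacle is algebraic, not combinatorial: it promotes the operators to larger spaces of catalytic variables ($\PPP_Y$, then $\PPP_{\tilde Y,\tilde Z}$), proves commutation relations there (\cref{thm:commut1,thm:commut2}, via \cref{lemma:relations,lemma:relations2,lemma:relations3}), and projects back to $\PPP$, thereby showing the ``combinatorial'' operators coincide with the ``Lax'' operators $A_{j+1}=[D_\a,A_j]$, $B^{(k)}_{m+1}=[\Omega^{(k)}_Y,B^{(k)}_m]$ without ever needing order-independence of the weights. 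Without a substitute for this promotion argument, your plan stalls exactly at the point you flag as the main obstacle, and the fallback you propose is the approach that does not work directly for general $b$.
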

For each covering $f: \mathcal{S} \rightarrow \Sphere_+$  contributing to~\eqref{eq:logIntro}, the homeomorphism type of the covering surface $\mathcal{S}$ is fully determined by the quantity $\kappa(f) t^{|f|} b^{\nu_\rho(f)}$. Indeed, orientability is controlled by the parameter $\nu_\rho(f)$, and the Euler characteristic is deduced from the Riemann-Hurwitz formula, see~\eqref{eq:EulerCharacteristic}. In particular, \eqref{eq:logIntro} implicitly contains a full topological expansion -- see also Remark~\ref{rem:genus}.

When $\mathcal{S}$ is orientable, generalized
branched coverings are in bijection with (usual) branched
coverings.
 Therefore for $b=0$, our result recovers the classical
interpretation of the tau-function~\eqref{eq:SchurIntro} in terms of
branched coverings (see
e.g.~\cite{GouldenJackson2008,AlexandrovChapuyEynardHarnad2020}). For
$b=1$, our theorem says that $\tau^{(k)}_b$ counts generalized
branched coverings of the sphere by arbitrary surfaces, without any
$b$-weighting. This fact could probably be proved by (now standard)
ideas close to the one used by Goulden,
Jackson~\cite{GouldenJackson1996} and Hanlon, Stanley,
Stembridge~\cite{HanlonStanleyStembridge1992} which cover the case
$k=1$  using the connection with representation theory of the Gelfand pair $(\mathfrak{S}_{2n},\mathbb{H}_n)$. However, for $b\not \in\{0,1\}$ our result is inaccessible by these methods, due to the lack of a well-adapted representation theoretic connection to Jack polynomials.

{\bf PDEs and Lax structure.} Our method of proof goes by showing that both sides of Equation~\eqref{eq:logIntro} satisfy the same PDEs. The differential operators defining these PDEs take two different forms: for the ``Jack polynomial'' side, they are defined by two companion {\it Lax equations}, while for coverings (or constellations), they follow explicitly from a combinatorial decomposition. Proving that the ``Lax'' and ``combinatorial'' forms are in fact equal is one of the hardest tasks of the paper. The presence of this Lax structure, which holds for general $b$, indicates that traces of integrability remain present beyond the two classical points $b\in\{0,1\}$.

{\bf $b$-Hurwitz numbers.} As a consequence of our work we introduce new $b$-deformations of
weighted and classical Hurwitz numbers and we investigate their properties, including the Cut-And-Join equation and piecewise polynomiality.

{\bf Link with the Matching-Jack conjecture and the $b$-conjecture.}
The deformation~\eqref{eq:JackIntro} was introduced by Goulden and Jackson~\cite{GouldenJackson1996} in the case $k=1$ of dessins d'enfants (in fact \cite{GouldenJackson1996} considers a more general function where the sequence $\underline{u_1}$ is replaced by a third arbitrary sequence of parameters). Using the connection between zonal polynomials and representation theory of the Gelfand pair $(\mathfrak{S}_{2n},\mathbb{H}_n)$, they proved that for $b=1$ this function enumerates analogues of dessins on general surfaces (orientable or not).
In the same paper they formulate the ``$b$--conjecture'' and the 
related ``Matching-Jack conjecture'', among the most remarkable open problems in algebraic combinatorics. They assert that the coefficients have an interpretation for arbitrary $b$:
they count dessins on general surfaces, with a weight which is a
polynomial in $b$ with nonnegative coefficients, as in our main
theorem. 

The representation theoretic tools used in the case $b\in\{0,1\}$ do not
apply for general $b$, and these conjectures are still wide open despite many
partial results~\cite{GouldenJackson1996,BrownJackson2007,LaCroix2009,KanunnikovVassilieva2016,DolegaFeray2017,Dolega2017a,KanunnikovPromyslovVassilieva2018}. Our
results are not strictly comparable with the Matching-Jack conjecture
and the $b$-conjecture: the case $k=1$ of our result is a special case
of both, and the case of general $k$ is incomparable with
them. However, the case $k=1$ of our results
is by
far the most general progress towards them
and covers, largely, all the
previously proven cases. Moreover, the $b$-conjecture and
our main theorem both involve \emph{three}
infinite families of parameters, 
which leads us to the following question -
is it possible to deduce the $b$-conjecture from our main theorem?
We do not know the answer to this question, but in analogy with the cases $b=0,1$, we believe that it may be positive.
Indeed, one can construct an isomorphism between the polynomial algebras generated by the three infinite sets of parameters used in this paper, and the ones appearing in the $b$-conjecture. In the cases $b=0,1$, the coefficients of the generating function $\MultiJack$ have a natural multiplicativity property that enables one to transfer the positivity and integrality of coefficients via this isomorphism.
This suggests a three-step program (1. isomorphism; 2. multiplicativity; 3. positivity and integrality of coefficients of $\MultiJack$) to attack the $b$-conjecture. The main result of this paper realizes the third step in full generality, but the second step remains to be done, and we plan to continue our research in this direction in the
future.
We observe that in the special cases
$b=0,1$ where it can be fully applied, this program leads to a new
proof of the ``$b$-conjecture'' (that is to say, of the Schur/Zonal expansion for the generating function of bipartite orientable/non-orientable maps) relying only on techniques of the present paper and not
on representation theory.
See \cref{subsec:betaEnsembles} for
more on the Matching-Jack conjecture and the $b$-conjecture.

{\bf Possible developments.}
Our paper sets the foundations of the study of $b$-Hurwitz
numbers. Many natural questions arise, which are not the subject of
this paper. 
First, the cases $b=0,1$ are related respectively to the integrable hierarchies KP and BKP  -- at least in certain cases, see e.g. \cite{AdlervanMoerbeke2001} in the context of $\beta$-ensembles.
The key role played by Lax structures in this paper indicates that
some of the properties related to integrability may still be present
for general $b$. Also, potential links of our work with
\cite{NatanzonOrlov2017}, where the Hurwitz numbers for (non-generalized)
branched coverings of the
projective plane were studied in the context of the BKP hierarchy, could be
investigated. Further, the
tau-function for $b=0$ famously satisfies, at least in some special
cases, the so-called \emph{Virasoro constraints} (e.g.~\cite{KazarianZograf2015}, see also
\cref{rem:VirasoroPatched}). We plan to address the $b$-deformation of
these in detail in a forthcoming work (for the case of $\beta$-ensembles, see again \cite{AdlervanMoerbeke2001}). In another direction, although our
results are not strictly comparable to the Matching-Jack conjecture
and the $b$-conjecture, they are by far the best partial progress
towards them. It is conceivable that in the future, results of this
paper are used in new attacks to these problems. Finally, Hurwitz
numbers are classically linked to the moduli space of complex curves
in several ways (most famously via the ELSV
formula~\cite{EkedahlLandoShapiroVainshtein2001}) and the $b$-deformed
{\it dessins d'enfants} appear for $b=1$ in work of Goulden, Harer,
and Jackson on the moduli space of real
curves~\cite{GouldenHarerJackson2001}. These authors were the first to
ask for the possible significance of the ``$b$ parameter'' in this
geometric picture. Our paper is an advance in that
direction. Understanding the integer parameter $\nu_\rho$ that we
introduce in this paper, in a purely geometric way, seems to be a
natural question to consider to go further. It should be related in
some sense to a ``stratification'' of the moduli space, yet to be
understood.

{\bf Extended abstract.}
The results of this paper will be presented at the conference FPSAC'21 and an extended abstract of 12 pages, without proofs, announcing the combinatorial part of our results will appear in the conference proceedings.

\medskip
{\bf Overview of the paper and intermediate results.}
The paper is almost entirely dedicated to the proof of our main result -- only \cref{sec:infinite} is independent and examines the projective limit when $k\rightarrow \infty$.
However several intermediate concepts and results appearing along the way are interesting in themselves, even in the case of $b\in \{0,1\}$.
Here is a short overview of our main contributions and a roadmap to our paper.

In \cref{sec:constellations} we introduce generalized branched
coverings and their combinatorial counterparts, $k$-constellations. In
\cref{sec:MonTutte} we introduce the notion of Measure of
Non-Orien\-tability (MON) and the combinatorial decomposition, that
together give rise to the $b$-weights and the parameter $\nu_\rho$.
We also state that the generating function of generalized branched coverings (or constellations) satisfies an explicit equation reflecting the combinatorial decomposition. This is \cref{thm:TutteConnected} page~\pageref{thm:TutteConnected}.

In \cref{subsec:proofTutte} we prove the decomposition
equation by analysing carefully the combinatorial decomposition. As far as we
know, this equation is interesting even for $b=0$ as it did not appear
earlier in full generality. \cref{subsec:Commutation} contains
the key idea of the paper: the combinatorial operators appearing in
the decomposition equation can alternatively be defined by recursive commutation relations (\cref{thm:commut1,thm:commut2} pages~\pageref{thm:commut1}--\pageref{thm:commut2}) or equivalently by two Lax equations (\cref{prop:Lax}).
Proving this claim is the hardest part of the paper. \cref{subsec:heuristic} sketches a combinatorial proof for $b\in\{0,1\}$, which serves as an inspiration for the general proof, given in \cref{subsec:proofCommut1,subsec:proofCommut2}.
 
\cref{sec:Jack} deals with Jack polynomials and shows that
the function $\tau^{(k)}_b$ is annihilated by the operators
constructed in the previous section. This makes the connection with
generalized branched coverings and constellations, and proves our main theorem. Interestingly, and as far as we know, this proof is also new in the classical case $b=0$: it is the first proof of the Schur function expansion of the generating function of coverings that does not rely on representation theory. The same is true of course for $b=1$. 
	
In \cref{sec:infinite} we show how to take a projective limit
of our results in order to build a non-orientable, $b$-weighted,
analogue of the weighted Hurwitz numbers. All results of the paper are
extended to this setting, including $b$-weights, decomposition
equations, $b$-polynomiality. In
\cref{subsec:bHurwitz} we study the case of (simple or double)
$b$-weighted Hurwitz numbers, which correspond to the case where all
ramification points except the first two are simple. We prove a deformed version of the Cut-And-Join equation, and piecewise polynomiality. In \cref{subsec:betaEnsembles} we discuss dessins d'enfants and $\beta$-ensembles, and we make a detailed account of the $b$- and Matching-Jack conjectures of Goulden and Jackson, and how they relate to our results. In \cref{subsec:HCIZ} we discuss monotone Hurwitz numbers and the HCIZ integral.

Finally, the appendix contains the proof of two lemmas relying on computations that present no difficulty, but are included for completeness.

\section{Coverings, maps,  and constellations}
\label{sec:constellations}

In this section we quickly review branched coverings before introducing their non-orientable generalization. We then introduce $k$-constellations as a combinatorial model for them.

\subsection{Branched coverings}

Let $\mathcal{S}$ be a \emph{surface}, that is to say a compact, two dimensional, real
manifold. By the classification theorem a connected surface $\mathcal{S}$ is
uniquely determined by its Euler characteristic 
$\chi_{\mathcal{S}} \leq 2$ 
 (or, equivalently, its \emph{genus} $g_{\mathcal{S}} \in \frac{1}{2}\N$  given by $\chi_\mathcal{S} = 2-2 g_{\mathcal{S}}$) together with 
the information whether $\mathcal{S}$ is orientable or not. 
For two surfaces $\mathcal{S}_1, \mathcal{S}_2$ we call a continuous map $$f: \mathcal{S}_1 \to \mathcal{S}_2$$ a
\emph{branched covering} (also known as \emph{ramified covering} or \emph{branched
  cover}) 
  of $\mathcal{S}_2$ by $\mathcal{S}_1$ 
  if every point $s \in
\mathcal{S}_2$ has an open neighborhood $U\ni s$ such that $f^{-1}(U)$ is a union of disjoint open sets $V_1\dots,V_\ell$, for some $\ell\geq 1$, such that 
 on each $V_i$ the map $f$ is topologically equivalent to the complex map
 $z \to z^{p_i}$ for some positive integer $p_i$ (with $s$ corresponding to $0\in \mathbb{C}$).
For each $s\in \mathcal{S}_2$, we can reorder the multiset
$\{p_1,\dots,p_\ell\}$ to form a \emph{partition} $\la = (\la_1,\dots,\la_\ell)$ of length $\ell$, that is a sequence of integers such that
$\la_1\geq \cdots \geq \la_\ell >0$. 
This partition, denoted by $\lambda(s)$, is called the \emph{ramification profile over $s$}, or \emph{profile over $s$} for short.
When the surfaces are connected, the size
$n=\lambda_1+\dots+\lambda_\ell$ of the partition $\lambda(s)$ does
not depend on the point $s$, and it is called the \emph{degree} of the covering. In particular, it is equal to the number of preimages of a generic point of $\mathcal{S}_2$.
The integer
$\ell$ which is \emph{the length} of the partition $\la(s)$
is called the \emph{multiplicity} of $s$. 

There are finitely
 many points $s_1\dots,s_k \in \mathcal{S}_2$ of multiplicity
 smaller than the degree -- they are called \emph{critical values}, or
 \emph{ramification points}. 
 The multiset $\{\la(s_1),\dots,\la(s_k)\}$ of
 profiles over all ramification points is called \emph{the full profile}
 of the covering $f$.
 Sometimes the ramification points will be numbered from $1$ to $k$, in which case the full profile will be defined as the ordered $k$-tuple $(\la(s_1),\dots,\la(s_k))$. 
 We will (classically) allow the partitions $\la(s_i)$ to be equal to $[1^n]$, {\it i.e.} we allow  ``trivial ramification points''.
 We say that two branched coverings $f_1: \mathcal{S}_1 \to
 \mathcal{S}, f_2: \mathcal{S}_2 \to
 \mathcal{S}$ are \emph{equivalent} if there exists a homeomorphism $\phi: \mathcal{S}_1 \to
 \mathcal{S}_2$ such that $f_1 = f_2\circ\phi$. When ramification points of
 $f_1$ and $f_2$ are numbered we additionally require $\phi$ to
 preserve this numbering.

\subsection{Generalized branched coverings.}
When $f:\mathcal{S}_1 \to \mathcal{S}_2$ is a branched covering and
$\mathcal{S}_2$ is orientable, then necessarily $\mathcal{S}_1$ is
orientable. This is the case in particular when $\mathcal{S}_2$ is the sphere $\Sphere$. We will now
generalize the definition of branched coverings to allow arbitrary
surfaces as the covering space of the sphere.

Let $\mathcal{S}$ be a surface. We let
$\pi_{\mathcal{S}}:\widetilde{\mathcal{S}} \rightarrow \mathcal{S}$ be
the orientation double cover of $\mathcal{S}$, and we let $\sigma:
\widetilde{\mathcal{S}} \to \widetilde{\mathcal{S}}$ be the
corresponding involution of $\widetilde{\mathcal{S}}$, so that
$\mathcal{S} \equiv \widetilde{\mathcal{S}}/\sigma$.
We also define $\Sphere_+$ and $\Sphere_-$ as the upper and the
  lower hemisphere, respectively, (with the equator $\partial \Sphere_+ = \partial
  \Sphere_-$ as common boundary) and the natural projection $p:\Sphere \to
  \Sphere_+$ identifying both hemispheres. 
  \begin{definition}\label{def:generalizedBC}
Let $f: \mathcal{S} \to \Sphere_+$ be a continuous map, which
(restricting its domain) is a covering of
$\Sphere_+\setminus\partial\Sphere_+$. We say that $f$ is a
\emph{generalized branched covering} of the sphere if there exists a
branched covering of the sphere
$\tilde{f}:\widetilde{\mathcal{S}}\to \Sphere$ such that
	  \begin{align}\label{eq:compatibility}
		  f \circ \pi_{\mathcal{S}} = p\circ\tilde{f}.
	  \end{align}
	We say that two generalized branched
coverings $f: \mathcal{S}\to \Sphere_+$ and $f':\mathcal{S}'\to \Sphere_+$ are
        \emph{equivalent}  if the branched
coverings $\tilde{f}$ and $\tilde{f'}$ are equivalent.
\end{definition}

When the covering space is orientable, generalized branched coverings are in natural bijection with branched coverings. Indeed, when $\mathcal{S}$ is orientable the associated
orientation double cover is simply
$\widetilde{\mathcal{S}}= \mathcal{S}\uplus\mathcal{S}$,
%
%
%
%
%
and the branched covering $\tilde{f}:\widetilde{\mathcal{S}}\to \Sphere$ subject
to $f \circ \pi_{\mathcal{S}} = p\circ\tilde{f}$ is determined by its
behaviour on a single copy of $\mathcal{S}$,
which gives the desired bijection (the uniqueness of $\widetilde{f}$ up to equivalence will be proved in the proof of \cref{prop:Correspondence2}).

We will be interested in enumerative properties of the
generalized branched coverings of the sphere $\Sphere$.
First note that for a
generalized branched covering $f:\mathcal{S}\to \Sphere_+$
and for each $s \in \Sphere$, if $s$ is a ramification point of $\tilde{f}$,
 then necessarily $s$ lies on the equator $\partial\Sphere_+$ (note the assumption that $f$ restricts to a \emph{covering} of $\Sphere_+\setminus\partial\Sphere_+$, and not to a \emph{branched covering}).
 The profile of the associated branched
covering $\tilde{f}$ over $s$ is a partition of
the form $\lambda \uplus \lambda :=
 (\lambda_1,\lambda_1,\lambda_2,\lambda_2,\dots,\lambda_\ell, \lambda_\ell) \vdash 2n$ for some partition $\la
 \vdash n$.  We will call $p(s)\in \Sphere_+$ a \emph{ramification point} of $f$. We denote the partition $\lambda$ by $\la(s)$ and we will call it the \emph{profile} over
 $s$. The \emph{full profile} of the generalized branched covering is given
 by the multiset $\{\la(s):s \text{ is a ramification point}\}$. As
 before, if ramification points are numbered, the full profile will be
 an ordered tuple, and we may allow trivial ramification points. Also, when ramification points are numbered, the equivalence between $\tilde{f}$ and $\tilde{f}'$ in Definition~\ref{def:generalizedBC} is understood between branched coverings with \emph{numbered} ramification points.
The integer $n$, which is half the degree of the branched covering $\tilde{f}$, is called the \emph{degree} of the generalized branched covering $f$. All these definitions are compatible with the standard definitions in the case where $\mathcal{S}$ is orientable, via the natural bijection of the previous paragraph.

\subsection{Maps and constellations}

\medskip

The problem of counting branched coverings of the sphere is equivalent to counting certain embedded graphs called \emph{maps}, that we now define.
An embedding of a graph (possibly with multiple edges and loops) into a surface which cuts it into simply
connected pieces (called \emph{faces}) is called \emph{a map}. We
consider maps up to homeomorphisms of surfaces. A small neighborhood of an edge around a vertex is called a
\emph{half-edge} and a small neighborhood of a vertex delimited by two
consecutive half-edges is called \emph{a corner}. It is convenient to represent a map by its \emph{ribbon graph}, which is the surface with boundary made by a small neighbourhood of the graph on the surface it embeds in (see \cref{fig:constellation}--Right).

Lando and Zvonkine introduced in their book~\cite{LandoZvonkin2004} a
particular set of vertex-coloured maps, subject to local coloring
constraints, called \emph{constellations}, that are in bijection with
branched coverings of the sphere $\Sphere$. The constellation
associated to a covering $f:\mathcal{S}\rightarrow\Sphere$  with $k+2$
numbered ramification points, is the map on $\mathcal{S}$ formed by
the preimage of a ``base graph'' drawn on the sphere going through
some of these points. The standard choice of base graph is a star
centered at a generic point, and connected in cyclic order to the
points numbered $0, 1, \dots, k$. These maps satisfy some simple local
colouring constraints that fully characterize them. Different choices
of base graph lead to different definitions which are easily seen to be equivalent, see e.g.~\cite[Figure 1.34]{LandoZvonkin2004} or~\cite{BousquetSchaeffer2000, AlexandrovChapuyEynardHarnad2020}.

To construct generalized constellations we will use a similar
principle but it will be important to choose a base graph that does
not depend on an orientation of the sphere. For this reason we will
use a path going through the ramification points rather than a star, see~\cref{fig:Coverings}. We leave to combinatorialist readers the pleasure of designing a direct bijection, in the orientable case, between the model we introduce and the one of~\cite{LandoZvonkin2004}.

\begin{figure}%
\center\subfloat[]{
\label{subfig:vertex}
\includegraphics[width=0.3\linewidth]{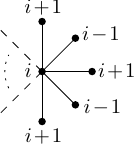}}
\qquad
\subfloat[]{
\label{subfig:constellation}
\includegraphics[width=0.5\linewidth]{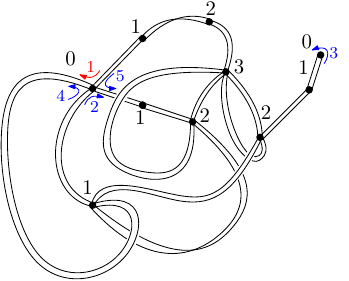}}
\caption{Left: the local constraints around a vertex of colour $i\in
  (0,k)$. Right: a labeled $3$-constellation of size $5$, in
	ribbon-graph representation; the corners of colour $0$ are labeled from $1$ to $5$, and each of them is oriented;  disregarding the labels of corners and the orientation of all blue corners (so that there remains only one distinguished and oriented corner, the red one) gives a rooted $3$-constellation (of size $5$).}
\label{fig:constellation}
\end{figure}

\begin{definition}[Constellation, see Figure~\ref{fig:constellation}]
	Let $k\geq 1$ be an integer. A \emph{$k$-constellation} is a map, equipped with a coloring of its vertices with colors in $\{0,1,2,\dots,k\}$, such that
\begin{enumerate}
\item
each vertex colored by $0$ ($k$, respectively) has only neighbours of
color $1$ ($k-1$ respectively),
\item
for any $0 < i < k$ and for any vertex $v$ colored by $i$, each corner
of $v$ separates vertices colored by $i-1$ and $i+1$.%
\end{enumerate}
\end{definition}

	The \emph{degree} of a face in a $k$-constellation is the
        number of corners of colour $0$ it contains, which is the same
        as the number of corners of colour $k$, and as half the number
	of corners of any other colour (the \emph{color} of a corner is the color
        of the vertex it is incident to). The \emph{degree} of
        a vertex of color $0$ or $k$ in a $k$-constellation is the
        number of its adjacent corners, while the degree of a vertex of
        color $i \in [1..k-1]$ in a $k$-constellation is half the number
	of its adjacent corners. The \emph{size} of a constellation $\bM$ is its number of corners of
	colour $0$ and is denoted by $|\bM|$. In particular any
        $k$-constellation $\bM$ has $k\cdot|\bM|$ edges.
	A constellation of size $n$ is \emph{labeled} if its corners of colour $0$ are labeled with the integers from $1$ to $n$, and if each such corner carries an (arbitrary) orientation.
A constellation  is \emph{rooted} if it is equipped with a
distinguished oriented corner of colour $0$, called the \emph{root}
(if the constellation is already labeled, the orientation of the root
corner is already given, but for unlabeled maps, this orientation is
part of the information given by the rooting). The \emph{root vertex
  (or face, respectively)} is the vertex (or face) incident to the
root corner. The \emph{color} of an edge is the pair $\{i,j\}$ formed by
        the colors of its two endpoints. \emph{The full profile} of a $k$-constellation is the $k+2$-tuple 
$(\la^{-1},\la^0,\la^1,\dots,\la^k)$, where $\la^{-1}$ is the
partition encoding face degrees and $\la^i$ is the
partition encoding degrees of vertices of colour~$i$.

\begin{figure}
\center
\includegraphics[width=0.85\linewidth]{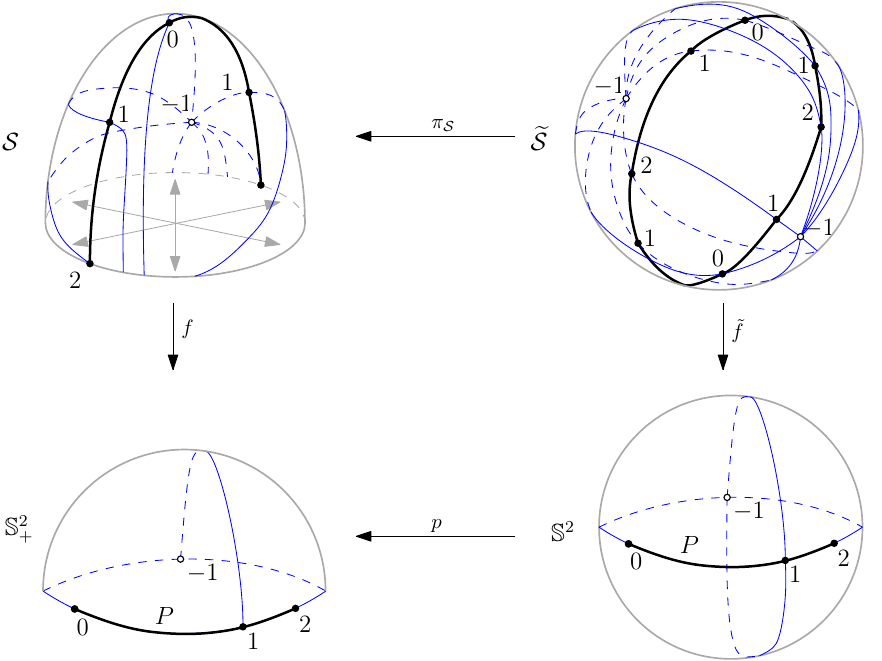}
\caption{The correspondence between generalized branched coverings and
	constellations in the case of $k+2=4$ ramification
        points. The base graph $P$, which is a path, and the corresponding
        constellations are drawn in fat black. Blue lines give the
        triangulations described in the proof of \cref{prop:Correspondence2}. On this example, $\widetilde{\mathcal{S}}$ is the
        sphere and $\mathcal{S}$ is the projective plane and the full
        profile is given by $((2),(2),(1,1),(2))$.
	}
\label{fig:Coverings}
\end{figure}

We can now state the correspondence between coverings and
  constellations. The proof uses a classical
  result in the orientable case.

\begin{proposition}[see
  \cref{fig:Coverings}-Left]\label{prop:Correspondence2}
  	Let $k\geq 1$ be an integer and let $f:
        \mathcal{S}\rightarrow \Sphere_+$ be a generalized
        branched covering of the sphere with $k+2$
        ramification points as in \cref{def:generalizedBC} monotonically numbered from $-1$ to $k$  along the equator $\partial\Sphere_+$. Let $P$ be a path
        on the equator  going through the points $0,1,\dots,k$ in this
        order. Then the  preimage $f^{-1}(P)\subset \mathcal{S}$ is a $k$-constellation on $\mathcal{S}$.

	This construction gives a one-to-one correspondence between
        equivalence classes of generalized branched
coverings of the sphere with  $k+2$ monotonically numbered ramification points and full profile
	$(\la^{-1},\la^0,\la^1,\dots,\la^k)$, and $k$-constellations with
the same full profile.
\end{proposition}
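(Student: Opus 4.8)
The plan is to reduce everything to the classical orientable case on the double cover, using the compatibility equation~\eqref{eq:compatibility} as the bridge. Given a generalized branched covering $f:\mathcal{S}\to\Sphere_+$ with its lift $\tilde f:\widetilde{\mathcal{S}}\to\Sphere$, I would first lift the base path $P$ on the equator to the sphere $\Sphere$. Since $p:\Sphere\to\Sphere_+$ identifies the two hemispheres and $P$ lies on the equator $\partial\Sphere_+=\partial\Sphere_-$, the preimage $p^{-1}(P)=P$ is a single path on the equator of $\Sphere$ through the points $0,1,\dots,k$. The key point is that $\tilde f^{-1}(P)\subset\widetilde{\mathcal{S}}$ is, by the classical correspondence of Lando--Zvonkine (applied to the genuine branched covering $\tilde f$ of the sphere with profiles $[2]\la^{i}$ over each numbered point), a bona fide constellation-type map on the orientable surface $\widetilde{\mathcal{S}}$. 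I would then show that the involution $\sigma$ acts on this map and that the quotient $\tilde f^{-1}(P)/\sigma$ is exactly $f^{-1}(P)$ via $\pi_{\mathcal{S}}$, so that $f^{-1}(P)$ is the promised $k$-constellation on $\mathcal{S}=\widetilde{\mathcal{S}}/\sigma$.

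The coloring and local constraints come for free from the structure of the path and the profiles. Concretely, I would color a vertex of $f^{-1}(P)$ by $i$ if it is a preimage of the point numbered $i$; the ramification structure of $f$ (each ramification point has profile $[2]\la$ upstairs, and over the generic segments of $P$ the map $f$ is an honest covering) forces precisely the local picture of Figure~\ref{fig:constellation}: color-$0$ and color-$k$ vertices are endpoints of $P$ so they touch only color $1$ and color $k-1$ respectively, and interior vertices of color $i$ have corners separating colors $i-1$ and $i+1$ because $P$ passes \emph{through} them with exactly two incident path-segments locally in each sheet. The face degrees encode the profile $\la^{-1}$ (the preimages of the complementary region $\Sphere_+\setminus P$), matching the full-profile bookkeeping. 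I would check that the number of corners of color $0$ equals the degree $n$ of $f$ (half the degree of $\tilde f$), consistent with the definition of size.

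For the bijectivity claim I would construct the inverse explicitly and verify the two compositions are identities up to equivalence. Given an abstract $k$-constellation $\bM$ on $\mathcal{S}$ with the prescribed full profile, I would reconstruct $f$ by the standard gluing: each face of $\bM$ is glued to a copy of $\Sphere_+\setminus P$ along its boundary reading the colors cyclically, producing a continuous map $f:\mathcal{S}\to\Sphere_+$ that restricts to a covering away from the equator. To produce the lift $\tilde f$ and check compatibility~\eqref{eq:compatibility}, I would pass to the orientation double cover: $\widetilde{\mathcal{S}}$ carries the lifted map $\pi_{\mathcal{S}}^{-1}(\bM)$, which is an orientable constellation in the Lando--Zvonkine sense and hence yields a genuine branched covering $\tilde f:\widetilde{\mathcal{S}}\to\Sphere$; commutativity of the square is then a local verification on each sheet. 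The main obstacle I anticipate is \emph{well-definedness and uniqueness of $\tilde f$ up to equivalence} in Definition~\ref{def:generalizedBC}: one must argue that the lift compatible with a given $f$ is unique up to the involution $\sigma$ and numbered equivalence, so that equivalence classes of generalized coverings correspond bijectively to $\sigma$-symmetric classes of orientable constellations, which in turn biject with constellations on the quotient $\mathcal{S}$. This deck-transformation argument --- controlling how $\sigma$ interacts with the classical bijection and ensuring the quotient data remembers exactly $\bM$ --- is the delicate step; the combinatorial coloring constraints, by contrast, are routine once the double-cover picture is set up.
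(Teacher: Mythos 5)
Your overall strategy --- reducing everything to the classical orientable correspondence on the orientation double cover via \eqref{eq:compatibility} --- is exactly the paper's route, and your inverse construction (lifting $\bM$ to $\pi_{\mathcal{S}}^{-1}(\bM)$, invoking the orientable theory to build $\tilde f$, then descending to $f$) matches the paper's argument up to presentational details. The problem is the step you yourself flag as ``delicate'' and then leave undone: well-definedness of $f \mapsto f^{-1}(P)$ on equivalence classes, together with the uniqueness of the lift $\tilde f$ up to equivalence (which \cref{def:generalizedBC} needs for ``equivalence'' of generalized coverings to be well defined at all, and which the paper explicitly promises to prove inside this proposition). A proposal that names this step but supplies no argument has a genuine gap, and it sits precisely where the proposition has content; moreover the ``deck-transformation / $\sigma$-symmetric classes'' framing you suggest is heavier machinery than needed and would itself require verification (e.g.\ that equality of the projected constellations downstairs is equivalent to equality of the $\sigma$-invariant constellations upstairs).

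The paper closes this gap with an observation you already wrote down but did not exploit: since $p$ is the identity on the equator, $p^{-1}(P)=P$, so \eqref{eq:compatibility} gives $\tilde f^{-1}(P) = (p\circ\tilde f)^{-1}(P) = (f\circ\pi_{\mathcal{S}})^{-1}(P)$. The right-hand side depends only on $f$, not on the choice of lift; hence the constellation upstairs is determined by $f$ alone. Combined with the classical orientable fact that $\tilde f^{-1}(P)$ determines $\tilde f$ up to equivalence, this shows at once that any two lifts of $f$ are equivalent (uniqueness of $\tilde f$), and that if $f,g$ are equivalent generalized coverings then $\tilde f^{-1}(P) = \tilde g^{-1}(P)$, whence $f^{-1}(P) = \pi_{\mathcal{S}}\bigl(\tilde f^{-1}(P)\bigr) = \pi_{\mathcal{S}}\bigl(\tilde g^{-1}(P)\bigr) = g^{-1}(P)$. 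No analysis of how $\sigma$ interacts with the classical bijection is required. A smaller omission in your forward direction: you never justify that $f^{-1}(P)$ is a map at all, i.e.\ that its faces are open disks; the paper does this by noting that each complementary region retracts onto a neighbourhood of a preimage of the ramification point $-1$, which is a disk by the definition of a generalized branched covering.
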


\begin{proof}
  The fact that the embedded graph $f^{-1}(P)$ on $\mathcal{S}$
  satisfies the local constraints of constellations (with vertex
  colours given by the pull-back of the numbering of ramification
  points) is clear. The fact that it is a well-defined map comes from
  the fact that each region delimited by this graph on $\mathcal{S}$
  retracts to the neighbourhood of a preimage of the ramification point~$-1$. 
	Each such neighbourhood is homeomorphic to a disk by
  definition of a generalized branched covering. We need to prove that
  $f^{-1}(P)$ gives the same constellation for equivalent
  generalized coverings.
  First note that $P$ is naturally identified with
  $p^{-1}(P)$, since $p$ is the identity on $\partial\Sphere_+ =
  \Sphere_+\cap \Sphere_-$, so we will denote both paths simply by
	$P$. In particular, using~\eqref{eq:compatibility} we have 
	$\tilde{f}^{-1}(P) = (p \tilde{f})^{-1}(P) = (f \pi_S)^{-1}(P)$, 
	 which does not depend on the
    choice of $\widetilde{f}$. This means that the constellation
	$\tilde{f}^{-1}(P)$ on $\tilde{\mathcal{S}}$ is determined by $f$. It is a standard fact in the orientable case (see~\cite[Chapter 1]{LandoZvonkin2004}, adapted to our slightly different choice of base graph)
    that the constellation $\tilde{f}^{-1}(P)$ determines
    $\tilde{f}$ uniquely up to equivalence. Now, let $f$ and $g$
	be equivalent generalized coverings, and consider the (unique up to equivalence) associated branched coverings 
	$\tilde{f}$, $\tilde{g}$.
	By definition  $\tilde{f}$ and $\tilde{g}$ are equivalent, therefore $\tilde{f}^{-1}(P)$ and $\tilde{g}^{-1}(P)$ are
	the same constellations. Using~\eqref{eq:compatibility} again shows that $f^{-1}(P)=\pi_S (\tilde{f}^{-1}(P))$ (and the analogue statement for $g$), therefore the two constellations $f^{-1}(P)$ and $g^{-1}(P)$ are also the same, as we wanted to prove.

	Now let $\bM$ be a constellation of $\mathcal{S}$. Then
        $\widetilde{\bM}=\pi_{\mathcal{S}}^{-1}(\bM)$ is a map on the
	orientable surface $\widetilde{\mathcal{S}}$. Using the standard arguments of
	the orientable case~\cite[Chapter 1]{LandoZvonkin2004} (adapted again to our
	choice of base graph),  we can  construct from $\tilde{\bM}$
        a branched covering $\tilde{f}: \widetilde{\mathcal{S}}\rightarrow
        \Sphere$ as follows. Triangulate $\Sphere$ by triangles with
        vertices given by the ramification points labeled by $-1,i,i-1$ for $i \in [1..k]$. In this way, we obtain $k$ triangles on the upper hemisphere $\Sphere_+$
        and $k$ corresponding (through $p$) triangles on the lower
	hemisphere, and such that the equator $\Sphere_+\cap\Sphere_-$
        is a cycle $(-1,0,\dots,k)$, see \cref{fig:Coverings}. Triangulate each face of $\bM$ by putting a new
        vertex $-1$ inside each face and connecting it to all the
        corners of the corresponding face. Pick an orientation on
        $\widetilde{\mathcal{S}}$ to send triangles with the set of
        vertices $-1,i,i-1$ visited in this order to the corresponding
        triangle in $\Sphere_+$ and visited in the reverse order to the corresponding
        triangle in $\Sphere_-$. Note that applying
        $\pi_{\mathcal{S}}$ to the triangulation of
        $\widetilde{\mathcal{S}}$ we obtain a triangulation of
          $\mathcal{S}$, which allows us to construct $f$ by sending
          triangles of the form $-1,i,i-1$ into corresponding
          triangles in $\Sphere_+$. The compatibility relation $f
          \circ \pi_{\mathcal{S}} = p\circ\tilde{f}$ is satisfied
          since the triangulations of $\Sphere_+$ and $\Sphere_-$ are
          identified by $p$.

	The fact that the two constructions are inverse of each other, and that the full profile is preserved, is direct by construction.
      \end{proof}

We remark that the Euler characteristic $\chi(\mathcal{S})$ of the covering surface can be recovered from the full profile $(\lambda^{-1}, \dots, \lambda^k)$ via the Riemann-Hurwitz/Euler formula:
\begin{align}\label{eq:EulerCharacteristic}
	\chi(\mathcal{S}) = 2n - \sum_{i=-1}^k (n- \ell(\lambda^i)).
\end{align}
Indeed, this formula is true for branched coverings and by construction this immediately implies that it holds for generalized branched coverings as well.
We remark that~\eqref{eq:EulerCharacteristic} only involves the length of each partition $\lambda^i$. In this paper we will enumerate generalized branched coverings of the sphere without controlling the full profile, but with enough control to keep track of these lengths, hence of the Euler characteristic of the underlying surface. 

\begin{remark}\label{rem:translation}
	Now that the correspondence between generalized branched coverings and
        constellations is established, in the rest of the paper, we
        will work with $k$-constellations, which are more convenient to
        enumerative purposes.
	The theorem stated in the introduction
        (\cref{thm:abbreviated}) will be proved in the language of
        constellations (Theorem~\ref{thm:mainInSection5}). The sum
        over rooted coverings $f$ in this theorem is understood as the
        sum over rooted constellations $(\bM,c)$
        in~\eqref{eq:mainInSection5}. The integer parameter
        $\nu_\rho(f)$ in that theorem is understood as the parameter
        $\nu_\rho(\bM,c)$ that we introduce in the next section, while
        the quantities $|f|, \kappa(f), v_i(f)$ in the theorem are
        understood respectively as the quantities  $|\bM|,
        \kappa(\bM,c), v_i(\bM)$ defined in \cref{sec:constellations}
        and \cref{sec:MonTutte}.
\end{remark}

\begin{remark}
	Some authors may prefer to call $(k+1)$-constellations what we call $k$-constellations here. This is related to the fact that in our main function~\eqref{eq:JackIntro} we have \emph{two} sets of ``time'' parameters $\pp$ and $\qq$. In many applications one studies the specialization $q_i=\bI{i=1}$, which on coverings corresponds to the case where the second marked ramification point is trivial (this is the same as viewing single Hurwitz numbers as special cases of double ones). Among these two natural choices of terminology, we kept the one that was shorter and more convenient for our purposes.
\end{remark}

	We conclude this section by introducing the notion of duality. 
	\begin{definition}\label{def:duality}
	\emph{Duality} is the involution on $k$-constellations defined
        as follows. Given a $k$-constellation $\bM$, add a new vertex of colour $-1$ inside each face and
link it by a new edge to all corners of label $k$. Then remove all vertices of $\bM$ of colour $0$ and edges
incident to them. Finally, exchange colours $-1\leftrightarrow 0$ and $k+1-i\leftrightarrow i$ for each $1 \leq i \leq k$. The map $\tilde \bM$ thus obtained is called the \emph{dual of $\bM$.} 
	\end{definition}
The fact that duality is an involution is clear from the interpretation on coverings, since it can be interpreted as a reflection of $\Sphere_+$ together with a renumbering of ramification points. 
There is a natural correspondence between corners of colour $0$ in
$\bM$ and $\tilde \bM$, which enables to lift duality at the level of
labeled and/or rooted objects. Indeed, let $\bM'$ be an intermediate map constructed from
  $\bM$ by adding a new vertex of colour $-1$ inside each face and
linking it by a new edge to all corners of label $k$. Note that each
face of $\bM'$ contains precisely one corner of color $-1$ and one
corner of color $0$. In particular orienting this face is equivalent
to orienting the corresponding corner of color $-1$ and it is
also equivalent to orienting the corresponding corner of color $0$.

\section{MON's and the $b$-deformed decomposition equation}
\label{sec:MonTutte}

\subsection{MON's and weights}

\begin{figure}%
\center\includegraphics[width=\linewidth]{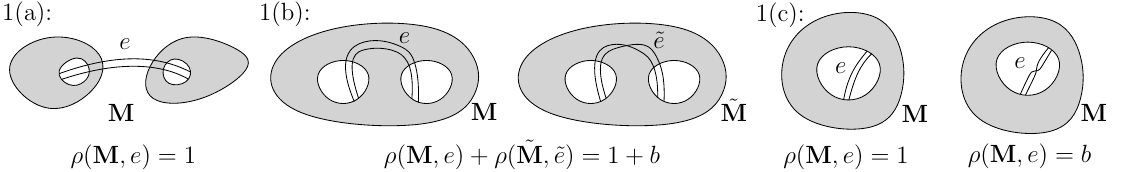}
	\caption{The main axioms of MONs.}
\label{fig:MON}
\end{figure}

Our way to assign a $b$-dependent weight to a map proceeds by repeated edge-deletions. The weight attached to each deletion depends on a number of arbitrary choices subject to suitable axioms, encompassed by the concept of \emph{measure of non-orientability}. 
\begin{definition}[MON; see \cref{fig:MON}]\label{def:MON}
	A \emph{measure of non-orientability (MON)} is a function
        $\rho(\cdot,\cdot)$ with value in $\mathbb{Q}[b]$ that
        associates to a vertex-colored map  $\bM$ and an edge $e$ in $\bM$, some value $\rho(\bM,e)$ and that satisfies the following properties.
	\begin{enumerate}
		\item
			Let $\bN:=\bM\setminus \{e\}$ and let $c_1,c_2$ be the two corners delimited by the endpoints of $e$ in~$\bN$.
			\begin{enumerate}
				\item If $c_1,c_2$ belong to two distinct connected components of $\bN$, then  $\rho(\bM,e)=1$.
				\item If $c_1,c_2$ belong to the same connected component of $\bN$ but to two different faces, then let $\tilde e$ be the other edge that could be added to $\bN$ between these corners to form a new map $\tilde\bM$. Then
					$\rho(\bM,e)+\rho(\tilde\bM,\tilde{e})=1+b$.
									\item If $c_1,c_2$ belong to the same face of $\bN$, then $\rho(\bM,e)=1$ if $e$ splits this face into two faces (``untwisted diagonal)'' and $\rho(\bM,e)=b$ otherwise (``twisted diagonal'').
		\end{enumerate}
		\item the value of  $\rho(\bM,e)$ depends only on the connected component of $\bM$ containing $e$.
	\end{enumerate}
	If $e_1,e_2,\dots,e_i$ are edges of $\bM$, we will denote
	$$
	\rho(\bM; e_1,\dots,e_i):=\rho(\bM,e_1) \rho(\bM\setminus\{e_1\},e_2)\dots \rho(\bM\setminus\{e_1,\dots,e_{i-1}\},e_i).
	$$
	This quantity in general depends on the ordering of the edges $e_1,\dots,e_i$. We will also use the notation $\rho(\bM;L)$ where $L=(e_1,\dots,e_i)$ is an ordered list of edges.
      \end{definition}

 \begin{example}\label{ex:computMON}
	 Let us compute the value of $\rho(\bM; e_1,e_2,e_3)$  for the three examples of Figure~\ref{fig:ExampleCompMON}.  In case a), both $\bM$ and $\bM\setminus\{e_1\}$ have
        one face, therefore $e_1$ is a ``twisted diagonal'' and
        $\rho(\bM;e_1) = b$ by condition 1c) of \cref{def:MON}. Removing 
        $e_2$ from $\bM\setminus\{e_1\}$ also produces a map with one
        face, so this is again the same case and
        $\rho(\bM;e_1,e_2) = b^2$. Finally removing $e_3$ from
        $\bM\setminus\{e_1,e_2\}$ disconnects this map,
        so we are in case 1a), and finally
        $\rho(\bM; e_1,e_2,e_3)=b^2$.
        The labeled map in case b) is identical to the
        one in case a), but the order of edge removals is
        different. The removal of $e_1$ from $\bM$ produces a map with two faces, therefore we are in case 1b) and $\rho(\bM,e_1)$ can be a priori any
        polynomial $X \in \QQ[b]$. However, we need to remember that
        1b) says
	there exists a labeled map
        $\tilde\bM$ and an edge $e_1$ (shown in case c) ) such that
        $\rho(\tilde\bM,e_1) = 1+b-X$. Removing the edge $e_2$ from
        $\bM\setminus\{e_1\}$ (which is the same map as
        $\tilde\bM\setminus\{e_1\}$) produces a map whose number of
        faces is smaller by one, therefore this edge is an ``untwisted diagonal'', 
	     which gives from 1c)
        $\rho(\bM;e_1,e_2)=X$ and
        $\rho(\tilde\bM;e_1,e_2)=1+b-X$. Removing the last edge
        corresponds to case 1a) and therefore $\rho(\bM;e_1,e_2,e_3)=X$ and
        $\rho(\tilde\bM;e_1,e_2,e_3)=1+b-X$.
Finally, note that the map $\tilde\bM$ from case c) is
        orientable. 
	     If we assume that $\rho$ is \emph{integral} (Definition~\ref{def:niceMON}),
	we have necessarily $X=b$, imposing
        $\rho(\tilde\bM;e_1,e_2,e_3)=1$, and 
        $\rho(\bM;e_1,e_2,e_3)=b$ for the map $\bM$ from case b).
 \end{example}

     \begin{figure}[h]
          \center\includegraphics[width=0.72\linewidth]{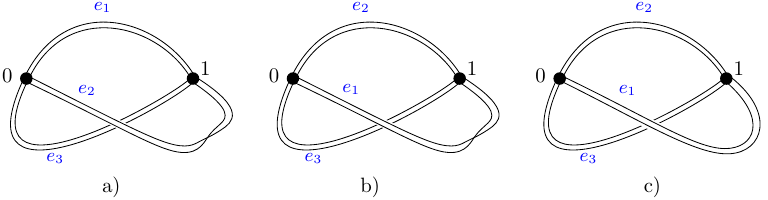}
	     \caption{Three maps $\bM$ with edges labeled $e_1,e_2,e_3$. See Example~\ref{ex:computMON} for the computation of $\rho(\bM; e_1,e_2,e_3)$ in each case.}
	     \label{fig:ExampleCompMON}
        \end{figure}

Our main results also require us to define integral and coherent MONs.
\begin{definition}[Integral MON, Coherent MON; see \cref{fig:niceMON}]\label{def:niceMON}

		A MON $\rho$ is \emph{integral} if $\rho(\bM,e)$ belongs to
	$\{1,b\}$ for any $\bM$ and $e$, and if the following is true: for every pair $(\bM,e)$ which is in case (1)(b) of Definition~\ref{def:MON} and such that $\bM$ is orientable, we have $\rho(\bM,e)=1$ and $\rho(\tilde{\bM},\tilde{e})=b$, in the notation of Definition~\ref{def:MON}.

	A MON $\rho$ is \emph{coherent} if for any colored map $\bM$, for any corner $c$ of $\bM$ of color $j$, and for any face $f$ of $\bM$ having an even number of corners of color $j+1$, the following is true. 
	Choose an arbitrary orientation of $f$, and number
        $c_1,\dots,c_{2d}$ the corners of color $j+1$ in $f$ (these
        corners inherit the orientation of $f$). Also choose an arbitrary orientation for $c$. 
	For $i\in [1..2d]$ let $e_i, \tilde{e}_i$ be the two possible edges connecting $c$ to $c_i$, where $e_i$ is the one that respect the corner orientations, and $\tilde{e}_i$ is its twist.
	Then for any $i$: 
	$$\rho(\bM\cup\{e_i\},e_i)+\rho(\bM\cup\{\tilde
        e_{i+1}\},\tilde{e}_{i+1})=(1+b)$$
        with the convention that $\tilde{e}_{2d+1} := \tilde{e}_{1}$.
\end{definition}
The idea of using MON's or their variants already appeared in previous works on the $b$-conjecture at least since Lacroix~\cite{LaCroix2009} (see also~\cite{DolegaFeraySniady2014,Dolega2017a}). Here we have added Axiom (2) which is necessary for the generating function arguments in the next section. 
Note also that previous authors only consider what we call here \emph{integral} MON's. Although considering non-integrals MON's is not needed strictly speaking for this paper, we believe that this is natural and may be useful for further developments (see \cref{rem:rhoSYM} below).

\begin{figure}%
\center\includegraphics[width=\linewidth]{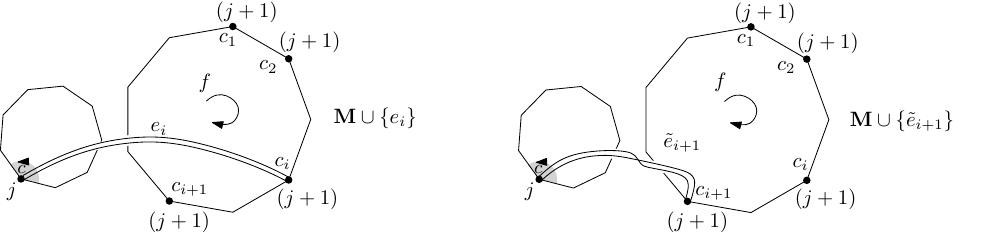}
	\caption{A coherent MON: $\rho(\bM\cup\{e_i\},e_i)+\rho(\bM\cup\{\tilde e_{i+1}\},\tilde{e}_{i+1})=(1+b).$}
\label{fig:niceMON}
\end{figure}

It is easy to see that MONs exist and to construct them. In fact, we have
\begin{lemma}\label{lemma:MONexist}
There exist MONs that are both coherent and integral.
\end{lemma}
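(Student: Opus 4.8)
The plan is to exhibit an explicit $\rho$ and verify the axioms of \cref{def:MON,def:niceMON}. The first observation is that essentially all the freedom sits in case (1)(b): cases (1)(a) and (1)(c) prescribe $\rho$ outright, while in case (1)(b) the relation $\rho(\bM,e)+\rho(\tilde\bM,\tilde e)=1+b$ together with integrality forces $\{\rho(\bM,e),\rho(\tilde\bM,\tilde e)\}=\{1,b\}$. Since a submap of an orientable map is orientable, at most one of $\bM=\bN\cup\{e\}$ and $\tilde\bM=\bN\cup\{\tilde e\}$ is orientable, and this happens exactly when $\bN$ is orientable; in that case integrality pins down which of $e,\tilde e$ gets the weight $1$. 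So the whole construction reduces to making a single consistent binary choice in case (1)(b) when $\bN$ is non-orientable.

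To make that choice I would use local orientations. Using the labelling carried by the constellations (which rigidifies the objects, so that canonical choices are well defined), fix for each connected component a canonical system of orientations: a global orientation when the component is orientable, and otherwise an arbitrary but fixed orientation of each individual face — each face being a disk, hence orientable. Every corner then inherits an orientation from the face it lies in. Now set $\rho(\bM,e)=1$ when the endpoints of $e$ lie in distinct components of $\bN$; otherwise let $\rho(\bM,e)=1$ if $e$ respects the canonical orientations of the two corners $c_1,c_2$ it joins, and $\rho(\bM,e)=b$ if it reverses them. This is manifestly integral. For a chord inside one face (case (1)(c)) the orientation-respecting edge is precisely the one splitting the face in two — a planar chord leaves $\chi$ and orientability unchanged, whereas its twist lowers $\chi$ by one and adds a cross-cap — so the rule matches the prescription of (1)(c). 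In case (1)(b) the edges $e,\tilde e$ are respectively orientation-respecting and orientation-reversing, so they receive $\{1,b\}$ and obey the sum rule; and because orientable components carry a \emph{global} orientation, the orientation-respecting edge is exactly the one keeping $\bM$ orientable, which supplies the extra clause in the definition of an integral MON. Axiom (2) holds since $\rho(\bM,e)$ only inspects the faces incident to the endpoints of $e$, hence data internal to the component of $e$.

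It remains to verify coherence, which I expect to be the main obstacle. Fix $\bM$, a corner $c$ of color $j$, and a face $f$ with corners $c_1,\dots,c_{2d}$ of color $j+1$. For each $i$ the two edges $e_i,\tilde e_i$ joining $c$ to $c_i$ have, by the sum rule, weights summing to $1+b$, so it suffices to prove that $\rho(\bM\cup\{e_i\},e_i)$ is independent of $i$; the identity $\rho(\bM\cup\{e_i\},e_i)+\rho(\bM\cup\{\tilde e_{i+1}\},\tilde e_{i+1})=1+b$ then follows. By construction this weight is $1$ or $b$ according to whether $e_i$ respects or reverses the \emph{canonical} corner orientations, while $e_i$ is, by definition of coherence, the edge respecting the chosen orientation of $c$ and the chosen orientation of $f$ used to number the $c_i$. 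Whether these chosen orientations agree with the canonical ones is governed by two comparisons — canonical versus chosen at $c$, and canonical versus chosen on the face $f$ — both independent of $i$, the second precisely because $f$ is a \emph{single} face carrying one canonical orientation. Hence $\rho(\bM\cup\{e_i\},e_i)$ is constant in $i$ and coherence holds. The conceptual crux is thus to arrange the canonical orientations so that they are globally consistent exactly on orientable components (for integrality) while each face still carries one well-defined orientation (forcing the $i$-independence behind coherence); once this is set up, all verifications are routine.
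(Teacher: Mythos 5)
Your proposal is correct and follows essentially the same route as the paper's proof: fix an orientation of every face (a global one on orientable components, arbitrary per face otherwise), set $\rho=1$ exactly when the added edge respects the resulting corner orientations, and observe that this choice is integral, respects Axiom (2), and is coherent. Your coherence verification (showing $\rho(\bM\cup\{e_i\},e_i)$ is independent of $i$ and then invoking the sum rule) is just a slightly more detailed packaging of the paper's observation that $e_i$ and $\tilde e_{i+1}$ carry opposite conventional orientations and hence the values $1$ and $b$ in some order.
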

\begin{proof}
The only choices that we have to make to construct a MON are
        the values of $\rho(\bM,e)$ and $\rho(\tilde{\bM},\tilde{e})$
        for pairs $(\bM,e)$ that are in case 1(b) of
        \cref{def:MON}. Indeed, all other values are imposed
        by the axioms. If we only wanted to respect Axiom (1), we
        could choose these values arbitrarily in $\{1,b\}$. Here we
        also have to be careful to perform  these choices
        simultaneously to respect Axiom (2) and to ensure
        coherence and integrality. For this we adapt the arguments of \cite[Section
        5.1]{Dolega2017a} to our settings.

	We first equip every connected vertex-coloured map with a fixed orientation of all its faces, given by a global orientation if the surface is orientable, and chosen arbitrarily for each face otherwise.
	Given an edge $e$ in a vertex-coloured map $\bM$ such that the pair $(\bM,e)$ is in case 1(b) of \cref{def:MON}, we let $\bN$ be the connected component of $\bM$ containing $e$.
Removing $e$ from $\bN$ creates a smaller map with two marked corners. We let $\rho(M,e)=1$ if the edge $e$ respects the fixed orientation of these corners, and $\rho(M,e)=b$ otherwise. By construction this choice respects Axiom (2). Coherence is clear, because the edges $e_i$ and $\tilde e_{i+1}$ in \cref{def:niceMON} have opposite conventional orientations along their faces and therefore are associated with the two values $1$ and $b$ (in some order). The fact that this MON is integral comes from the fact that we have chosen the orientation of faces from a global surface orientation whenever the map is orientable.
\end{proof}

\begin{remark}\label{rem:rhoSYM}
	We can construct a MON by choosing the two values in case 1(b) of \cref{def:MON} to be both equal to $(1+b)/2$. We obtain a MON denoted by $\rho_{SYM}$, which is not integral, but is coherent. Introducing $\rho_{SYM}$ is not necessary, strictly speaking, to prove the results of this paper, but it played a role in our discovery of the ``heuristic proofs'' given in \cref{subsec:proofCommut1}. Also, since we prove in this paper that the $b$-weighted enumeration of constellations is independent of the choice of a coherent MON (\cref{cor:independentOfMONPatched} page~\pageref{cor:independentOfMONPatched}), it is natural to expect that further works on the subject use the possibility to work with the coherent MON $\rho_{SYM}$, which is simple and canonical. 
\end{remark}

\subsection{Right-paths and the combinatorial decomposition}

\begin{definition}[Right-path](see \cref{fig:rightPathExample})
	Let $\bM$ be a $k$-constellation and $c$ be an oriented corner of color $0$ in $\bM$, lying in some face $f$. The sequence of $k$ edges $e_1,e_2,\dots,e_k$ that follow $c$ around $f$, in the orientation of $c$, is called the \emph{right-path} of $c$. Note that the edge $e_i$ has color $\{i-1,i\}$. 
\end{definition}
\begin{figure}%
\center\includegraphics[height=45mm]{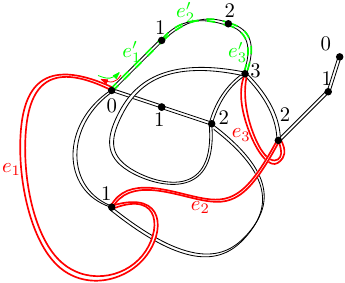}
	\caption{The $3$-constellation of \cref{fig:constellation}. The right-path $(e_1,e_2,e_3)$ of the (oriented) root is shown in (bold) red. In (dotted) green, we show the right-path $(e_1',e_2',e_3')$ of the same corner but with opposite orientation.}
\label{fig:rightPathExample}
\end{figure}

From the local coloring constraints that define $k$-constellations, we clearly have:
\begin{lemma}
	If $\bM$ is a $k$-constellation and $P$ is a right-path in $\bM$, then $\bM\setminus P$ is again a $k$-constellation (with possibly more connected components than $\bM$).
\end{lemma}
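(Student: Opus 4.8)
The plan is to verify directly that $\bM\setminus P$ satisfies the two defining local coloring constraints of a $k$-constellation; the map structure itself is automatic in the combinatorial (ribbon-graph) framework, in which deleting any set of edges yields a well-defined map on a possibly different surface and with possibly more connected components. So the only real work is local, around the vertices incident to the edges of $P$, and the main point to get right is \emph{which} corners merge under deletion.

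First I would record the shape of a right-path. Writing $v_0$ for the vertex carrying the oriented corner $c$, the coloring constraints force $P=(e_1,\dots,e_k)$ to visit vertices $v_0,v_1,\dots,v_k$, where $v_i$ has color $i$ and $e_i$ is the edge of color $\{i-1,i\}$ joining $v_{i-1}$ to $v_i$. Since the colors $0,1,\dots,k$ are pairwise distinct, the vertices $v_0,\dots,v_k$ are pairwise distinct, hence so are the edges $e_1,\dots,e_k$; thus $P$ is a genuine simple path. Moreover, by construction $e_i$ and $e_{i+1}$ are the two edges bounding the corner of the face $f$ at $v_i$, so they are consecutive in the rotation around $v_i$. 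The endpoints are then easy: constraint (1) only restricts the neighbours of vertices of color $0$ and $k$, and deleting edges can only \emph{remove} neighbours, so it is preserved; nothing else needs checking at $v_0$ and $v_k$ since vertices of these colors carry no corner constraint.

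The heart of the argument, and the one place requiring care, is constraint (2) at an internal vertex $v_i$ with $0<i<k$. Here I would stress that one must delete the whole path \emph{simultaneously} rather than edge by edge: removing $e_i$ alone would leave a corner of $v_i$ bounded by two edges of color $\{i,i+1\}$, violating (2), so the intermediate objects are not constellations. Analyzing the simultaneous removal of $e_i$ and $e_{i+1}$ at $v_i$, I would use that around $v_i$ the edge-colors alternate between $\{i-1,i\}$ and $\{i,i+1\}$ (this is precisely constraint (2) for $\bM$), with $e_i$ (color $\{i-1,i\}$) and $e_{i+1}$ (color $\{i,i+1\}$) consecutive. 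Their deletion fuses the three corners they bound into a single corner delimited by the edge $a$ preceding $e_i$ and the edge $b$ following $e_{i+1}$, of colors $\{i,i+1\}$ and $\{i-1,i\}$ respectively; this new corner still separates a vertex of color $i+1$ from one of color $i-1$, so (2) holds. Every other corner of $v_i$, and every corner at a vertex off $P$, is untouched.

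Putting these observations together shows that $\bM\setminus P$ is a $k$-constellation, possibly with more connected components if some $e_i$ is a bridge. I expect the only genuine obstacle to be the bookkeeping of corner fusion at the internal vertices, combined with the (initially counterintuitive) fact that the deletion must be carried out globally on all of $P$; once the alternating color pattern around a color-$i$ vertex is isolated, the verification is immediate.
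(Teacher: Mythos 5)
Your proof is correct and takes essentially the same approach as the paper, which states this lemma without proof as being clear from the local coloring constraints: your argument is exactly that verification spelled out (distinctness of the path's vertices and edges via their colors, alternation of edge colors $\{i-1,i\}$/$\{i,i+1\}$ around an internal vertex, and corner fusion under simultaneous deletion). The only detail you gloss over is the degenerate case where an internal vertex $v_i$ has degree $2$ and becomes isolated after deleting $e_i$ and $e_{i+1}$ (so there is no fused corner delimited by $a$ and $b$), but then the constraints hold vacuously, so nothing breaks.
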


 We now introduce the \emph{combinatorial decomposition}%
 \footnote{Along the years more and more complicated algorithms to
   decompose rooted maps by edge deletion were found, and the present example may be the most complicated to date. Sometimes the name ``Tutte decomposition'' is used generically for them, here we prefer ``combinatorial decomposition''. 
   Tutte's original work~\cite{Tutte1962a} dealt with planar maps. Lehman and Walsh~\cite{WalshLehman1972} were the first to write a decomposition for the case of higher genera, and many works followed in the context of enumeration of orientable or non-orientable maps, see e.g.~\cite{BenderCanfield1991, Gao1993}. A combinatorial decomposition  for $k$-constellations in the orientable case appears in Fang's PhD thesis~\cite{Fang:PhD} in the case were one only tracks the face degrees (and not vertices of colour $0$). The decomposition presented in this paper contains these examples as special cases.
 The equations obtained by analyzing these decompositions are often called Tutte equations, but we prefer to use ``decomposition equations'' below, see \cref{subsec:TutteEq}.
 In the context of mathematical physics, similar equations are often called Dyson-Schwinger equations or loop equations, see e.g.~\cite{LandoZvonkin2004, Eynard:book}, although not all loop equations directly reflect a combinatorial decomposition, see e.g.~\cite{AlexandrovChapuyEynardHarnad2020}.}%
which consists in removing right-paths recursively from a connected $k$-constellation until the whole map has been exhausted.

 In this definition we assume that some underlying MON $\rho$ has been fixed.
\begin{definition}[Combinatorial decomposition]\label{def:randomDecomposition}
	Let $(\bM,c)$ be a rooted connected $k$-constellation.
	The combinatorial decomposition of $(\bM,c)$ is the recursive algorithm defined as follows:
\begin{itemize}
		\item We let  $\mathbf{M}_0=\mathbf{M}$ and we let
                  $c_0:=c$, $m := \deg(v(c))$, where $v(c)$ is the
                  vertex adjacent to $c$ (the root vertex of $\mathbf{M}$);
		\item
			For each $i$ from $1$ to $m$, we let $P_i=(e^{(i)}_1,e^{(i)}_2,\dots,e^{(i)}_k)$ be the right-path of the corner $c_{i-1}$ in $\bM_{i-1}$. We let $\mathbf{M}_i:=\mathbf{M}_{i-1}\setminus P_i$. For $i<m$ we let $c_{i}$ be the oriented corner induced by $c_{i-1}$ in the map~$\bM_i$.
		 \item We let $L$ be the ordered list of edges 	$$
	L=(e^{(1)}_1,e^{(1)}_2, \dots, e^{(1)}_k, e^{(2)}_1, e^{(2)}_2, \dots, e^{(2)}_k,\dots,\dots,e^{(m)}_1, e^{(m)}_2,\dots,e^{(m)}_k).
	$$
We say that the weight $\rho(\bM;L)$ has been \emph{collected} by the algorithm.
\item If the map $\bM_m$ is empty, then stop. If it is not, let
  $\bM^{(1)}, \dots, \bM^{(i)}$ be its connected components. For $j
  \in [1..i]$ let $c^{(j)}$ be the last corner in $\bM^{(j)}$ from
  which an edge was deleted in the execution of the algorithm (in
  other terms after removing an edge attached to $c^{(j)}$ in the execution of the algorithm no other
  edge was removed from $\bM^{(j)}$). This corner naturally 
		inherits the orientation of the last right-path removed from
  it. We root $\bM^{(j)}$ in the first corner of color $0$ following
  the oriented corner $c^{(j)}$ (that is to say, we visit consecutive corners of the face of $\bM^{(j)}$ which contains
  $c^{(j)}$ starting from this corner and following its orientation, until a corner of colour $0$ is reached, and we take that corner as the root). Let $\tilde{\bM}^{(1)}, \dots, \tilde{\bM}^{(i)}$ be their dual maps. 
			\item Run recursively the algorithm on each of the maps $\tilde{\bM}^{(1)}, \dots, \tilde{\bM}^{(i)}$.
		\end{itemize}
              \end{definition}
              
\cref{fig:CombDec} shows the combinatorial decomposition
            of the $3$-constellation from \cref{subfig:constellation}. The need to alternate between primal and dual maps in the decomposition may seem unnatural to the reader. As we will see, it comes from the fact that the differential equations we use to make the connection with Jack polynomials mix two sets of differential variables ($\pp$ and $\qq$, see Proposition~\ref{prop:mixDiffEqs}). 

	\begin{definition}
		Fix a MON $\rho$, and let  $(\bM,c)$ be a rooted connected $k$-constellation. We define the \emph{weight} $\vec\rho(\bM,c)$ of $(\bM,c)$ as the product of all the weights collected during the combinatorial decomposition of $(\bM,c)$.

              \end{definition}

                \begin{example}
                  Let us compute $\vec\rho(\bM,c)$ for the rooted  map $(\bM,c)$
                  of \cref{subfig:constellation}. During the
                  combinatorial decomposition we first remove four
                  right-paths attached to the root vertex, until the
                  3-constellation $\bM_4$ has more than one connected
                  component (it has two: the
                  trivial one consisting of the root vertex and
                  $\bM^{(1)}$ shown in \cref{fig:CombDec}). The edges
                  appearing in the list $L$ produced by the
                  combinatorial decomposition are depicted in
                  \cref{fig:CombDec}. One can check that
                  $\rho(\bM_0,P_1) = X_1\cdot X_2 \cdot 1$,
                  $\rho(\bM_1,P_2) = 1\cdot 1 \cdot b$,
                  $\rho(\bM_2,P_3) = b\cdot 1\cdot 1$,
                  $\rho(\bM_3,P_4) = 1\cdot 1 \cdot 1$, where $X_1,X_2
                  \in \QQ[b]$ are fixed polynomials associated with
                  $\rho$ by Axiom 1b) of \cref{def:MON} (the $i$-th
                  term in the product expressing $\rho(\bM_{j-1},P_j)$
                  corresponds to removing $e^{(j)}_i$).
                  In particular $\rho(\bM,L) = b^2\cdot X_1\cdot X_2$.
                  Finally, $\rho(\tilde\bM^{(1)})=1\cdot 1\cdot 1$ so that
                  $\vec\rho(\bM,c) = b^2\cdot X_1\cdot X_2$.
                \end{example}

                \begin{figure}%
\includegraphics[width=0.8\linewidth]{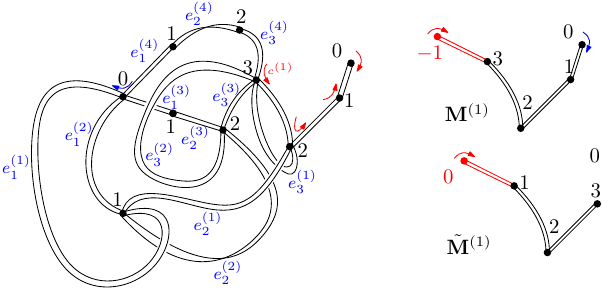}
\caption{Combinatorial decomposition of the $3$-constellation from
  \cref{subfig:constellation}. In the left we indicate which edges
  of $\bM$ are listed in the first part of the algorithm. Red oriented
  corners show how to root the connected component $\bM^{(1)}$ by first
  orienting the corner $c^{(1)}$ and following its orientation until
  we reach the first corner of color $0$. On the right hand side the
  red vertex and the red edge are not part of
  $\bM^{(1)}$ but they indicate the intermediate step of the
			construction of the dual map $\tilde\bM^{(1)}$, which also shows how
to orient the root of~$\tilde\bM^{(1)}$ (see Definition~\ref{def:duality} and the comment following it).}
\label{fig:CombDec}
\end{figure}
              
\begin{definition-lemma}\label{deflemma:nu}
	Let $\rho$ be an integral MON, and let $(M,c)$ be a connected rooted $k$-constellation. Then we have
	$$\vec\rho(\bM,c) = b^{\nu_{\rho}(\bM,c)},
	$$ 
	where $\nu_{\rho}(\bM,c)$ is a nonnegative integer, which is zero if and only if $\bM$ is orientable.
\end{definition-lemma}
\begin{proof}
	The fact that $\vec\rho(\bM,c)$ is a monomial is a direct consequence of the definitions. The fact that $\nu_{\rho}(\bM,c)$ is zero if and only if $\bM$ is orientable follows from the fact that it is orientable if and only if the weight $1$ (instead of $b$) is collected at each step of the combinatorial decomposition, which is clear by inspecting all cases in the definition of a MON.
\end{proof}

	\subsection{$b$-weights and $p_{\cdot}q_{\cdot}u_{\cdot}y_{\cdot}$-markings}

	In this paper we will consider generating  functions of constellations, and we will  be able to keep track of many parameters of these combinatorial objects in our formulas. In order to make our discussions as clear and readable as possible, we fix some terminology and notation now.

	To a constellation $\bM$ (possibly rooted, or labeled), we will associate several sorts of ``weights'':
	\begin{itemize}
		\item a \emph{$b$-weight}, which is a quantity in
                  $\mathbb{Q}[b]$, a priori dependent on the choice of
                  an underlying MON $\rho$.
			Example of $b$-weights
                  are the quantities $\vec\rho(\bM,c)$ or
			$b^{\nu_\rho(\bM,c)}$ defined above. We will restrict the word \emph{weight} to these quantities.
	  \item a monomial weight in the variables $p_i,q_i,u_i,y_i$, which serves as a marking keeping track of parameters of the map, such as the face or vertex degrees. To avoid confusion with the $b$-weights we will use the word \emph{marking} instead of \emph{weight} for these quantities.
	\end{itemize}

	For the rest of this paper we fix indeterminates $b$,
        $\pp=(p_i)_{i\geq 1}$, $\qq=(q_i)_{i\geq 1}$,
        $\yy=(y_i)_{i\geq 0}$, $\uu = (u_i)_{i\geq 1}$.
	If $\bM$ is a constellation we denote by $cc(\bM)$ its number
        of connected components, and by $F(\bM)$ the set of its
        faces. For $i\geq 0$ we denote by $V_i(\bM)$ the set of
        vertices of color $i$ and by $v_i(\bM)$ its cardinality.
Recall also that $|\bM|$ is the size, {\it i.e.} the number of corners of colour $0$, of $\bM$.
	\begin{definition}[Markings]
		Let $\bM$ be a $k$-constellation. The \emph{marking} of $\bM$ is the monomial
		\begin{align}\label{eq:marking}
		 \kappa(\bM):=\prod_{f\in F(\bM)} p_{\deg(f)}\prod_{v\in V_0(\bM)} q_{\deg(v)} \prod_{i=1}^k u_i^{v_i(\bM)}.
		\end{align}
		Let $(\bM,c)$ be a rooted $k$-constellation, and $f_c$ be its root face. The \emph{marking} of $(\bM,c)$ is the monomial
		$$
		 \vec\kappa(\bM,c):= y_{\deg (f_c)}\prod_{f\in F(\bM)\setminus\{f_c\}} p_{\deg(f)}\prod_{v\in V_0(\bM)} q_{\deg(v)} \prod_{i=1}^k u_i^{v_i(\bM)}
		 = \frac{y_{\deg (f_c)}}{p_{\deg (f_c)}}\kappa(\bM).
		$$
	\end{definition}
		In other words, our marking uses variables $p_i$ to
                record a non-root face of degree $i$, $q_i$ to record
                a vertex of colour $0$ and degree $i$, variables $u_i$
                to record a vertex of color $i$, and $y_i$ to record
                the fact that the root face has degree $i$. %

                \begin{example}
                  For the rooted $3$-constellation from
                  \cref{subfig:constellation} one has
                  $\vec\kappa(\bM,c)=y_5\cdot q_4\cdot q_1\cdot
                  u_1^4\cdot u_2^3\cdot u_3$.
                  \end{example}

\subsection{Generating functions of connected maps and the decomposition equations.}
\label{subsec:TutteEq}

Let $\rho$ be a MON.
We let \sloppy $\vec{H}_\rho(t;\pp,\qq, \yy,u_1,\dots,u_k)$ be the multivariate generating
function of rooted connected $k$-constellations given by the formula
\begin{equation}
\label{eq:MultivariateGeneratingMapsRootedConnected}
	\vec{H}_\rho(t;\pp,\qq, \yy,u_1,\dots,u_k) := \sum_{n\geq 1}\sum_{(\bM,c)} t^{n} \vec{\rho}(\bM,c) \vec\kappa(\bM,c),
\end{equation}
where the second sum is taken over rooted connected (unlabeled) $k$-constellations of size~$n$.
Formally $\vec{H}_\rho$ is viewed as formal power series
in $t$, with coefficients that are polynomials in the
variables $y_i,p_i,q_i,u_i$, with coefficients in $\mathbb{Q}(b)$, that is
\[ \vec{H}_\rho \in \QQ(b)[\yy,\pp,\qq,u_1,\dots,u_k][[t]].\]

For $m\geq 1$, we also denote by $\vec{H}_\rho^{[m]}$ the contribution to $\vec{H}_\rho$ of maps whose root vertex has degree $m$
\begin{equation}
  \label{eq:DefHRhoM}
	\vec{H}_\rho^{[m]}(t;\pp,\qq, \yy,u_1,\dots,u_k) := q_m^{-1}\sum_{n\geq 1}\sum_{(\bM,c)\atop \deg v_c=m} t^{n} \vec{\rho}(\bM,c) \vec\kappa(\bM,c),
\end{equation}
where the second sum is now taken over rooted connected (unlabeled) $k$-constellations of size~$n$ whose root vertex $v_c$ has degree $m$. Note that we do not count the root vertex in the marking (hence the factor $q_m^{-1}$). By definition one has:
\begin{align}
  \label{eq:HintoHm}
	\vec{H}_\rho = \sum_{m\geq 1} q_m \cdot \vec{H}_\rho^{[m]}.
\end{align}
We also consider the variant where the root face is marked with $\pp$-variables, namely
\begin{align*}
	H_\rho^{[m]} =\Theta_Y \vec H_\rho^{[m]}
	=q_m^{-1}\sum_{n\geq 1}\sum_{(\bM,c)\atop \deg v_c=m} t^{n} \vec{\rho}(\bM,c) \kappa(\bM),
\end{align*}
with
\begin{align}
     \Theta_Y &:= \sum_{i\geq 1}p_i\frac{\partial}{\partial y_i}.
\end{align}
Finally, we let $\pi$ be the operator that exchanges the sets
  of variables $\pp \leftrightarrow \qq$ and $u_{i}\leftrightarrow
  u_{k+1-i}$ for $1 \leq i \leq k$, and we let  
\begin{align}\label{eq:dualFunction}
\widetilde{H}_\rho^{[m]}:= \pi H_\rho^{[m]}.
\end{align}
Note that, by applying duality, we have
\begin{align}\label{eq:dualFunction2}
\widetilde{H}_\rho^{[m]}= 
	p_m^{-1}\sum_{n\geq 1}\sum_{(\bM,c)\atop \deg f_c =m} t^{n} \vec\rho {(\tilde{\bM}, \tilde{c})} \kappa(\bM),
\end{align}
where the second sum is now taken over rooted connected (unlabeled) $k$-constellations of size~$n$ whose root \emph{face} has degree $m$. Note that in this sum the $b$-weight is computed  on the dual rooted map $(\tilde{\bM}, \tilde{c})$ of $(\bM,c)$, and that we used that $\kappa(\tilde{\bM}) = \pi \kappa (\bM)$. 

We will now state a set of equations (which we call ``decomposition equations'') that characterizes these functions. We first need to define some operators:
\begin{align*}
\Lambda_{Y} &:= (1+b)\sum_{i,j\geq 1}y_{i+j-1}\frac{i\partial^2}{\partial
  p_i \partial y_{j-1}} +\sum_{i,j\geq
  1}y_{i-1}p_j\frac{\partial}{\partial y_{i+j-1}}
+b\cdot\sum_{i\geq
  0}y_{i}\frac{i\partial}{\partial y_i },
	\\
Y_+ &:= \sum_{i\geq 0}y_{i+1}\frac{\partial}{\partial y_i}.
\end{align*}

\begin{theorem}[Decomposition equations]\label{thm:TutteConnected}
	Let $\rho$ be any coherent MON.
Then	
	the family of generating series $\vec H_\rho^{[m]}\equiv \vec H_\rho^{[m]}(t;\pp,\qq,\yy,u_1,\dots,u_k)$
	satisfies the following set of equations, for $m\geq 1$:
	\begin{equation}\label{eq:TutteConnected}
\vec H_\rho^{[m]} =t^m  \cdot
		\Big(Y_+\prod_{l=1}^{k}\big(\Lambda_{Y}+u_l + \sum_{i,j\geq 1} y_{j+i-1}
		\widetilde{H}_\rho^{[i]} \frac{\partial}{\partial y_{j-1}}
		\big)\Big)^{m}  (y_0).
\end{equation}
\end{theorem}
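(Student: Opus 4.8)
The plan is to read \cref{eq:TutteConnected} directly off the combinatorial decomposition of \cref{def:randomDecomposition}, by translating each elementary edge-deletion into the action of one of the operators $\Lambda_Y$, $Y_+$, multiplication by $u_l$, or the $l$-independent operator $B:=\sum_{i,j\geq 1}y_{i+j-1}\widetilde{H}_\rho^{[i]}\frac{\partial}{\partial y_{j-1}}$, while simultaneously tracking the marking $\vec\kappa$ through the variables $y_i,p_i,q_i,u_i$ and the $b$-weight $\vec\rho$ through the chosen coherent MON. Throughout I would work with the ``partially decomposed'' state: a constellation in which the face currently being processed is recorded by the $\yy$-variables (a face of degree $i$ contributing $y_i$), all other faces and vertices being recorded by $\pp,\qq,\uu$ as usual. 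Reading the decomposition backwards then amounts to reconstructing a rooted constellation one edge at a time, and the whole proof is the verification that this reconstruction is governed precisely by the right-hand side.

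First I would fix a rooted connected $k$-constellation $(\bM,c)$ whose root vertex $v_c$ has degree $m$. Since $v_c$ carries exactly $m$ corners of colour $0$ and all its incident edges have colour $\{0,1\}$, the decomposition deletes exactly $m$ right-paths before $v_c$ is exhausted; these $m$ corners of colour $0$ are the only ones created at this level of the recursion, which explains both the prefactor $t^m$ and the normalisation $q_m^{-1}$ in the definition of $\vec H_\rho^{[m]}$ (the root vertex itself is not recorded). Reading this backwards, $(\bM,c)$ is rebuilt in $m$ identical rounds, each round first growing one right-path $(e_1,\dots,e_k)$ and then creating the colour-$0$ corner of $v_c$ from which the next round starts. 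Growing a right-path will be the ordered product $\prod_{l=1}^{k}(\Lambda_Y+u_l+B)$, creating the corner will be $Y_+$ (which raises the degree of the current root face by one), and the $m$ rounds give the outer power applied to the seed $y_0$ (the exhausted root face).

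The heart of the matter is the analysis of a single edge $e_l$ of colour $\{l-1,l\}$ of a right-path, i.e.\ the step reaching a vertex of colour $l$, which I would split according to the position of its second endpoint, following the cases of \cref{def:MON}. If $e_l$ is a bridge (case (1)(a), weight $1$) to a single new vertex of colour $l$, it records $u_l$ and leaves the current face unchanged, giving the summand $u_l$. If $e_l$ is a bridge to a non-trivial separate piece, then — since the decomposition re-roots that piece and recurses on its \emph{dual} — the piece is counted by the dual series $\widetilde{H}_\rho^{[i]}$ of \cref{eq:dualFunction2}, its root face of degree $i$ being absorbed along the bridge into the current face of degree $j-1$ to form a face of degree $i+j-1$; this is $B$, and the appearance of the \emph{dual} series is precisely the primal/dual alternation built into \cref{def:randomDecomposition}. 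In all remaining cases $e_l$ joins two corners of the same component, producing the three summands of $\Lambda_Y$: in case (1)(b) the endpoints lie in two different faces, so $e_l$ merges a non-root face of degree $i$ into the current face of degree $j-1$ to give degree $i+j-1$, and here the generating function simultaneously receives the twin configuration containing the twisted edge $\tilde{e}_l$, so that coherence of $\rho$ (\cref{def:niceMON}) forces $\rho(\bM,e_l)+\rho(\tilde{\bM},\tilde{e}_l)=1+b$ and yields $(1+b)\sum_{i,j}y_{i+j-1}\,i\,\frac{\partial^2}{\partial p_i\partial y_{j-1}}$ (the factor $i$ recording the corner of the absorbed face at which the gluing occurs); in case (1)(c) the untwisted diagonal (weight $1$) splits the current face of degree $i+j-1$ into a non-root face of degree $j$ and a root face of degree $i-1$, giving $\sum_{i,j}y_{i-1}p_j\frac{\partial}{\partial y_{i+j-1}}$, while the twisted diagonal (weight $b$) leaves a single face of unchanged degree and raises the genus, giving $b\sum_i y_i\,i\,\frac{\partial}{\partial y_i}$.

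Assembling these contributions, growing one edge contributes $\Lambda_Y+u_l+B$, growing a full right-path contributes $\prod_{l=1}^{k}(\Lambda_Y+u_l+B)$, creating the colour-$0$ corner contributes $Y_+$, and iterating over the $m$ corners of $v_c$ starting from $y_0$ gives \cref{eq:TutteConnected}; formally this would be run as an induction on the size $n$, the strictly smaller pieces split off by bridges being supplied by the induction hypothesis through $\widetilde{H}_\rho^{[i]}$. The main obstacle, to my mind, is not any single case but the global consistency of the bookkeeping: one must check that the order in which edges are deleted within and across right-paths matches the non-commuting order of the factors in $\prod_{l=1}^{k}$ and of the outer power; that in case (1)(b) each unordered pair of twin maps is summed exactly once, so that coherence legitimately produces the factor $1+b$; and, most delicately, that the dualisation and re-rooting of the leftover components prescribed by \cref{def:randomDecomposition} is reproduced faithfully by the self-referential operator $B$, with the weight of each piece computed on its dual exactly as in \cref{eq:dualFunction2}. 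Establishing this last point — that the weight $\vec\rho(\bM,c)$ collected by the decomposition factorises edge by edge with the duals in the correct places — is where the real work lies.
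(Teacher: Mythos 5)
Your overall architecture is the paper's own: invert the combinatorial decomposition, interpret each elementary edge-addition as one of the operators ($u_l$ for a bridge to a new isolated vertex, $\Lambda_Y$ for attachments within the current component, the self-referential term $\sum_{i,j} y_{j+i-1}\widetilde{H}_\rho^{[i]}\partial/\partial y_{j-1}$ with the dual series for bridges to new components, $Y_+$ for re-rooting), and run an induction on size so that the split-off pieces are supplied by $\widetilde{H}_\rho^{[i]}$ computed on duals. However, your treatment of case (1)(b) --- the step that is supposed to produce the term $(1+b)\sum_{i,j} y_{i+j-1}\, i\,\frac{\partial^2}{\partial p_i\partial y_{j-1}}$ --- contains a genuine error. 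You claim that when $e_l$ joins the current root face to a non-root face, ``the generating function simultaneously receives the twin configuration containing the twisted edge $\tilde e_l$'', so that $\rho(\bM,e_l)+\rho(\tilde\bM,\tilde e_l)=1+b$. This is false whenever $l<k$: by the last condition in \cref{def:partialPath}, the corner of colour $l$ that $e_l$ attaches to is incident in $\bN$ to one edge of colour $\{l-1,l\}$ and one of colour $\{l,l+1\}$, and exactly \emph{one} of the two twists of $e_l$ extends the partial right-path validly. The twin configuration is therefore simply absent from the sum, so the pairing you invoke (which, incidentally, is Axiom 1(b) of \cref{def:MON}, not coherence) is unavailable, and your argument breaks at this point.

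This is precisely where coherence --- the genuinely stronger hypothesis in the statement --- must enter, and your proposal never uses it in its actual form. The paper's argument records the observation that a face of degree $d$ contains $2d$ corners of colour $l$ when $0<l<k$, that exactly one twist is valid at each such corner, and that the type of valid twist \emph{alternates} as one goes around the face; coherence (\cref{def:niceMON}) then pairs the valid edge to a corner $c_i$ with the valid edge to the next corner $c_{i+1}$, each such pair contributing $1+b$, for a total of $d(1+b)$. Only in the extremal case $l=k$ (where a face of degree $d$ has $d$ corners of colour $k$ and \emph{both} twists at each are valid) does the twin Axiom 1(b) suffice, as you describe. The same valid-twist counting is what produces the factor $d$ in your case (1)(c) twisted subcase and the ``exactly one valid choice per splitting $(i,j)$'' in the untwisted subcase, which you assert without justification. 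So the skeleton of your proof matches the paper's, but the central computation --- the one that explains why the theorem needs a coherent MON rather than an arbitrary one --- would fail as written and needs to be replaced by the alternation-plus-coherence pairing.
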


\begin{corollary}\label{cor:TutteThetaConnected}
		Let $\rho$ be any coherent MON.
Then	
	the family of generating series $(H_\rho^{[m]})_{m\geq 1}$ 
	is fully characterized by the following set of equations, for $m\geq 1$:
	\begin{equation}\label{eq:TutteThetaConnected}
H_\rho^{[m]}(t;\pp,\qq,u_1,\dots,u_k) =t^m  \cdot
		\Theta_Y \Big(Y_+\prod_{l=1}^{k}\big(\Lambda_{Y}+u_l + \sum_{i,j\geq 1} y_{j+i-1}
		\widetilde {H}_\rho^{[i]} \frac{\partial}{\partial y_{j-1}}
		\big)\Big)^{m}  (y_0),
\end{equation}
	together with~\eqref{eq:dualFunction}.
\end{corollary}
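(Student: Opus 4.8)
The plan is to derive \cref{cor:TutteThetaConnected} directly from \cref{thm:TutteConnected}, since the corollary is essentially a restatement of the decomposition equations after applying the operator $\Theta_Y$ to pass from the $\yy$-marked root face to the $\pp$-marked root face, together with the bookkeeping of the symmetry $\pi$. First I would recall the definitional identity $H_\rho^{[m]} = \Theta_Y \vec H_\rho^{[m]}$ from the setup preceding the theorem, where $\Theta_Y = \sum_{i\geq 1} p_i \frac{\partial}{\partial y_i}$ is the operator that replaces the $y_i$-marking of the root face by a $p_i$-marking. Applying $\Theta_Y$ to both sides of~\eqref{eq:TutteConnected} immediately yields~\eqref{eq:TutteThetaConnected}: the right-hand side of the corollary is literally $\Theta_Y$ applied to the right-hand side of the theorem, and the left-hand side becomes $\Theta_Y \vec H_\rho^{[m]} = H_\rho^{[m]}$ by definition. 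So the equation itself is obtained by a one-line application of a linear operator.

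The genuinely new content in the corollary, relative to the theorem, is the word \emph{characterized}: I must argue that the system~\eqref{eq:TutteThetaConnected} together with~\eqref{eq:dualFunction} determines the family $(H_\rho^{[m]})_{m\geq 1}$ \emph{uniquely}. The natural way to do this is by induction on the degree $n$ in the formal parameter $t$. I would expand each $H_\rho^{[m]}$ and $\widetilde H_\rho^{[i]}$ as a power series in $t$ and observe, from the explicit combinatorial definitions~\eqref{eq:DefHRhoM} and~\eqref{eq:dualFunction2}, that $H_\rho^{[m]}$ (and hence $\widetilde H_\rho^{[m]} = \pi H_\rho^{[m]}$) has a zero coefficient for all powers of $t$ below $t^m$, since a rooted connected $k$-constellation whose root vertex has degree $m$ has size at least $m$. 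The prefactor $t^m$ on the right-hand side of~\eqref{eq:TutteThetaConnected} then guarantees that the coefficient of $t^n$ in $H_\rho^{[m]}$ is expressed through the operators $\Lambda_Y$, $Y_+$, $\Theta_Y$ applied to coefficients of $\widetilde H_\rho^{[i]}$ of $t$-degree strictly less than $n$, so that the right-hand side only involves data of lower order in $t$.

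Making this precise is the main point to be careful about. I would fix $n$ and assume inductively that all coefficients $[t^{n'}] H_\rho^{[m]}$ (equivalently $[t^{n'}]\widetilde H_\rho^{[m]}$ via~\eqref{eq:dualFunction}) are determined for every $m\geq 1$ and every $n' < n$. Extracting the coefficient of $t^n$ in~\eqref{eq:TutteThetaConnected}, and using the $t^m$ prefactor, one sees that the right-hand side is a polynomial in the coefficients $[t^{n'}]\widetilde H_\rho^{[i]}$ with $n' \leq n-m < n$ (since each factor in the product contributes at least one face-deletion step and the relevant generating functions start at order $t^i$), hence is already known by the induction hypothesis; this pins down $[t^n] H_\rho^{[m]}$ for each $m$. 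The base case $n=0$ is vacuous because all the series start at positive order in $t$.

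The step I expect to require the most care is verifying the triangularity claim: that when the coefficient of $t^n$ is extracted, every occurrence of an unknown function $\widetilde H_\rho^{[i]}$ on the right-hand side is accompanied by a strictly positive power of $t$, so no self-reference at order $t^n$ survives. This hinges on the combined facts that $\widetilde H_\rho^{[i]}$ has no constant term in $t$ and that the outermost $t^m$ factor shifts every contribution, and on checking that the differential operators $\Lambda_Y$, $Y_+$, $\Theta_Y$ act $t$-degree-preservingly (they involve no $t$). Once this is confirmed, uniqueness follows, and combined with the displayed identity the corollary is proved.
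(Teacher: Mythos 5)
Your proposal is correct and follows the same route as the paper: apply $\Theta_Y$ to both sides of~\eqref{eq:TutteConnected} to obtain~\eqref{eq:TutteThetaConnected}, then observe that the system determines the coefficients inductively order by order in $t$ (the paper states this last point in one sentence, while you spell out the triangularity coming from the $t^m$ prefactor with $m\geq 1$ and the fact that the operators carry no $t$). No gaps; the extra detail you supply is a faithful expansion of the paper's "it is clear" step.
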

\begin{proof}[Proof of the corollary]
	Equation~\eqref{eq:TutteConnected} implies \eqref{eq:TutteThetaConnected} by applying the operator $\Theta_Y$ to both sides. It is clear that this set of equations characterizes the functions since coefficients can be computed inductively from the equations, order by order in $t$.
\end{proof}

In particular, we observe that
\begin{corollary}\label{cor:independentOfMONPatched}
	The functions $\vec H_\rho$, $H_\rho^{[m]}$, $\widetilde{H}_\rho^{[m]}$ do not depend on the coherent MON $\rho$.
\end{corollary}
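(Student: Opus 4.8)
The plan is to derive Corollary~\ref{cor:independentOfMONPatched} as an immediate consequence of the characterization provided by Corollary~\ref{cor:TutteThetaConnected}. The key observation is that the defining system of equations \eqref{eq:TutteThetaConnected} together with \eqref{eq:dualFunction} makes no reference whatsoever to the particular choice of coherent MON $\rho$: the operators $\Lambda_Y$, $Y_+$, $\Theta_Y$, and $\pi$ are all fixed differential/substitution operators, and the parameters $t, \pp, \qq, \yy, u_1,\dots,u_k$ are fixed indeterminates. The only place where $\rho$ enters the equations is implicitly, through the functions $\widetilde H_\rho^{[i]}$ appearing inside the product, but these are themselves determined from the $H_\rho^{[m]}$ via \eqref{eq:dualFunction}.

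\begin{proof}[Proof of \cref{cor:independentOfMONPatched}]
	Let $\rho$ and $\rho'$ be two coherent MONs. By \cref{thm:TutteConnected}, both families $(H_\rho^{[m]})_{m\geq 1}$ and $(H_{\rho'}^{[m]})_{m\geq 1}$ satisfy the system of equations \eqref{eq:TutteThetaConnected} together with \eqref{eq:dualFunction}. As noted in the proof of \cref{cor:TutteThetaConnected}, this system determines its solution uniquely: reading \eqref{eq:TutteThetaConnected} order by order in $t$, the coefficient of $t^n$ on the left-hand side is expressed through the operators $\Theta_Y, Y_+, \Lambda_Y$ applied to data involving only $\widetilde H_\rho^{[i]}$, hence $H_\rho^{[i]}$, at orders strictly smaller than $n$ (each factor in the product and the outer $t^m$ raises the $t$-degree). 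An immediate induction on $n$ therefore forces $H_\rho^{[m]} = H_{\rho'}^{[m]}$ for all $m\geq 1$. Applying $\pi$ and using \eqref{eq:dualFunction} gives $\widetilde H_\rho^{[m]} = \widetilde H_{\rho'}^{[m]}$ as well. Finally, since $\vec H_\rho^{[m]} = \Theta_Y^{-1} H_\rho^{[m]}$ in the sense that $H_\rho^{[m]} = \Theta_Y \vec H_\rho^{[m]}$ and $\vec H_\rho = \sum_{m\geq 1} q_m \vec H_\rho^{[m]}$, and the passage between $\vec H_\rho^{[m]}$ and $H_\rho^{[m]}$ is via the $\rho$-independent operator $\Theta_Y$, the equality of the $H_\rho^{[m]}$ propagates to $\vec H_\rho$, as claimed.
\end{proof}

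\noindent The essentially routine nature of this argument is worth emphasizing: \emph{all} the real content has already been absorbed into \cref{thm:TutteConnected}, whose proof establishes that the combinatorially defined generating functions (which a priori depend on $\rho$ through the weights $\vec\rho(\bM,c)$ collected during the combinatorial decomposition) do in fact satisfy the MON-free system. The only subtlety one must check is the well-posedness of the inductive extraction, namely that the right-hand side of \eqref{eq:TutteThetaConnected} at order $t^n$ depends only on lower-order data; this is exactly what was invoked in the proof of \cref{cor:TutteThetaConnected} and requires nothing beyond tracking the $t$-grading through the operators $Y_+$, $\Lambda_Y$, $\Theta_Y$. There is no genuine obstacle here — the statement is a clean corollary of uniqueness, and its main value is conceptual: it licenses the use of the canonical coherent MON $\rho_{SYM}$ of \cref{rem:rhoSYM} in all subsequent arguments.
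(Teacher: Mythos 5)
Your treatment of $H_\rho^{[m]}$ and $\widetilde{H}_\rho^{[m]}$ is correct and is exactly the paper's route: the corollary is stated right after \cref{cor:TutteThetaConnected} precisely because both families, for any two coherent MONs $\rho,\rho'$, solve the same system \eqref{eq:TutteThetaConnected}--\eqref{eq:dualFunction}, which makes no reference to the MON, and that system determines its solution order by order in $t$ (the prefactor $t^m$ with $m\geq 1$, together with the positive $t$-valuation of the $\widetilde{H}_\rho^{[i]}$, guarantees that the coefficient of $t^n$ only calls on strictly lower-order data; your parenthetical claim that \emph{each} factor of the product raises the $t$-degree is not accurate, since $\Lambda_Y+u_l$ is $t$-free, but this does not affect the induction).

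The last step, however, contains a genuine gap: to get MON-independence of $\vec H_\rho$ you argue from $\Theta_Y \vec H_\rho^{[m]} = \Theta_Y \vec H_{\rho'}^{[m]}$ to $\vec H_\rho^{[m]} = \vec H_{\rho'}^{[m]}$, i.e.\ you implicitly invert $\Theta_Y$ (your notation $\Theta_Y^{-1}$ makes this explicit). But the passage $H_\rho^{[m]}=\Theta_Y\vec H_\rho^{[m]}$ goes the \emph{wrong} way: $\Theta_Y=\sum_i p_i\frac{\partial}{\partial y_i}$ is not injective on series that are linear in the $\yy$-variables, e.g.\ $\Theta_Y\left(y_1p_2-y_2p_1\right)=p_1p_2-p_2p_1=0$; combinatorially, $H_\rho^{[m]}$ forgets which face of each constellation is the root face, so it contains strictly less information than $\vec H_\rho^{[m]}$, and equality of the $H^{[m]}$'s does not by itself propagate upward. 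The fix is one line, but it must go through \cref{thm:TutteConnected} rather than through $\Theta_Y$: equation \eqref{eq:TutteConnected} expresses $\vec H_\rho^{[m]}$ itself as an operator expression whose only dependence on $\rho$ is through the functions $\widetilde{H}_\rho^{[i]}$, which your first step has already shown to be MON-independent; hence each $\vec H_\rho^{[m]}$, and therefore $\vec H_\rho=\sum_{m\geq 1}q_m\,\vec H_\rho^{[m]}$, is MON-independent as well.
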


	\subsection{Unconnected functions}

        Let us consider the following antiderivative of $\Theta_Y \vec H_\rho$:
$$H_\rho :=
\sum_{n\geq 1} \frac{1}{n} \sum_{(\bM,c)} t^{n} \vec{\rho}(\bM,c) \kappa(\bM),
$$
where the second sum is taken over rooted connected (unlabeled)
$k$-constellations of size~$n$ so that
\[ t \frac{\partial}{\partial t} H_\rho = \Theta_Y \vec H_\rho.\]
Then the series $F_\rho$ is defined by 
$$F_\rho = \exp \frac{1}{1+b} H_\rho.$$
Note that by Corollary~\ref{cor:independentOfMONPatched} it does not
depend on the coherent MON $\rho$ and the following identity holds
\begin{align}\label{eq:newDefFrho}
(1+b) t \frac{\partial}{\partial t}\ln F_\rho =\Theta_Y \vec H_\rho.
\end{align}

We now want to give a combinatorial interpretation to coefficients of $F_\rho$. Because each connected rooted constellation has $n! 2^{n-1}$ different labellings, we can also write 
$$
H_\rho =
\sum_{n\geq 1} \frac{1}{n} \sum_{(\bM,c)} \frac{t^{n}}{2^{n-1} n!} \vec{\rho}(\bM,c) \kappa(\bM)
$$
where the second sum is now taken over \emph{labeled} and rooted connected  $k$-constellations of size~$n$. This can also be rewritten
$$
H_\rho =
\sum_{n\geq 1} \sum_{\bM} \frac{t^{n}}{2^{n-1} n!} \kappa(\bM) \cdot \mathbf{E}_{c\in \mathbf{M}} [\vec{\rho}(\bM,c)] ,
$$
where the second sum is now taken over \emph{labeled} (but no more rooted) connected  $k$-constellations of size~$n$, and where $\mathbf{E}_{c\in \mathbf{M}}$ now denotes expectation with respect to a corner $c$ of colour $0$ chosen uniformly at random among those of $\bM$.

\begin{definition}
	Define the weight  of a labeled connected constellation $\bM$
        as $\tilde \rho(\bM):=\mathbf{E}_{c\in \mathbf{M}}
        [\vec{\rho}(\bM,c)]$. Extend this definition multiplicatively
        to unconnected (but still labeled) constellations.
\end{definition}

Because $\tilde{\rho}(\bM)$ is multiplicative on connected components
by definition, and $\kappa(\bM)$ and  $2^{|\bM|-cc(\bM)}$ also are,
the generating function $F_\rho = \exp \frac{1}{1+b} H_\rho$ can be
directly interpreted as the generating function of unconnected objects with these markings. More precisely:
\begin{theorem}
	The generating function $F_\rho\equiv F_\rho (t;\pp,\qq,u_1,\dots,u_k)$ defined by~\eqref{eq:newDefFrho} is given by the expansion:
	\begin{align}\label{eq:expansionUnconnected}
	F_\rho = 1+\sum_{n \geq 1}\sum_{\bM} \frac{t^{n}}{2^{n-cc(\bM)}n!} \frac{\tilde{\rho}(\bM) \kappa(\bM)}{(1+b)^{cc(\bM)}},
	\end{align}
	where the second sum is taken over labeled $k$-constellations of size~$n$, connected or not.
      \end{theorem}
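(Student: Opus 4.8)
The statement is essentially an exponential-formula bookkeeping argument, and the plan is to unwind the passage from the connected series $H_\rho$ to the unconnected series $F_\rho=\exp\frac{1}{1+b}H_\rho$ by expanding the exponential and regrouping terms by the underlying (unconnected) labelled constellation. First I would record the three multiplicativity facts that the paragraph preceding the theorem already isolates: the marking $\kappa(\bM)$ is multiplicative over connected components (it is a product over faces and vertices), the weight $\tilde\rho(\bM)=\mathbf{E}_{c\in\bM}[\vec\rho(\bM,c)]$ is multiplicative by definition, and the power $2^{n-cc(\bM)}$ together with the size-factorials behave multiplicatively in the appropriate sense. Starting from the rewriting
\begin{align*}
H_\rho = \sum_{n\geq 1}\sum_{\bM} \frac{t^n}{2^{n-1}n!}\,\kappa(\bM)\,\tilde\rho(\bM),
\end{align*}
where the sum is over labelled connected constellations, I note that each connected labelled constellation of size $n$ contributes with a factor $2^{n-1}=2^{n-cc}$ since $cc=1$; this is the seed for the general $cc(\bM)$ in the target formula.

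Next I would apply the exponential formula in its standard form for labelled combinatorial structures. Writing $F_\rho=\exp\frac{1}{1+b}H_\rho$ and expanding $\exp$ as $\sum_{r\geq 0}\frac{1}{r!}\left(\frac{1}{1+b}H_\rho\right)^r$, the coefficient of a product of $r$ connected pieces comes with a factor $\frac{1}{r!}$ that precisely cancels the overcounting of ordered versus unordered collections of components, provided the size-labels are distributed correctly. The key is that for labelled structures on $\{1,\dots,n\}$, a disjoint union of connected pieces of sizes $n_1,\dots,n_r$ is recovered from the product of the connected generating functions after choosing how to partition the label set; the multinomial $\binom{n}{n_1,\dots,n_r}=\frac{n!}{n_1!\cdots n_r!}$ arising from this choice combines with the individual $\frac{1}{n_i!}$ factors to yield the global $\frac{1}{n!}$ in the target. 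Here one must be slightly careful because our labels sit only on the corners of colour $0$, so ``size'' means number of such corners and the relevant label set has cardinality $n=|\bM|$; but this does not change the combinatorics of the exponential formula.

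The only genuinely nonroutine point, and the step I expect to be the main obstacle, is tracking the powers of $2$ and the factors $(1+b)$ through the exponential. For the $2$-powers: each connected component of size $n_i$ carries $2^{n_i-1}$ in $H_\rho$, and a constellation $\bM$ with $cc(\bM)$ components of total size $n=\sum n_i$ therefore accumulates $\prod_i 2^{n_i-1}=2^{\sum n_i - cc(\bM)}=2^{n-cc(\bM)}$, which is exactly the exponent in~\eqref{eq:expansionUnconnected}. For the $(1+b)$-factors: the prefactor $\frac{1}{1+b}$ on $H_\rho$ contributes one factor $\frac{1}{1+b}$ per connected component when the exponential is expanded, hence a total of $\frac{1}{(1+b)^{cc(\bM)}}$, matching the denominator in the target. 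I would verify that the term $n=0$ of the sum (the empty constellation) produces the constant $1$ in $F_\rho$, consistent with $\exp(0)=1$.

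Putting these pieces together gives, for a labelled constellation $\bM$ of size $n$ with $cc(\bM)$ components,
\begin{align*}
F_\rho = 1+\sum_{n\geq 1}\sum_{\bM}\frac{t^n}{2^{n-cc(\bM)}n!}\frac{\tilde\rho(\bM)\,\kappa(\bM)}{(1+b)^{cc(\bM)}},
\end{align*}
which is~\eqref{eq:expansionUnconnected}. I would close by remarking that the multiplicativity of $\tilde\rho$, $\kappa$, and $2^{\cdot}$ is exactly what licenses the factorization of each summand of $\exp$ into a product over connected components, so that the abstract exponential formula applies verbatim; the content of the theorem is the correct alignment of the three normalization exponents ($n!$, $2^{n-cc}$, $(1+b)^{cc}$) rather than any deep combinatorial input.
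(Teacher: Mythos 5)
Your proposal is correct and is essentially the paper's own argument: the paper proves this theorem by exactly the same route, namely observing that $\tilde\rho$, $\kappa$, and the power $2^{|\bM|-cc(\bM)}$ are multiplicative over connected components, so that $F_\rho=\exp\frac{1}{1+b}H_\rho$ is, by the exponential formula for labelled structures, the generating function of (possibly unconnected) labelled constellations with precisely the normalization $\frac{t^n}{2^{n-cc(\bM)}n!}(1+b)^{-cc(\bM)}$. Your accounting of the $n!$, $2^{n-cc}$, and $(1+b)^{cc}$ factors matches the paper's bookkeeping exactly.
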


\begin{remark}
	We will prove (Theorem~\ref{thm:mainInSection5}) that $F_\rho$ is in fact equal to the function $\tau^{(k)}_b$ defined in~\eqref{eq:JackIntro}. Thus the last theorem gives an explicit interpretation of the coefficients of $\tau^{(k)}_b$. We will also show (Lemma~\ref{lem:MultiJack}) that $F_\rho=\tau^{(k)}_b$ satisfies the following equation
	\begin{align}\label{eq:TutteUnconnected}
		m \frac{q_m\partial}{\partial q_m} F_\rho (t;\pp,\qq,u_1,\dots,u_k) = \Theta_Y t^m \cdot q_m \cdot
	\big(Y_+\prod_{l=1}^{k} (\Lambda_{Y}+u_l)\big)^{m}  \frac{y_0}{1+b} F_\rho(t;\pp,\qq,u_1,\dots,u_k),
	\end{align}
	and similarly, in Corollary~\ref{cor:equalInSection5}, we will show that 
	$$
	 m \frac{\partial }{\partial q_m} H_\rho = H_\rho^{[m]}.
	$$
	This equation has the following interpretation. Fix a monomial $\mathbf{m}=p_\lambda q_\mu u_1^{v_1}\dots u_k ^{v_k}$ such that the number of parts in $\mu$ equal to $m$ is nonzero, and that there exists at least one connected $k$-constellation with marking $\mathbf{m}$. Then, if $\bM$ denotes a random connected $k$-constellation such that $\kappa(\bM)=\mathbf{m}$, chosen uniformly at random, one has
	$$
	\mathbf{E}_{\mathbf{M}} 
	\mathbf{E}_{c\in \mathbf{M}}  [\vec{\rho}(\bM,c)]
=
\mathbf{E}_{\mathbf{M}} 
	\mathbf{E}_{c\in \mathbf{M}}  [\vec{\rho}(\bM,c) | \deg v_c = m].
	$$
	This ``symmetry'' is not directly apparent on the combinatorial model.
\end{remark}

	\begin{remark}[Other relations and deformed Virasoro constraints]\label{rem:VirasoroPatched}
	In the special cases of $b\in \{0,1\}$ we can obtain other
        equations, which do not involve differential operators
	with respect to the variables $(q_i)_{i \geq 1}$. Indeed, it follows from our proofs that for $b\in\{0,1\}$,
		one has
	\begin{align}\label{eq:Virasoro3}
	\frac{\ell\partial}{\partial p_\ell}
          F_\rho =
          [y_\ell] \sum_{m \geq 1}t^m \cdot q_m \cdot
	\big(Y_+\prod_{l=1}^{k} (\Lambda_{Y}+u_l)\big)^{m}  \frac{y_0}{1+b} F_\rho, 
	\end{align}
	where $[y_\ell]$ is coefficient extraction.
		Indeed, our proofs show that the right-hand side is the generating function of rooted constellations in which the root face has degree $\ell$, without attributing any marking to that face (this follows from the combinatorial interpretation of operators given below, which in the case $b\in \{0,1\}$ can be applied also to the combinatorial decomposition of the root component in a rooted unconnected constellation).  Such maps can also be obtained by distinguishing a face of degree $\ell$ in an unrooted constellation and choosing one of the $\ell$ corners of colour $0$ it contains as the root, giving rise to the expression of  the left-hand side (note that for $b\not \in \{0,1\}$ this argument would not work, since the $b$-weight attributed to a map depends on the choice of the root corner).
		In the special
        case $q_i=\delta_{1,i}$ and $k=2$, Equation~\eqref{eq:Virasoro3} is precisely
        the $\ell$-th Virasoro constraint for dessins d'enfants for
        $b=0$ (or its non-orientable generalisation for $b=1$),
        see~\cite{KazarianZograf2015}. In the general case, possible
        $b$-analogues of the differential
        equations~\eqref{eq:Virasoro3} and their links with the
        Virasoro algebra and its central extensions deserve further
        interest and will be studied in further works.\footnote{Note
          added during revision: Virasoro constraints for certain 
		$b$-deformed models (including the case $q_i=\delta_{1,i}$ and $k=2$ of~\eqref{eq:Virasoro3}, for general $b$) were discussed and proved
          in~\cite{BonzomChapuyDolega2022}, and they were used in the subsequent
          paper~\cite{BonzomChapuyDolega2022a} to prove
		recursion formulas counting various non-orientable maps. However, we still cannot prove~\eqref{eq:Virasoro3} for general values of $b$.}
\end{remark}

\section{Differential operators: partial right-paths and commutation
  relations}
\label{sec:Operators}

\subsection{Interpretation of the operator $\Lambda_{Y}$ and proof of the decomposition equation}
\label{subsec:proofTutte}

\begin{figure}%
\center\includegraphics[width=\linewidth]{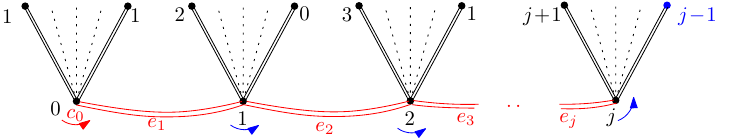}
	\caption{A partial right-path as in
          \cref{def:partialPath}. Black edges belong to a
          $k$-constellation $\bN$, and red edges form a partial
          right-path $P_j$. In the case $j=k$, the color $j+1$ should
          be replaced by $j-1=k-1$ on the illustration. The blue vertex of color $j-1$
          corresponds to the vertex described in the last axiom of the
        definition, and the blue corners corresponds to the tour of
	the face $f_0$ which determines this blue vertex. Note that, for $j<k$, if one replaced the edge $e_j$ by its twist, the configuration would not be a partial right-path anymore, as the vertex in the last axiom of Definition~\ref{def:partialPath} would now have colour $j+1$.}
\label{fig:partialRightPath}
\end{figure}

To prove the decomposition equation, we will show that each $k$-constellation can be constructed from smaller ones by adding edges one by one, thus working with intermediate objects that do not fully satisfy the constraints defining $k$-constellations.

\begin{definition}[Partial right-path; see \cref{fig:partialRightPath}]\label{def:partialPath}
	Let $\bN$ be a $k$-constellation and $j\in [0..k]$. 
	Assume that $\bM_j$ is a map formed by adding a sequence of
        new edges $P_j = (e_1,e_2, \dots, e_j)$ to $\bN$ (possibly
        also using some new vertices of color $i$ for $i\in [0..j]$;
        $P_0 := \emptyset$ by convention), and assume that $\bM_j$ is rooted at some oriented corner $c_0$ of color $0$, incident to $e_1$, such that:
	\begin{itemize}
		\item the edge	$e_i$ has color $(i-1,i)$, for $i\in[1..j]$;
		\item starting from $c_0$ and following the tour of the face $f_0$ containing it in $\bM_j$, we follow the sequence of edges $e_1,e_2,\dots,e_j$ in this order;
		\item for $i<j$, the vertex of color $i$ on $P_j$ satisfies the local constraints of a $k$-constellation in the map $\bM_j$; 
		\item if $j\neq0$, the vertex of $\bM_j$ that follows $e_j$ in the
                  tour of the face $f_0$ starting from $c_0$ and following the sequence of edges $e_1,e_2,\dots,e_j$ has color $j-1$. 
	\end{itemize}
	Then  we say that $P_j$ is a \emph{partial right-path of length $j$} for $\bN$.
\end{definition}

The following is clear from definitions:
\begin{lemma}
	If $P=(e_1,\dots,e_k)$  is a partial right-path of length $k$ for $\bN$, starting at some corner $c_0$, then $\bN\cup P$ is a $k$-constellation, and $P$ is the right-path of $c_0$ in $\bN \cup P$.
\end{lemma}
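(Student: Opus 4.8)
The plan is to apply \cref{def:partialPath} with $j=k$, writing $\bM := \bN\cup P$ (this is exactly the object $\bM_k$ produced by the definition when the path is complete), and to verify the two assertions separately. The statement that $P$ is the right-path of $c_0$ in $\bM$ is immediate and requires no real work: by the first two bullets of \cref{def:partialPath}, following the tour of the root face $f_0$ starting from the oriented corner $c_0$ one traverses exactly $e_1,e_2,\dots,e_k$ in this order, and $e_i$ has color $\{i-1,i\}$. This is verbatim the definition of the right-path of $c_0$, so this half is settled directly from the hypotheses.

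It remains to check that $\bM$ is a $k$-constellation. That $\bM$ is a map is already part of the hypothesis of \cref{def:partialPath}, so only the two local coloring constraints need verification, and I would organize this vertex by vertex. Every vertex not lying on $P$ has the same incident edges in $\bM$ as in $\bN$, hence satisfies both constraints because $\bN$ is a $k$-constellation. The vertices lying on $P$ are exactly the vertices $v_0,v_1,\dots,v_k$ of colors $0,1,\dots,k$ visited along the path. For each $i\in\{0,1,\dots,k-1\}$, the third bullet of \cref{def:partialPath} (the clause ``for $i<j$'', applied with $j=k$) asserts precisely that $v_i$ satisfies the local constraints in $\bM$, which dispatches all path vertices except the tip $v_k$.

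The only genuine point is therefore the color-$k$ endpoint $v_k$ of $e_k$, which I expect to be the main (though minor) obstacle, being the one vertex on $P$ not covered by the ``local constraints'' clause. Here the relevant requirement is that a color-$k$ vertex have only color-$(k-1)$ neighbours. The edge $e_k$ has color $\{k-1,k\}$ and so contributes a color-$(k-1)$ neighbour; any other edge incident to $v_k$ belongs to $\bN$. If $v_k$ is a newly added vertex it has no further incident edges and we are done; otherwise $v_k$ was already a color-$k$ vertex of the $k$-constellation $\bN$, so all its $\bN$-neighbours have color $k-1$. In either case every neighbour of $v_k$ has color $k-1$, as required, and the fourth bullet of \cref{def:partialPath} (the vertex following $e_k$ along the face tour has color $k-1$) is consistent with this picture and ensures no malformed corner is created at $v_k$. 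Combining these observations, $\bM=\bN\cup P$ is a map satisfying both coloring constraints, hence a $k$-constellation, which completes the proof.
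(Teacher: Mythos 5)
Your proof is correct. The paper offers no proof at all for this lemma --- it simply declares it ``clear from definitions'' --- and your argument is exactly the routine unpacking the authors had in mind: the first two bullets of \cref{def:partialPath} give the right-path claim verbatim, the third bullet (with $j=k$) disposes of every path vertex of colour $0,\dots,k-1$, vertices off the path are untouched, and the colour-$k$ tip only needs the neighbour-colour axiom (not the corner-alternation axiom), which you verify directly in both the ``new vertex'' and ``old vertex of $\bN$'' cases.
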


We will now construct operators that ``build'' partial right-paths, but for this we first have to define what markings we associate to the intermediate objects that are not exactly $k$-constellations.
\begin{definition}[Markings for partial right-paths]
	Let $\bN$ be a constellation and $P_j$ be a partial right-path of length $j$ for $\bN$, of root corner $c_0$ and root vertex $v_0$.
	We define the marking $\hat\kappa(\bN\cup  P_j,c_0)$ of the ``intermediate'' constellation $\bN\cup  P_j$ as for usual rooted constellations, except that when measuring the degree of faces, we do not count the root corner $c_0$, and that we do not count the factor $q_i$ corresponding to the root vertex. That is to say:
$$
	 \hat\kappa(\bN\cup  P_j,c_0):=
	 y_{\deg (f_0)-1}\prod_{f\in F(\bN\cup  P_j)\setminus\{f_0\}} p_{\deg(f)}\prod_{v\in V_0(M)\setminus\{v_0\}} q_{\deg(v)} \prod_{i=1}^k u_i^{v_i(\bN\cup  P_j)}
	 ,
$$
where $f_0$ (resp. $v_0$) is the face containing $c_0$
(resp. the vertex incident to $c_0$) in $\bN\cup
P_j$. 
\end{definition}
\noindent Note that when $c_0$ is the only corner of color $0$ in the face $f_0$, then $\hat\kappa(\bN\cup P_j,c_0) $ involves the variable $y_0$.

	The following proposition says that the effect of the operator $\Lambda_{Y}+u_j$ is to extend the length of a partial right-path by one unit.
\begin{proposition}[Interpretation of $\Lambda_{Y}$]\label{prop:interpretGammaPatched}
	Let $\bN$ be a $k$-constellation, $j\in[0..k-1]$, and $P_j=(e_1,\dots,e_j)$ be a partial right-path for $\bN$. 
Assume that $N\cup P_j$ is connected.  
	Let $\rho$ be a coherent MON.
	Then
	\begin{align*}
		(\Lambda_{Y}+u_{j+1})
	\hat\kappa(\bN \cup  P_j,c_0)
		= \sum_{e_{j+1}} \rho(\bN \cup  P_j \cup \{e_{j+1}\};e_{j+1})
	\hat\kappa(\bN\cup P_j \cup \{e_{j+1}\},c_0)
	,\end{align*}
	where the sum is taken over all possible additions of an edge $e_{j+1}$ (possibly using a new vertex of color $j+1$) such that $(e_1,\dots,e_j,e_{j+1})$ is a partial right-path of length $j+1$ for $\bN$.
\end{proposition}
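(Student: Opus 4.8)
$\textbf{Proof proposal.}$
The plan is to prove the operator identity by a term-by-term comparison: each monomial produced by the operator $\Lambda_Y + u_{j+1}$ acting on $\hat\kappa(\bN\cup P_j,c_0)$ should correspond to exactly one way of extending the partial right-path by adding an edge $e_{j+1}$, with the correct $b$-weight $\rho$ attached. First I would set up the geometry: the partial right-path $P_j$ has its last vertex (reached from $c_0$) of color $j$, and we must attach $e_{j+1}$ of color $(j,j+1)$ so that $(e_1,\dots,e_{j+1})$ becomes a partial right-path of length $j+1$. The new edge either reaches a \emph{fresh} vertex of color $j+1$, or connects to an \emph{already-present} corner of color $j+1$ lying in the face $f_0$ currently traversed. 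The term $u_{j+1}$ in the operator should account for the creation of a new vertex of color $j+1$ (hence the marking picks up a factor $u_{j+1}$, matching the $+u_{j+1}$), while the three summands of $\Lambda_Y$ should account for the three ways an edge can land on a pre-existing corner, governed by the MON axioms.

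Next I would match the three pieces of $\Lambda_Y$ against the three cases of \cref{def:MON}. The operator $\Lambda_Y = (1+b)\sum_{i,j}y_{i+j-1}\frac{i\partial^2}{\partial p_i\partial y_{j-1}} + \sum_{i,j}y_{i-1}p_j\frac{\partial}{\partial y_{i+j-1}} + b\sum_i y_i \frac{i\partial}{\partial y_i}$ has exactly three terms, and the derivatives encode how the degree of the root face $f_0$ changes when $e_{j+1}$ is added. The key bookkeeping is that adding $e_{j+1}$ merges or splits faces: the $\frac{\partial}{\partial p_i}$ term consumes a non-root face of degree $i$ (case where the new edge connects $f_0$ to a different face, which by case (1)(b) of the MON contributes the symmetric weight summing to $1+b$ — hence the prefactor $(1+b)$); the $p_j$-multiplication term creates a new non-root face (the ``untwisted diagonal'' splitting of $f_0$, weight $1$, case (1)(c)); and the $b\sum_i y_i\frac{i\partial}{\partial y_i}$ term is the ``twisted diagonal'' that keeps $f_0$ a single face but changes its orientability, weight $b$, again case (1)(c). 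The combinatorial factor $i$ (from $\frac{i\partial}{\partial p_i}$ or $\frac{i\partial}{\partial y_i}$) records the choice of one of the $i$ corners of color $j+1$ in the relevant face to which $e_{j+1}$ attaches, and the index shifts $y_{i+j-1}$, $y_{i-1}$ track the resulting degree of the root face (measured without the root corner, per the definition of $\hat\kappa$).

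The heart of the argument, and where I expect the main obstacle, is verifying that the coefficients $(1+b)$, $1$, and $b$ in $\Lambda_Y$ correctly aggregate the MON-weights $\rho(\bN\cup P_j\cup\{e_{j+1}\}; e_{j+1})$ summed over the relevant edge additions. The subtle point is case (1)(b): a single edge $e_{j+1}$ connecting two distinct faces can be added in two twisted variants $e_{j+1},\tilde e_{j+1}$, whose weights sum to $1+b$ by the MON axiom, but these two variants produce the \emph{same} marking monomial (they merge the same two faces into a root face of the same degree). This is precisely where \emph{coherence} of $\rho$ is needed: coherence guarantees that when we pair up the edge and its twist across a face, the weights $\rho(\bN\cup\{e_i\},e_i)+\rho(\bN\cup\{\tilde e_{i+1}\},\tilde e_{i+1})$ telescope to $1+b$ uniformly, so that the total $b$-weight collected equals the operator coefficient regardless of how the corners are enumerated. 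I would therefore isolate this as a lemma: summing $\rho$ over the two twists of each edge to a chosen corner of color $j+1$ yields $1+b$ per corner, matching the $(1+b)$ prefactor and the combinatorial factor $i$. The remaining two cases (splitting $f_0$ with an untwisted vs. twisted diagonal) are governed by case (1)(c) and are a direct identification of weights $1$ and $b$ with the two operator terms; these present no difficulty once the face-degree bookkeeping is set up carefully. Finally I would check that every edge addition is counted exactly once and that the markings $\hat\kappa$ on both sides agree monomial-by-monomial, completing the proof.
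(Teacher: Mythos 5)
Your overall plan is the same as the paper's proof: match the new-vertex case to $u_{j+1}$, and match the three terms of $\Lambda_Y$ to the three ways of attaching $e_{j+1}$ to an existing corner (merge with a non-root face, twisted diagonal in the root face, untwisted diagonal in the root face), with MON case (1)(b) behind the $(1+b)$ term and case (1)(c) behind the $1$ and $b$ terms. However, the coefficient bookkeeping --- which you correctly identify as the heart of the argument --- has a genuine gap, located exactly where you defer to a lemma.

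The missing ingredient is the dichotomy between $j+1<k$ and $j+1=k$, which governs \emph{which} of the two twisted variants of a candidate edge is an admissible extension of the right-path. When $j+1<k$, the color constraints of the constellation (after traversing the new edge one must reach the edge of color $\{j,j+1\}$ at the target vertex) force exactly \emph{one} of the two twists to be a valid choice; only when $j+1=k$ are both twists valid. This breaks your accounting in three places. First, a non-root face of degree $d$ contains $2d$ corners of color $j+1$ when $j+1<k$ (and $d$ when $j+1=k$), so your statement that the factor $i$ in $\frac{i\partial}{\partial p_i}$ ``records the choice of one of the $i$ corners of color $j+1$'' is not the correct count. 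Second, your key lemma --- that the two twists $e_{j+1},\tilde e_{j+1}$ of a single edge sum to weight $1+b$ --- is just Axiom (1)(b) of the MON definition, and it can only be applied when $j+1=k$, because when $j+1<k$ one of those two twists is not a valid extension and hence does not appear in the sum at all. What is actually needed when $j+1<k$ (and this is the \emph{only} place coherence enters) is the paper's alternation argument: the $2d$ valid edges, one per corner, have twist types that alternate around the face, and coherence by definition pairs the untwisted edge to corner $c_i$ with the \emph{twisted} edge to the \emph{next} corner $c_{i+1}$, producing $d$ pairs of weight $1+b$ each. Your proposal conflates these two mechanisms (``coherence guarantees that when we pair up the edge and its twist\ldots''), so the coefficient $d(1+b)$ is never actually established in the generic case $j+1<k$. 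Third, the same alternation of valid choices is what justifies the coefficients in the two root-face cases: it shows there are exactly $d$ (not $2d$) valid twisted diagonals, and that for the untwisted diagonals the valid corners alternate with the non-root corners of color $0$, so that each splitting $(i,j)$ of the root-face degree is realized by exactly one valid corner. These cases are therefore not the routine ``face-degree bookkeeping'' you describe; without the dichotomy and the alternation argument they would yield coefficients off by a factor of $2$.
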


\begin{proof}[Proof of \cref{prop:interpretGammaPatched}]
	In order to add the edge $e_{j+1}$ to the partial path $P_j$, we should connect the corner $c_j$ which follows $e_j$ on $P_j$, to some corner of color $(j+1)$, by some new edge  $e_{j+1}$. We can already note that after doing this, the colour constraints of $k$-constellations around the vertex of colour $j$ on $P_j$ will automatically be satisfied, from the last property of \cref{def:partialPath}.
	
	We first remark that if $c_{j+1}$ is a corner of color $j+1$ in $\bN$ incident to a vertex $v_{j+1}$, there are two different edges $e_{j+1}, \tilde e_{j+1}$ that can be added joining $c_j$ to $c_{j+1}$, where one is the twist of the other. 
We distinguish two cases:
	\begin{itemize}[itemsep=0pt, topsep=0pt,parsep=0pt, leftmargin=12pt]
		\item if $j+1=k$, then both $(e_1,\dots,e_{k-1}, e_k)$ and $(e_1,\dots,e_{k-1}, \tilde{e}_k)$ are (partial) right-paths for $\bN$. Indeed, in this case there are no nontrivial colour constraints to satisfy.
		\item if $j+1<k$, then exactly one choice of $e\in \{e_{j+1},\tilde e_{j+1}\} $ is such that $(e_1,\dots,e_{j}, e)$ is a partial right-path for $\bN$. Indeed, since $\bN$ is a $k$-constellation the corner $c_{j+1}$ in $\bN \cup P_j$ is incident to two edges of color $ \{j,j+1\}$ and $\{j+1,j+2\}$, and the last constraint in \cref{def:partialPath} requires that after following $e$ along the path one reaches the edge of color $\{j,j+1\}$, which forces the choice of the twist.  In this proof we will say that this choice of $e$ is the \emph{valid choice}.
	\end{itemize}

	\smallskip
	Then we observe that each vertex of $\bM_j:=\bN\cup\{P_j\}$ satisfies the local constraints of a $k$-constellation, except for the vertex of colour $j$ on $P_j$.
	This implies that, in the map $\bN\cup P_j$ each non-root face of degree $d$ contains exactly $2d$ corners of label $j+1$ if $j+1<k$, and $d$ corners of label $k$. The same is true for the root face provided we do not count the corner $c_0$ in the degree.
	Let $f$ be a face of $\bM_j$ of degree $d$ (or degree $d+1$ if
        $f$ is the root-face). Orient $f$ arbitrarily and assume that
        $j+1<k$. Let $u_1,\dots,u_{2d}$ be the list of corners of
        color $j+1$ in $f$, with respect to the chosen orientation. When following the tour of $f$, the labels of the two corners visited before and after $u_i$ are either $(j,j+2)$ or $(j+2,j)$, and moreover, corners of the two types alternate. 
	For each such corner $u_i$, if we want to create a new edge $e_{j+1}$ from $c_j$ to $u_i$, only one possible twist of that new edge is a valid choice, and moreover, the type of twist which is valid alternates with corners.
	This observation being recorded, let us proceed with the proof by distinguishing some cases. There are several ways to create the new edge $e_{j+1}$:
	\begin{itemize}[itemsep=0pt, topsep=0pt,parsep=0pt, leftmargin=20pt]
		\item[(i)] {\it we create a new isolated vertex of color $j+1$, linked by an edge to $c_j$.} This does not contribute to the $b$-weight, and the contribution to the marking is $u_{j+1}$. %
		\item[(ii)] {\it we connect $c_j$ to a corner of color $c_{j+1}$ in a non-root face $f$.}
			If this chosen face has degree $d$, the degree of the root face will increase by $d$.
If $j+1<k$, from the observation recorded above, the $2d$ corners of color $(j+1)$ in the chosen face give rise to $2d$ valid choices of edges whose twists alternate, and because $\rho$ is coherent, the total weight of these $2d$ possible additions sum up to
			\begin{align}\label{eq:contribution}
	(1+b)+(1+b)+\dots+(1+b) = d (1+b).
			\end{align}
			If $j+1=k$, there are  $d$ corners of color $(j+1)$ in the chosen face, and each of them corresponds to two possible choices of edges $e_{j+1}$ and $\tilde{e}_{j+1}$, which are both valid choices. By Axiom 1(b) of \cref{def:MON}, the sum of contributions to the $b$-weight of adding $e_{j+1}$ or $\tilde{e}_{j+1}$ is $(1+b)$. The total contribution in this case is thus again $d(1+b)$.
				
			Therefore both subcases of case $(ii)$ the contribution is $d(1+b)$ where $d$ is the degree of~$f$. We conclude that case (ii) is described by the operator:
			$$
(1+b)\sum_{a,d\geq 1}y_{a+d-1}\frac{d\partial^2}{\partial
  p_d \partial y_{a-1}} $$

		\item[(iii)] {\it we connect $c_j$ to a corner $c_{j+1}$ of color $j+1$ inside the root face, and we choose the twist of the new edge so that we do not create any new face.} Let $(d+1)$ be the degree of the root face. If $j+1<k$, from the observation recorded above, only half of the $2d$ such edges are valid choices. On the other hand if $j+1=k$, we have $d$ such possible edges and all of them are valid. Hence the number of possible choices is $d$ in both cases. Moreover the degree of the root face does not increase, and the corresponding $b$-weight for each such choice is $b$ by Axiom 1(c) of \cref{def:MON}. Therefore contribution for this case is:
	$$b\cdot\sum_{d\geq
			0}y_{d}\frac{d\partial}{\partial y_d }.$$

		\item[(iv)] {\it we connect $c_j$ to a corner $c_{j+1}$ of color $j+1$ inside the root face, and we twist the edge so that we create a new face in addition to the root face.}
			As before, if $j+1<k$ only half of the $2d$ corners of label $j+1$ in that face are valid choices from the observation above, and the $d$ valid choices alternate around the root face with the $d$ non-root corners of color $0$. Similarly if $j+1=k$, there are $d$ valid choices that alternate around the root face with the $d$ non-root corners of color $0$. 

			Therefore given $i,j \geq 1$, if the root face has degree $i+j$, there is exactly one choice of valid corner such that after adding the edge the new root face has degree $i$ (and the newly created face then has degree $j$). Moreover, the corresponding $b$-weight is $1$ (Axiom~1(c)), so the contribution for this case is:
			$$\sum_{i,j\geq
  1}y_{i-1}p_j\frac{\partial}{\partial y_{i+j-1}}.$$
	\end{itemize}
	By summing contributions of  cases (ii)-(iii)-(iv), we
        recognize the definition of the operator $\Lambda_{Y}$, so the
        contribution of the four cases (i)-(ii)-(iii)-(iv) is $\Lambda_{Y}+u_{j+1}$ and the proof is complete.
\end{proof}

\begin{remark}We required the MON to be \emph{coherent} so that the second case of the decomposition equation gave rise to the correct weight. One could imagine relaxing further the notion of coherent MON to require only~\eqref{eq:contribution} to hold, rather than the stronger property that edges can be grouped in pairs of weight $(1+b)$ each.
\end{remark}

	\begin{remark}The assumption that $\bN\cup P_j$ is connected is not strictly needed to give an interpretation of the operator $\Lambda_Y+u_i$ as adding an edge, but working with unconnected constellations would require to use a more sophisticated marking taking into account the number of connected components, in the spirit of~\eqref{eq:expansionUnconnected}. We will not need this discussion and prefer to avoid it. We leave to the reader the task of giving a direct interpretation of~\eqref{eq:TutteUnconnected} along these lines.
\end{remark}

	The following proposition is the counterpart of Proposition~\ref{prop:interpretGammaPatched} for the case when the next edge on the partial right-path joins to a new connected component. 
\begin{proposition}\label{prop:interpretGammaPatchedDisconnect}
	Let $\bN$ be a $k$-constellation, $j\in[0..k-1]$, and
        $P_j=(e_1,\dots,e_j)$ be a partial right-path for $\bN$. 
	Assume that $\bN \cup P_j$ is connected. 
	Let
        $\rho$ be a coherent MON.  
	Then
	\begin{align*}
		&\left(\sum_{i,j\geq 1}  y_{j+i-1}
		\widetilde{H}_\rho^{[i]} \frac{\partial}{\partial y_{j-1}}\right)
	\hat\kappa(\bN \cup  P_j,c_0)t^{|\bN|} \\
		&\hspace{2cm}	= \sum_{e_{j+1}, \bN'} \rho(\bN' \cup  P_j \cup \{e_{j+1}\};e_{j+1}) \vec\rho {(\tilde{\bN}'', \tilde{c})}
	\hat\kappa(\bN'\cup P_j \cup \{e_{j+1}\},c_0)t^{|\bN'|},
\end{align*}
	where the sum is taken over all $k$-constellations
	$\bN'$ such that $P_{j+1}=(e_1,\dots,e_j,e_{j+1})$ is a partial
	right-path of length $j+1$ for $\bN'$, and such that removing
        $e_{j+1}$ from the connected map $\bN'\cup P_{j+1}$ disconnects it into two components such that the component containing $P_j$ is $\bN\cup P_j$.
	In the sum, the other connected component is denoted by $\bN''$ and it is rooted at the first corner of colour $0$ following the corner from which $e_{j+1}$ was deleted,
	denoted by $c$. We denote by $(\tilde{\bN}'', \tilde{c})$ the dual of the rooted map $(\bN'',c)$.
\end{proposition}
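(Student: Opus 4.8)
The plan is to mirror the analysis of \cref{prop:interpretGammaPatched}, but now tracking the case where the new edge $e_{j+1}$ joins the partial right-path to a \emph{separate} connected component rather than to a corner inside $\bN\cup P_j$. First I would fix the corner $c_j$ following $e_j$ on $P_j$, and describe all ways to add $e_{j+1}$ so that $P_{j+1}=(e_1,\dots,e_j,e_{j+1})$ becomes a partial right-path of length $j+1$ for some larger constellation $\bN'$, subject to the constraint that deleting $e_{j+1}$ disconnects $\bN'\cup P_{j+1}$ with $\bN\cup P_j$ as the component carrying $P_j$. By Axiom 1(a) of \cref{def:MON}, since removing $e_{j+1}$ separates two connected components, we have $\rho(\bN'\cup P_{j+1};e_{j+1})=1$, so the $b$-weight contributed by this single edge is trivial; all the non-orientability weight is carried by the other component and is encoded in $\vec\rho(\tilde\bN'',\tilde c)$ through the dual series $\widetilde{H}_\rho^{[i]}$.

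The second step is a bookkeeping computation of markings. The new piece glued in is a rooted connected constellation $(\bN'',c)$ whose root \emph{face} becomes identified, through the gluing, with (part of) the root face of $\bN\cup P_j$. If the glued component has root face of degree $i$, and the old root face of $\bN\cup P_j$ has degree $d$ (recorded by $y_{d}$, i.e.\ $\deg(f_0)-1=d$ in the convention of $\hat\kappa$), then after gluing the root face of $\bN'\cup P_{j+1}$ has degree increased so that its $y$-index becomes $j+i-1$ for the appropriate decomposition $d=j-1$. This is exactly the effect of the operator $y_{j+i-1}\frac{\partial}{\partial y_{j-1}}$: it strips a factor $y_{j-1}$ recording the old root-face degree and replaces it by $y_{j+i-1}$, while the factor $\widetilde{H}_\rho^{[i]}$ supplies the generating series of the glued component marked by $\kappa$ with its root face of degree $i$ (and weighted on its dual, as in~\eqref{eq:dualFunction2}). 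Summing over all $i,j\ge 1$ reproduces the operator on the left-hand side, and the power of $t$ tracks $|\bN'|=|\bN|+|\bN''|$, matching $t^{|\bN|}\cdot t^{|\bN''|}$.

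The main subtlety I expect is \textbf{matching orientations and twists correctly} so that exactly one valid edge $e_{j+1}$ is produced per corner of color $j+1$ in the glued component, and that the $b$-weight of the glued component is computed on the \emph{dual} rooted map $(\tilde\bN'',\tilde c)$ rather than on $(\bN'',c)$ itself. The appearance of the dual is forced by the alternation between primal and dual maps in \cref{def:randomDecomposition}: when $e_{j+1}$ is deleted in the combinatorial decomposition, the severed component is re-rooted at the first corner of color $0$ following the deletion point and then \emph{dualized} before the recursion continues, which is precisely why $\widetilde{H}_\rho^{[i]}=\pi H_\rho^{[i]}$ (rather than $H_\rho^{[i]}$) enters the operator. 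I would verify that the root-corner conventions of $\hat\kappa$ (not counting $c_0$ in the root-face degree, and not counting the root vertex factor $q_\bullet$) are consistent with the definition of $\widetilde{H}_\rho^{[i]}$ via~\eqref{eq:dualFunction2}, where the root face of degree $i$ is marked and normalized by $p_i^{-1}$. Once the twist/orientation alternation is handled as in \cref{prop:interpretGammaPatched} (using that for $j+1<k$ exactly one twist is a valid choice, and for $j+1=k$ both are but enter symmetrically), the identification of the operator is immediate.
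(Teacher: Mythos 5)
Your proposal is correct and follows essentially the same route as the paper's proof: the paper likewise notes that the new edge contributes weight $1$ by Axiom 1(a), that the glued component is generated by $\widetilde{H}_\rho^{[i]}$ computed with the dual $b$-weight as in~\eqref{eq:dualFunction2}, and that the root-face degree increases by $i$, yielding the operator $\sum_{i,j\geq 1} y_{j+i-1}\widetilde{H}_\rho^{[i]}\tfrac{\partial}{\partial y_{j-1}}$. The only point to phrase more carefully is the bijection you defer to the twist analysis: it should pair each valid edge with the \emph{oriented} colour-$0$ corner (i.e.\ the rooting of $\bN''$) that it induces, not with a corner of colour $j+1$; the paper obtains exactly this uniqueness from the alternation of valid corners and colour-$0$ corners along faces, as you anticipate.
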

\begin{proof}
	In order to build a map $\bN'$ as in the statement of the proposition, we should connect the corner $c_j$ which follows $e_j$ on $P_j$, to some corner of color $(j+1)$ in some new connected $k$-constellation $\bN''$, by adding some new edge  $e_{j+1}$ to a corner $c_{j+1}$ of $\bN''$. As in the proof of Proposition~\ref{prop:interpretGammaPatched},  after doing this the colour constraints of $k$-constellations around the vertex of colour $j$ on $P_j$ will automatically be satisfied, from the last property of \cref{def:partialPath}.

	Conversely, let $(\bN'',c)$ be a rooted connected $k$-constellation whose
        root face $f$ has degree $i$. There is a unique
        way of adding a valid edge from the corner $c_j$ to a
        corner of colour $(j+1)$ in $f$, such that the first corner of colour $0$
	following the edge $e_{j+1}$ is equal to $c$ (indeed, similarly as in the proof of Proposition~\ref{prop:interpretGammaPatched}, valid corners and corners of colour $0$ alternate along faces). Moreover, by Axiom
        1(a) of Definition~\ref{def:MON}, the contribution to the
        $b$-weight of this addition is equal to $1$. 

	The contribution for the choice of the map $\bN''$, with root
        face of degree $i$, is given by the generating function
        $\widetilde{H}_\rho^{[i]}$, where we note that we are
        computing it with the dual $b$-weight
        $\vec\rho(\tilde{\bN}'',\tilde{c})$ as in \eqref{eq:dualFunction2}. Moreover, the root face will increase by $i$ when connecting the edge $e_{j+1}$. Therefore the overall contribution for the choice of $\bN'$ and $e_{j+1}$ is equal to
$$\left(\sum_{i,j\geq 1}  y_{i+j-1}
		\widetilde{H}_\rho^{[i]} \frac{\partial}{\partial y_{j-1}}\right)
	\hat\kappa(\bN \cup  P_j,c_0)t^{|\bN|}.  ~ ~ ~ ~ ~~ ~ ~ ~ ~ ~ \qedhere
	$$
      \end{proof}

      We are now ready to prove the decomposition equation.
\begin{proof}[Proof of \cref{thm:TutteConnected}]
	We invert the combinatorial decomposition of the previous
        section. Any rooted connected (unlabeled) $k$-constellation $\bM$, with a root vertex of degree $m$, can be constructed as follows:
	\begin{enumerate}[itemsep=0pt, topsep=0pt,parsep=0pt, leftmargin=20pt]
		\item create a new isolated vertex of color $0$, with a marked corner $c_1$; 
		\item for $i$ from $1$ to $m$ do:
			\begin{enumerate}[itemsep=0pt, topsep=0pt,parsep=0pt, leftmargin=20pt]
				\item let $P^{(i)}_0$ be a new (empty) partial-right path of length $0$, rooted at $c_{i}$
				\item extend the partial right-path
                                  $P^{(i)}_0$ into partial right-paths
                                  $P^{(i)}_1, \dots, P^{(i)}_k$, by
                                  adding edges one by one (possibly
                                  adding new vertices, or connecting
                                  them with rooted connected
                                  (unlabeled) $k$-constellations along the way);
				\item once the right-path $P^{(i)}_k$ has been created, call $c_{i+1}$ the corner that follows $c_i$ around $v_0$, and reroot the current constellation at $c_{i+1}$; 
			\end{enumerate}
	\end{enumerate}
	The contribution of step (1) is simply $y_0$. By \cref{prop:interpretGammaPatched,prop:interpretGammaPatchedDisconnect}, for each $i
        \in [1..m]$ the contribution of Step 3(b) is given by the
        product of operators
        \[\left(\Lambda_{Y}+u_k+\sum_{i,j\geq 1}  y_{j+i-1}
		\widetilde{H}_\rho^{[i]}\frac{\partial}{\partial y_{j-1}})\right)\cdots \left(\Lambda_{Y}+u_1+\sum_{i,j\geq 1}  y_{j+i-1}
		\widetilde{H}_\rho^{[i]}\frac{\partial}{\partial
                  y_{j-1}})\right).\]
            Indeed, note that the dual $b$-weight
            $\rho(\tilde{\bN}'',\tilde{c})$ appearing in the R.H.S. of
            the equation given by
            \cref{prop:interpretGammaPatchedDisconnect} is coherent
            with the fact that to compute the $b$-weight in the
            combinatorial decomposition of $\bN'$, the $b$-weight of the smaller component $\bN''$ will be computed from its dual map $\tilde{\bN}''$. 
            After Step 3(b) the corner $c_i$ is no longer the
            root corner of the current map, so it has to be counted in
            the marking $\hat \kappa$. This is taken into account by
            the operator $Y_+$, so the overall contribution of Steps
            3(b) and 3(c) is given by the operator
            \[\Big(Y_+\prod_{l=1}^{k}\big(\Lambda_{Y}+u_l + \sum_{i,j\geq 1} y_{j+i-1}
		\widetilde {H}_\rho^{[i]} \frac{\partial}{\partial y_{j-1}}
		\big)\Big)^{m}.\]
        \\
        At the end of the process, the newly created vertex $v_0$
        contributes a monomial $q_m$ to the marking, but this
        contribution is killed by the factor $q_m^{-1}$ in front of
        the defining equation\eqref{eq:DefHRhoM}. Finally the fact
        that the size of the map increases by $m$ is taken into
        account by a factor $t^{m}$. 
	Overall, the contribution of steps (1)--(3) is thus equal to
            \[t^m\Big(Y_+\prod_{l=1}^{k}\big(\Lambda_{Y}+u_l + \sum_{i,j\geq 1} y_{j+i-1}
		\widetilde {H}_\rho^{[i]} \frac{\partial}{\partial y_{j-1}}
		\big)\Big)^{m}(y_0),\]
	which finishes the proof.
      \end{proof}

\subsection{Commutation relations and Lax pairs}
\label{subsec:Commutation}

The two theorems below are the keystone of this paper. They show that the operators that appear in the decomposition equations can be alternatively defined inductively by certain recurrence relations involving commutators. Their proof is the hardest part of the paper and will occupy much of the next sections.

These relations are the crucial link between Jack polynomials and
weighted generalized branched coverings (via constellations).  From now on we let $\alpha=1+b$ and we will use either $b$ or $\alpha$, or both, in our notation.

\begin{definition}
The Laplace-Beltrami operator $D_\alpha$ is the differential operator defined by
\begin{equation}
\label{eq:Laplace-Beltrami}
   D_\alpha = \frac{1}{2}\left((1+b)\sum_{i,j\geq 1}p_{i+j}\frac{ij\partial^2}{\partial
  p_i \partial p_{j}} +\sum_{i,j\geq
  1}p_{i}p_j\frac{(i+j)\partial}{\partial p_{i+j}}+b\cdot\sum_{i\geq
  1}p_{i}\frac{i(i-1)\partial}{\partial p_i}\right).
\end{equation}
\end{definition}
Here and below we let $\PPP:=\mathbb{Q}(b)[p_1,p_2,\dots]$. Moreover we let $[\cdot,\cdot]$ denote the algebra commutator, $[A,B]=AB-BA$. 
\begin{theorem}[First commutation relations]\label{thm:commut1}
Define the differential operators $(A_j)_{j\geq 1}$ on $\PPP$ by:
\begin{align}\label{eq:EjCombi}
 A_{j+1}:=\Theta_Y Y_+\Lambda_{Y}^{j} \frac{y_0}{1+b}, \ \ j\geq0.
\end{align}
Then these operators satisfy the recurrence formula
\begin{align}\label{eq:commut1}
	A_1 &= p_1/(1+b) \ \ \ ,\ \ A_{j+1} = [D_\a,A_j], \mbox { , for } j\geq 1.
\end{align}
These equalities hold between operators on $\PPP$.
\end{theorem}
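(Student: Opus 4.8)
The plan is to prove the two assertions separately: the explicit value of $A_1$, and then the commutation recurrence $A_{j+1}=[D_\alpha,A_j]$ for $j\ge 1$. First I would dispatch the base case by a direct computation. Applying $A_1=\Theta_Y Y_+\tfrac{y_0}{1+b}$ to $f\in\PPP$, multiplication by $\tfrac{y_0}{1+b}$ produces $\tfrac{1}{1+b}y_0 f$; since this is supported on the index $0$, the operator $Y_+=\sum_{i\ge0}y_{i+1}\tfrac{\partial}{\partial y_i}$ turns it into $\tfrac{1}{1+b}y_1 f$; and $\Theta_Y=\sum_{i\ge1}p_i\tfrac{\partial}{\partial y_i}$ returns $\tfrac{1}{1+b}p_1 f$. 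Hence $A_1$ is multiplication by $p_1/(1+b)$, as claimed. For the recurrence, I observe that all operators act on the larger ring $\mathbb{Q}(b)[p_1,p_2,\dots,y_0,y_1,\dots]$, and $D_\alpha$ involves only the $p$-variables, so it commutes both with $Y_+$ and with multiplication by $y_0$. Writing $A_j=\tfrac{1}{1+b}\Theta_Y Y_+\Lambda_Y^{j-1}y_0$ and expanding $[D_\alpha,A_j]$ by the Leibniz rule, the claim $A_{j+1}=[D_\alpha,A_j]$ reduces, for each $j\ge 1$, to the operator identity
\[
	[D_\alpha,\Theta_Y]\,Y_+\Lambda_Y^{j-1}y_0 \;+\; \Theta_Y Y_+\,[D_\alpha,\Lambda_Y^{j-1}]\,y_0 \;=\; \Theta_Y Y_+\Lambda_Y^{j}\,y_0. \qquad (\star_j)
\]

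It is convenient to restrict to the subspace of $y$-degree $1$, on which the entire computation lives. Identifying such an element with the sequence $(g_n)_{n\ge0}$ of its coefficients $g_n\in\PPP$, one checks that $D_\alpha$ acts coefficientwise, $Y_+$ is the shift $g_n\mapsto g_{n-1}$, $\Theta_Y$ is the pairing $(g_n)\mapsto\sum_{n\ge1}p_n g_n$, multiplication by $y_0$ is the injection $f\mapsto(f,0,0,\dots)$, and $\Lambda_Y$ becomes the explicit three-term operator
\[
	(\Lambda_Y g)_m=(1+b)\sum_{i=1}^m i\,\frac{\partial}{\partial p_i}g_{m-i}+\sum_{\ell\ge1}p_\ell\, g_{m+\ell}+bm\,g_m.
\]
In this model $D_\alpha$ is \emph{diagonal}, which is what will make the telescoping in $(\star_j)$ visible.

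The next step is to compute the commutators entering $(\star_j)$. A direct calculation gives
\[
	[D_\alpha,p_k]=(1+b)\,k\sum_{i\ge1} i\,p_{i+k}\frac{\partial}{\partial p_i}+\frac{k}{2}\sum_{a+c=k}p_a p_c+\frac{b}{2}k(k-1)\,p_k,
\]
whence $[D_\alpha,\Theta_Y]=\sum_{k\ge1}[D_\alpha,p_k]\,\tfrac{\partial}{\partial y_k}$, and a similar but longer computation produces $[D_\alpha,\Lambda_Y]$. For $j=1$ the identity $(\star_1)$ reads $[D_\alpha,\Theta_Y]Y_+y_0=\Theta_Y Y_+\Lambda_Y y_0$, and one verifies that both sides applied to $f$ collapse to $(1+b)\sum_{\ell\ge1}\ell\,p_{\ell+1}\,\tfrac{\partial f}{\partial p_\ell}=[D_\alpha,p_1]f$; this settles $A_2=[D_\alpha,A_1]$ and pins down the normalisation.

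The heart of the matter is $(\star_j)$ for general $j$: commuting $D_\alpha$ past the string $\Theta_Y Y_+\Lambda_Y^{j-1}y_0$ must manufacture \emph{exactly one} additional factor $\Lambda_Y$. This is not a formal consequence of the first-order commutators, since it requires the precise coefficients of $D_\alpha$ and of the three terms of $\Lambda_Y$ to conspire across the whole power $\Lambda_Y^{j-1}$; I expect this to be the main obstacle. I would attack it by induction on $j$, isolating a ``commutation-closure'' relation for $[D_\alpha,\Lambda_Y]$ that, once sandwiched between $\Theta_Y Y_+$ and $y_0$, telescopes through the powers of $\Lambda_Y$ and leaves only a boundary term coming from $[D_\alpha,\Theta_Y]$, exactly compensated so as to rebuild $\Theta_Y Y_+\Lambda_Y^{j}y_0$. (Phrased in the Jack eigenbasis, where $D_\alpha$ is diagonal with eigenvalues $\theta_\lambda$, the recurrence is equivalent to $\langle\lambda|A_j|\mu\rangle=(\theta_\lambda-\theta_\mu)^{j-1}\langle\lambda|A_1|\mu\rangle$; but the lack of a usable formula for the Jack action of $\Lambda_Y$ at generic $\alpha$ is precisely why the direct operator approach seems unavoidable.) The natural way to discover the correct intermediate identities is to read each commutator combinatorially — as the insertion or deletion of one edge along a right-path — in the solvable cases $b\in\{0,1\}$, and then to verify the resulting algebraic identity for a formal parameter $b$.
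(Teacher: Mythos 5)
There is a genuine gap, and you have essentially located it yourself. Everything up to and including your verification of $(\star_1)$ is correct: the base case $A_1=p_1/(1+b)$, the reduction of the recurrence to $(\star_j)$ via $[D_\a,Y_+]=[D_\a,y_0]=0$, the sequence model for the $y$-linear subspace, and the computation showing both sides of $(\star_1)$ equal $(1+b)\sum_{\ell}\ell\,p_{\ell+1}\partial/\partial p_\ell$. But the inductive step for general $j$ — which is the actual content of the theorem — is only stated as a hope. The telescoping you propose does not go through as written: expanding $[D_\a,\Lambda_{Y}^{j-1}]=\sum_{r}\Lambda_{Y}^{r}[D_\a,\Lambda_{Y}]\Lambda_{Y}^{j-2-r}$ produces the commutator $[D_\a,\Lambda_{Y}]$, which is a second-order operator mixing $p$- and $y$-variables and which does \emph{not} commute with $\Lambda_{Y}$; the sum therefore does not collapse, and the boundary term $[D_\a,\Theta_Y]Y_+\Lambda_{Y}^{j-1}y_0$ has nothing obvious to cancel against. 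Note that your $j=1$ check succeeds precisely because $[D_\a,\Lambda_{Y}^{0}]=0$, i.e.\ because the problematic term is absent; it does not test the mechanism you would need for $j\geq 2$.

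The missing idea — and it is the key idea of the paper's proof — is to correct $D_\a$ itself rather than to analyze $[D_\a,\Lambda_{Y}]$. One exhibits an explicit operator $D_\a'$ on $\PPP_Y$ (second order, built from terms $y_{i+j}\,\partial^2/\partial y_i\partial p_j$, $y_i p_j\,\partial/\partial y_{i+j}$ and $y_i\,\partial/\partial y_i$) such that
\begin{align*}
[D_\a+D_\a',\Lambda_{Y}]=0,\qquad [D_\a+D_\a',Y_+]=Y_+\Lambda_{Y},\qquad \Theta_Y(D_\a+D_\a')=D_\a\Theta_Y,
\end{align*}
and such that $D_\a'y_0$ annihilates $\PPP$ (this is Lemma~\ref{lemma:relations}). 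Granting these finite-order verifications, your $(\star_j)$ follows in two lines: the third relation gives $[D_\a,\Theta_Y]=\Theta_Y D_\a'$, the first gives $[D_\a,\Lambda_{Y}^{j-1}]=-[D_\a',\Lambda_{Y}^{j-1}]$, and since $D_\a'$ kills $y_0\PPP$ one has $[D_\a',\Lambda_{Y}^{j-1}]y_0=D_\a'\Lambda_{Y}^{j-1}y_0$ as operators on $\PPP$; hence the left-hand side of $(\star_j)$ equals $\Theta_Y[D_\a',Y_+]\Lambda_{Y}^{j-1}y_0=\Theta_Y Y_+\Lambda_{Y}^{j}y_0$, using $[D_\a',Y_+]=[D_\a+D_\a',Y_+]=Y_+\Lambda_{Y}$. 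So your reduction is fully compatible with the paper's route, and in that sense your framework is sound; but without producing $D_\a'$ and its three relations (the "commutation-closure" you postulate, which lives on the bigger space $\PPP_Y$, not at the level of $[D_\a,\Lambda_{Y}]$ alone), the proof is not complete.
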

We now define the operator on $\PPP$.
$$
\Omega^{(k)}_Y:=
\Theta_Y Y_+ \prod_{j=1}^k (\Lambda_{Y}+u_j)\Lambda_{Y}\frac{y_0}{1+b}.
$$
We have:
\begin{theorem}[Second commutation relations]\label{thm:commut2}
Define the differential operators $(B^{(k)}_m)_{m\geq 1}$  by:
\begin{align}\label{eq:FmCombi}
 B^{(k)}_{m}:= (m-1)! \Theta_Y \big(Y_+\prod_{i=1}^{k}(\Lambda_{Y}+u_i)\big)^m \frac{y_0}{1+b}, \ \ m\geq1.
\end{align}
Then these operators satisfy the recurrence formula, for $m\geq 1$
\begin{align}\label{eq:commut2}
B^{(k)}_1 =%
\Theta_Y Y_+ \prod_{j=1}^k (\Lambda_{Y}+u_j) \frac{y_0}{1+b}	
	\ \ \ , \ \ \  B^{(k)}_{m+1} = [\Omega^{(k)}_Y,B^{(k)}_m] \mbox { , for } m\geq 1.
\end{align}
These equalities hold between operators on $\PPP$.
\end{theorem}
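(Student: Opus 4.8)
The plan is to reduce the whole statement to a single family of operator identities and then to establish those. Write $G := Y_+\prod_{i=1}^{k}(\Lambda_{Y}+u_i)$, so that $B^{(k)}_m = (m-1)!\,\Theta_Y G^m \frac{y_0}{1+b}$ and $\Omega^{(k)}_Y = \Theta_Y\, G\,\Lambda_{Y}\,\frac{y_0}{1+b}$. The base value $B^{(k)}_1$ is then literally the $m=1$ instance of the defining formula \eqref{eq:FmCombi}, so there is nothing to prove in the first equation of \eqref{eq:commut2}. For the recurrence, note that $B^{(k)}_{m+1}=[\Omega^{(k)}_Y,B^{(k)}_m]$ is, after dividing by $(m-1)!$, equivalent to the single \emph{crux identity}
\begin{equation}\label{eq:crux}
\left[\Omega^{(k)}_Y,\ \Theta_Y G^m\tfrac{y_0}{1+b}\right] = m\,\Theta_Y G^{m+1}\tfrac{y_0}{1+b}
\end{equation}
as operators on $\PPP$, for every $m\geq 1$: multiplying \eqref{eq:crux} by $(m-1)!$ turns the left side into $[\Omega^{(k)}_Y,B^{(k)}_m]$ and the right side into $m!\,\Theta_Y G^{m+1}\frac{y_0}{1+b}=B^{(k)}_{m+1}$. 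Thus the theorem collapses to proving \eqref{eq:crux}.

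To attack \eqref{eq:crux} I would first record the structural fact that $Y_+$ and $\Lambda_{Y}$ each preserve the degree in the $\yy$-variables (visible term-by-term in the definition of $\Lambda_{Y}$), while multiplication by $y_0$ raises it by one and $\Theta_Y$ lowers it by one. Hence every intermediate expression produced by $\Theta_Y G^m\frac{y_0}{1+b}$ is \emph{linear} in $\yy$ at the moment $\Theta_Y$ is applied, so that $\Theta_Y$ and multiplication by $y_0/(1+b)$ act as a genuine contraction/insertion pair on $\PPP$-valued sequences $(\phi_i)_{i\ge 0}$. In this ``$\yy$-linear'' representation $Y_+$ and $\Lambda_{Y}$ become explicit ladder operators (matrices of differential operators in $\pp$), and expanding the commutator in \eqref{eq:crux} reduces it to controlling the single internal contraction $\frac{y_0}{1+b}\Theta_Y$ sitting between the two capped blocks $G\Lambda_{Y}$ and $G^m$.

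The heart of the argument is then a transfer relation describing how this internal contraction is pushed across one block $G$, together with the comparison between threading $G\Lambda_{Y}$ and threading $G$ through it: the difference should produce exactly one extra $G$-block, and summing over the $m$ copies of $G$ in $G^m$ — the usual derivation-type count — yields the multiplicity $m$ on the right of \eqref{eq:crux}. The key external input here is the first commutation relation \cref{thm:commut1}, which encodes precisely the effect of inserting a factor $\Lambda_{Y}$ via a commutator with $D_\alpha$; reformulated as a conjugation/generating-function identity in a formal parameter $s$ for $e^{s\Lambda_{Y}}$, it governs how $\Lambda_{Y}$ threads past the contraction. The main obstacle I anticipate is exactly this internal contraction: because $\Omega^{(k)}_Y$ and $\Theta_Y G^m\frac{y_0}{1+b}$ are \emph{separately} capped, the commutator is not the cap of an inner commutator, so the clean ``derivation on the $G$-algebra'' picture is obstructed and one must carefully match the boundary terms created at the contraction, while checking that the product $\prod_{i=1}^k(\Lambda_{Y}+u_i)$ threads through transparently (conveniently seen by reading \eqref{eq:crux} as a polynomial identity in the $u_i$). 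Assembling these boundary contributions into the single term $m\,\Theta_Y G^{m+1}\frac{y_0}{1+b}$ is the substantive computation, and is presumably where the Lax reformulation (\cref{prop:Lax}) offers the cleanest bookkeeping.
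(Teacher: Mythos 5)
Your reduction of the theorem to the single identity $[\Omega^{(k)}_Y,\Theta_Y G^m\frac{y_0}{1+b}]=m\,\Theta_Y G^{m+1}\frac{y_0}{1+b}$ on $\PPP$ (with $G=Y_+\prod_{i=1}^k(\Lambda_{Y}+u_i)$) is correct, and your diagnosis of the obstruction is exactly right: both $\Omega^{(k)}_Y$ and $\Theta_Y G^m\frac{y_0}{1+b}$ are separately capped by $\Theta_Y(\cdots)y_0$, so the commutator is not the cap of an inner commutator and no derivation argument applies directly. But at that point your proposal stops being a proof, and the two tools you invoke to close the gap cannot do so. First, \cref{thm:commut1} concerns $D_\alpha$ and the operators $A_j=\Theta_Y Y_+\Lambda_{Y}^{j-1}\frac{y_0}{1+b}$; it encodes insertion of $\Lambda_{Y}$ via commutation with a \emph{different} promoted operator ($D_\alpha+D_\alpha'$ on $\PPP_Y$), and it gives no handle on threading the capped operator $\Omega^{(k)}_Y$ past the cap of $\Theta_Y G^m y_0$ — note also that $\Lambda_{Y}$ does not commute with $Y_+$, so inserting $\Lambda_Y$ ``somewhere'' inside $G^m$ is position-dependent and does not produce the clean term $G^{m+1}$. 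Second, \cref{prop:Lax} for the pair $(B^{(k)}(s),\Omega^{(k)}_Y)$ is \emph{equivalent} to the recurrence $B^{(k)}_{m+1}=[\Omega^{(k)}_Y,B^{(k)}_m]$ that you are trying to prove; using it as ``bookkeeping'' is circular.

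What actually fills the gap in the paper is an idea absent from your plan: promote $\Omega^{(k)}_Y$ to the \emph{uncapped} operator $\Omega^{(k)}_Z+\square^{(k)}=\Theta_{\tilde{Z}}\tilde{Z}_+\prod_{i}(\La_{\tilde{Z}}+u_i)\La_{\tilde{Z}}z_0$ acting on the four-alphabet space $\PPP_{\tilde{Y},\tilde{Z}}$ (variables $\yy,\bm{y'},\zz,\bm{z'}$), so that its internal structure lives in catalytic variables independent of the $\yy$'s carrying $G^m$, with correction terms (the primed variables inside $\square^{(k)}$) accounting for the interaction of the two ``root faces'' (\cref{rem:promotion2}). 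One then proves $[\Omega^{(k)}_Z+\square^{(k)},G]=G^2$ on $\PPP_Y$, together with $\Theta_Y(\Omega^{(k)}_Z+\square^{(k)})=\Omega^{(k)}_Z\Theta_Y$ and $y_0\cdot\PPP\subset\ker\square^{(k)}$ (\cref{lemma:relations2}), and only \emph{after} running the induction on this promoted identity does one cap with $\Theta_Y$ and $y_0$. Worse for your sketch: the ``derivation-type count'' you describe corresponds to the unsymmetrized bilinear identity $[\Theta_{\tilde{Z}}\tilde{Z}_+\La_{\tilde{Z}}^{m+1},Y_+\La_{Y}^{n}]=Y_+\La_{Y}^{m}Y_+\La_{Y}^{n}$, which the paper explicitly notes is \emph{false} (footnote to \eqref{eq:commutatorYZ}); only the symmetrized version (\cref{lemma:relations3}) holds, and establishing it is the hardest computation of the paper. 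So the step you defer as ``the substantive computation'' is not routine assembly of boundary terms: without the doubled catalytic alphabet and the symmetrization, the argument as outlined would fail.
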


\begin{remark}
	The equalities in \cref{thm:commut1,thm:commut2} hold between operators acting on $\PPP$. They do not hold on a larger space containing also the variables $y_i$. This simple fact makes the proof of these theorems difficult. The strategy we design in the next section will demand to promote these operators to such a larger space,  on which induction can be applied.  The fact that we encounter a difficulty here will hardly be a surprise for combinatorialists: the variables $y_i$ play the role of ``catalytic'' variables that enabled us to write combinatorial equations in the first place, but we then pay the price of having to eliminate them. 
\end{remark}

We observe that the commutation relations have an obvious
reformulation in terms of Lax pairs. Although we will not use this in
this paper, we believe that the following reformulation might be of an
independent interest, especially in view of a connection with integrability. 
\begin{proposition}[Lax equations]
  \label{prop:Lax}
	The formal power series of operators $A(s):=\sum_{j\geq 0}\frac{s^j}{j!} A_{j+1}$ and $B^{(k)}(s):=\sum_{j\geq 0}\frac{s^j}{j!} B^{(k)}_{j+1}$  each satisfies a Lax equation with respective Lax pairs $(A(s),D_\alpha)$ and $(B^{(k)}(s),\Omega^{(k)}_Y)$. Namely
	$$
	\frac{d}{ds} A(s) = [D_\alpha,A(s)]\ \ \ \mbox{and} \ \ \ 
	\frac{d}{ds} B^{(k)}(s) = [\Omega^{(k)}_Y,B^{(k)}(s)], 
	$$
with solutions
$$
	A(s) = e^{s D_\alpha} A_1 e^{-s D_\alpha}\ \ \ \mbox{and} \ \ \
	B^{(k)}(s) = e^{s \Omega^{(k)}_Y} B^{(k)}_1 e^{-s \Omega^{(k)}_Y}.
$$

\end{proposition}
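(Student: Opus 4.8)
The plan is to deduce everything directly from the commutation relations of \cref{thm:commut1,thm:commut2}; granting those, the proposition is a purely formal statement, and the text is right to present it as an ``obvious reformulation''. First I would rewrite the two recurrences in closed form. Writing $\operatorname{ad}_X(Z):=[X,Z]$ for the adjoint action, a one-line induction on \eqref{eq:commut1} gives $A_{j+1}=\operatorname{ad}_{D_\alpha}^{\,j}(A_1)$ for every $j\geq 0$, and the same induction on \eqref{eq:commut2} gives $B^{(k)}_{m+1}=\operatorname{ad}_{\Omega^{(k)}_Y}^{\,m}(B^{(k)}_1)$ for every $m\geq 0$.

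Next I would establish the Lax equations by differentiating the defining series term by term. Shifting the summation index,
\[
\frac{d}{ds}A(s)=\sum_{j\geq 0}\frac{s^j}{j!}A_{j+2}
=\sum_{j\geq 0}\frac{s^j}{j!}[D_\alpha,A_{j+1}]=[D_\alpha,A(s)],
\]
where the middle equality is \cref{thm:commut1} and the last one uses only $\QQ(b)$-linearity of the bracket in its right entry together with the fact that $D_\alpha$ does not depend on $s$. The identical computation, with $D_\alpha$ replaced by $\Omega^{(k)}_Y$, $A$ by $B^{(k)}$, and \cref{thm:commut1} by \cref{thm:commut2}, yields $\frac{d}{ds}B^{(k)}(s)=[\Omega^{(k)}_Y,B^{(k)}(s)]$. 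For the solution formulas I would invoke the Hadamard expansion $e^{sD_\alpha}A_1e^{-sD_\alpha}=\sum_{j\geq 0}\frac{s^j}{j!}\operatorname{ad}_{D_\alpha}^{\,j}(A_1)$, which by the closed form of the first step is exactly $A(s)$; equivalently, $C(s):=e^{sD_\alpha}A_1e^{-sD_\alpha}$ satisfies $\frac{d}{ds}C=[D_\alpha,C]$ and $C(0)=A_1=A(0)$, so $C=A$ by coefficientwise uniqueness of solutions of this linear first-order ODE in $s$. The same argument applies verbatim to $B^{(k)}(s)$ and $\Omega^{(k)}_Y$.

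The only point to pin down — which I would flag as the one (mild) subtlety — is the formal framework in which the exponentials live. I would read the whole statement in the ring $\End(\PPP)[[s]]$ of formal power series in $s$ with coefficients in $\End(\PPP)$: there each $e^{\pm sX}$, for $X\in\{D_\alpha,\Omega^{(k)}_Y\}$, is a well-defined power series whose $s^n$-coefficient is the honest operator $\frac{(\pm1)^n}{n!}X^n$, one has $e^{sX}e^{-sX}=\operatorname{id}$, and both the Hadamard expansion and the coefficientwise uniqueness of the linear ODE $\frac{d}{ds}C=[X,C]$ are standard identities in this ring. One can make the picture even more concrete by grading $\PPP$ so that $p_i$ has weight $i$: inspection of \eqref{eq:Laplace-Beltrami} shows that $D_\alpha$ preserves this grading, while inspection of the definition of $\Omega^{(k)}_Y$ shows that it raises the weight by exactly one unit, the only weight-shifting factor being $Y_+$; hence each $X^n$ has a fixed weight shift and the exponentials act in a controlled, graded manner. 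With this reading fixed there is no further difficulty, and the proposition is indeed a formal consequence of \cref{thm:commut1,thm:commut2}.
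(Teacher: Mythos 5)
Your proof is correct and takes essentially the same route as the paper, which states the proposition without proof as an ``obvious reformulation'' of \cref{thm:commut1,thm:commut2}: the recurrences give $A_{j+1}=\operatorname{ad}_{D_\alpha}^{\,j}(A_1)$ and $B^{(k)}_{m+1}=\operatorname{ad}_{\Omega^{(k)}_Y}^{\,m}(B^{(k)}_1)$, from which the Lax equations and the conjugation formulas follow by termwise differentiation and the Hadamard expansion in $\End(\PPP)[[s]]$. Your extra care about the formal framework (and the grading remark, including that $\Omega^{(k)}_Y$ shifts weight by one via $Y_+$) is accurate and harmless, though the paper did not need it.
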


\subsection{Heuristic: a simple combinatorial proof for $b=0$ or $1$}
\label{subsec:heuristic}

It is tempting to prove the commutation relations of \cref{thm:commut1,thm:commut2} by giving them a combinatorial interpretation. This turns out to be possible for $b=0$ or $1$. In this section we quickly sketch this idea because it is the inspiration for the algebraic proof we design in the next sections that works for all $b$.

\begin{proof}[Sketch of the proof of \cref{thm:commut1} for $b\in \{0,1\}$]
Similarly as in the proof of the decomposition equation and
\cref{prop:interpretGammaPatched}, the operator $A_{j+1}:=\Theta_Y
	\Lambda_{Y}^{j} \frac{y_0}{1+b}$  can be interpreted as follows. First,
create a new isolated vertex of color $0$, considered as the root
vertex and counted by the factor $y_0/(1+b)$. Then create a partial right-path of length $j$ from this vertex, using only existing vertices (operator $\Lambda_{Y}^{j}$). Finally restore the marking of the root face from the $y$ to the $p$ variable (operator $\Theta_Y$). Thus this operator has the effect of creating a root and a partial path of length $j$ from this root, at the level of the $p$ variables.
	
	Moreover, it can also be shown with a bit of work that for any $j$ the operator $D_\alpha$ can be interpreted as adding an edge of color $\{j,j+1\}$ at an arbitrary position in the map. Similarly as in the proof of the decomposition equation, if $j+1<k$ only half of the possible edges are valid choices for this construction, while for $k=j+1$ all of them are.

By composing these operators, the product $D_\alpha A_{j+1}$ has the
effect of adding a new right path of length $j$, and an edge of color
$\{j,j+1\}$ somewhere in the map. Changing the order of the action of
these operators $A_{j+1} D_\alpha$ has the effect of adding an edge of
color $\{j,j+1\}$, and then a new right path. This is almost the same,
except that it does not include the case when the edge is added from the very last corner of the newly created right path, or equivalently when this creates a right path of length $j+1$. We conclude that the commutator $D_\alpha A_{j+1}-A_{j+1} D_\alpha$ has the effect of creating a right-path of length $j+1$, i.e. it is equal to $A_{j+2}$. This is precisely the first commutation relation. 
\end{proof}

\begin{proof}[Sketch of the proof of \cref{thm:commut2} for $b\in \{0,1\}$]
	Similarly as in the proof of \cref{thm:TutteConnected}, the operator $B^{(k)}_{m}=(m-1)! \Theta_Y \big(Y_+\prod_{i=1}^{k}(\Lambda_{Y}+u_i)\big)^m y_0$ can be interpreted as creating a new vertex of colour~$0$ and degree $m$, with an ordering of the edges incident to it, at the level of $p$ variables.
	Moreover it can be shown that $\Omega^{(k)}_Y$ has the effect
        of adding a right-path (of length $k$, possibly using new
        vertices along the way) to a $k$-constellation, while
        $B^{(k)}_1$ has the effect of adding a new vertex of color $0$
        and a right-path starting from it. Therefore, the operators
        $\Omega^{(k)}_Y B^{(k)}_{m}$ and
        $B^{(k)}_{m}\Omega^{(k)}_Y $ both have the effect of adding
        a new vertex of color $0$ and degree $m$, and a new right-path
        in the map, except that the second one does not include the
        case when the new right path is incident to the new vertex. This corresponds precisely to creating a new vertex of degree $m+1$ with an ordering of its edges, i.e. to $B^{(k)}_{m+1}$, which gives the second commutation relation.
      \end{proof}

The sketch of the proof we just gave can be made fully rigorous in the
case $b=0$ or $b=1$ with a bit of work (what remains to be done is the
proof of the fact that the operators $D_\alpha$ and $\Omega^{(k)}_Y$
can be interpreted as we claimed, with appropriate marking
conventions). However, these proofs do \emph{not} work for general
$b$. Indeed, they are based on the idea of constructing the same map
by adding the same edges in different orders, but in the general case there is no reason {\it a priori} that different orders give the same contribution to the $b$-weight. Our whole strategy is designed to overcome this difficulty, see \cref{rem:promotion} below.

\subsection{Proof of the first commutation relations (\cref{thm:commut1})}
\label{subsec:proofCommut1}

The idea of the proof of \cref{thm:commut1} is the following: we
``promote'' the operator $D_\alpha$ to an operator (noted
$D_\alpha+D_\alpha'$) acting on a larger space $\PPP_Y$ such that
$\Theta_Y(D_\alpha+D_\alpha') = D_\alpha \Theta_Y$. This promoted
operator commutes with $\Lambda_{Y}$ and its commutator with
$Y_+$ is given by $Y_+\Lambda_Y$, see
\cref{lemma:relations}. This enables us to perform simple algebraic
manipulations leading to the proof of \cref{thm:commut1} by projecting
the operators acting on $\PPP_Y$ on the subspace
$\PPP$. \cref{rem:promotion} describes the (combinatorial) origin of this proof.

We let $\PPP_Y$ be the set of polynomials in the variables $y_i$ and $p_j$ that are at most linear in the variables $y_i$, that is
\begin{align}
\PPP_{Y} &:= \Span_{\mathbb{Q}(b)}\{p_\lambda, y_i p_\lambda\}_{ i \in \mathbb{N}, \lambda \in \Y}.
\end{align}
Note that $\PPP \subset \PPP_{Y}$ and that differential operators in the variables $p_i$, such as $D_\alpha$, naturally act on $\PPP_Y$. 
We now define the operator $D_\alpha'$ on $\PPP_Y$ by
\begin{align*}
D_\alpha' := 1/2 \left(   (1+b)\sum_{i,j\geq 1} 2y_{i+j} ij \frac{\partial^2 }{\partial y_i \partial p_j }+\sum_{i,j\geq 1} 2i y_{i}p_j \frac{\partial }{ \partial y_{i+j} }+b\sum_{i,j\geq1} i(i-1)y_i\frac{\partial }{\partial y_i} \right).
\end{align*}

\begin{lemma}\label{lemma:relations}
	We have the following commutation relations, as operators on $\PPP_Y$.
	\begin{subequations}
	\begin{align}
		[D_\alpha+D_\alpha',\Lambda_{Y}]&=0 \label{eq:rela1}\\
		[D_\alpha+D_\alpha',Y_+]&=Y_+\Lambda_{Y} \label{eq:rela2}\\
		\Theta_Y (D_\alpha+D_\alpha')&= D_\alpha\Theta_Y. \label{eq:rela3}
	\end{align}
	\end{subequations}
\end{lemma}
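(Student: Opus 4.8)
The plan is to prove all three identities by direct computation inside the algebra of polynomial differential operators generated by the multiplications $p_i,y_i$ and the derivations $\partial/\partial p_i,\partial/\partial y_i$, using only the canonical relations $[\partial/\partial p_i,p_j]=\delta_{ij}$, $[\partial/\partial y_i,y_j]=\delta_{ij}$, all other pairs of generators commuting. I would abbreviate $D_i:=\partial/\partial p_i$ and $E_i:=\partial/\partial y_i$. The single feature that makes these identities hold \emph{on $\PPP_Y$} (and, as the following remark stresses, fail on a larger space) is that every element of $\PPP_Y$ is at most linear in the $y$-variables. Hence, after bringing any composite operator to normal-ordered form (all multiplications to the left of all derivations), every monomial carrying two or more factors $E_i$ annihilates $\PPP_Y$, since two successive $y$-derivatives kill a polynomial that is at most linear in $y$. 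I would record this \emph{reduction principle} once at the outset and invoke it throughout: it is exactly what collapses the a priori second-order (in $E$) pieces of the products below into first-order operators. Equivalently, one may verify each identity by evaluating both sides on the two families $p_\lambda$ and $y_k p_\lambda$ spanning $\PPP_Y$, which enforces the reduction automatically.

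I would first clear away the terms that commute for trivial reasons. Since $D_\alpha$ involves only $p_i,D_i$ while $Y_+$ involves only $y_i,E_i$, they act on disjoint variables, so $[D_\alpha,Y_+]=0$ and \eqref{eq:rela2} reduces to the single identity $[D_\alpha',Y_+]=Y_+\Lambda_{Y}$. I would then dispatch \eqref{eq:rela3}, which is shortest: writing $\Theta_Y=\sum_k p_k E_k$ and using $[D_\alpha,E_k]=0$ gives $[D_\alpha,\Theta_Y]=\sum_k [D_\alpha,p_k]\,E_k$, and a one-line computation of the first-order operator $[D_\alpha,p_k]$ (each of the three summands of $D_\alpha$ contributes, through $[D_iD_j,p_k]$, $[D_{i+j},p_k]$ and $[D_i,p_k]$ respectively) shows that this equals $\Theta_Y D_\alpha'$ once the spurious two-$E$ terms of $\Theta_Y D_\alpha'$ are discarded by the reduction principle; rearranging $D_\alpha\Theta_Y=\Theta_Y D_\alpha+[D_\alpha,\Theta_Y]=\Theta_Y(D_\alpha+D_\alpha')$ then yields \eqref{eq:rela3}. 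The identity $[D_\alpha',Y_+]=Y_+\Lambda_Y$ is handled the same way, matching the three summands of $D_\alpha'$ against the three summands of $\Lambda_Y$ after normal-ordering $Y_+\Lambda_Y$ and dropping two-$E$ terms; the bookkeeping of the index shifts $i+j$, $j-1$, $i+j-1$ must be done carefully but presents no conceptual difficulty.

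The main obstacle is \eqref{eq:rela1}, the identity $[D_\alpha,\Lambda_Y]=-[D_\alpha',\Lambda_Y]$. Here $[D_\alpha,\Lambda_Y]$ is automatically first order in $E$, since $D_\alpha$ carries no $E$, whereas $[D_\alpha',\Lambda_Y]$ is a difference of products each carrying two factors $E$; it is precisely the reduction principle that turns the latter back into a first-order operator, and the content of \eqref{eq:rela1} is that the two resulting first-order operators cancel. I would organize the computation by the type of elementary contraction produced: the cross-terms in which a $D_i$ of one operator hits a $p_j$ of the other (governed by $[D_i,p_j]=\delta_{ij}$) and those in which an $E_i$ hits a $y_j$ (governed by $[E_i,y_j]=\delta_{ij}$). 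Grouping the nine cross-terms of $[D_\alpha,\Lambda_Y]$ and the nine of $[D_\alpha',\Lambda_Y]$ in this way, one checks they pair off with opposite signs after a standard re-indexing, keeping track of the factors $(1+b)$, the pure-$b$ diagonal pieces, and the combinatorial coefficients $i$, $ij$, $i+j$. It is at this step that the designed symmetry between the pure-$p$ block of $D_\alpha$ and the mixed $p$--$y$ block of $D_\alpha'$, mirrored in the three summands of $\Lambda_Y$, does the work. Since the verification is long but entirely mechanical, I would present it either as a term-by-term table or, more transparently, by the basis-vector evaluation on $p_\lambda$ and $y_k p_\lambda$ mentioned above, which reduces everything to elementary manipulations of partition-indexed sums.
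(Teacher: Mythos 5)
Your proposal is correct and follows essentially the same route as the paper: the paper also proves \cref{lemma:relations} by direct finite-order operator computation (relegated to \cref{sec:AppendixComputations}, organized there diagrammatically and by collecting coefficients of powers of $\alpha$ and $b$), with the same implicit use of your ``reduction principle'' that two $y$-derivatives annihilate $\PPP_Y$, the same trivial observation $[D_\alpha,Y_+]=0$ for \eqref{eq:rela2}, the same reduction of \eqref{eq:rela3} to $[D_\alpha,\Theta_Y]=\Theta_Y D_\alpha'$ (including the symmetrization $\sum_{i,j} i\,p_{i+j}\partial^2/\partial p_i\partial p_j=\tfrac12\sum_{i,j}(i+j)\,p_{i+j}\partial^2/\partial p_i\partial p_j$), and the bulk of the work spent on \eqref{eq:rela1}. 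Your alternative of evaluating both sides on the basis $p_\lambda$, $y_k p_\lambda$ is likewise a standard equivalent bookkeeping of the same mechanical verification.
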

\begin{proof}
The proof of these equations presents no difficulty since all
operators have finite order. We refer the interested reader to \cref{sec:AppendixComputations}.
\end{proof}

\begin{proof}[Proof of \cref{thm:commut1}]
	The first two relations of \cref{lemma:relations} imply that
        $[D_\alpha+D_\alpha',Y_+\Lambda_{Y}^j]=Y_+\Lambda_{Y}^{j+1}$
        by induction on $j\geq 0$. Applying $\Theta_Y$ to this
        identity we get:
$$\Theta_Y  (D_\alpha+D_\alpha')Y_+\Lambda_{Y}^j-\Theta_Y Y_+\Lambda_{Y}^j (D_\alpha+D_\alpha') =\Theta_Y Y_+\Lambda_{Y}^{j+1}.$$
Using the third relation of the lemma we obtain
$$D_\alpha \Theta_Y Y_+\Lambda_{Y}^j-\Theta_Y Y_+\Lambda_{Y}^j (D_\alpha+D_\alpha') =\Theta_Y Y_+\Lambda_{Y}^{j+1} .$$
	Now we multiply by $y_0$ on the right, and notice that
        $D_\alpha'y_0$ annihilates the space $\PPP$. 
	Therefore, as operators on $\PPP$ we have
$$D_\alpha \Theta_Y Y_+\Lambda_{Y}^j y_0-\Theta_Y Y_+\Lambda_{Y}^j D_\alpha y_0=\Theta_Y Y_+\Lambda_{Y}^{j+1} y_0.$$ 
Using that $D_\alpha y_0=y_0D_\alpha$ this shows that we have the
following equality between operators on $\PPP$:
$$
	\left[ D_\alpha,\Theta_Y Y_+\Lambda_{Y}^j y_0\right] =\Theta_Y Y_+\Lambda_{Y}^{j+1} y_0.
$$
Since for $j=0$ we have $\Theta_Y Y_+\Lambda_{Y}^{j} y_0= \Theta_Y Y_+
y_0 = p_1$, we obtain \eqref{eq:commut1} by induction on $j$, which
finishes the proof.
\end{proof}

\begin{remark}[Origin of this proof]\label{rem:promotion}
Let us quickly explain the origin of this proof and of the operator $D_\alpha'$.
	The  idea of the combinatorial proof of \cref{subsec:heuristic} is that a given map can be obtained from a smaller one by adding the missing edges in several different orders. It fails in the context of $b$-weights because these different orders may give different contributions. To overcome this, it is natural to look for an ``exchange lemma'' that would say that in fact, the contributions are the same. More precisely, we would need to say that the operation of adding an edge $e$
to a partial right-path in a rooted map $\bM$, and of adding an edge
$f$ of a given colour not incident to this path, ``commute'' with respect to the $b$-weight. For example, for any map $\bM$, one could look for an involution $(e,f)\mapsto (e',f')$ on the set of such pairs of edges that preserves the rooted marking of the final map  and such that $\rho(\bM\cup \{e,f\},e,f) =\rho(\bM\cup \{e',f'\},f',e')$. 

If such a proof exists, it should be represented algebraically by a
simple commutation relation between the operator $\Lambda_{Y}$ and the
operator that adds an edge of a given colour somewhere in the map. This operator needs to be a ``promoted'' version of $D_\alpha$ acting on the space $\PPP_Y$, that needs to take into account the case when the edge $f$ is incident to the root face. This is precisely what the operator $D_\alpha+D_\alpha'$ does.

	In fact, such a combinatorial proof can be given and we found it before the algebraic proof given here. We were able to make it work by using the coherent MON $\rho_{SYM}$, see \cref{rem:rhoSYM}.	
	However writing all the details turns out to be tedious and we
        decided to give only the algebraic proof, leaving this remark for the interested reader.
\end{remark}

\subsection{Proof of the second commutation relations (\cref{thm:commut2})}
\label{subsec:proofCommut2}

In order to prove \cref{thm:commut1}, we promoted our operators from
$\PPP$ to the larger space $\PPP_Y$, where we were able to control
commutation relations.
 In order to prove \cref{thm:commut2} we follow a similar approach, but we need to use the larger space $\PPP_{\tilde{Y},\tilde{Z}}$ defined below.
 The rest of this section is dedicated to this proof.

We introduce three new families of indeterminates $\bm{y'} = (y_i')_{i\geq 0}$ ,$\bm{z}=(z_i)_{i\geq 0}$,
$\bm{z'} = (z_i')_{i\geq 0}$. We let $\PPP_{\tilde{Y},\tilde{Z}}$ be the space of polynomials in $\bm{y},\bm{y'},\bm{z},\bm{z'},\bm{p}$ which are at most linear in each of the families $\bm{y},\bm{y'},\bm{z},\bm{z'}$, and that do not involve simultaneously prime and non-prime variables in these families. Namely:
\begin{align}
\PPP_{\tilde{Y},\tilde{Z}} &:= \Span_{\mathbb{Q}(b)}\{y_i z_j p_\lambda, y'_i z'_j p_\lambda, y_i p_\lambda, z_j p_\lambda, y'_i p_\lambda, z'_j p_\lambda, p_\lambda\}_{i,j \in \mathbb{N}, \lambda \in \Y}.
\end{align}
Clearly
\[ \PPP \subset \PPP_{Y} \subset \PPP_{\tilde{Y},\tilde{Z}}.\]

\begin{remark}[Origin of the space $\PPP_{\tilde{Y},\tilde{Z}}$]\label{rem:promotion2}
	In the spirit of \cref{rem:promotion}, in order to make the
        heuristic proof of \cref{subsec:heuristic} work for the second
        commutation relation, one would need an exchange lemma that
        enables one to add two different right-paths to the same map,
        with different roots, in two different orders, in such a way
        that the contributions to the $b$-weights of both additions
        are the same. But since the construction of right-paths only
        applies to rooted objects, the proof of this lemma, which
        would have to be inductive and work with partial right-paths,
        would need  to keep track of \emph{two} root face degrees. It
        is thus natural to use new variables $z_j$ to mark the size of
        this second root face. However, one should not forget to
        consider the case where, at some point of the construction,
        both roots lie in the \emph{same} face of the map, thus
	splitting it into two intervals (say of length $i$ and $j$). 
	For this case, we use the variables $y'_i z'_j$, hence
        the need of working with the big space
        $\mathcal{P}_{\tilde{Y},\tilde{Z}}$. One needs to promote the
        various operators we consider to this bigger setting, and
        understand their commutators. For example, the operators
        $\Lambda_{\tilde{Y}}$ and $\Lambda_{\tilde{Z}}$ defined below
        are the promoted versions of the operator $\Lambda_{Y}$ (and
        its $\mathbf{z}$-analogue $\Lambda_Z$), and they have the
        effect of extending the first and second partial right path by
        one unit, respectively. The key ``commutation relations''
        between these operators are presented in \cref{lemma:relations3}. 
\end{remark}

We first define variants of the operator $\Lambda_{Y}$ for other alphabets. Since $\La_{Y}$ is acting also on the bigger space $\PPP_{\tilde{Y},\tilde{Z}}$ we can define operators $\La_{Y'},\La_{Z}$ and $\La_{Z'}$ acting on $\PPP_{\tilde{Y},\tilde{Z}}$ by analogy, that is $\La_A$ is obtained from the formula for $\La_{Y}$ by replacing each occurrence of $y_i$ and $\frac{\partial}{\partial y_i}$ by $a_i$ and $\frac{\partial}{\partial a_i}$ for each symbol $a\in\{z,y',z'\}$ of capital symbol $A\in\{Z,Y',Z'\}$. Using the same analogy, we define the operators $\Theta_{Y'},\Theta_{Z}, \Theta_{Z'}$,  $Y'_+,Z_+,Z'_+$, and $\Omega^{(k)}_Z$.
Next, we define 
\begin{align*}
	\La_{Z,Z'}^{Y,Y'} :=   (1\splus{}b)\ssum_{i,j,k\geq 1} \frac{y'_{i\splus{}j\sminus{}1} z'_{k\sminus{}1}\cdot\partial^2 }{\partial y_{i\splus{}k\sminus{}1} \partial z_{j\sminus{}1} }
\splus{}\ssum_{i,j,k\geq 1}\frac{y_{i\splus{}j\sminus{}1} z_{k\sminus{}1}\cdot\partial^2 }{\partial y'_{i\splus{}k\sminus{}1} \partial z'_{j\sminus{}1} }\splus{}b\ssum_{i,j,k\geq 1} \frac{y'_{i\splus{}j\sminus{}1} z'_{k\sminus{}1}\cdot\partial^2 }{\partial y'_{i\splus{}k\sminus{}1} \partial z'_{j\sminus{}1} },
\end{align*}
and its version $\La_{Y,Y'}^{Z,Z'}$ with appropriately exchanged variables:
\begin{align*}
\La_{Y,Y'}^{Z,Z'} :=  (1\splus{}b)\ssum_{i,j,k\geq 1} \frac{z'_{i\splus{}j\sminus{}1} y'_{k\sminus{}1}\cdot\partial^2 }{\partial z_{i\splus{}k\sminus{}1} \partial y_{j\sminus{}1} }
\splus{}\ssum_{i,j,k\geq 1}\frac{z_{i\splus{}j\sminus{}1} y_{k\sminus{}1}\cdot\partial^2 }{\partial z'_{i\splus{}k\sminus{}1} \partial y'_{j\sminus{}1} }\splus{}b\ssum_{i,j,k\geq 1} \frac{z'_{i\splus{}j\sminus{}1} y'_{k\sminus{}1}\cdot\partial^2 }{\partial z'_{i\splus{}k\sminus{}1} \partial y'_{j\sminus{}1} }.
\end{align*}
We also define
\begin{align*}
\Theta_{\tilde{Z}} &:=  \sum_{i\geq 0}p_i\frac{\partial}{\partial z_i}
                     + \sum_{i,j\geq
                     0}y_{i+j}\frac{\partial^2}{\partial y'_i \partial
                     z'_j},
\end{align*}
with the convention that $p_0 = 1$, and 
$\Theta_{\tilde{Y}}$ by analogy.
Finally, we define
\begin{align*}
	\La_{\tilde{Y}} &:=  \La_{Y} + \La_{Y'} + \La_{Y,Y'}^{Z,Z'}\ , \ \ 
\tilde{Y}_+ := Y_+ + Y'_+,\\
	\La_{\tilde{Z}} &:=  \La_{Z} + \La_{Z'} + \La_{Z,Z'}^{Y,Y'}\ , \ \ 
\tilde{Z}_+ := Z_+ + Z'_+.
\end{align*}

The following lemma is the analogue of \cref{lemma:relations} and it easily implies \cref{thm:commut2}.
\begin{lemma}\label{lemma:relations2}
	We have the following relations between operators acting on $\PPP_Y$.
	\begin{align*}
		\left[\Omega^{(k)}_Z + \square^{(k)}, Y_+\prod_{i=1}^{k}(\Lambda_{Y}+u_i)\right]&=\left(Y_+\prod_{i=1}^{k}(\Lambda_{Y}+u_i)\right)^2, \\
		\Theta_Y (\Omega^{(k)}_Z + \square^{(k)}) &= \Omega^{(k)}_Z\Theta_{Y},\\
		y_0\cdot\PPP &\subset \ker\square^{(k)},
	\end{align*}
	where 
	\begin{align*}
		(1+b)\cdot\square^{(k)} &= \Theta_{\tilde{Z}}\tilde{Z}_+\prod_{1 \leq i \leq k}(\La_{\tilde{Z}}+u_i)\La_{\tilde{Z}}z_0 - \Theta_{Z}Z_+\prod_{1 \leq i \leq k}(\La_{Z}+u_i)\La_{Z}z_0 \\
		&= \Theta_{\tilde{Z}}\tilde{Z}_+\prod_{1 \leq i \leq k}(\La_{\tilde{Z}}+u_i)\La_{\tilde{Z}}z_0 - (1+b)\Omega^{(k)}_Z.
	\end{align*}
\end{lemma}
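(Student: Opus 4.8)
The plan is to follow the architecture of the proof of \cref{thm:commut1}: the three relations are the precise analogues, transplanted to the larger space $\PPP_{\tilde{Y},\tilde{Z}}$, of the identities \eqref{eq:rela1}--\eqref{eq:rela3} of \cref{lemma:relations} together with the annihilation fact that $D_\alpha' y_0$ kills $\PPP$ used there. Writing $P_Y:=Y_+\prod_{i=1}^{k}(\Lambda_{Y}+u_i)$ and $\tilde\Omega:=\Omega^{(k)}_Z+\square^{(k)}=\Theta_{\tilde{Z}}\tilde{Z}_+\prod_{i=1}^{k}(\La_{\tilde{Z}}+u_i)\La_{\tilde{Z}}z_0$, the first relation $[\tilde\Omega,P_Y]=P_Y^2$ plays the role of the combined identity $[D_\alpha+D_\alpha',Y_+\Lambda_{Y}^j]=Y_+\Lambda_{Y}^{j+1}$, the second is the intertwining property, and the third is the annihilation fact. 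Once the three are established, \cref{thm:commut2} follows by the same projection as in \cref{subsec:proofCommut1}: from $[\tilde\Omega,P_Y^m]=m\,P_Y^{m+1}$ one applies $\Theta_Y$ on the left (using the second relation), multiplies by $\tfrac{y_0}{1+b}$ on the right (using the third relation to replace $\tilde\Omega$ by $\Omega^{(k)}_Z$), and finally uses that $\Omega^{(k)}_Z$ and $\Omega^{(k)}_Y$ coincide on $\PPP$, being equal up to the renaming of the dummy alphabet $\zz\leftrightarrow\yy$.

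I would dispatch the third relation first, by a direct invariant-subspace argument. For $f\in\PPP$ the element $z_0\,y_0 f$ lies in the subspace $\mathcal V$ spanned by the monomials $y_0 p_\lambda$ and $y_0 z_m p_\lambda$ --- those carrying the factor $y_0$, at most linear in $\zz$, and free of $\bm{y'},\bm{z'}$ and of $y_i$ with $i\geq 1$. On $\mathcal V$ every summand of $\La_{\tilde{Z}},\tilde{Z}_+,\Theta_{\tilde{Z}}$ containing a primed derivative vanishes, and the mixed summand $\La_{Z,Z'}^{Y,Y'}$ vanishes as well, because its only $\yy$-derivative is $\partial/\partial y_{i+k-1}$ with $i,k\geq 1$, which cannot reach $y_0$. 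Hence each promoted operator restricts on $\mathcal V$ to its plain $\zz$-counterpart, $\mathcal V$ is stable under the whole product, and since the $\zz$-operators commute with multiplication by $y_0$ we obtain $\tilde\Omega(y_0 f)=\Omega^{(k)}_Z(y_0 f)$; that is, $\square^{(k)}(y_0 f)=0$.

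For the second relation I would push $\Theta_Y$ from the left through the product defining $\tilde\Omega$. In contrast to the previous case, $\Theta_Y$ does \emph{not} commute with the plain factors $\La_Z,Z_+,\Theta_Z$, since both act on the $\pp$-variables and since the mixed term $\La_{Z,Z'}^{Y,Y'}$ manufactures $\yy$- and $\bm{y'}$-variables that $\Theta_Y$ can subsequently consume. The point is that the promoted operators were designed so that these discrepancies telescope: the primed summands of $\La_{\tilde{Z}}$ together with the extra summand $\sum_{i,j}y_{i+j}\frac{\partial^2}{\partial y'_i z'_j}$ of $\Theta_{\tilde{Z}}$ supply exactly the one-step intertwining relations of \cref{lemma:relations3}, which chained along the product collapse to $\Theta_Y\tilde\Omega=\Omega^{(k)}_Z\Theta_Y$ on $\PPP_Y$. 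This step is finite-order bookkeeping and presents no conceptual difficulty once the atomic relations are recorded.

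The first relation is the main obstacle. Expanding $[\tilde\Omega,P_Y]$ by the Leibniz rule over the $k$ factors of $P_Y$ reduces it to the atomic commutators of $\tilde\Omega$ with $\Lambda_{Y}$ and $Y_+$ provided by \cref{lemma:relations3}, and the required outcome is that these sum to exactly one extra copy of $P_Y$, giving $P_Y^2$. The essential --- and genuinely delicate --- point is that here one cannot replace $P_Y$ by its doubly-promoted version $P_{\tilde{Y}}:=\tilde{Y}_+\prod_{i=1}^{k}(\La_{\tilde{Y}}+u_i)$, because $\tilde\Omega$ produces $\zz$- and primed variables on which $P_{\tilde{Y}}$ and $P_Y$ disagree; the plain operator $P_Y$ must be fed the primed output of $\tilde\Omega$ directly. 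This is precisely the ``two roots in a common face'' interaction, carried algebraically by the mixed operators $\La_{Z,Z'}^{Y,Y'}$ and $\La_{Y,Y'}^{Z,Z'}$, and the whole apparatus of the two primed alphabets (see \cref{rem:promotion2}) exists so that this merging configuration is counted once, with the correct $b$-weight and no spurious contributions. Verifying this cancellation is where essentially all the work of the proof resides, and it is the algebraic incarnation of the combinatorial exchange lemma.
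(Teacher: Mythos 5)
Your third relation is sound and is essentially the paper's own argument: after expanding $\La_{\tilde{Z}}=\La_{Z}+\La_{Z'}+\La_{Z,Z'}^{Y,Y'}$ (and likewise $\tilde{Z}_+$, $\Theta_{\tilde{Z}}$), the purely plain terms cancel against $\Omega^{(k)}_Z$, and every surviving term carries either a primed derivative or a derivative $\partial/\partial y_m$ with $m\geq 1$, both of which annihilate the span of the monomials $y_0 z_a p_\lambda$; your invariant-subspace packaging of this is correct. Your second relation is also on the right track, except that the one-step intertwining relations you invoke are not in \cref{lemma:relations3} but are \eqref{eq:rel4}--\eqref{eq:rel5} of \cref{lemma:relations4} (namely $\Theta_{\tilde{Y}}\La_{\tilde{Z}}=\La_{Z}\Theta_{\tilde{Y}}$ and $\Theta_{\tilde{Y}}\tilde{Z}_+=Z_+\Theta_{\tilde{Y}}$), chained through the product defining $\Omega^{(k)}_Z+\square^{(k)}$ and only then restricted to $\PPP_Y$, where $\Theta_{\tilde{Y}}$ and $\Theta_Y$ agree.

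The genuine gap is the first relation, which you rightly single out as the heart of the matter but then do not prove, and the reduction you propose would fail. Writing $P_Y:=Y_+\prod_{i=1}^{k}(\La_Y+u_i)$, expanding $[\Omega^{(k)}_Z+\square^{(k)},P_Y]$ by the Leibniz rule into ``atomic commutators with $\La_Y$ and $Y_+$'' is not viable: those individual commutators have no usable closed form, and \cref{lemma:relations3} does not supply them. The paper's mechanism has three ingredients, none of which appear in your sketch. First, \emph{polarization}: the quadratic identity is reduced to the bilinear identities \eqref{eq:commutatorYZ} in powers $\La^m,\La^n$, and, crucially, only the \emph{symmetrized} (in $m,n$) version of these holds --- the paper explicitly footnotes that the non-symmetrized one is false; this is exactly the obstruction your factor-by-factor cancellation would hit. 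Second, the operator $\Delta$, which you never mention, is the true atomic object: on $\PPP_{\tilde{Y},\tilde{Z}}$ the commutators produce $\Delta$'s (e.g.\ $[\La_{\tilde{Z}},\tilde{Y}_+]=\tilde{Y}_+\Delta$, relation \eqref{eq:rel3}), and \cref{lemma:relations3} --- proved by a telescoping manipulation of the relations of \cref{lemma:relations4} --- expresses the symmetrized commutator $[\tilde{Z}_+\La_{\tilde{Z}}^{m+1},\tilde{Y}_+\La_{\tilde{Y}}^{n}]+[\tilde{Z}_+\La_{\tilde{Z}}^{n+1},\tilde{Y}_+\La_{\tilde{Y}}^{m}]$ purely in terms of $\Delta$-terms. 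Third, a projection step: sandwiching by $\Theta_{\tilde{Z}}(\cdots)z_0$ and checking by direct computation that $\Theta_{\tilde{Z}}\tilde{Y}_+\La_{\tilde{Y}}^m\tilde{Z}_+\La_{\tilde{Z}}^n\Delta z_0$ acts on $\PPP_Y$ exactly as $Y_+\La_Y^m Y_+\La_Y^n$, which is how the square $P_Y^2$ materializes. Without the symmetrization and without $\Delta$, the assertion that the mixed operators ``sum to exactly one extra copy of $P_Y$'' cannot be established, so the main identity remains unproved.
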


\begin{proof}[Proof of \cref{thm:commut2}]
The first relation of \cref{lemma:relations2} and a direct induction imply that for all $m \geq 1$ one has
\[ [\Omega^{(k)}_Z + \square^{(k)}, \big(Y_+\prod_{i=1}^{k}(\Lambda_{Y}+u_i)\big)^m]=m \big(Y_+\prod_{i=1}^{k}(\Lambda_{Y}+u_i)\big)^{m+1}.\]
Acting by $y_0$ on the right and by $\Theta_Y$ on the left, we obtain
the following identity between operators on $\PPP$:
	\begin{align}\label{eq:interm11}
\Theta_Y[\Omega^{(k)}_Z + \square^{(k)}, \big(Y_+\prod_{i=1}^{k}(\Lambda_{Y}+u_i)\big)^m]y_0=\Theta_Ym \big(Y_+\prod_{i=1}^{k}(\Lambda_{Y}+u_i)\big)^{m+1}y_0.
\end{align}

	Now, we know by \cref{lemma:relations2} that $y_0\cdot\PPP
        \subset \ker\square^{(k)}$. Moreover the operator $[\Omega^{(k)}_Z,y_0] =
        0$ obviously annihilates $\PPP$. Thus we get the following identity between operators on $\PPP$:
\begin{align*} &\Theta_Y
  \big(Y_+\prod_{i=1}^{k}(\Lambda_{Y}+u_i)\big)^m(\Omega^{(k)}_Z +
  \square^{(k)})y_0 = \Theta_Y
  \big(Y_+\prod_{i=1}^{k}(\Lambda_{Y}+u_i)\big)^m\Omega^{(k)}_Z y_0
                      =\\
  &=
  \Theta_Y \big(Y_+\prod_{i=1}^{k}(\Lambda_{Y}+u_i)\big)^m
  y_0\Omega^{(k)}_Z=
  \Theta_Y \big(Y_+\prod_{i=1}^{k}(\Lambda_{Y}+u_i)\big)^m
  y_0\Omega^{(k)}_Y.
  \end{align*}
Moreover, the second relation in \cref{lemma:relations2} gives 
\[\Theta_Y(\Omega^{(k)}_Z + \square^{(k)}) \big(Y_+\prod_{i=1}^{k}(\Lambda_{Y}+u_i)\big)^m y_0 = \Omega^{(k)}_Z\Theta_Y \big(Y_+\prod_{i=1}^{k}(\Lambda_{Y}+u_i)\big)^m y_0.\]
Thus
\[ \Theta_Y[\Omega^{(k)}_Z + \square^{(k)}, \big(Y_+\prod_{i=1}^{k}(\Lambda_{Y}+u_i)\big)^m]y_0 = [\Omega^{(k)}_Y,\Theta_Y \big(Y_+\prod_{i=1}^{k}(\Lambda_{Y}+u_i)\big)^m y_0],\]
	which together with~\eqref{eq:interm11} implies the second relation of~\eqref{eq:commut2} and concludes the proof.
\end{proof}

We prove \cref{lemma:relations2} using an equality between operators acting on $\PPP_{\tilde{Y},\tilde{Z}}$ (\cref{lemma:relations3}) which we then project to $\PPP_Y$. In order to do this, we first need \cref{lemma:relations4}. 
We let 
\[ \Delta := (1+b)\sum_{i,j \geq 0}z'_i y'_j \frac{\partial^2}{\partial y_i\partial z_j} + \sum_{i,j \geq 0}z_i y_j \frac{\partial^2}{\partial y'_i\partial z'_j}+b\sum_{i,j \geq 0}z'_i y'_j \frac{\partial^2}{\partial y'_i\partial z'_j}.\]

\begin{lemma}\label{lemma:relations4}
	We have the following relations between operators acting on $\PPP_{\tilde{Y},\tilde{Z}}$.
	\begin{subequations}
	\begin{align}
		\La_{\tilde{Z}}\Delta &= \Delta \La_{\tilde{Y}}, \label{eq:rel1} \\ 
		 (\La_{\tilde{Z}}+\Delta)\La_{\tilde{Y}}&=\La_{\tilde{Y}}(\La_{\tilde{Z}}+\Delta)
		=(\La_{\tilde{Y}}+\Delta)\La_{\tilde{Z}}=\La_{\tilde{Z}}(\La_{\tilde{Y}}+\Delta)
		\label{eq:rel2bis}\\
		[\La_{\ZZ},\YY_+] &= \YY_+ \Delta, \mbox{ i.e. } \La_{\ZZ}\YY_+ = \YY_+ (\La_{\ZZ} + \Delta) ,\label{eq:rel3} \\
				[\La_{\YY},\ZZ_+] &= \ZZ_+ \Delta, \mbox{ i.e. } \La_{\YY}\ZZ_+ = \ZZ_+ (\La_{\YY} + \Delta) ,\label{eq:rel3bis} \\
		\Theta_{\tilde{Z}}\La_{\tilde{Y}} &=
                                                    \La_{Y}\Theta_{\tilde{Z}}  \mbox{ and }\Theta_{\tilde{Y}}\La_{\tilde{Z}} = \La_{Z}\Theta_{\tilde{Y}}\label{eq:rel4}\\
          		\Theta_{\tilde{Z}}\tilde{Y}_+ &= Y_+\Theta_{\tilde{Z}} \mbox{ and }\Theta_{\tilde{Y}}\tilde{Z}_+= Z_+\Theta_{\tilde{Y}}\label{eq:rel5}.%
	\end{align}
	\end{subequations}
\end{lemma}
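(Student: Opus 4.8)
The plan is to treat \cref{lemma:relations4} as a list of identities between explicit differential operators of bounded order on $\PPP_{\tilde{Y},\tilde{Z}}$, and to verify each by evaluating both sides on the monomial basis $\{y_iz_jp_\lambda,\ y'_iz'_jp_\lambda,\ y_ip_\lambda,\ z_jp_\lambda,\ y'_ip_\lambda,\ z'_jp_\lambda,\ p_\lambda\}$. Every operator involved is a polynomial in the $p,y,y',z,z'$ and their first and second derivatives, and $\PPP_{\tilde{Y},\tilde{Z}}$ is at most linear in each of the families $\bm y,\bm y',\bm z,\bm z'$ and never mixes primed with unprimed letters; so acting on a basis monomial produces a short finite sum, most of whose terms are annihilated by these constraints, and each identity collapses to matching two explicit sums after reindexing. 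To halve the bookkeeping I would first record the involution $\phi$ exchanging $(\bm y,\bm y')\leftrightarrow(\bm z,\bm z')$ while fixing $b$ and $\pp$. A direct inspection of the definitions shows $\phi$ fixes $\Delta$ and swaps $\La_{\tilde{Y}}\leftrightarrow\La_{\tilde{Z}}$, $\tilde{Y}_+\leftrightarrow\tilde{Z}_+$, $\Theta_{\tilde{Y}}\leftrightarrow\Theta_{\tilde{Z}}$, $\La_Y\leftrightarrow\La_Z$ and $Y_+\leftrightarrow Z_+$ (in particular $\phi(\La_{Y,Y'}^{Z,Z'})=\La_{Z,Z'}^{Y,Y'}$). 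Consequently the second equality in each of \eqref{eq:rel4} and \eqref{eq:rel5} is the $\phi$-image of the first, \eqref{eq:rel3bis} is the $\phi$-image of \eqref{eq:rel3}, and \eqref{eq:rel1} pairs with its companion $\La_{\tilde{Y}}\Delta=\Delta\La_{\tilde{Z}}$; so only one representative of each $\phi$-pair need be computed.

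The key organizing device is a sector decomposition. By the no-mixing constraint, $\PPP_{\tilde{Y},\tilde{Z}}$ splits into an unprimed part (monomials in $\bm y,\bm z,\bm p$ only), a primed part (monomials in $\bm y',\bm z',\bm p$ only), and their common pure-$\bm p$ overlap. Each building block has a definite effect on sectors: $\La_Y,\La_{Y'},\La_Z,\La_{Z'}$ preserve them and annihilate monomials lacking the relevant letter, whereas the cross operators $\La_{Y,Y'}^{Z,Z'}$, $\La_{Z,Z'}^{Y,Y'}$, the operators $\Theta_{\tilde{Y}},\Theta_{\tilde{Z}}$ and $\Delta$ carry the unprimed sector to the primed one and back (with their $b$-terms acting diagonally inside the primed sector). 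Expanding $\La_{\tilde{Y}}=\La_Y+\La_{Y'}+\La_{Y,Y'}^{Z,Z'}$ and $\La_{\tilde{Z}}=\La_Z+\La_{Z'}+\La_{Z,Z'}^{Y,Y'}$ and tracking the sector of each summand pins down the domain and codomain of every piece, so that on a fixed basis monomial only a controlled handful of terms survive. This is precisely the finite-order, ``no conceptual difficulty'' verification already invoked for \cref{lemma:relations}, now carried out with more alphabets.

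For the two relations \eqref{eq:rel1} and \eqref{eq:rel2bis}, which involve products of the Laplacian-type operators, I would reduce the work further by a purely formal manipulation. Writing $X=\La_{\tilde{Y}}$, $W=\La_{\tilde{Z}}$, $D=\Delta$, the four products in \eqref{eq:rel2bis} are $WX+DX$, $XW+XD$, $XW+DW$ and $WX+WD$; their pairwise equalities are exactly $WD=DX$ (which is \eqref{eq:rel1}), its $\phi$-image $DW=XD$, and the single commutator identity $[\,\La_{\tilde{Y}},\La_{\tilde{Z}}\,]=\Delta(\La_{\tilde{Y}}-\La_{\tilde{Z}})$ (which is invariant under $\phi$). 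Thus once \eqref{eq:rel1} is checked, $\phi$ supplies its companion for free, and all of \eqref{eq:rel2bis} follows from this single extra commutator. The remaining relations are of the same computational type but lower order: \eqref{eq:rel3}--\eqref{eq:rel3bis} are commutators of a Laplacian with the index-shift operators $\tilde{Y}_+,\tilde{Z}_+$, and \eqref{eq:rel4}--\eqref{eq:rel5} are intertwining relations with the first-order operators $\Theta_{\tilde{Y}},\Theta_{\tilde{Z}}$, whose action simply projects into $\PPP_Y$.

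The main obstacle is the index bookkeeping in \eqref{eq:rel1} and in the commutator behind \eqref{eq:rel2bis}: both $\Delta$ and the cross term $\La_{Y,Y'}^{Z,Z'}$ carry double and triple sums with shifts of the form $i+j-1$ and $i+k-1$, so composing them and matching the resulting sums demands careful and consistent reindexing, and it is here that the linearity and sector constraints must be used to discard the many a-priori terms. The $\Theta$- and $\cdot_+$-relations are comparatively painless since those operators are low order with clean shifts. I expect no conceptual surprises: the symmetry $\phi$, the sector grading, and the reduction of \eqref{eq:rel2bis} to \eqref{eq:rel1} plus one commutator keep the verification finite and checkable, in the same spirit as the companion \cref{lemma:relations}.
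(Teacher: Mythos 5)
Your proposal is correct and follows essentially the same route as the paper's own proof: the paper likewise invokes the $\bm{y}\leftrightarrow\bm{z}$ exchange symmetry to verify only one identity from each pair, reduces \eqref{eq:rel2bis} to \eqref{eq:rel1} (and its image) plus the single commutator identity $[\La_{\tilde{Z}}+\Delta,\La_{\tilde{Y}}]=0$ --- which, given \eqref{eq:rel1} and its image, is equivalent to your $[\La_{\tilde{Y}},\La_{\tilde{Z}}]=\Delta(\La_{\tilde{Y}}-\La_{\tilde{Z}})$ --- and then checks the surviving identities by direct finite-order computation. The only difference is bookkeeping: the paper organizes the computation diagrammatically, grouping terms by powers of $\alpha$ and $b$, whereas you evaluate both sides on the monomial basis of $\PPP_{\tilde{Y},\tilde{Z}}$, a technique the paper itself singles out as an equally valid way to carry out these checks.
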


\begin{proof}
The proof of these equations presents no conceptual difficulty since all
operators have finite order. We refer the interested reader to \cref{sec:AppendixComputations}.
\end{proof}

\begin{lemma}\label{lemma:relations3}
	For $n,m\geq 0$, we have the following equality between operators acting on $\PPP_{\tilde{Y},\tilde{Z}}$:
	\begin{align}\label{eq:crazySymmetrizedRelation}
		[\tilde{Z}_+\La_{\tilde{Z}}^{m+1},\tilde{Y}_+\La_{\tilde{Y}}^{n}] + [\tilde{Z}_+\La_{\tilde{Z}}^{n+1},\tilde{Y}_+\La_{\tilde{Y}}^{m}] &=\tilde{Y}_+\La_{\tilde{Y}}^n\tilde{Z}_+\La_{\tilde{Z}}^m\Delta + \tilde{Y}_+\La_{\tilde{Y}}^m\tilde{Z}_+\La_{\tilde{Z}}^n\Delta.
	\end{align}
\end{lemma}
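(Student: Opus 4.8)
The plan is to eliminate the catalytic operators $\YY_+,\ZZ_+$ at the outset, reducing the stated identity to a commutator-free identity among the $\La$'s and $\Delta$, which then turns out to be a two-variable symmetric-function identity. Throughout I write $A:=\La_{\ZZ}+\Delta$, $B:=\La_{\YY}+\Delta$, $P:=\YY_+$, $Q:=\ZZ_+$. First I record what \cref{lemma:relations4} gives me. Relation~\eqref{eq:rel2bis} reads $A\La_{\YY}=\La_{\YY}A=B\La_{\ZZ}=\La_{\ZZ}B$; call this common operator $W$. In particular $[A,\La_{\YY}]=0$, $[B,\La_{\ZZ}]=0$ and $A\La_{\YY}=B\La_{\ZZ}=W$, while by definition $A+\La_{\YY}=B+\La_{\ZZ}=\La_{\YY}+\La_{\ZZ}+\Delta=:S$. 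From \eqref{eq:rel3} and \eqref{eq:rel3bis} I get $\La_{\ZZ}P=PA$ and $\La_{\YY}Q=QB$, hence $\La_{\ZZ}^aP=PA^a$ and $\La_{\YY}^bQ=QB^b$ for all $a,b\ge 0$ by a one-line induction; finally $[P,Q]=0$, since $\YY_+$ and $\ZZ_+$ differentiate disjoint families of variables.

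Next I would push $P,Q$ to the left in every term. Using the above commutations, for all $a,b\ge 0$,
\[ [\ZZ_+\La_{\ZZ}^a,\YY_+\La_{\YY}^b]=Q\La_{\ZZ}^aP\La_{\YY}^b-P\La_{\YY}^bQ\La_{\ZZ}^a=PQ\,\bigl(A^a\La_{\YY}^b-B^b\La_{\ZZ}^a\bigr). \]
Applying this to $(a,b)=(m+1,n)$ and $(n+1,m)$ and summing turns the left-hand side of \eqref{eq:crazySymmetrizedRelation} into $PQ\bigl[(A^{m+1}\La_{\YY}^n+A^{n+1}\La_{\YY}^m)-(B^n\La_{\ZZ}^{m+1}+B^m\La_{\ZZ}^{n+1})\bigr]$, while the same substitutions rewrite the right-hand side as $PQ\,\bigl(B^n\La_{\ZZ}^m\Delta+B^m\La_{\ZZ}^n\Delta\bigr)$. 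Since $\La_{\ZZ}^{m+1}+\La_{\ZZ}^m\Delta=\La_{\ZZ}^m(\La_{\ZZ}+\Delta)=\La_{\ZZ}^mA$, the lemma is equivalent to the catalytic-free operator identity
\[ A^{m+1}\La_{\YY}^n+A^{n+1}\La_{\YY}^m=\bigl(B^n\La_{\ZZ}^m+B^m\La_{\ZZ}^n\bigr)\,A. \]

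The crux is this last identity. Because $[A,\La_{\YY}]=0$, I can pull one factor $A$ to the left on the left-hand side, so it suffices to prove
\[ A^m\La_{\YY}^n+A^n\La_{\YY}^m=B^n\La_{\ZZ}^m+B^m\La_{\ZZ}^n. \]
Here both sides are the image of the symmetric polynomial $x^m y^n+x^n y^m\in\Z[x,y]$ under an evaluation homomorphism: the left-hand side under $x,y\mapsto A,\La_{\YY}$, the right-hand side under $x,y\mapsto B,\La_{\ZZ}$. These homomorphisms are well defined because each pair commutes ($[A,\La_{\YY}]=[B,\La_{\ZZ}]=0$), and they send the elementary symmetric functions $e_1=x+y$ and $e_2=xy$ to the \emph{same} operators, namely $A+\La_{\YY}=B+\La_{\ZZ}=S$ and $A\La_{\YY}=B\La_{\ZZ}=W$. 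Since every symmetric polynomial is a polynomial in $e_1,e_2$, the two evaluations agree, which is exactly the desired equality.

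I expect the genuine obstacle to be recognizing this symmetric-function step: the superficial asymmetry between the ``$\YY$-side'' $(A,\La_{\YY})$ and the ``$\ZZ$-side'' $(B,\La_{\ZZ})$ dissolves once one sees that \eqref{eq:rel2bis} is precisely the statement that the two commuting pairs share the same elementary symmetric functions $S$ and $W$ -- this is what makes the commutator-free identity hold termwise, with no need to understand the more complicated commutators $[A,B]$ or $[\La_{\YY},\La_{\ZZ}]$. Everything else is routine bookkeeping: the $PQ$-reduction uses only \eqref{eq:rel3}, \eqref{eq:rel3bis} and $[P,Q]=0$, and notably the remaining relations \eqref{eq:rel1}, \eqref{eq:rel4}, \eqref{eq:rel5} of \cref{lemma:relations4} are not needed for this particular lemma.
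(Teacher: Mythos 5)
Your proof is correct, and at the decisive step it takes a genuinely different route from the paper's. Both arguments begin identically: you use \eqref{eq:rel3}, \eqref{eq:rel3bis} and $[\YY_+,\ZZ_+]=0$ to push $\YY_+$ and $\ZZ_+$ to the left, which is what the paper does in \eqref{eq:takietam}. From there the paper proves the stronger, \emph{non-symmetrized} identity \eqref{eq:explicitCommutator} --- an explicit formula for the single commutator $[\ZZ_+\La_{\ZZ}^m,\YY_+\La_{\YY}^n]$ as a sum of $m-n$ terms --- by expanding $(\La_{\ZZ}+\Delta)^{m-n}-\La_{\ZZ}^{m-n}$ according to the position of the leftmost (resp.\ rightmost) $\Delta$, shuttling isolated $\Delta$'s with \eqref{eq:rel1}, and then deduces the symmetrized statement by telescoping. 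You instead symmetrize at the outset and observe that \eqref{eq:rel2bis} says precisely that the two commuting pairs $(\La_{\ZZ}+\Delta,\La_{\YY})$ and $(\La_{\YY}+\Delta,\La_{\ZZ})$ share the same elementary symmetric functions $e_1=\La_{\YY}+\La_{\ZZ}+\Delta$ and $e_2=(\La_{\ZZ}+\Delta)\La_{\YY}=(\La_{\YY}+\Delta)\La_{\ZZ}$, so every symmetric polynomial --- in particular $x^my^n+x^ny^m$ --- evaluates identically on both pairs. This is shorter, conceptually sharper, requires neither \eqref{eq:rel1} (which is in any case a consequence of \eqref{eq:rel2bis}, obtained by equating two of its four members) nor any telescoping, and it explains why the symmetrization in $(m,n)$ is exactly the right form of the statement. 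What the paper's longer computation buys in exchange is the explicit formula \eqref{eq:explicitCommutator} for each \emph{individual} commutator, which is strictly more information than the symmetrized combination. One wording quibble: after factoring out $\YY_+\ZZ_+$ on the left you call the catalytic-free identity ``equivalent'' to the lemma; since $\YY_+\ZZ_+$ is not injective you have only shown (and only need) that it \emph{implies} the lemma, so ``it suffices to prove'' is the accurate phrasing.
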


\begin{proof}
Without loss of generality we can assume that $m\geq n$, that is $m = n+i$ for $i \geq 0$.
We first claim that it suffices to prove the formula
\begin{align}\label{eq:explicitCommutator}
	[\tilde{Z}_+\La_{\tilde{Z}}^m,\tilde{Y}_+\La_{\tilde{Y}}^{n}] = \sum_{1\leq j \leq i}\YY_+\La_{\YY}^{n+i-j}\ZZ_+\La_{\tilde{Z}}^{n+j-1}\Delta = [\tilde{Y}_+\La_{\tilde{Y}}^m,\tilde{Z}_+\La_{\tilde{Z}}^{n}].
\end{align}
	Indeed, assuming~\eqref{eq:explicitCommutator}, the L.H.S. of~\eqref{eq:crazySymmetrizedRelation} is equal to
\begin{align*} 
	&	[\tilde{Z}_+\La_{\tilde{Z}}^{m+1},\tilde{Y}_+\La_{\tilde{Y}}^{n}]-[\tilde{Z}_+\La_{\tilde{Z}}^{m},\tilde{Y}_+\La_{\tilde{Y}}^{n+1}] \\
	&=\sum_{1\leq j \leq i+1}\YY_+\La_{\YY}^{n+i+1-j}\ZZ_+\La_{\tilde{Z}}^{n+j-1}\Delta - \sum_{1\leq j \leq i-1}\YY_+\La_{\YY}^{n+i-j}\ZZ_+\La_{\tilde{Z}}^{n+j}\Delta 
\\ 
	&=\tilde{Y}_+\La_{\tilde{Y}}^n\tilde{Z}_+\La_{\tilde{Z}}^m\Delta +
\tilde{Y}_+\La_{\tilde{Y}}^m\tilde{Z}_+\La_{\tilde{Z}}^n\Delta,
\end{align*}
	since all  terms in the first sum cancel with the second sum, except $j\in\{1,i+1\}$. This is the desired equality.

	We now prove~\eqref{eq:explicitCommutator} by a repeated use of the relations of \cref{lemma:relations4}.
	We rewrite
        \begin{multline}
          \label{eq:takietam}
	[\tilde{Z}_+\La_{\tilde{Z}}^m,\tilde{Y}_+\La_{\tilde{Y}}^{n}] = \tilde{Z}_+\La_{\tilde{Z}}^{n+i}\tilde{Y}_+\La_{\tilde{Y}}^{n}-\tilde{Y}_+\La_{\tilde{Y}}^{n}\tilde{Z}_+\La_{\tilde{Z}}^{n+i} \\
	=
\tilde{Z}_+\tilde{Y}_+\big((\La_{\tilde{Z}}+\Delta)^{i}-\La_{\tilde{Z}}^i\big)(\La_{\tilde{Z}}+\Delta)^n\La_{\tilde{Y}}^{n},
\end{multline}
	by applying first the relations \eqref{eq:rel3}--\eqref{eq:rel3bis} to move the operators $\YY_+$ and $\ZZ_+$ to the left, and then rearranging with \eqref{eq:rel2bis}.

We now expand $(\La_{\tilde{Z}}+\Delta)^{i}$ according to the position of the leftmost $\Delta$, and we get:
\begin{align*} 
\tilde{Z}_+\tilde{Y}_+\big((\La_{\tilde{Z}}+\Delta)^{i}-\La_{\tilde{Z}}^i\big)(\La_{\tilde{Z}}+\Delta)^n\La_{\tilde{Y}}^{n}
	&=\tilde{Z}_+\tilde{Y}_+ \sum_{1\leq j \leq i}\La_{\tilde{Z}}^{j-1}\Delta
(\La_{\tilde{Z}}+\Delta)^{i-j}(\La_{\tilde{Z}}+\Delta)^n\La_{\tilde{Y}}^{n}
 \\
	&=\tilde{Z}_+\tilde{Y}_+ \sum_{1\leq j \leq i}(\La_{\tilde{Y}}+\Delta)^{n+i-j}\La_{\tilde{Z}}^{n+j-1}
\Delta,
\end{align*}
	where for the second equality we first used the relation~\eqref{eq:rel1} to move the isolated operator $\Delta$ to the right, and then rearrange with~\eqref{eq:rel2bis}. Using the relation \eqref{eq:rel3bis} we can move the operator $\ZZ_+$ inside the sum, and we obtain the first equality in~\eqref{eq:explicitCommutator}. 
	
	If we expand $(\La_{\tilde{Z}}+\Delta)^{i}$
        in~\eqref{eq:takietam} according to the position of the rightmost $\Delta$, we get
	\begin{align*} 
\tilde{Z}_+\tilde{Y}_+ \big((\La_{\tilde{Z}}+\Delta)^{i}-\La_{\tilde{Z}}^i\big)(\La_{\tilde{Z}}+\Delta)^n\La_{\tilde{Y}}^{n}
		&=\tilde{Z}_+\tilde{Y}_+ \sum_{1\leq j \leq i}(\La_{\tilde{Z}}+\Delta)^{j-1}\Delta
\La_{\tilde{Z}}^{i-j}\La_{\tilde{Z}}^n(\La_{\tilde{Y}}+\Delta)^{n} \\
		&=\tilde{Z}_+\tilde{Y}_+ \sum_{1\leq j \leq i}(\La_{\tilde{Z}}+\Delta)^{n+j-1}\La_{\tilde{Y}}^{n+i-j}\Delta,   
	\end{align*}
using the same relations as before. Applying \eqref{eq:rel3} to move
the operator $\YY_+$ yields the second equality in~\eqref{eq:explicitCommutator}.
\end{proof}

\begin{proof}[Proof of~\cref{lemma:relations2}]
We have three statements to prove:

\noindent $\bullet$
We first prove that $y_0\cdot\PPP \subset \ker\square^{(k)}$.
To see this, we replace in the formula
\[ (1+b)\cdot\square^{(k)} = \Theta_{\tilde{Z}}\tilde{Z}_+\prod_{1 \leq i \leq k}(\La_{\tilde{Z}}+u_i)\La_{\tilde{Z}}z_0 - \Theta_{Z}Z_+\prod_{1 \leq i \leq k}(\La_{Z}+u_i)\La_{Z}z_0 \]
 the operator  $\La_{\tilde{Z}}$ by its definition $\La_{Z} + \La_{Z'} + \La_{Z,Z'}^{Y,Y'}$ and we expand the first product. We notice that the monomials in the expansion that involve only the operator $\La_Z$ cancel with the second product, therefore all remaining monomials involve one of the operators  $\La_{Z'}$ or $\La_{Z,Z'}^{Y,Y'}$ at least once. Therefore each term in the expansion involves either a derivative with respect to a prime variable, or a derivative $\frac{\partial}{\partial y_k}$ with $k\geq 1$, and the statement follows.

	\medskip

\noindent $\bullet$
We now prove that
	$[\Omega^{(k)}_Z + \square^{(k)}, \big(Y_+\prod_{i=1}^{k}(\Lambda_{Y}+u_i)\big)]=\big(Y_+\prod_{i=1}^{k}(\Lambda_{Y}+u_i)\big)^2.$ 
Since
	\begin{align}\label{eq:Omega+square}
	\Omega^{(k)}_Z + \square^{(k)} = \Theta_{\tilde{Z}}\tilde{Z}_+\prod_{1 \leq i \leq k}(\La_{\tilde{Z}}+u_i)\La_{\tilde{Z}}\frac{z_0}{1+b},   	\end{align}
	we can rewrite the desired identity (multiplied by $(1+b)$) as
	\begin{align}\label{eq:quadratic}
		[\Theta_{\tilde{Z}}\tilde{Z}_+P\big(\La_{\tilde{Z}}\big)\cdot \La_{\tilde{Z}}z_0, \La_{\tilde{Z}},Y_+P\big(\La_{Y}\big)] &= (1+b)\bigg(Y_+P\big(\La_{Y}\big)\bigg)^2,
	\end{align}
	where $P(x):=\prod_{1 \leq i \leq k}(x+u_i)$.
	To prove this quadratic identity on polynomials, it is sufficient to prove the corresponding symmetrized bilinear identity\footnote{It would of course be sufficient to prove the non-symmetrized version of this bilinear identity, namely that the first terms on each side of~\eqref{eq:commutatorYZ} are equal, but this is not true!}
\begin{multline}
\label{eq:commutatorYZ}
[\Theta_{\tilde{Z}}\tilde{Z}_+\La_{\tilde{Z}}^{m+1}z_0,Y_+\La_{Y}^n] + [\Theta_{\tilde{Z}}\tilde{Z}_+\La_{\tilde{Z}}^{n+1}z_0,Y_+\La_{Y}^m]
= (1+b)\big(Y_+\La_{Y}^m Y_+\La_{Y}^n+Y_+\La_{Y}^n Y_+\La_{Y}^m\big),
\end{multline}
for $m,n\geq 0$.
	Indeed, assuming~\eqref{eq:commutatorYZ} and  writing $P(x)=\sum_{0\leq m \leq k} a_m x^m$, the L.H.S. of~\eqref{eq:quadratic} rewrites
\begin{multline*}
	\sum_{0 \leq m < n\leq k} a_m a_n\Big([\Theta_{\tilde{Z}}\tilde{Z}_+\La_{\tilde{Z}}^{m+1}z_0,Y_+\La_{Y}^n] + [\Theta_{\tilde{Z}}\tilde{Z}_+\La_{\tilde{Z}}^{n+1}z_0,Y_+\La_{Y}^m] \Big) 
        \\
        + \sum_{0 \leq m \leq k} a_m^2[\Theta_{\tilde{Z}}\tilde{Z}_+\La_{\tilde{Z}}^{m+1}z_0,Y_+\La_{Y}^m] 
	= (1+b)\bigg(\sum_{0 \leq m < n\leq k} a_m a_n\Big(Y_+\La_{Y}^m Y_+\La_{Y}^n+Y_+\La_{Y}^n Y_+\La_{Y}^m\Big) 
        \\
       + \sum_{0 \leq m \leq k} a_m^2 Y_+\La_{Y}^m Y_+\La_{Y}^m\bigg) = (1+b)\bigg(Y_+P\big(\La_{Y}\big)\bigg)^2.
\end{multline*}
	Now, by acting with the operators $\Theta_{\tilde{Z}}$ and
        $z_0$, respectively on the left and right of the relation of
        \cref{lemma:relations3} we have the following identity between
        operators acting on $\PPP_Y$:
\begin{equation}
\label{eq:commutatorYZagain}
\Theta_{\tilde{Z}}\Bigg([\tilde{Z}_+\La_{\tilde{Z}}^{m+1},\tilde{Y}_+\La_{\tilde{Y}}^{n}] + [\tilde{Z}_+\La_{\tilde{Z}}^{n+1},\tilde{Y}_+\La_{\tilde{Y}}^{m}]\Bigg)z_0 =\Theta_{\tilde{Z}}\Bigg(\tilde{Y}_+\La_{\tilde{Y}}^n\tilde{Z}_+\La_{\tilde{Z}}^m\Delta + \tilde{Y}_+\La_{\tilde{Y}}^m\tilde{Z}_+\La_{\tilde{Z}}^n\Delta\Bigg)z_0.
\end{equation}
	
	It thus suffices to prove that the left (right, resp.) hand side of \eqref{eq:commutatorYZ} and \eqref{eq:commutatorYZagain} coincide. 
We start with the left hand side. First, we claim that
	\begin{align}\label{eq:halfRelation}
		\Theta_{\tilde{Z}}[\tilde{Z}_+\La_{\tilde{Z}}^{m+1},\tilde{Y}_+\La_{\tilde{Y}}^{n}]z_0 = [\Theta_{\tilde{Z}}\tilde{Z}_+\La_{\tilde{Z}}^{m+1}z_0,Y_+\La_{Y}^n].
	\end{align}
Indeed, 
\[
\Theta_{\tilde{Z}}[\tilde{Z}_+\La_{\tilde{Z}}^{m+1},\tilde{Y}_+\La_{\tilde{Y}}^{n}]z_0 = \Theta_{\tilde{Z}}\tilde{Z}_+\La_{\tilde{Z}}^{m+1}\tilde{Y}_+\La_{\tilde{Y}}^{n}z_0 - \Theta_{\tilde{Z}}\tilde{Y}_+\La_{\tilde{Y}}^{n}\tilde{Z}_+\La_{\tilde{Z}}^{m+1}z_0.\]
Using the fact that $[z_0,\La_{Y}] = 0$ annihilates $\PPP_Y$, and the
fact that $\La_{Y'}$ and $\La_{Y,Y'}^{Z,Z'}$ annihilate $z_0\PPP_Y$ we
substitute $\La_{\tilde{Y}} = \La_{Y}+\La_{Y'}+\La_{Y,Y'}^{Z,Z'}$ and obtain
\[ \Theta_{\tilde{Z}}\tilde{Z}_+\La_{\tilde{Z}}^{m+1}\tilde{Y}_+\La_{\tilde{Y}}^{n}z_0 = \Theta_{\tilde{Z}}\tilde{Z}_+\La_{\tilde{Z}}^{m+1}z_0 Y_+\La_{Y}^{n}.\]
Similarly, using the relations~\eqref{eq:rel4}-\eqref{eq:rel5}, we obtain
\[ \Theta_{\tilde{Z}}\tilde{Y}_+\La_{\tilde{Y}}^{n}\tilde{Z}_+\La_{\tilde{Z}}^{m+1}z_0 = Y_+\La_{Y}^{n}\Theta_{\tilde{Z}}\tilde{Z}_+\La_{\tilde{Z}}^{m+1}z_0,\]
which together with the previously proved equation, implies~\eqref{eq:halfRelation}. 
Since the same equation with $m$ and $n$ exchanged also holds, this proves that the left hand sides of \eqref{eq:commutatorYZ} and \eqref{eq:commutatorYZagain} coincide. 
	
	We now turn to the right-hand sides of \eqref{eq:commutatorYZ} and \eqref{eq:commutatorYZagain}. First, we have
\[ \Theta_{\tilde{Z}}\tilde{Y}_+\La_{\tilde{Y}}^m\tilde{Z}_+\La_{\tilde{Z}}^n\Delta z_0 = Y_+\La_{Y}^m \Theta_{\tilde{Z}}\tilde{Z}_+\La_{\tilde{Z}}^n\Delta z_0\]
by relations~\eqref{eq:rel4}-\eqref{eq:rel5}. Moreover
\begin{multline*} 
	\Theta_{\tilde{Z}}\tilde{Z}_+\La_{\tilde{Z}}^n\Delta z_0 y_i
        p_\lambda =
        (1+b)\Theta_{\tilde{Z}}\tilde{Z}_+\La_{\tilde{Z}}^n y_0'z'_i
        p_\lambda=
        (1+b)\Theta_{\tilde{Z}}y_0'\tilde{Z}_+\La_{\tilde{Z}}^n z'_i
        p_\lambda\\
        =(1+b)\Theta_{\tilde{Z}}y_0'Z'_+\La_{Z'}^n z'_i p_\lambda=
        (1+b) Y_+ \La_{Y}^n y_i p_\lambda
      \end{multline*}
by direct inspection of the definitions of operators. This implies
that the action of
$\Theta_{\tilde{Z}}\tilde{Y}_+\La_{\tilde{Y}}^m\tilde{Z}_+\La_{\tilde{Z}}^n\Delta
z_0$ and $(1+b) Y_+ \La_{Y}^m Y_+ \La_{Y}^n$ on $\PPP_Y$ coincides.
Therefore the right hand sides of \eqref{eq:commutatorYZ} and \eqref{eq:commutatorYZagain} coincide, which finally implies that \eqref{eq:commutatorYZ} holds true. This concludes the proof of the desired identity.

\medskip

\noindent $\bullet$
	It only remains to prove that $\Theta_Y (\Omega^{(k)}_Z + \square^{(k)}) = \Omega^{(k)}_Z\Theta_{Y}$. From~\eqref{eq:Omega+square} and from the relations~\eqref{eq:rel4}-\eqref{eq:rel5}, we directly obtain $\Theta_{\tilde Y} (\Omega^{(k)}_Z + \square^{(k)}) = \Omega^{(k)}_Z\Theta_{\tilde{Y}}$. But $\Theta_{\tilde{Y}}$  and $\Theta_{Y}$ have the same action on $\PPP_Y$, therefore we get that $\Theta_{Y} (\Omega^{(k)}_Z + \square^{(k)}) = \Omega^{(k)}_Z\Theta_{Y}$, as operators on $\PPP_Y$, which is the desired relation.
	\end{proof}

\section{$b$-deformation of the tau-function}
\label{sec:Jack}

In this section we study the $b$-deformed tau-function $\MultiJack$, defined in~\eqref{eq:JackIntro} using Jack symmetric functions. We show that it is annihilated by the operators defined in the previous section, which makes the connection with the generating function of coverings $F_\rho$ and prove our main result, \cref{thm:mainInSection5}.

\subsection{Jack symmetric functions}

\subsubsection{Partitions and symmetric functions}
The group
$\Sym{\infty}$ of permutations of $\mathbb{N}_{\geq 1}$ with a finite support
acts naturally on the set of 
sequences of nonnegative integers with finite support $\A = \bigoplus_{i = 1}^{\infty}\N$, and
partitions represent orbits of this action. We can rephrase this
observation as follows. Let $\Symm_n :=
\QQ[x_1,\dots,x_n]^{\Sym{n}}$ be the algebra of symmetric polynomials,
that is polynomials in $x_1,\dots,x_n$ invariant by the natural action
of $\Sym{n}$ permuting their variables. Let $\Symm :=
\underleftarrow{\Symm_n}$ be the projective limit with respect to the
natural morphism $\Symm_{n+1} \ni f(x_1,\dots,x_n,x_{n+1}) \mapsto
f(x_1,\dots,x_n,0)$. The algebra of symmetric functions $\Symm$ has a natural homogenous basis
indexed by partitions and obtained by symmetrizing monomials:
\[m_\la = \sum_{\alpha \in \Sym{\infty} \la} \xx^\alpha,\]
where $\Sym{\infty} \la$ is the orbit of the partition $\la$ by the
action of the permutation group $\Sym{\infty}$ on $\A$, and
$\xx^\alpha$ is the monomial $\xx^\alpha = \prod_i
x_i^{\alpha_i}$. In particular $\Symm = \bigoplus \Symm^n$ has a natural gradation by
degree, and $\Symm^n$ is a finite-dimensional vector space,
whose dimension is given by the number of partitions of size
$n$.

There is another base of $\Symm^n$ of a great importance in this
paper, which is called \emph{power-sum} basis, and is given by 
    \[ p_\lambda = \prod_{i=1}^{\ell(\lambda)} p_i; \quad p_i = \sum_j x_j^i
      \text{ for } i>0.\]
An immediate consequence of this fact is that $\Symm$ is a polynomial
algebra, $\Symm = \QQ[p_1,p_2,\dots]$.

\subsubsection{Laplace-Beltrami operator and Jack symmetric function}

In order to define Jack symmetric functions, which are the main
characters of this section, we need to introduce some simple
combinatorial statistics of partitions.

We let $\PPP_n$ denote the set of partitions of size $n$. There is an important poset structure on $\PPP_n$
given by the \emph{dominance order:}
\[ \lambda \leq \mu \iff \sum_{i\leq j}\lambda_i \leq \sum_{i\leq
    j}\mu_i \text{ for any positive integer } j.\]
To any partition $\lambda \in \PPP_n$ we can associate
a conjugate partition $\la^t = (\la^t_1,\dots,\la^t_{\ell'})$, where
$\ell' = \la_1$, and for any $1 \leq i \leq \ell'$
\[ \la^t_i = \#\{j: \la_j \geq i\}.\]
The concept of conjugate partition is very natural from a geometric
point of view. Indeed we can represent a partition $\la$ by drawing its
\emph{Young diagram}, which consists of the set 
\[ \la = \{(i, j):1 \leq i \leq \la_j, 1 \leq j \leq \ell(\la) \}\]
and then conjugating $\la$ corresponds to reflecting its Young diagram
through the line $x=y$.  For any
 box $\square := (i,j) \in \la$ from Young diagram we define its
 \emph{arm-length} by $a(\square) := \la_j-i$ and its
 \emph{leg-length} by $\ell(\square) := \la_i^t-j$. These definitions follow~\cite[Chapter I]{Macdonald1995}.

Let $\alpha = 1+b$ be an indeterminate. There are
several natural statistics on the set of partitions (or,
equivalently, Young diagrams) that we need:
\begin{align}
\label{eq:HookProduct}
\hook_\a(\la) &:= \prod_{\square \in \la}\left(\a\ a(\square) + \ell(\square) +1 \right),\\
\label{eq:HookProduct2}
\hook'_\a(\la) &:= \prod_{\square \in \la}\left(\a\ a(\square) +
                 \ell(\square) +\a \right),\\
\label{eq:NumericalFactor}
z_\la &:= \prod_{i \geq 1}i^{m_i(\la)}m_i(\la)!,
\end{align}
where $m_i(\la)$ denotes the number of parts of $\la$ equal to $i$
(therefore $n!z_\lambda^{-1}$ is the number of permutations from the
conjugacy class of type $\lambda$). We
also recall that for a box $\square = (x,y) \in \la$ its
content is equal to $x-y = (x-1) - (y-1)$. We define its
$\a$-deformation by
\[c_\a(\square) := \a(x-1)-(y-1).\]

Let $\Symm_\alpha$ denote the algebra of symmetric functions over the
field $\QQ(\alpha)$ of rational functions in $\alpha$ with rational
coefficients. Since $\Symm_\alpha = \QQ(\a)[p_1,p_2,\dots]$ then clearly
the Laplace-Beltrami operator $D_\a$ given by
\eqref{eq:Laplace-Beltrami} acts on the symmetric function
algebra. Its importance is reflected in the following result.

\begin{defprop}
  \label{defprop:Jack}
There is a unique family of symmetric functions
$\{J^{(\alpha)}_\lambda\}$ such that for each partition $\lambda$,
\begin{itemize}
\item $D_\alpha J^{(\alpha)}_\lambda = \left(\sum_{\square \in \la}c_\a(\square)\right)J^{(\alpha)}_\lambda$;
\item
$ J_\la^{(\a)} = \hook_\a(\la) m_\la + \sum_{\nu < \la}a^{\la}_\nu m_\nu,  \text{ where } a^{\la}_\nu \in \QQ(\a).$
  \end{itemize}
  We call them \emph{Jack symmetric functions}.
\end{defprop}

\begin{remark}
  Jack symmetric functions
  are usually defined by three conditions: orthogonality,
	normalization, and triangularity (which is the second property in our definition). However,
  \cref{defprop:Jack} is the core of the proof that the classical definition
  makes sense. Therefore we are
  going to treat \cref{defprop:Jack} as a definition of Jack symmetric functions in
  this paper and we refer to \cite{Stanley1989,Macdonald1995} for completeness.
\end{remark}

We can endow $\Symm_\alpha$ with a scalar product by defining it on
the basis of power-sums
\begin{equation}
  \label{eq:product}
  \langle p_\mu,p_\nu\rangle_\a = \a^{\ell(\mu)}z_\mu\delta_{\mu,\nu},
\end{equation}
where $\delta_{\mu,\nu}$ is the Kronecker delta. It turns out that Jack
symmetric functions are also orthogonal with the following squared norm:
\[\langle J_\la^{(\a)},J_\la^{(\a)}\rangle_\a = \hook_\a(\la)
  \hook'_\a(\la) =: j_\la^{(\alpha)}.\]
Note that this is a one-parameter deformation of the factor
$(\frac{f_\la}{n!})^2$ appearing in the definition~\eqref{eq:SchurIntro}
of the classical
tau-function. Indeed, for  $\alpha = 1$, $\hook_1(\la) = \hook'_1(\la)$
coincides with the classical hook-length appearing in the hook-length formula for $f_\lambda$ (see e.g.~\cite{Stanley:EC2}), thus \[ \frac{n!^2}{f_\la^2} = j_\la^{(1)}.\]

For any linear operator $D \in \End(\Symm_\alpha)$ we can
define its adjoint $D^\perp$ with respect to
$\langle\cdot,\cdot\rangle_\alpha$ so that
\[\langle Df,g\rangle_\alpha = \langle
f,D^\perp g\rangle_\alpha\] for all symmetric functions $f,g \in
\Symm_\alpha$.
For instance
\begin{equation}  
  \left(p_j/\a\right)^\perp = j\partial/\partial p_j,
\end{equation}
which is a direct consequence of \eqref{eq:product}.

From now on, we will use the notation $J_\la^{(\a)}(\pp)$ to indicate
that we are treating Jack polynomials as polynomials in the ``power-sum'' variables
$p_1,p_2,\dots$, considered as indeterminates.

\medskip

We are now going to show that the operators $G_j := j!\partial/\partial p_j$
are determined by a similar recursion as the operators $A_j$ from
\cref{thm:commut1}:

\begin{lemma}\label{lem:commutSimple}
Define the differential operators $(G_j)_{j\geq 1}$ on $\PPP$ by:
\begin{align}
  \label{eq:GjCombi}
 G_{j}:= j!\partial/\partial p_j \ \ j\geq 1.
\end{align}
Then these operators satisfy the recurrence formula
\begin{align}\label{eq:commutSimple}
	G_1 &= \partial/\partial p_1 \ \ \ ,\ \ G_{j+1} = [G_j,A_2^\perp], \mbox { , for } j\geq 1.
\end{align}
\end{lemma}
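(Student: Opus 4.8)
The plan is to reduce the statement to two explicit computations: first express $A_2^\perp$ as a concrete first-order differential operator, and then evaluate the commutator $[G_j,A_2^\perp]$ directly using the canonical relations $[\partial/\partial p_i,p_j]=\delta_{ij}$. Since $A_2$ enters only through its adjoint and each $G_j$ is a single (scaled) derivative, no induction or catalytic-variable machinery is needed here; everything is a bounded-order computation on $\PPP=\mathbb{Q}(b)[p_1,p_2,\dots]$.

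First I would obtain an explicit formula for $A_2$. By \cref{thm:commut1} we have $A_2=[D_\alpha,A_1]$ with $A_1=p_1/(1+b)$, so $A_2=\frac{1}{1+b}[D_\alpha,p_1]$. Using the explicit form \eqref{eq:Laplace-Beltrami} of $D_\alpha$ together with $[\partial/\partial p_i,p_1]=\delta_{i,1}$, only the first (quadratic-derivative) summand of $D_\alpha$ contributes: the $p_ip_j$ summand produces a factor $\delta_{i+j,1}=0$ for $i,j\geq 1$, while the $b$-summand carries a factor $i(i-1)$ that vanishes at $i=1$. A short computation then gives
\[ A_2=\sum_{i\geq 1} i\,p_{i+1}\frac{\partial}{\partial p_i}. \]

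Next I would compute the adjoint $A_2^\perp$ with respect to $\langle\cdot,\cdot\rangle_\alpha$, using that the adjoint reverses products, $(XY)^\perp=Y^\perp X^\perp$, and the basic relation $(p_j/\alpha)^\perp=j\,\partial/\partial p_j$ coming from \eqref{eq:product}. Writing each summand of $A_2$ as the product of multiplication by $p_{i+1}$ with the operator $i\,\partial/\partial p_i$, its adjoint is $(i\,\partial/\partial p_i)^\perp\,(p_{i+1})^\perp=(p_i/\alpha)\cdot\alpha(i+1)\,\partial/\partial p_{i+1}$, where I used $(i\,\partial/\partial p_i)^\perp=p_i/\alpha$ and $(p_{i+1})^\perp=\alpha(i+1)\,\partial/\partial p_{i+1}$. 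The factors of $\alpha$ cancel, yielding
\[ A_2^\perp=\sum_{i\geq 1}(i+1)\,p_i\frac{\partial}{\partial p_{i+1}}. \]

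Finally, the recursion is a one-line check. Since $\big[\partial/\partial p_j,\ p_i\,\partial/\partial p_{i+1}\big]=\delta_{i,j}\,\partial/\partial p_{i+1}$, we obtain
\[ [G_j,A_2^\perp]=\Big[j!\,\frac{\partial}{\partial p_j},\ \sum_{i\geq 1}(i+1)\,p_i\frac{\partial}{\partial p_{i+1}}\Big]=j!\,(j+1)\,\frac{\partial}{\partial p_{j+1}}=(j+1)!\,\frac{\partial}{\partial p_{j+1}}=G_{j+1}, \]
and $G_1=\partial/\partial p_1$ is immediate from the definition. The only mildly delicate point — and the step most prone to normalization errors — is the adjoint computation, where one must correctly track the factors $\alpha$, $i$, and $i+1$ coming from \eqref{eq:product} and apply the anti-automorphism property in the right order; once $A_2^\perp$ is in hand the conclusion follows at once.
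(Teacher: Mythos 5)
Your proof is correct, and it follows essentially the same route as the paper's: obtain an explicit formula for $A_2^\perp$, then verify the recursion by the direct commutator computation $[\partial/\partial p_j,\,p_i\,\partial/\partial p_{i+1}]=\delta_{i,j}\,\partial/\partial p_{i+1}$. The only divergence is how you make $A_2$ explicit: the paper reads it off directly from the definition \eqref{eq:EjCombi}, computing $\Theta_Y Y_+\Lambda_Y\frac{y_0}{\alpha}=\sum_{i\geq 1}p_{i+1}\left(p_i/\alpha\right)^\perp$, whereas you invoke \cref{thm:commut1} to write $A_2=\frac{1}{1+b}[D_\alpha,p_1]$ and extract the formula from the Laplace--Beltrami operator \eqref{eq:Laplace-Beltrami}; this is non-circular (\cref{thm:commut1} is proved earlier and independently of this lemma) and gives the same operator $\sum_{i\geq 1}i\,p_{i+1}\partial/\partial p_i$. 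One cosmetic remark: the paper sidesteps your ``delicate'' $\alpha$-bookkeeping in the adjoint by keeping everything in the form $\left(p_{i+1}/\alpha\right)^\perp\cdot p_i$, so the factors of $\alpha$ never appear explicitly; your explicit cancellation is equally valid. Also, your observation that no induction is needed is accurate --- the paper's phrase ``induction on $j$'' is vacuous here, since the identity $G_{j+1}=[G_j,A_2^\perp]$ is verified uniformly in $j$ without using any previous case.
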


\begin{proof}
  The proof is made by induction on $j$ and it is an easy
  computation. Note that
  \[ A_2^\perp = \left(\Theta_Y Y_+\Lambda_{Y} \frac{y_0}{\alpha}\right)^\perp = \left(\sum_{i
      \geq 1} p_{i+1}\cdot \left(p_{i}/\a\right)^\perp\right)^\perp
	= \sum_{i
        \geq 1} p_{i} \cdot \left(p_{i+1}/\a\right)^\perp = \sum_{i
        \geq 1} \left(p_{i+1}/\a\right)^\perp\cdot p_{i}.\]
    Since
    \[j![    \partial/\partial p_j, \left(p_{i+1}/\a\right)^\perp\cdot p_{i}] =
    j!\delta_{i,j}\left(p_{i+1}/\a\right)^\perp\]
  we obtain 
  \[ G_{j+1} = j! \left(p_{j+1}/\a\right)^\perp = j! \sum_{i \geq 1}[
    \partial/\partial p_j, \left(p_{i+1}/\a\right)^\perp\cdot
    p_{i}] = [G_j,A_2^\perp],\]
  and we conclude the proof.
\end{proof}

Stanley obtained in his seminal paper \cite{Stanley1989} some results concerning Jack symmetric functions, which are of special
interest for us. These results can be seen as $\a$--deformations of
a classical product formula and a special case of
Pieri rule for Schur polynomials. 
Moreover, for two partitions $\la,\mu \in \PPP$ we write $\la
\nearrow \mu$ if $|\mu|-|\la| = 1$ and the Young diagram  of $\la$ is
contained in the one of $\mu$.

\begin{theorem}[\cite{Stanley1989}]
  For any $\lambda \in \PPP_n$ one has
  \begin{align}
    \label{eq:JackEvaluationProdForm}
     J_\la^{(\a)}(\underline{u}) &= \prod_{\square \in
                         \la}(u+c_\a(\square)),\\
    \label{eq:JackPieriRule}
    p_1 J_\la^{(\a)}(\pp) &= \sum_{\la \nearrow \mu}c_{\la \nearrow \mu}
                       J_\mu^{(\a)}(\pp),
    \end{align}
where $c_{\la \nearrow \mu} \in \N[\a]$ is a (explicit) polynomial in
$\a$ with nonnegative integer coefficients.

\end{theorem}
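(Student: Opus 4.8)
Both formulas are due to Stanley, and the plan is to recover them from the Laplace-Beltrami characterization of \cref{defprop:Jack} rather than to cite them blindly. Everything rests on the scalar product \eqref{eq:product}, the norm $j_\la=\hook_\a(\la)\hook'_\a(\la)$, the adjunction $(p_j/\a)^\perp=j\,\partial/\partial p_j$, and one commutator. Since multiplication by $p_1$ raises the degree by one, a direct check on power sums using \eqref{eq:Laplace-Beltrami} shows that the commutator of $D_\a$ with multiplication by $p_1$ equals $(1+b)A_2$, where $A_2=\sum_{i\ge1}i\,p_{i+1}\,\partial/\partial p_i$ is the operator of \cref{thm:commut1} (computed in the proof of \cref{lem:commutSimple}). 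Writing $e_\la:=\sum_{\square\in\la}c_\a(\square)$ for the eigenvalue $D_\a J_\la^{(\a)}=e_\la J_\la^{(\a)}$, I note that $e_\la$ is exactly the \emph{content sum}; this is the $b$-deformed shadow of the content product in \eqref{eq:JackEvaluationProdForm}, and is what couples the two statements.

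For the Pieri rule \eqref{eq:JackPieriRule}, I would write $p_1J_\la^{(\a)}=\sum_{|\mu|=|\la|+1}c_{\la\mu}J_\mu^{(\a)}$ and apply $D_\a$. Using the eigen-equation and the commutator above, this yields the identity
\[
\sum_{|\mu|=|\la|+1}c_{\la\mu}\,(e_\mu-e_\la)\,J_\mu^{(\a)}=(1+b)\,A_2\,J_\la^{(\a)},
\]
in which $e_\mu-e_\la=c_\a(\square)$ whenever $\mu=\la+\square$. That $c_{\la\mu}\ne0$ forces a single-box addition $\la\nearrow\mu$ I would obtain from a triangularity argument in the dominance order: $p_1=m_{(1)}$ and its adjoint $\a\,\partial/\partial p_1$ are both triangular for $\le$, which together with the triangular expansion $J_\la^{(\a)}=\hook_\a(\la)m_\la+\sum_{\nu<\la}a^\la_\nu m_\nu$ of \cref{defprop:Jack} confines the support to covers of $\la$. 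The coefficients are then isolated by orthogonality, $c_{\la\mu}=\langle p_1J_\la^{(\a)},J_\mu^{(\a)}\rangle_\a/j_\mu=\a\,\langle J_\la^{(\a)},\partial_{p_1}J_\mu^{(\a)}\rangle_\a/j_\mu$, and simplified into an explicit ratio of the hook products $\hook_\a,\hook'_\a$ over $\la$ and $\mu$.

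For the evaluation \eqref{eq:JackEvaluationProdForm}, I would use the $N$-variable specialization $p_k\mapsto u$ with $N=u$ kept formal, which is legitimate because $J_\la^{(\a)}(1^N)$ is a polynomial in $N$. Applying this specialization to the Pieri rule turns it into a recursion relating evaluations across adjacent degrees, whose solution is pinned down by the down operator $\a\,\partial/\partial p_1$ together with the anchoring value of \cref{defprop:Jack} at $\la=(1)$; alternatively, and more robustly, the product form follows at once from the fact that the Sekiguchi-Debiard operators, of which $D_\a$ is the lowest nontrivial one, have the $J_\la^{(\a)}$ as joint eigenfunctions with \emph{product} eigenvalues. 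Verifying that $\prod_{\square}(u+c_\a(\square))$ solves the Pieri recursion reduces to a finite identity among contents and arm/leg lengths of $\la$ and its one-box extensions.

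The main obstacle is the positivity in the Pieri step: the orthogonality formula gives $c_{\la\nearrow\mu}\in\QQ(\a)$ routinely, and cancellation of the hook products shows it is a polynomial, but proving that every surviving factor has the shape $\a\,a(\square)+\ell(\square)+\mathrm{const}$ with a \emph{nonnegative} constant — so that $c_{\la\nearrow\mu}\in\N[\a]$ — is the delicate combinatorial point, and it is precisely this integrality-and-positivity that the later arguments of the paper build upon.
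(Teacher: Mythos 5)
The first thing to note is that the paper contains no proof of this statement: it is imported wholesale from Stanley \cite{Stanley1989} and used as a black box. In fact the logical flow in the paper goes the opposite way from your sketch --- \cref{cor:ActionOfEOnJack}, whose $i=2$ case is exactly your displayed identity $\sum_{\la\nearrow\mu}c_{\la\nearrow\mu}(e_\mu-e_\la)J_\mu^{(\a)}=(1+b)A_2J_\la^{(\a)}$, is \emph{deduced from} the Pieri rule, so that identity cannot serve as evidence for it. That said, several of your ingredients are sound: the commutator $[D_\a,p_1]=(1+b)A_2$ is correct (it is how $A_2$ is defined in \cref{thm:commut1}, and the proof of \cref{lem:commutSimple} gives the explicit form $A_2=\sum_{i\geq 1}i\,p_{i+1}\partial/\partial p_i$), and the dual-triangularity argument using $p_1$ and its adjoint $\a\,\partial/\partial p_1$, combined with orthogonality, is indeed the standard way to confine the support to covers $\la\nearrow\mu$ and to write $c_{\la\nearrow\mu}=\langle p_1J_\la^{(\a)},J_\mu^{(\a)}\rangle_\a/j_\mu$.

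The genuine gaps sit exactly where the content of the theorem lies. First, the claim $c_{\la\nearrow\mu}\in\N[\a]$ with an explicit formula is the whole point of \eqref{eq:JackPieriRule}: orthogonality only yields $c_{\la\nearrow\mu}\in\QQ(\a)$, and you acknowledge but do not perform the step converting the ratio of hook products into a polynomial with nonnegative integer coefficients --- this is the bulk of Stanley's proof, not a routine verification, and the paper's later positivity statements rest on it. Second, \eqref{eq:JackEvaluationProdForm} is not established by either route you offer. The Sekiguchi--Debiard route assumes joint eigenfunctions with product-form eigenvalues, a fact not derivable from \cref{defprop:Jack} alone and of essentially the same depth as the formula you want. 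The recursion route fails structurally: specializing the Pieri rule under $p_k\mapsto u$ gives $u\,J_\mu^{(\a)}(\underline{u})=\sum_{\mu\nearrow\la}c_{\mu\nearrow\la}J_\la^{(\a)}(\underline{u})$, a single linear relation tying one value at degree $n$ to \emph{several} unknown values at degree $n+1$; it cannot pin down individual evaluations without exactly the explicit coefficients (and an induction) that you have not supplied. A minor side remark: testing $\la=(1)$ (where $J_{(1)}^{(\a)}=p_1$, so $J_{(1)}^{(\a)}(\underline{u})=u$) shows the product should run over \emph{all} boxes of $\la$, the box $(1,1)$ contributing the factor $u$; this is a typo in the paper's statement, and the all-boxes version is the one actually used later in \eqref{eq:contentForm} and \eqref{eq:contentProductForm}, so your specialization, if completed, would land on the correct formula.
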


\begin{corollary}
  \label{cor:ActionOfEOnJack}
  For any $i \geq 1$ the following identity holds true
  \[ A_i J_\la^{(\a)}(\pp) = \sum_{\la \nearrow \mu}c_\a(\mu\setminus\la)^{i-1}\frac{c_{\la \nearrow \mu}}{\a}
                       J_\mu^{(\a)}(\pp).\]
                   \end{corollary}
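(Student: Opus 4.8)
The plan is to prove \cref{cor:ActionOfEOnJack} by induction on $i$, using the recurrence $A_{i} = [D_\alpha, A_{i-1}]$ from \cref{thm:commut1} together with the fact that the Jack function $J_\la^{(\a)}$ is an eigenvector of the Laplace--Beltrami operator $D_\alpha$. The base case $i=1$ is exactly the Pieri rule \eqref{eq:JackPieriRule} divided by $\a$: since $A_1 = p_1/(1+b) = p_1/\a$, we have $A_1 J_\la^{(\a)} = \frac1\a \sum_{\la \nearrow \mu} c_{\la\nearrow\mu} J_\mu^{(\a)}$, which matches the claimed formula because the exponent $c_\a(\mu\setminus\la)^{i-1}$ equals $1$ when $i=1$.

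For the inductive step, suppose the formula holds for $A_{i}$. First I would record the key spectral fact: by \cref{defprop:Jack}, $D_\alpha J_\mu^{(\a)} = \big(\sum_{\square\in\mu} c_\a(\square)\big) J_\mu^{(\a)}$, so $D_\alpha$ acts as the scalar $\gamma(\mu) := \sum_{\square\in\mu} c_\a(\square)$ on $J_\mu^{(\a)}$. The crucial observation is that if $\la \nearrow \mu$, then $\gamma(\mu) - \gamma(\la) = c_\a(\mu\setminus\la)$, i.e.\ the difference of eigenvalues is precisely the $\a$-content of the added box. Now apply $A_{i+1} = [D_\alpha, A_i] = D_\alpha A_i - A_i D_\alpha$ to $J_\la^{(\a)}$:
\begin{align*}
A_{i+1} J_\la^{(\a)} &= D_\alpha A_i J_\la^{(\a)} - A_i D_\alpha J_\la^{(\a)} \\
&= D_\alpha \sum_{\la\nearrow\mu} c_\a(\mu\setminus\la)^{i-1}\frac{c_{\la\nearrow\mu}}{\a} J_\mu^{(\a)} - \gamma(\la) \sum_{\la\nearrow\mu} c_\a(\mu\setminus\la)^{i-1}\frac{c_{\la\nearrow\mu}}{\a} J_\mu^{(\a)} \\
&= \sum_{\la\nearrow\mu} \big(\gamma(\mu)-\gamma(\la)\big)\, c_\a(\mu\setminus\la)^{i-1}\frac{c_{\la\nearrow\mu}}{\a} J_\mu^{(\a)} \\
&= \sum_{\la\nearrow\mu} c_\a(\mu\setminus\la)^{i}\,\frac{c_{\la\nearrow\mu}}{\a} J_\mu^{(\a)},
\end{align*}
where the induction hypothesis is used in the first sum of the second line, the eigenvalue property in the second sum, and the content-difference identity $\gamma(\mu)-\gamma(\la) = c_\a(\mu\setminus\la)$ in the last step. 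This is exactly the claimed formula for $i+1$, completing the induction.

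The only genuine verification needed is the content-difference identity $\gamma(\mu)-\gamma(\la) = c_\a(\mu\setminus\la)$, which is immediate: the Young diagram of $\mu$ differs from that of $\la$ by a single box $\square_0 = \mu\setminus\la$, and since $\gamma$ is a sum of $\a$-contents over all boxes, the difference telescopes to the $\a$-content $c_\a(\square_0)$ of the unique added box. I expect no serious obstacle here; the argument is a clean eigenvalue-difference computation that exploits the fact that $A_i$ raises the degree by exactly one (so it maps $J_\la^{(\a)}$ into the span of the $J_\mu^{(\a)}$ with $\la\nearrow\mu$) combined with the triangularity already encoded in the Pieri rule. The one subtlety worth stating explicitly is that the support of $A_i J_\la^{(\a)}$ is controlled: since $A_1$ multiplies by $p_1/\a$ and hence raises degree by one with the Pieri support $\{\mu : \la\nearrow\mu\}$, and since each subsequent commutator with $D_\alpha$ preserves homogeneity degree, every $A_i$ maps $J_\la^{(\a)}$ into $\Span\{J_\mu^{(\a)} : \la\nearrow\mu\}$; this is what guarantees that the only partitions appearing throughout the induction are those with $\la\nearrow\mu$, so that the content-difference identity applies term by term.
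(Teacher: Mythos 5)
Your proof is correct and follows essentially the same route as the paper: induction on $i$ with base case the Pieri rule \eqref{eq:JackPieriRule}, then the commutator recurrence $A_{i+1}=[D_\a,A_i]$ combined with the eigenvalue property of $D_\a$ and the telescoping identity $\gamma(\mu)-\gamma(\la)=c_\a(\mu\setminus\la)$ for $\la\nearrow\mu$. The paper's proof is a more compact version of exactly this computation.
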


                   \begin{proof}
                     We use induction on $i$. For
                     $i=1$ one has $A_1 = p_1/\a$, so this is simply
                     \eqref{eq:JackPieriRule}.
                     We recall that
			   \begin{align}\label{eq:jackEigenvector}
				   D_\a  J_\la^{(\a)}(\pp) = \left(\sum_{\square \in \la} c_{\a}(\square)\right)  J_\la^{(\a)}(\pp).
			   \end{align}
                     Thus
                     \begin{multline*}
                       A_{i+1} J_\la^{(\a)}(\pp) =
                         [D_\a,A_i]J_\la^{(\a)}(\pp)  = \sum_{\la \nearrow \mu}\left(\sum_{\square \in\mu}c_\a(\square)- \sum_{\square \in\la}c_\a(\square)\right)c_\a(\mu\setminus\la)^{i-1}\frac{c_{\la \nearrow \mu}}{\a}
                         J_\mu^{(\a)}(\pp)\\
                         =\sum_{\la \nearrow
                           \mu}c_\a(\mu\setminus\la)^i \frac{c_{\la \nearrow \mu}}{\a}
                       J_\mu^{(\a)}(\pp). \ \ \  \qedhere
                       \end{multline*}
                     \end{proof}

\subsection{The $b$-deformation of the tau-function}

In this section we prove our main theorem.
In the proof, the differential operators defined in previous sections with respect to the variables $\pp$ will also be used with respect to the variables $\qq$, and for this we introduce the following more precise notation. We denote by $A_i(\pp)$, $B^{(k)}_i(\pp)$, $G_i(\pp)$, respectively,
 the operators defined by \eqref{eq:EjCombi},
\eqref{eq:FmCombi} and \eqref{eq:GjCombi}. We
denote by $A_i(\qq)$, $B^{(k)}_i(\qq)$, $G_i(\qq$), respectively,
the operators obtained from $A_i(\pp)$ $B^{(k)}_i(\pp)$, $G_i(\pp)$
 by replacing each occurence of the indeterminate $p_i$ in
their definition by the indeterminate $q_i$ for each $i>0$.
Moreover we recall that, everywhere, $\a = 1+b$.

Define $\MultiJack(t;\pp,\qq,u_1,\dots,u_k) \in
\QQ(b)[\pp,\qq,u_1,\dots,u_k][[t]]$ by
\begin{align}
  \label{eq:MultiJackDef}
  \MultiJack(t;\pp,\qq,u_1,\dots,u_k) := \sum_{n \geq 0}t^n\sum_{\la
    \vdash n}\frac{J_\la^{(1+b)}(\pp)J_\la^{(1+b)}(\qq)J_\la^{(1+b)}(\underline{u_1})\cdots J_\la^{(1+b)}(\underline{u_k})}{j_\la^{(1+b)}},
\end{align}
with the convention that the constant term in $t$ is equal to $1$.

        \begin{lemma}
      \label{lem:MultiJack1}
      The generating series $\MultiJack$ satisfies the following
      equation:
      \begin{equation}
        \label{eq:VirasoroJack1}
        G_1(\qq) \MultiJack(t;\pp,\qq,u_1,\dots,u_k) =
          tB^{(k)}_1(\pp) \MultiJack(t;\pp,\qq,u_1,\dots,u_k).
          \end{equation}
      \end{lemma}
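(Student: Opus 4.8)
The plan is to expand both sides of~\eqref{eq:VirasoroJack1} in the basis $\{J_\la^{(\a)}(\pp)J_\nu^{(\a)}(\qq)\}_{\la,\nu}$ and match coefficients, using the explicit action of the operators on Jack polynomials provided by \cref{cor:ActionOfEOnJack}, the evaluation formula~\eqref{eq:JackEvaluationProdForm}, and the adjointness relation for the scalar product $\langle\cdot,\cdot\rangle_\a$. Throughout I write $\a=1+b$ and, for $\nu\nearrow\la$, denote by $\square=\la\setminus\nu$ the unique added box.

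First I would rewrite the right-hand operator in terms of the $A_i$. Since $\Lambda_{Y}$ commutes with the scalars $u_j$, expanding the product gives $\prod_{j=1}^k(\Lambda_{Y}+u_j)=\sum_{i=0}^k e_{k-i}(u_1,\dots,u_k)\Lambda_{Y}^i$, where $e_m$ is the elementary symmetric polynomial. Inserting this in the definition of $B^{(k)}_1$ and using $A_{i+1}=\Theta_Y Y_+\Lambda_{Y}^i\frac{y_0}{1+b}$ yields $B^{(k)}_1(\pp)=\sum_{i=0}^k e_{k-i}(u_1,\dots,u_k)A_{i+1}(\pp)$. Applying \cref{cor:ActionOfEOnJack} term by term and resumming $\sum_{i=0}^k e_{k-i}(u)\,c_\a(\square)^i=\prod_{j=1}^k(c_\a(\square)+u_j)$, I obtain the raising action
\begin{equation*}
B^{(k)}_1(\pp)\,J_\nu^{(\a)}(\pp)=\sum_{\nu\nearrow\la}\Big(\prod_{j=1}^k\big(u_j+c_\a(\square)\big)\Big)\frac{c_{\nu\nearrow\la}}{\a}\,J_\la^{(\a)}(\pp).
\end{equation*}

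For the left-hand side I would use that $G_1=\partial/\partial p_1=(p_1/\a)^\perp=A_1^\perp$, the adjoint with respect to $\langle\cdot,\cdot\rangle_\a$ of the multiplication operator $A_1=p_1/\a$. Combining $\langle J_\la^\a,J_\la^\a\rangle_\a=j_\la$ with the $i=1$ case of \cref{cor:ActionOfEOnJack} (equivalently the Pieri rule~\eqref{eq:JackPieriRule}) gives the lowering action
\begin{equation*}
G_1(\qq)\,J_\la^{(\a)}(\qq)=\sum_{\nu\nearrow\la}\frac{c_{\nu\nearrow\la}}{\a}\,\frac{j_\la}{j_\nu}\,J_\nu^{(\a)}(\qq).
\end{equation*}

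Finally I would substitute these two actions into the definition of $\MultiJack$ and compare. Applying $G_1(\qq)$ and reindexing the resulting double sum by the pair $\nu\nearrow\la$ (the factor $j_\la$ cancels the $1/j_\la$ weight, leaving $1/j_\nu$), the coefficient of $t^{|\la|}\,\frac{c_{\nu\nearrow\la}}{\a}\,\frac{J_\la^{(\a)}(\pp)J_\nu^{(\a)}(\qq)}{j_\nu}$ is $\prod_{l=1}^k J_\la^{(\a)}(\underline{u_l})$. On the right-hand side, using $t^{|\nu|+1}=t^{|\la|}$, the same coefficient is $\prod_{l=1}^k J_\nu^{(\a)}(\underline{u_l})\cdot\prod_{j=1}^k(u_j+c_\a(\square))$. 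The two agree because the evaluation formula~\eqref{eq:JackEvaluationProdForm} gives, for each $l$, the ratio $J_\la^{(\a)}(\underline{u_l})/J_\nu^{(\a)}(\underline{u_l})=u_l+c_\a(\square)$; pairing the $l$-th evaluation with the $j=l$ factor turns $\prod_l J_\nu^{(\a)}(\underline{u_l})\prod_j(u_j+c_\a(\square))$ into $\prod_l J_\la^{(\a)}(\underline{u_l})$. I expect the only delicate point to be the base case $\nu=\emptyset$, $\la=(1)$: here $\square=(1,1)$ has content $0$ and is excluded from the product in~\eqref{eq:JackEvaluationProdForm}, so the ratio argument must be replaced by a direct check that the lowest-order term $t\,B^{(k)}_1(\pp)\cdot 1=t\,\frac{\prod_j u_j}{\a}\,p_1$ matches $G_1(\qq)$ applied to the degree-one term of $\MultiJack$.
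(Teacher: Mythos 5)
Your proof is correct and follows essentially the same route as the paper's: the raising action of $B^{(k)}_1(\pp)$ obtained from \cref{cor:ActionOfEOnJack} together with the expansion of $\prod_{j}(\Lambda_{Y}+u_j)$ in elementary symmetric polynomials, the lowering action of $G_1(\qq)$ obtained from the Pieri rule via adjointness, and coefficient matching through the ratio $J_\la^{(\a)}(\underline{u})/J_\nu^{(\a)}(\underline{u})=u+c_\a(\square)$, i.e.\ \eqref{eq:contentForm}. Your base-case caveat is the only divergence, and it is actually unnecessary: the exclusion of the box $(1,1)$ in \eqref{eq:JackEvaluationProdForm} is a typo (the product runs over all boxes of $\la$, as confirmed by the paper's later identity $u^{|\la|}J_\la^{(1+b)}(\underline{1/u})=\prod_{\square\in\la}\big(1+uc_\a(\square)\big)$), so the ratio argument holds uniformly --- though your direct verification of the degree-one term is correct and does no harm.
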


\begin{proof}
        We fix an integer $n \geq 0$ and we look at the coefficient of $t^{n+1}$ in the %
L.H.S.
        of \cref{eq:VirasoroJack1}, which is given by the formula:
        \begin{multline*}
          \sum_{\la
    \vdash
    n+1}\frac{J_\la^{(1+b)}(\pp)\JJ_\la^{(1+b)}(u_1,\dots,u_k)}{j_\la^{(1+b)}}G_1(\qq)
J_\la^{(1+b)}(\qq) = \\
  \sum_{\la
    \vdash
    n+1}\frac{J_\la^{(1+b)}(\pp)\JJ_\la^{(1+b)}(u_1,\dots,u_k)}{j_\la^{(1+b)}}\frac{\partial}{\partial
    q_1}   J_\la^{(1+b)}(\qq),
\end{multline*}
where
\[\JJ_\la^{(1+b)}(u_1,\dots,u_k) := J_\la^{(1+b)}(\underline{u_1})\cdots J_\la^{(1+b)}(\underline{u_k}).\]
Applying the Pieri rule \eqref{eq:JackPieriRule} we obtain the following
expression
\[ \sum_{\la
    \vdash
    n+1} J_\la^{(1+b)}(\pp)\JJ_\la^{(1+b)}(u_1,\dots,u_k)\sum_{\mu \nearrow \la}\frac{c_{\mu \nearrow \la}}{1+b}\frac{J_\mu^{(1+b)}(\qq)}{j_\mu^{(1+b)}}.\]
It is straightforward from \eqref{eq:JackEvaluationProdForm} that for
any $\mu \nearrow \la$ we have
	\begin{align}\label{eq:contentForm}
		J_\la^{(1+b)}(\underline{u}) =
\big(u+c_{\a}(\la\setminus\mu)\big)\cdot
J_\mu^{(1+b)}(\underline{u}),
	\end{align}
which gives the following identity:
\[ \JJ_\la^{(1+b)}(u_1,\dots,u_k) = \big(\sum_{1 \leq i \leq
    k+1}e_{k+1-i}(u_1,\dots,u_k) c_{\a}(\la\setminus\mu)^{i-1}\big)
  \JJ_\mu^{(1+b)}(u_1,\dots,u_k).\]
Plugging it into the expression of the L.H.S. of \cref{eq:VirasoroJack1}
and changing the order of summation we obtain the following formula:
\[ \sum_{\mu
    \vdash
    n} \frac{J_\mu^{(1+b)}(\qq)\JJ_\mu^{(1+b)}(u_1,\dots,u_k)}{j_\mu^{(1+b)}}\big(\sum_{1 \leq i \leq
    k+1}e_{k+1-i}(u_1,\dots,u_k) \sum_{\mu \nearrow
    \la}c_{\a}(\la\setminus\mu)^{i-1}\frac{c_{\mu \nearrow \la}}{1+b} J_\la^{(1+b)}(\pp)\big).\]
Finally, \cref{cor:ActionOfEOnJack} implies that this expression is
equal to
\[ \sum_{\mu
    \vdash
    n}
  \frac{J_\mu^{(1+b)}(\qq)\JJ_\mu^{(1+b)}(u_1,\dots,u_k)}{j_\mu^{(1+b)}}B^{(k)}_1(\pp)J_\mu^{(1+b)}
  = [t^{n+1}]\RHS,\]
which leads to the desired identity:
\[ G_1(\qq)\MultiJack(t;\pp,\qq,u_1,\dots,u_k) =
  tB^{(k)}_{1}(\pp) \MultiJack(t;\pp,\qq,u_1,\dots,u_k). \qedhere\]
\end{proof}

\begin{lemma}
      \label{lem:MultiJack2}
      The generating series $\MultiJack$ satisfies the following
      equation:
      \begin{equation}
        \label{eq:VirasoroJack2}
        A_2^\perp(\qq) \MultiJack(t;\pp,\qq,u_1,\dots,u_k) =
          t\Omega_Y^{(k)}(\pp) \MultiJack(t;\pp,\qq,u_1,\dots,u_k).
          \end{equation}
      \end{lemma}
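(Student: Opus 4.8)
The plan is to mirror the proof of \cref{lem:MultiJack1}: I will compute the action of each side of \eqref{eq:VirasoroJack2} on Jack symmetric functions and match them, the whole argument reducing to \cref{cor:ActionOfEOnJack} together with the content product formula \eqref{eq:contentForm}. Both sides are power series in $t$ whose coefficients are polynomials in $\qq$ (with coefficients in $\QQ(b)[\pp,u_1,\dots,u_k]$), and the Jack functions $\{J_\nu^{(\a)}(\qq)\}$ form a basis; hence it suffices to check the identity after pairing both sides with an arbitrary $J_\nu^{(\a)}(\qq)$ in the scalar product $\langle\cdot,\cdot\rangle_\a$ in the $\qq$-variables.

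First I would record the action of $\Omega^{(k)}_Y$ on Jack functions. Since the indeterminates $u_j$ commute with $\Lambda_Y$, one has $\prod_{j=1}^k(\Lambda_Y+u_j)\Lambda_Y = Q(\Lambda_Y)$ with $Q(x) = x\prod_{j=1}^k(x+u_j)$, so expanding $Q$ into monomials, using $A_{i+1}=\Theta_Y Y_+\Lambda_Y^{i}\frac{y_0}{1+b}$ from \eqref{eq:EjCombi}, and applying \cref{cor:ActionOfEOnJack} termwise yields, by linearity,
\begin{equation*}
\Omega^{(k)}_Y(\pp)\,J_\nu^{(\a)}(\pp) = \sum_{\nu\nearrow\la} c_\a(\la\setminus\nu)\prod_{j=1}^k\big(u_j+c_\a(\la\setminus\nu)\big)\frac{c_{\nu\nearrow\la}}{\a}\,J_\la^{(\a)}(\pp).
\end{equation*}

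Next I would pair the left-hand side with $J_\nu^{(\a)}(\qq)$. Using that $A_2^\perp(\qq)$ is the $\langle\cdot,\cdot\rangle_\a$-adjoint of $A_2(\qq)$ (so that $\langle A_2^\perp(\qq)\MultiJack, J_\nu^{(\a)}(\qq)\rangle_\a=\langle \MultiJack, A_2(\qq)J_\nu^{(\a)}(\qq)\rangle_\a$), the orthogonality $\langle J_\la^{(\a)}(\qq), J_\nu^{(\a)}(\qq)\rangle_\a = j_\la^{(\a)}\delta_{\la,\nu}$, and the resulting identity $\langle \MultiJack, J_\la^{(\a)}(\qq)\rangle_\a = t^{|\la|}J_\la^{(\a)}(\pp)\JJ_\la^{(\a)}(u_1,\dots,u_k)$, together with the $i=2$ case of \cref{cor:ActionOfEOnJack}, namely $A_2(\qq)J_\nu^{(\a)}(\qq)=\sum_{\nu\nearrow\la}c_\a(\la\setminus\nu)\frac{c_{\nu\nearrow\la}}{\a}J_\la^{(\a)}(\qq)$, I obtain
\begin{equation*}
\langle A_2^\perp(\qq)\MultiJack, J_\nu^{(\a)}(\qq)\rangle_\a = \sum_{\nu\nearrow\la} c_\a(\la\setminus\nu)\frac{c_{\nu\nearrow\la}}{\a}\,t^{|\la|}\,J_\la^{(\a)}(\pp)\,\JJ_\la^{(\a)}(u_1,\dots,u_k).
\end{equation*}
For the right-hand side, $\Omega^{(k)}_Y(\pp)$ acts only on the $\pp$-variables and so commutes with the $\qq$-pairing; applying $\langle \MultiJack, J_\nu^{(\a)}(\qq)\rangle_\a = t^{|\nu|}J_\nu^{(\a)}(\pp)\JJ_\nu^{(\a)}(u)$, the action recorded above, and then \eqref{eq:contentForm} in the form $\prod_{j=1}^k(u_j+c_\a(\la\setminus\nu))\,\JJ_\nu^{(\a)}(u) = \JJ_\la^{(\a)}(u)$ collapses the product factor, and since $|\la|=|\nu|+1$ this produces exactly the same expression. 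As this holds for every $\nu$, \eqref{eq:VirasoroJack2} follows.

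The argument is bookkeeping once the two actions are in hand; the only point requiring care is to keep the two \emph{directions} consistent — the upward (box-adding) action of $\Omega^{(k)}_Y$ on the $\pp$-variables against the adjoint (downward) action of $A_2^\perp$ on the $\qq$-variables — and to verify that the norms $j_\la^{(\a)}$ and the content products $\prod_j(u_j+c_\a(\la\setminus\nu))$ cancel exactly. Equivalently, one may avoid the scalar product and compare coefficients of $t^n$ directly as in \cref{lem:MultiJack1}, at the cost of using the explicit downward formula $A_2^\perp J_\la^{(\a)} = \sum_{\nu\nearrow\la} c_\a(\la\setminus\nu)\frac{c_{\nu\nearrow\la}}{\a}\frac{j_\la^{(\a)}}{j_\nu^{(\a)}}J_\nu^{(\a)}$, in which the ratio $j_\la^{(\a)}/j_\nu^{(\a)}$ cancels against the factor $1/j_\la^{(\a)}$ in the definition of $\MultiJack$.
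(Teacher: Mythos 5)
Your proof is correct and is essentially the paper's own argument: both rest on \cref{cor:ActionOfEOnJack} together with the content product formula~\eqref{eq:contentForm} and the orthogonality of Jack polynomials. The only difference is presentational — you pair against the basis $J_\nu^{(\a)}(\qq)$ and move $A_2^\perp(\qq)$ across the pairing to become $A_2(\qq)$, whereas the paper extracts the coefficient of $t^{n+1}$ and applies the explicit downward expansion of $A_2^\perp(\qq)$ on Jack polynomials (itself a consequence of the same adjointness), which is exactly the equivalent route you identify in your closing remark.
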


\begin{proof}
The proof is very similar to the proof of the previous lemma. We fix an integer $n \geq 0$ and we look at the coefficient of $t^{n+1}$ in the L.H.S.
        of \cref{eq:VirasoroJack2}:
        \begin{multline*}
          \sum_{\la
    \vdash
    n+1}\frac{J_\la^{(1+b)}(\pp)\JJ_\la^{(1+b)}(u_1,\dots,u_k)}{j_\la^{(1+b)}} A_2^\perp(\qq)
  J_\la^{(1+b)}(\qq) \\
  = \sum_{\la
    \vdash
    n+1}J_\la^{(1+b)}(\pp)\JJ_\la^{(1+b)}(u_1,\dots,u_k)
  \sum_{\mu \nearrow \la} c_\a(\la\setminus\mu)\frac{c_{\mu \nearrow \la}}{1+b}\frac{J_\mu^{(1+b)}(\qq)}{j_\mu^{(1+b)}},
\end{multline*}
by \cref{cor:ActionOfEOnJack}. Applying the same substitutions as in the
proof of \cref{lem:MultiJack2} we transform the coefficient of
$t^{n+1}$ on the L.H.S. of \cref{eq:VirasoroJack2} to the following form:
\[ \sum_{\mu
    \vdash
    n} \frac{J_\mu^{(1+b)}(\qq)\JJ_\mu^{(1+b)}(u_1,\dots,u_k)}{j_\mu^{(1+b)}}\big(\sum_{1 \leq i \leq
    k+1}e_{k+1-i}(u_1,\dots,u_k) \sum_{\mu \nearrow
    \la}c_{\a}(\la\setminus\mu)^{i}\frac{c_{\mu \nearrow \la}}{1+b} J_\la^{(1+b)}(\pp)\big).\]
Finally, \cref{cor:ActionOfEOnJack} gives that this expression is
equal to
\[ \sum_{\mu
    \vdash
    n}
  \frac{J_\mu^{(1+b)}(\qq)\JJ_\mu^{(1+b)}(u_1,\dots,u_k)}{j_\mu^{(1+b)}}\Omega_Y^{(k)}(\pp)
  J_\mu^{(1+b)}(\pp)
  = [t^{n+1}]\RHS,\]
which finishes the proof.
\end{proof}

These two lemmas composed together give us the following equations, 
which are a keystone of this paper.

    \begin{lemma}
      \label{lem:MultiJack}
      For any $m \geq 1$ the generating series $\MultiJack$ satisfies the following
      equation:
      \begin{equation}
        \label{eq:VirasoroJack}
        G_m(\qq) \MultiJack(t;\pp,\qq,u_1,\dots,u_k) =
          t^mB_m^{(k)}(\pp) \MultiJack(t;\pp,\qq,u_1,\dots,u_k).
          \end{equation}
      \end{lemma}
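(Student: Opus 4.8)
The plan is to prove \eqref{eq:VirasoroJack} by induction on $m$, exploiting the fact that the two families $(G_m)_{m\geq 1}$ and $(B^{(k)}_m)_{m\geq 1}$ obey parallel commutator recursions: $G_{m+1} = [G_m, A_2^\perp]$ by \cref{lem:commutSimple}, and $B^{(k)}_{m+1} = [\Omega^{(k)}_Y, B^{(k)}_m]$ by \cref{thm:commut2}. The base case $m=1$ is exactly \cref{lem:MultiJack1}. The whole mechanism is that \cref{lem:MultiJack1,lem:MultiJack2} transport the ``building block'' operators of each recursion into one another when acting on the series $\MultiJack$: $G_1(\qq)$ into $tB^{(k)}_1(\pp)$, and $A_2^\perp(\qq)$ into $t\Omega^{(k)}_Y(\pp)$.

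For the inductive step, I would assume $G_m(\qq)\MultiJack = t^m B^{(k)}_m(\pp)\MultiJack$ and apply \cref{lem:commutSimple} (with each $p_i$ replaced by $q_i$) to write
\[ G_{m+1}(\qq)\MultiJack = G_m(\qq)A_2^\perp(\qq)\MultiJack - A_2^\perp(\qq)G_m(\qq)\MultiJack. \]
Each term is then evaluated by pushing the $\qq$-operators through. For the first term, \cref{lem:MultiJack2} gives $A_2^\perp(\qq)\MultiJack = t\Omega^{(k)}_Y(\pp)\MultiJack$; since $G_m(\qq)$ is a differential operator in the $\qq$-variables while $\Omega^{(k)}_Y(\pp)$ acts on the $\pp$-variables, they commute, so the first term becomes $t\Omega^{(k)}_Y(\pp)G_m(\qq)\MultiJack = t^{m+1}\Omega^{(k)}_Y(\pp)B^{(k)}_m(\pp)\MultiJack$ after invoking the inductive hypothesis. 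Symmetrically, applying the hypothesis first and then \cref{lem:MultiJack2}, the second term equals $t^{m+1}B^{(k)}_m(\pp)\Omega^{(k)}_Y(\pp)\MultiJack$. Subtracting and recognizing the commutator through \cref{thm:commut2} yields
\[ G_{m+1}(\qq)\MultiJack = t^{m+1}\,[\Omega^{(k)}_Y(\pp), B^{(k)}_m(\pp)]\,\MultiJack = t^{m+1}B^{(k)}_{m+1}(\pp)\MultiJack, \]
closing the induction.

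There is no genuine analytic difficulty here: the content of the statement is carried entirely by the two preceding lemmas together with the commutator recursions, so this is essentially a formal consequence of the structural results already established. The one point to treat with care is the commutativity invoked twice above, which holds because, after the reductions of \cref{thm:commut1,thm:commut2}, all operators in play act either on $\QQ(b)[p_1,p_2,\dots]$ or on its $\qq$-analogue, and operators in disjoint sets of indeterminates commute. The only thing I would double-check is that the operator orderings and signs in the two recursions are compatible after transport, i.e.\ that the ``new'' operator ends up on the same side in $[G_m,A_2^\perp]$ as in $[\Omega^{(k)}_Y,B^{(k)}_m]$; the computation above confirms this, since the first term $G_m(\qq)A_2^\perp(\qq)$ maps to $t^{m+1}\Omega^{(k)}_Y(\pp)B^{(k)}_m(\pp)$ with the factors reordered precisely so that the resulting commutator is $[\Omega^{(k)}_Y,B^{(k)}_m]$ and not its negative.
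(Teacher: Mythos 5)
Your proposal is correct and follows essentially the same route as the paper's own proof: induction on $m$ with base case \cref{lem:MultiJack1}, expanding $G_{m+1}(\qq)=[G_m(\qq),A_2^\perp(\qq)]$ via \cref{lem:commutSimple}, transporting each term through \cref{lem:MultiJack2} and the inductive hypothesis using the commutativity of $\qq$-operators with $\pp$-operators, and recognizing $[\Omega_Y^{(k)}(\pp),B_m^{(k)}(\pp)]=B_{m+1}^{(k)}(\pp)$ from \cref{thm:commut2}. The ordering check you flag at the end is exactly the point the paper also relies on, and your verification of it is accurate.
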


      \begin{proof}
        We use induction on $m$. For $m=1$ the
        statement is precisely 
        \cref{lem:MultiJack1}. We fix a nonnegative integer $l >1$ and
        suppose that \cref{eq:VirasoroJack}
        holds true for any $m<l$. Then 
      \[  G_l(\qq) \MultiJack(t;\pp,\qq,u_1,\dots,u_k)
        = [G_{l-1}(\qq), A_2^\perp(\qq)]\MultiJack(t;\pp,\qq,u_1,\dots,u_k),\]
      which is equal to
      \begin{align*}
        \big(&G_{l-1}(\qq)\cdot\Omega_Y^{(k)}(\pp) t-
        A_2^\perp(\qq) \cdot B^{(k)}_{l-1}(\pp)
                                                t^{l-1}\big)\MultiJack(t;\pp,\qq,u_1,\dots,u_k) 
        = \big(\Omega_Y^{(k)}(\pp)\cdot G_{l-1}(\qq) t-\\
     -   &B^{(k)}_{l-1}(\pp)\cdot A_2^\perp(\qq) 
                                                t^{l-1}\big)\MultiJack(t;\pp,\qq,u_1,\dots,u_k)
                                                  = [\Omega_Y^{(k)}(\pp),
        B^{(k)}_{l-1}(\pp)]t^l\MultiJack(t;\pp,\qq,u_1,\dots,u_k)
        \end{align*}
      by \cref{lem:MultiJack2}, induction hypothesis, and the fact
      that operators $A_2^\perp(\qq), G_{l-1}(\qq)$ commute with $\Omega_Y^{(k)}(\pp), B^{(k)}_{l-1}(\pp)$. This
      finishes the proof since
      \[ [\Omega_Y^{(k)}(\pp),
        B^{(k)}_{l-1}(\pp)] = B^{(k)}_{l}(\pp). \qedhere\]
    \end{proof}

\subsection{Proof of the main results}

We are now ready to make the connection between $k$-constellations and the function $\tau^{(k)}_b$.
For $m\geq 1$ we introduce the functions
$$
U_m := (1+b) m \frac{\partial}{\partial q_m} \log \tau^{(k)}_b \ \ 
, \ \ V_m =(1+b) m \frac{\partial}{\partial p_m} \log \tau^{(k)}_b.
$$
Recall that $\pi$, defined in Section~\ref{sec:MonTutte} is
  the operator exchanging the sets of variables
  $\pp\leftrightarrow\qq$ and $u_i\leftrightarrow u_{k+1-i}$ for $1
  \leq i \leq k$. 
\begin{proposition}\label{prop:mixDiffEqs}
	For $m\geq 1$ one has:
	\begin{align*}
		U_m =t^m  \cdot
		\Theta_Y \Big(Y_+\prod_{l=1}^{k}\big(\Lambda_{Y}+u_l + \sum_{i,j\geq 1}V_i  y_{j+i-1}
		 \frac{\partial}{\partial y_{j-1}}
		\big)\Big)^{m}  (y_0),
	\end{align*}
	and moreover $V_m = \pi U_m$. 
\end{proposition}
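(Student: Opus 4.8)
The plan is to obtain the first identity from \cref{lem:MultiJack} by a Hopf--Cole type conjugation that turns the \emph{linear} PDE satisfied by $\tau^{(k)}_b$ into a \emph{nonlinear} identity for $\log\tau^{(k)}_b$, and then to deduce $V_m=\pi U_m$ from the evident symmetry of $\tau^{(k)}_b$. First I would restate \cref{lem:MultiJack} in a more usable form. Since $G_m(\qq)=m!\,\frac{\partial}{\partial q_m}$ and $B^{(k)}_m(\pp)=(m-1)!\,\Theta_Y\big(Y_+\prod_{i=1}^k(\Lambda_Y+u_i)\big)^m\frac{y_0}{1+b}$, dividing the identity $G_m(\qq)\tau^{(k)}_b=t^mB^{(k)}_m(\pp)\tau^{(k)}_b$ by $(m-1)!$ gives
\[ m\,\frac{\partial}{\partial q_m}\tau^{(k)}_b = t^m\,\Theta_Y\Big(Y_+\prod_{l=1}^k(\Lambda_Y+u_l)\Big)^m\frac{y_0}{1+b}\,\tau^{(k)}_b. \]
Dividing by $\tau^{(k)}_b$ and multiplying by $(1+b)$, the two factors $(1+b)$ cancel and one gets $U_m=\frac{t^m}{\tau^{(k)}_b}\,\Theta_Y\big(Y_+\prod_{l}(\Lambda_Y+u_l)\big)^m\,y_0\,\tau^{(k)}_b$.

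The heart of the argument is the conjugation by $\tau^{(k)}_b$. The crucial fact is that $\tau^{(k)}_b$ does \emph{not} depend on the catalytic variables $y_i$. Hence $\Theta_Y$, $Y_+$, multiplication by $u_l$, and multiplication by $y_0$ all commute with $\tau^{(k)}_b$, so conjugation by $\tau^{(k)}_b$ fixes them. The only nontrivial conjugation is that of $\Lambda_Y$, and only through its term carrying $\frac{\partial}{\partial p_i}$: using $\tau^{-1}\frac{\partial}{\partial p_i}\tau=\frac{\partial}{\partial p_i}+\frac{\partial\log\tau^{(k)}_b}{\partial p_i}$ together with $\frac{\partial\log\tau^{(k)}_b}{\partial p_i}=\frac{V_i}{(1+b)i}$, the factor $(1+b)i$ in $\Lambda_Y$ cancels and one computes the clean identity $\tau^{-1}\Lambda_Y\tau=\Lambda_Y+\sum_{i,j\geq 1}V_i\,y_{i+j-1}\frac{\partial}{\partial y_{j-1}}$ (note $y_{i+j-1}=y_{j+i-1}$). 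Inserting $\tau\,\tau^{-1}$ between consecutive factors and using multiplicativity of conjugation then yields $\tau^{-1}\big(Y_+\prod_l(\Lambda_Y+u_l)\big)^m\tau=\big(Y_+\prod_l(\Lambda_Y+u_l+\sum_{i,j}V_i\,y_{j+i-1}\frac{\partial}{\partial y_{j-1}})\big)^m$. Commuting $\tau^{(k)}_b$ through $\Theta_Y$ to the far left, past $y_0$, and cancelling it against the prefactor $1/\tau^{(k)}_b$ gives exactly the first formula of the proposition. I would emphasize that this is an identity of operators applied to $y_0$: the derivatives $\frac{\partial}{\partial p_i}$ inside $\Lambda_Y$ genuinely act on the $V_j$ appearing to their right, precisely as on the right-hand side of the statement.

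For the second claim I would use the manifest $\pi$-invariance of $\tau^{(k)}_b$. Indeed, in the defining sum~\eqref{eq:MultiJackDef} the exchange $\pp\leftrightarrow\qq$ fixes the product $J_\la^{(1+b)}(\pp)J_\la^{(1+b)}(\qq)$, while $u_i\leftrightarrow u_{k+1-i}$ merely permutes the factors $J_\la^{(1+b)}(\underline{u_l})$, so $\pi\tau^{(k)}_b=\tau^{(k)}_b$ and hence $\pi\log\tau^{(k)}_b=\log\tau^{(k)}_b$. Since $\pi$ swaps the variable families $\pp$ and $\qq$, it intertwines the derivations, $\pi\circ\frac{\partial}{\partial q_m}=\frac{\partial}{\partial p_m}\circ\pi$. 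Applying $\pi$ to $U_m=(1+b)m\,\frac{\partial}{\partial q_m}\log\tau^{(k)}_b$ therefore gives $\pi U_m=(1+b)m\,\frac{\partial}{\partial p_m}\log\tau^{(k)}_b=V_m$, as desired.

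I expect the main obstacle to be the careful bookkeeping in the conjugation of $\Lambda_Y$: one must verify that only its $\frac{\partial}{\partial p_i}$-term contributes (the remaining two terms involve only $p$-multiplications and $y$-derivatives, which commute with $\tau^{(k)}_b$), and track the coefficient $(1+b)i$ so that it combines with $\frac{\partial}{\partial p_i}\log\tau^{(k)}_b$ to produce exactly $V_i$ with the correct index shift. A secondary point requiring care is that the resulting expression must be read as an operator identity acting on $y_0$, so that the promoted $p$-derivatives differentiate the nonlinear coefficients $V_j$ — this is the mechanism by which the higher $\frac{\partial^2}{\partial p_i\partial p_j}\log\tau^{(k)}_b$ contributions are correctly reproduced on both sides.
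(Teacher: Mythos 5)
Your proof is correct and follows essentially the same route as the paper's: both start from \cref{lem:MultiJack} and then conjugate by $\tau^{(k)}_b$, using the $y$-independence of $\tau^{(k)}_b$ so that only the $p$-derivative term of $\Lambda_Y$ picks up the extra term $\sum_{i,j\geq 1}V_i\,y_{j+i-1}\frac{\partial}{\partial y_{j-1}}$ via the Leibniz rule (the paper words this conjugation as the identity $(\Lambda_{Y}+u_l)\,A\,\tau^{(k)}_b=\big((\Lambda_{Y}+u_l+\sum_{i,j\geq 1}V_i\,y_{j+i-1}\tfrac{\partial}{\partial y_{j-1}})A\big)\tau^{(k)}_b$ applied repeatedly), while the claim $V_m=\pi U_m$ is deduced in both cases from $\pi\tau^{(k)}_b=\tau^{(k)}_b$. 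Your write-up is just a more explicit operator-conjugation phrasing of the paper's argument, including the correct caveat that the $p$-derivatives in each $\Lambda_Y$ must act on the $V_j$'s introduced by factors to their right.
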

\begin{proof}
	From the previous lemma and from the definition~\eqref{eq:FmCombi} of $B_m^{(k)}$, we have for $m\geq 1$,
	\begin{align}\label{eq:patch1}
	m\frac{\partial}{\partial q_m} \tau^{(k)}_b =
	t^m\cdot \Theta_Y \big(Y_+\prod_{l=1}^{k}(\Lambda_{Y}+u_l)\big)^m \frac{y_0}{1+b} \tau^{(k)}_b.
	\end{align}
	Now, for any series $A(\yy,\pp)$ depending on variables $\yy$ and $\pp$ (and possibly other parameters), we have by definition of the operator $\Lambda_Y$:
	\begin{align*}%
		(\Lambda_{Y}+u_l) A(\yy,\pp) \tau^{(k)}_b =
		\Big((\Lambda_{Y}+u_l+ 
		\sum_{i,j\geq 1}  V_i y_{j+i-1}
		\frac{\partial}{\partial y_{j-1}} )A(\yy,\pp) \Big) \tau^{(k)}_b,
	\end{align*}
	where we used that $(1+b) i\frac{\partial}{\partial p_{i}}\tau^{(k)}_b = V_i \tau^{(k)}_b$.
	Applying this identity repeated times to~\eqref{eq:patch1}, and using also $m\frac{\partial}{\partial q_m} \tau^{(k)}_b=\frac{1}{1+b} U_m \tau^{(k)}_b$ , we get
	\begin{align*}
		\frac{1}{1+b} U_m =
		t^m\cdot \Theta_Y \big(Y_+\prod_{l=1}^{k}
		\big(\Lambda_{Y}+u_l+ 
		\sum_{i,j\geq 1}  V_i y_{j+i-1}
		\frac{\partial}{\partial y_{j-1}} \big)
		\Big)^m \left(\frac{y_0}{1+b}\right),
	\end{align*}
	which is the desired identity upon multiplying by $(1+b)$.

	The fact that $V_m = \pi U_m$ is clear since $\tau^{(k)}_b = \pi \tau^{(k)}_b$.
\end{proof}

From Corollary~\ref{cor:TutteThetaConnected} we immediately deduce
\begin{corollary}\label{cor:equalInSection5}
Let $\rho$ be a coherent MON. 
Then for any $m\geq 1$, one has $U_m = H^{[m]}$, where $H^{[m]}$ is the generating function of constellations defined by~\eqref{eq:DefHRhoM}.
\end{corollary}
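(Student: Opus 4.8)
The plan is to recognize that Corollary~\ref{cor:equalInSection5} is not a new computation but a \emph{uniqueness} statement: the two families $(U_m)_{m\geq 1}$ and $(H_\rho^{[m]})_{m\geq 1}$ satisfy the very same closed system of functional equations, and that system determines its solution unambiguously. First I would write out the system obeyed by $(U_m)$. By Proposition~\ref{prop:mixDiffEqs}, setting $V_i=\pi U_i$, one has
$$U_m = t^m \Theta_Y \Big(Y_+\prod_{l=1}^{k}\big(\Lambda_{Y}+u_l + \sum_{i,j\geq 1}V_i\, y_{j+i-1}\tfrac{\partial}{\partial y_{j-1}}\big)\Big)^{m}(y_0),\qquad V_m=\pi U_m.$$
Since each $V_i=(1+b)i\,\partial_{p_i}\log\tau^{(k)}_b$ is independent of the $\yy$-variables, it commutes with $y_{j+i-1}$ and with $\partial/\partial y_{j-1}$, so the inner operator is literally the one in Corollary~\ref{cor:TutteThetaConnected} under the dictionary $U_m\leftrightarrow H_\rho^{[m]}$, $V_i\leftrightarrow\widetilde{H}_\rho^{[i]}$; and the closure $V_m=\pi U_m$ mirrors exactly the relation $\widetilde{H}_\rho^{[m]}=\pi H_\rho^{[m]}$ of~\eqref{eq:dualFunction}.

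Next I would invoke the uniqueness already established in the proof of Corollary~\ref{cor:TutteThetaConnected}: the coefficients of any solution can be extracted order by order in $t$. Concretely, the overall prefactor $t^m$, together with the fact that every occurrence of $V_i$ (resp. $\widetilde{H}_\rho^{[i]}$) carries strictly positive $t$-valuation, makes the recursion triangular in the degree in $t$, so that the coefficient of $t^N$ on the left is a fixed polynomial expression in the coefficients of $t^{<N}$ of the whole family. Hence any family of power series with vanishing constant term that solves this system is uniquely determined. Both families qualify: $H_\rho^{[m]}$ counts nonempty rooted constellations and so is $O(t)$, while $U_m=(1+b)m\,\partial_{q_m}\log\tau^{(k)}_b$ is $O(t)$ because $\tau^{(k)}_b=1+O(t)$.

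The only point that requires care — which I would treat as the main (and quite minor) obstacle — is to verify that the two systems coincide \emph{term by term}, i.e. that substituting $V_i$ for $\widetilde{H}_\rho^{[i]}$ reproduces exactly the operator of Corollary~\ref{cor:TutteThetaConnected}, and that the two defining auxiliary relations match as noted above. Once this identification is in place, applying uniqueness to the common system forces $U_m=H_\rho^{[m]}=H^{[m]}$ for every $m\geq 1$ (the last equality by Corollary~\ref{cor:independentOfMONPatched}, which makes $H^{[m]}$ independent of the chosen coherent MON $\rho$). This is the claimed identity, and no genuinely new argument beyond Proposition~\ref{prop:mixDiffEqs} and the characterization in Corollary~\ref{cor:TutteThetaConnected} is needed.
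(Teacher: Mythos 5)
Your proposal is correct and matches the paper's own argument: the paper deduces this corollary "immediately" from Corollary~\ref{cor:TutteThetaConnected}, precisely because Proposition~\ref{prop:mixDiffEqs} shows that $(U_m)$ with $V_m=\pi U_m$ satisfies the same characterizing system as $(H_\rho^{[m]})$ with $\widetilde{H}_\rho^{[m]}=\pi H_\rho^{[m]}$, and that system determines its solution order by order in $t$. Your write-up simply makes explicit the triangularity and vanishing-constant-term details that the paper leaves implicit.
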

Recall that for each positive integer $n$ the function $[t^n]\tau^{(k)}_b$ treated as a polynomial in
$\qq$ is homogenous of degree $n$ (where $\deg(q_i) := i$), therefore
$\sum_{m \geq 1} m q_m \frac{\partial}{\partial q_m}$ and $ t
\frac{\partial}{\partial t}$ act similarly on $\tau^{(k)}_b$. In
particular, using definition of $U_m$, \cref{cor:equalInSection5}, and identity \eqref{eq:HintoHm}
we obtain the following.
\begin{theorem}\label{thm:mainInSection5}
Let $\rho$ be a coherent MON. Then one has
$$
	(1+b) t \frac{\partial}{\partial t} \ln \tau^{(k)}_b = \Theta_Y \vec H_\rho.
$$
In particular, if $\rho$ is integral then we have
	\begin{align}\label{eq:mainInSection5}
(1+b) t \frac{\partial}{\partial t} \ln \tau^{(k)}_b =
	\sum_{n \geq 1}\sum_{(\bM,c)} t^{n} b^{\nu_\rho(\bM,c)} \kappa(\bM),
	\end{align}
	where the second sum is taken over rooted connected $k$-constellations $(\bM,c)$ of size $n$.
\end{theorem}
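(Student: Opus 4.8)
The plan is to assemble \cref{thm:mainInSection5} from the machinery built up in the preceding sections, so the ``proof'' is really a matter of chaining together the right intermediate results rather than a fresh calculation. The heart of the matter is to match the two characterizations of the functions $U_m$ and $H^{[m]}$.

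\begin{proof}[Proof of \cref{thm:mainInSection5}]
First I would establish the equality $U_m = H^{[m]}$ for every $m\geq 1$, which is exactly \cref{cor:equalInSection5}. The point is that both families satisfy the \emph{same} characterizing system: by \cref{prop:mixDiffEqs} the functions $U_m = (1+b)m\,\partial/\partial q_m \log\tau^{(k)}_b$ obey
\[
U_m = t^m\,\Theta_Y \Big(Y_+\prod_{l=1}^{k}\big(\Lambda_{Y}+u_l + \sum_{i,j\geq 1}V_i\, y_{j+i-1}\tfrac{\partial}{\partial y_{j-1}}\big)\Big)^{m}(y_0),
\]
together with $V_m = \pi U_m$; and by \cref{cor:TutteThetaConnected} the combinatorial functions $H^{[m]}_\rho$ obey the identical system~\eqref{eq:TutteThetaConnected} with $\widetilde H^{[i]}_\rho$ in place of $V_i$, together with $\widetilde H^{[m]}_\rho = \pi H^{[m]}_\rho$ (recall $\pi$ exchanges $\pp\leftrightarrow\qq$ and $u_i\leftrightarrow u_{k+1-i}$). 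Since \cref{cor:TutteThetaConnected} asserts these equations \emph{characterize} the functions uniquely (coefficients are computed inductively order by order in $t$), and the two systems are formally identical once we match $V_i\leftrightarrow \widetilde H^{[i]}_\rho$, I identify $U_m = H^{[m]}$ and $V_m = \widetilde H^{[m]}_\rho$ for a coherent MON $\rho$; this is independent of the choice of $\rho$ by \cref{cor:independentOfMONPatched}.

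Next I would pass from the family $(U_m)_m$ to the single generating identity. Recalling that
\[
U_m = (1+b)\,m\,\frac{\partial}{\partial q_m}\log\tau^{(k)}_b,
\qquad
H^{[m]} = \Theta_Y \vec H^{[m]}_\rho,
\]
I sum against the $q$-grading. The operators $\sum_{m\geq 1} m\,q_m\,\partial/\partial q_m$ and $t\,\partial/\partial t$ act identically on $\tau^{(k)}_b$, because each monomial $J^{(\alpha)}_\la(\qq)$ appearing in~\eqref{eq:MultiJackDef} at order $t^n$ is homogeneous of degree $n=|\la|$ in the $\qq$-variables (under $\deg q_i = i$), so both operators multiply the order-$t^n$ term by $n$. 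Therefore, multiplying $U_m = H^{[m]}$ by $q_m$ and summing over $m\geq 1$ gives on the left
\[
\sum_{m\geq 1} q_m\,U_m = (1+b)\sum_{m\geq 1} m\,q_m\,\frac{\partial}{\partial q_m}\log\tau^{(k)}_b = (1+b)\,t\,\frac{\partial}{\partial t}\log\tau^{(k)}_b,
\]
and on the right $\sum_{m\geq 1} q_m\,H^{[m]} = \Theta_Y \sum_{m\geq 1} q_m\,\vec H^{[m]}_\rho = \Theta_Y\vec H_\rho$, using the definition $\vec H_\rho = \sum_{m\geq 1} q_m\,\vec H^{[m]}_\rho$. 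This yields the first displayed identity $(1+b)\,t\,\partial/\partial t \,\ln\tau^{(k)}_b = \Theta_Y\vec H_\rho$.

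Finally, for the stated special case, I would unfold $\Theta_Y\vec H_\rho$ when $\rho$ is additionally \emph{integral}. By definition~\eqref{eq:MultivariateGeneratingMapsRootedConnected}, $\vec H_\rho = \sum_{n\geq 1}\sum_{(\bM,c)} t^n \vec\rho(\bM,c)\,\vec\kappa(\bM,c)$; applying $\Theta_Y = \sum_i p_i\,\partial/\partial y_i$ converts the root-face marking $y_{\deg f_c}$ back to $p_{\deg f_c}$, so that $\Theta_Y \vec\kappa(\bM,c) = \kappa(\bM)$. Since $\rho$ is integral, \cref{deflemma:nu} gives $\vec\rho(\bM,c) = b^{\nu_\rho(\bM,c)}$ with $\nu_\rho(\bM,c)$ a nonnegative integer vanishing exactly when $\bM$ is orientable. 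Substituting these two facts produces
\[
(1+b)\,t\,\frac{\partial}{\partial t}\ln\tau^{(k)}_b = \sum_{n\geq 1}\sum_{(\bM,c)} t^n\, b^{\nu_\rho(\bM,c)}\,\kappa(\bM),
\]
which is~\eqref{eq:mainInSection5}.
\end{proof}

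The only genuinely nontrivial input is the identification $U_m = H^{[m]}$, and I would flag that all the real work has already been discharged: it rests on \cref{prop:mixDiffEqs}, whose proof in turn depends on the commutation relations \cref{lem:MultiJack} for the Jack side, which are themselves the shadow of \cref{thm:commut2}. Thus the main obstacle for this theorem is purely bookkeeping — verifying that the catalytic recursion for $(U_m)$ and the decomposition recursion for $(H^{[m]}_\rho)$ are literally the same system so that the uniqueness clause of \cref{cor:TutteThetaConnected} applies — whereas the analytic and combinatorial heavy lifting lives entirely in the earlier sections.
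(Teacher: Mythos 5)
Your proposal is correct and follows essentially the same route as the paper: it combines \cref{prop:mixDiffEqs} with the uniqueness clause of \cref{cor:TutteThetaConnected} to get $U_m = H^{[m]}$ (the paper's \cref{cor:equalInSection5}), then sums against the $q$-grading using that $\sum_{m\geq 1} m q_m \partial/\partial q_m$ and $t\,\partial/\partial t$ agree on $\tau^{(k)}_b$, and finally invokes \cref{deflemma:nu} for the integral case. The only difference is that you spell out details the paper leaves implicit (the homogeneity argument and the conversion $\Theta_Y\vec\kappa(\bM,c)=\kappa(\bM)$), which is fine.
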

Note that, up to the correspondence between constellations and
generalized branched coverings, the last statement is precisely our main result stated in the introduction (Theorem~\ref{thm:abbreviated}).

  \section{Weighted Hurwitz numbers and projective limits}
\label{sec:infinite}

In previous sections we have considered $k$-constellations for an arbitrary fixed $k$, which correspond to coverings with $k+2$ ramification points. However in the literature concerning the orientable case, much interest has been given to cases where the number of ramification points is unbounded, which is the case for (weighted) Hurwitz numbers.
In this section we explain how to obtain this case as a limit of the previous one by allowing $k$ to become, in some sense, infinite.

\begin{quote}
	\centering {\it In this section, a coherent MON $\rho$ is fixed.}
\end{quote}

\subsection{Infinite constellations and projective limits}

We let $\mathcal{C}^{(k)}$ be the set of $k$-constellations, and $\mathcal{C}^{(k)}_n$ the subset formed by the ones of size $n$. 
We equip these objects with the \emph{modified marking} 
$$
	\hat \kappa(\bM) := \prod_{f\in F(\bM)} p_{\deg(f)}\prod_{v\in V_0(\bM)} q_{\deg(v)} \prod_{i=1}^k u_i^{n-v_i(\bM)},
$$
which differs from the marking~\eqref{eq:marking} used in the previous
section only by the exponent of $u_i$. We let $\hat F_k$ denote the corresponding
``modified'' generating function, given by the substitution:
\begin{align}\label{eq:hatFk}
	\hat F^{(k)}(t,\mathbf{p},\mathbf{q},u_1,\dots,u_k) :=F_\rho(t
  \cdot u_1\cdots u_k,\mathbf{p},\mathbf{q},u_1^{-1},\dots,u_k^{-1}).
\end{align}
We define the modified marking in the same way for rooted objects.
In the rest of \cref{sec:infinite} we will work with modified markings. To avoid confusion, all the new generating functions we define in this section will be also denoted with a hat symbol`$~\widehat{~}~$'.

\begin{remark}
	In previous sections, the parameter $k$ was fixed, and it could be deduced from the notation for the function $F_\rho$ only by its number of arguments. Since in this section it is crucial to let $k$ vary, we indicate it explicitly in the notation $\hat F^{(k)}$. On the other hand, now that we have proved that our generating functions do not depend on the choice of a coherent MON, we drop the dependency in $\rho$ from the notation.
	The two sides of Equation~\eqref{eq:hatFk} reflect these differences of notation.
\end{remark}

We observe that the addition of a leaf of colour $(k+1)$ to every
corner of colour $k$ gives an inclusion 
\begin{align}\label{eq:inclusion}
	\mathcal{C}_n^{(k)} \hookrightarrow \mathcal{C}_n^{(k+1)}.
\end{align}
This inclusion preserves the modified marking, since the number of
corners of color $k$ and of color $0$ are equal.
The same inclusion holds at the level of rooted objects and it preserves the rooted modified marking.
Moreover, by an appropriate choice of the integral coherent MON $\rho$, it is possible\footnote{Indeed, the leaves are only spectators in the combinatorial decomposition. So it suffices to choose a MON $\rho$ such that $\rho(\bM,e)$ depends only on the position of $e$ in the 2-core (or skeleton) of $\bM$, which is the map obtained from $\bM$ by successively removing all leaves.} to define the $b$-weights $\tilde\rho(\bM)$, $\vec\rho(\bM,c)$, and $b^{\nu_\rho(\bM,c)}$ in a way that respects the inclusion~\eqref{eq:inclusion}.

This implies the following equation (that can also be deduced from~\eqref{eq:TutteUnconnected})
\begin{align}\label{eq:projectiveCondition}
	\hat F^{(k)}(t,\mathbf{p},\mathbf{q},u_1,\dots,u_k) =\hat F^{(k+1)}(t,\mathbf{p},\mathbf{q},u_1,\dots,u_k,0).
\end{align}

These inclusions enable us to define the projective limit 
$$\mathcal{C}_n^{(\infty)}:=\underleftarrow{\mathcal{C}_n^{(k)}}.$$ 
Elements of $\mathcal{C}^{(\infty)} := \bigcup \mathcal{C}_n^{(\infty)}$ can be viewed as ``constellations
with arbitrarily many colors'',
and for short will be called \emph{infinite constellations} in what follows. They are equipped with a well-defined (modified) marking and $b$-weight.
Their generating function
	$$
	\hat F^{(\infty)}(t,\mathbf{p},\mathbf{q},u_1,\dots ):=
	1+\sum_{n \geq 1}\sum_{\bM \in \mathcal{C}^{(\infty)}_n} \frac{t^{n}}{2^{n-cc(\bM)}n!} \frac{\tilde{\rho}(\bM)}{(1+b)^{cc(\bM)}} \hat \kappa (\bM)
	$$
is given by the projective limit
$$
\hat F^{(\infty)}(t,\mathbf{p},\mathbf{q},u_1,\dots ) = \underleftarrow{\hat F^{(k)}}(t,\mathbf{p},\mathbf{q},u_1,\dots,u_k).
$$

\begin{remark}[Infinite constellations are generalized branched coverings]
	The interpretation of infinite constellations in terms of
        generalized branched coverings is straightforward. Elements of
        $\mathcal{C}^{(\infty)}_n$ correspond to generalized branched
        coverings of the sphere $\mathcal{S}\rightarrow \Sphere_+$ of
        degree~$n$, whose number of non-trivial ramification points is
	(finite but) not bounded {\it a priori}. The first ramification point (allowed to be trivial) is numbered $-1$ and corresponds to faces of the constellation. The other (nontrivial) ramification points are numbered with some (non necessarily consecutive) nonnegative integers, corresponding to the integers $i$ such that $n-v_i(\bM)$ is nonzero in the constellation.
\end{remark}

We denote the set of \emph{rooted} infinite constellations of size $n$ by
$\mathcal{C}^{(\infty)}_{n,\bullet}$. We have the following theorem.
\begin{theorem}[Main results reformulated for infinite constellations]\label{thm:infinite}
	The generating function $\hat F^{(\infty)} \equiv \hat F^{(\infty)} (t;\pp,\qq,u_1,\dots)$ of infinite constellations satisfies the equation
\begin{equation}\label{eq:TutteThetaInfinite}
	\frac{t\partial}{\partial t} \hat F^{(\infty)} = \Theta_Y \sum_{m \geq 1}t^m \cdot q_m \cdot
	\left(Y_+  \prod_{i=1}^\infty (1+u_i \Lambda_{Y})\right)^m  \frac{y_0}{1+b} \hat F^{(\infty)}.
\end{equation}
It is equal to the Jack polynomial expansion
\begin{equation}\label{eq:jackInfinite}
	\hat F^{(\infty)}(t,\mathbf{p},\mathbf{q},u_1,\dots ) = \sum_{n \geq 0}t^n
	\sum_{\la
	\vdash n}\frac{J_\la^{(1+b)}(\pp)J_\la^{(1+b)}(\qq) \prod_{i\geq 1} u_i^{|\lambda|}J_\la^{(1+b)}(\underline{1/u_i})  }{j_\la^{(1+b)}}.
\end{equation}
	Moreover $(1+b) \frac{ t \partial} {\partial t} \log \hat  F^{(\infty)}$ has coefficients which are polynomials in $b$ with non negative integer coefficients, explicitly given by
\begin{equation}\label{eq:jackInfiniteConnected}
(1+b) \frac{ t \partial} {\partial t} \log
	\hat F^{(\infty)} = 
	\sum_{n \geq 1}\sum_{(\bM,c) \in \mathcal{C}^{(\infty)}_{n,\bullet}} t^{n} \hat \kappa(\bM) b^{\nu_\rho(\bM,c)}.
\end{equation}
\end{theorem}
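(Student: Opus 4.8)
The plan is to obtain all three assertions from their finite-$k$ counterparts, transported through the substitution~\eqref{eq:hatFk} and then passed to the projective limit $k\to\infty$. The consistency of this limit — that for each fixed power of $t$ the coefficients stabilize as $k$ grows — is exactly what~\eqref{eq:projectiveCondition} encodes, and this has already been established; so I may freely interchange the limit with the algebraic manipulations below. The one preliminary computation I would record is the effect of~\eqref{eq:hatFk} on the operators. Writing $T = t\,u_1\cdots u_k$, one has $t\frac{\partial}{\partial t}\hat F^{(k)} = T\frac{\partial}{\partial T}F_\rho$, so the grading operator $t\partial_t$ is invariant under the scaling. Moreover, since $\Lambda_{Y}$ and $Y_+$ do not involve the $u_i$, the substitution $u_l\mapsto u_l^{-1}$ gives the identity $\prod_{l=1}^{k}(\Lambda_{Y}+u_l^{-1}) = (u_1\cdots u_k)^{-1}\prod_{l=1}^{k}(1+u_l\Lambda_{Y})$, where the scalar $(u_1\cdots u_k)^{-1}$ commutes with $Y_+$ and $\Lambda_Y$; this is precisely what converts $\prod(\Lambda_Y+u_l)$ into $\prod(1+u_l\Lambda_Y)$.

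For the Jack expansion~\eqref{eq:jackInfinite}, I would start from \cref{thm:mainInSection5}, which identifies $F_\rho$ with $\tau^{(k)}_b$ and hence with the expansion~\eqref{eq:MultiJackDef}. Substituting $t\mapsto T$ and $u_i\mapsto u_i^{-1}$, the prefactor $T^{|\lambda|}=t^{|\lambda|}\prod_i u_i^{|\lambda|}$ is absorbed into the evaluation factors, yielding $\hat F^{(k)} = \sum_n t^n\sum_{\lambda\vdash n} (j_\lambda^{(1+b)})^{-1} J_\lambda^{(1+b)}(\pp)\,J_\lambda^{(1+b)}(\qq)\,\prod_{i=1}^k u_i^{|\lambda|}J_\lambda^{(1+b)}(\underline{1/u_i})$. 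Passing to the limit (the $(k\!+\!1)$-st evaluation factor reducing to the identity when $u_{k+1}=0$, consistently with~\eqref{eq:projectiveCondition}) gives~\eqref{eq:jackInfinite}.

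For the PDE~\eqref{eq:TutteThetaInfinite}, I would sum the finite-$k$ decomposition equation~\eqref{eq:TutteUnconnected} over $m\geq 1$. Since the $t^n$-coefficient of $\tau^{(k)}_b$ is homogeneous of degree $n$ in the $\qq$-grading, $\sum_m m q_m\partial_{q_m}$ acts as $t\partial_t$; summing therefore gives $t\partial_t F_\rho = \Theta_Y\sum_{m}t^m q_m\big(Y_+\prod_{l=1}^k(\Lambda_Y+u_l)\big)^m\frac{y_0}{1+b}F_\rho$. Applying~\eqref{eq:hatFk} together with the operator identity of the first paragraph, the factor $T^m=t^m(u_1\cdots u_k)^m$ cancels the $(u_1\cdots u_k)^{-m}$ produced by each product, leaving $t\partial_t\hat F^{(k)} = \Theta_Y\sum_{m}t^m q_m\big(Y_+\prod_{l=1}^k(1+u_l\Lambda_Y)\big)^m\frac{y_0}{1+b}\hat F^{(k)}$; here the invariance of $t\partial_t$ handles the left-hand side. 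Letting $k\to\infty$ replaces $\prod_{l=1}^k$ by $\prod_{i\geq 1}(1+u_i\Lambda_Y)$ and produces~\eqref{eq:TutteThetaInfinite}.

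Finally, the positivity and combinatorial formula~\eqref{eq:jackInfiniteConnected} follow by applying the same substitution to the integral-MON identity~\eqref{eq:mainInSection5}: the substitution turns $\kappa(\bM)$ into the modified marking $\hat\kappa(\bM)$, so $(1+b)t\partial_t\log\hat F^{(k)} = \sum_n\sum_{(\bM,c)} t^n b^{\nu_\rho(\bM,c)}\hat\kappa(\bM)$, summed over rooted connected $k$-constellations. Passing to the limit, using that the weights $b^{\nu_\rho}$ and the modified markings are compatible with the inclusion~\eqref{eq:inclusion}, yields the sum over $\mathcal{C}^{(\infty)}_{n,\bullet}$; nonnegativity of the $b$-coefficients is inherited term by term since each contribution is a monomial $b^{\nu_\rho}$ with nonnegative integer exponent. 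The main obstacle, and the only step requiring genuine care, is the rigorous justification of the projective limit: one must interpret the infinite products $\prod_{i\geq 1}(1+u_i\Lambda_Y)$ and $\prod_{i\geq 1}u_i^{|\lambda|}J_\lambda^{(1+b)}(\underline{1/u_i})$ as limits of their truncations and check that, at each fixed order in $t$, all quantities stabilize, so that the limit commutes with $\log$, with $t\partial_t$, and with the operator products. This stabilization is precisely what the already-established compatibility~\eqref{eq:projectiveCondition} provides.
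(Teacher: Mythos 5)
Your proof is correct and takes essentially the same approach as the paper: the paper's own proof of this theorem is the single line ``This is a direct consequence of the results of previous sections,'' and your argument spells out exactly that consequence --- transporting the finite-$k$ Jack expansion, the decomposition equation \eqref{eq:TutteUnconnected}, and the positivity statement \eqref{eq:mainInSection5} through the substitution \eqref{eq:hatFk}, then passing to the projective limit justified by \eqref{eq:projectiveCondition}. The bookkeeping you make explicit (the identity $\prod_{l}(\Lambda_{Y}+u_l^{-1})=(u_1\cdots u_k)^{-1}\prod_{l}(1+u_l\Lambda_{Y})$, the invariance of $t\partial_t$ under the rescaling of $t$, and the conversion of $\kappa(\bM)$ into $\hat\kappa(\bM)$) is precisely what the paper leaves implicit.
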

\begin{proof}
	This is a direct consequence of the results of previous sections.
\end{proof}

We recall that $J_\la^{(1+b)}$ is a polynomial in the power-sum variables of homogeneous graded degree $|\lambda|$, normalized in such a way that the coefficient of $p_1^{|\la|}$ is $1$.  Therefore $u_i^{|\lambda|}J_\la^{(1+b)}(\underline{1/u_i})$ is a polynomial in $u_i$ with constant term $1$. Thus, despite the infinite product in its definition, the expression~\eqref{eq:jackInfinite} is a well defined formal power series in $t$ and the $u_i$.
In fact, from~\eqref{eq:contentForm}, we have the explicit formula 
$u^{|\la|}J_\la^{(1+b)}(\underline{1/u}) = \prod_{\square \in \la}\big(1+u c_{\a}(\square)\big)$, where the product is taken over all boxes of $\la$, and we can write
\begin{align}\label{eq:contentProductForm}
\hat F^{(\infty)}(t,\mathbf{p},\mathbf{q},u_1,\dots ) = \sum_{n \geq 0}t^n
	\sum_{\la
	\vdash n}\frac{J_\la^{(1+b)}(\pp)J_\la^{(1+b)}(\qq) \prod_{i\geq 1}  \prod_{\square \in \la}\big(1+u_i c_{\a}(\square)\big)  }{j_\la^{(1+b)}}.
\end{align}

We also observe that the case of finite $k$ considered in previous
sections can be recovered from the infinite case by considering the
case where $u_i$ is equal to zero for all $i>k$. 

\subsection{Weighted $b$-Hurwitz numbers}

We now introduce $b$-weighted analogues of the weighted Hurwitz numbers of~\cite{Guay-PaquetHarnad2017}. In order to avoid the terminology ``$b$-weighted weighted Hurwitz numbers'', we will use ``weighted $b$-Hurwitz numbers'' instead.

We note that the parameters $u_i$ appear in the equations of \cref{thm:infinite} only through the generating function $z \mapsto \prod_{i=1}^\infty (1+u_i z)$. Therefore it can be interesting to use the coefficients of this power series as parameters, rather than the $u_i$ themselves. Let 
$$
G(z) = 1+\sum_{k\geq 1} g_k z^k
$$
be a formal power series, where the $g_k$ are indeterminates. 
Define the generating function $\hat F^G \equiv \hat F^G(t,\mathbf{p},\mathbf{q},g_1,\dots)$ by the following variant of Equation~\eqref{eq:TutteThetaInfinite}
\begin{equation}\label{eq:TutteThetaInfiniteG}
	\frac{t\partial}{\partial t} \hat F^{G}  = \Theta_Y \sum_{m \geq 1}t^m \cdot q_m \cdot
	\Big(Y_+  G\big( \Lambda_{Y}\big)\Big)^m  \frac{y_0}{1+b} \hat F^{G}.
\end{equation}

When the variables $(g_k)_{k\geq 1}$ and $(u_i)_{i\geq 1}$ are related by the equation $G(z)=\prod_{i=1}^\infty (1+z u_i)$, then we have
\begin{align}\label{eq:F=F}
	\hat F^G(t,\mathbf{p},\mathbf{q},g_1,\dots)=\hat F^{(\infty)}(t,\mathbf{p},\mathbf{q},u_1,\dots).
\end{align}
Therefore in this case $\hat F^G$ can be seen as the generating function of infinite constellations, with weights $u_i$ related to $g_k$ by $$g_k =  e_k (u_1,\dots)$$ where $e_k$ denote the elementary symmetric functions. 
Call an infinite constellation $\bM$ of size~$n$ \emph{normal} if the sequence 
$$\mathfrak{v}(\bM):=(n-v_i(\bM))_{i\geq 1}$$ is nonincreasing, i.e. if it is an integer partition. Then $\hat F^{(\infty)}$ can be viewed as the generating function of normal infinite constellations with their usual marking in the $\mathbf{p}$-$\mathbf{q}$-$t$ variables, and a marking  $m_{\mathfrak{v}(\bM)}(u_1,u_2,\dots)$ giving the contribution of vertices of color $i\geq 1$, where $m_{\cdot}$ denotes monomial symmetric functions (see e.g.~\cite{Stanley:EC2}). Indeed, the summation hidden in the definition of monomial symmetric functions consists in summing over all possible reorderings of the sequence $\mathfrak{v}(\bM)$, accounting also for constellations which are not normal.  This observation enables us to give an interpretation of $\hat F^{G}$ in terms of the indeterminates $g_k$ with no reference to underlying variables $u_i$, as follows.
\begin{theorem}\label{thm:weighted}
	Let $G(z)=1+\sum_{k\geq 1}g_k z^k$. Then the series $\hat F^G$ is the generating function of normal infinite constellations with their $b$-weight, and with a marking $p_i$ (resp $q_i$) per face (resp. vertex of colour $0$) of degree $i$, and a marking $f_{\mathfrak{v}(M)}(g_1,g_2,\dots)$ where  for an integer partition $\iota$, $f_{\iota}$ is the polynomial that expresses the monomial symmetric function of index $\iota$ in terms of the elementary symmetric functions. 
	At the level of rooted connected objects, we have
	$$
(1+b)\frac{t\partial}{\partial t} \log	\hat F^G =
		\sum_{n \geq 1}\sum_{(\bM,c) \in \mathcal{C}^{(\infty)}_{n,\bullet}\atop \bM~\mbox{\tiny normal}} 
		t^{n} b^{\nu_\rho(\bM,c)} \prod_{f\in F(\bM)} p_{\deg(f)}\prod_{v\in V_0(\bM)} q_{\deg(v)}\ f_{\mathfrak{v}(\bM)}(g_1,\dots). 
	$$
\end{theorem}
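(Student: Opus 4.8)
The plan is to deduce \cref{thm:weighted} from the combinatorial interpretation of $\hat F^{(\infty)}$ already obtained in \cref{thm:infinite}, using only the symmetry of $\hat F^{(\infty)}$ in the variables $u_i$ together with the fundamental theorem of symmetric functions. The first observation is that $\hat F^{(\infty)}$ is symmetric in $\uu=(u_1,u_2,\dots)$: this is immediate from the Jack expansion \eqref{eq:jackInfinite}, since the factor $\prod_{i\geq 1}u_i^{|\la|}J_\la^{(1+b)}(\underline{1/u_i})=\prod_{i\geq 1}\prod_{\square\in\la}\big(1+u_i c_\a(\square)\big)$ is a symmetric function of the $u_i$. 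Consequently the connected series $(1+b)\frac{t\partial}{\partial t}\log\hat F^{(\infty)}$ is symmetric in $\uu$ as well.

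Next I would regroup the combinatorial expansion \eqref{eq:jackInfiniteConnected} according to the partition obtained by sorting $\mathfrak{v}(\bM)$. Writing the $u$-marking of an infinite constellation as the monomial $\uu^{\mathfrak{v}(\bM)}:=\prod_{i\geq 1}u_i^{\,n-v_i(\bM)}$ and denoting by $w(\bM,c)=b^{\nu_\rho(\bM,c)}\prod_{f\in F(\bM)}p_{\deg(f)}\prod_{v\in V_0(\bM)}q_{\deg(v)}$ the remaining weight, the coefficient of a monomial $\uu^{\alpha}$ equals $\sum_{\mathfrak{v}(\bM)=\alpha}t^{|\bM|}w(\bM,c)$, the sum over rooted connected infinite constellations with that exponent sequence. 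By the symmetry just established this coefficient depends only on the partition $\iota$ obtained by sorting $\alpha$, and for $\iota$ a partition the constellations with $\mathfrak{v}(\bM)=\iota$ are exactly the \emph{normal} ones. Summing over the distinct permutations $\alpha$ of each $\iota$ and recognising the definition of the monomial symmetric function $m_\iota$, this yields
\[ (1+b)\frac{t\partial}{\partial t}\log\hat F^{(\infty)} = \sum_{\iota}\Bigg(\sum_{(\bM,c)\ \mathrm{normal}\atop \mathfrak{v}(\bM)=\iota}t^{|\bM|}\,b^{\nu_\rho(\bM,c)}\prod_{f\in F(\bM)}p_{\deg(f)}\prod_{v\in V_0(\bM)}q_{\deg(v)}\Bigg)\,m_\iota(u_1,u_2,\dots), \]
and the analogous identity for the unconnected series $\hat F^{(\infty)}$ itself.

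The last step is to switch from the monomial to the elementary basis. By definition $f_\iota$ is the polynomial expressing $m_\iota$ in the elementary symmetric functions, so $m_\iota(u_1,u_2,\dots)=f_\iota\big(e_1(\uu),e_2(\uu),\dots\big)$. Recall from \eqref{eq:F=F} that under the substitution $g_k=e_k(\uu)$ one has $\hat F^G=\hat F^{(\infty)}$, while $\hat F^G$ is defined intrinsically in the indeterminates $g_k$ by the PDE \eqref{eq:TutteThetaInfiniteG}, which determines it order by order in $t$. Thus, after substituting $g_k=e_k(\uu)$, both $\hat F^G$ and the proposed combinatorial series $\sum_{(\bM,c)\ \mathrm{normal}}t^{|\bM|}b^{\nu_\rho(\bM,c)}\big(\prod_f p_{\deg(f)}\prod_v q_{\deg(v)}\big)f_{\mathfrak{v}(\bM)}(g_1,\dots)$ map to the same element of the ring of symmetric functions in $\uu$. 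Since for each fixed power of $t$ and each fixed $\pp$-$\qq$ monomial both series are \emph{polynomials} in finitely many $g_k$, and since the map $g_k\mapsto e_k(\uu)$ is injective on $\QQ(b)[g_1,g_2,\dots]$ by the fundamental theorem of symmetric functions, the two polynomials coincide. This gives the claimed identity in the independent indeterminates $g_k$, for both $\hat F^G$ and its logarithmic derivative.

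The computations are routine; the main conceptual point, and the step requiring most care, is the regrouping in the second paragraph: one must use that the \emph{whole} series is symmetric in $\uu$ (not each summand) so that coefficient extraction at $\uu^{\alpha}$ gives the same value along each $\Sym{\infty}$-orbit of exponent sequences, and so that passing from $\uu^{\mathfrak{v}(\bM)}$ to $m_{\mathfrak{v}(\bM)}$ collects precisely the non-normal constellations with the correct orbit multiplicities built into $m_\iota$. The transfer to independent $g_k$ via injectivity of $g\mapsto e(\uu)$ is standard but should be stated explicitly, since the assertion concerns the $g_k$ as free variables.
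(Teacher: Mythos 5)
Your overall route is the same as the paper's. The paper's (rather terse) argument for \cref{thm:weighted} is exactly the regrouping you describe: symmetry of $\hat F^{(\infty)}$ in $\uu$ lets one collect the constellations whose deficiency sequence $\mathfrak{v}(\bM)$ is a rearrangement of a fixed partition $\iota$, turning $\hat F^{(\infty)}$ into a sum over \emph{normal} constellations marked by $m_\iota(\uu)$; then the basis change $m_\iota=f_\iota(e_1,e_2,\dots)$ and the passage from $g_k=e_k(\uu)$ to independent indeterminates $g_k$ via algebraic independence of the $e_k$ (the same point the paper invokes to justify \eqref{eq:GProductForm}).

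There is, however, one false claim in your final transfer step: it is \emph{not} true that, for a fixed power of $t$ and a fixed $\pp$-$\qq$ monomial, the two series are polynomials in finitely many $g_k$. By \eqref{eq:EulerCharacteristic}, a connected constellation of size $n$ with face profile $\lambda$ and colour-$0$ profile $\mu$ has $|\mathfrak{v}(\bM)|=\ell(\lambda)+\ell(\mu)-\chi(\mathcal{S})$, and $\chi(\mathcal{S})$ is unbounded below for fixed data: already for $n=2$, $\lambda=\mu=(2)$, degree-two coverings with $2g$ additional simple branch points exist for every $g$, so the coefficient of $t^2p_2q_2$ involves $f_\iota$ for partitions $\iota$ of arbitrarily large size; equivalently, on the Jack side, $\prod_{\square\in\la}G(c_\a(\square))$ contains every $g_k$. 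Thus both coefficients are honest formal power series in the $g_k$, and injectivity of $g_k\mapsto e_k(\uu)$ on the polynomial ring $\QQ(b)[g_1,g_2,\dots]$ does not suffice as stated. The repair is routine: assign $g_k$ the weight $k$. Then $f_\iota$ is weight-homogeneous of weight $|\iota|$, each coefficient has only finitely many terms of any given weight (finitely many constellations of size $n$ with $|\mathfrak{v}(\bM)|=N$ on one side; finitely many monomials $g_{k_1}\cdots g_{k_m}$ with $m\leq n$ and $\sum_j k_j=N$ on the other), and the weight-preserving substitution $g_k\mapsto e_k(\uu)$ is injective on the weight-graded completion $\QQ(b)[[g_1,g_2,\dots]]$, since it is an isomorphism in each weight by the fundamental theorem of symmetric functions. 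With that emendation your proof is complete and coincides with the paper's.
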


The polynomials $f_\iota$ can be constructed as follows. For an integer partition $\mu$ of size $n$, we have $e_\mu = \sum_{\iota \vdash n} R_{\mu,\iota} m_\iota$ where the square matrix $R$ has its rows and columns indexed by integer partitions of size $n$, and $R_{\mu,\iota}$ is equal to the number of $0$-$1$ matrices with row-sum (resp. column-sum) equal to $\mu$ (resp. $ \iota$), see~\cite{Stanley:EC2}. Then  
$f_\iota(g_1,\dots)  = \sum_{\mu \vdash n} (R^{-1})_{\iota,\mu} \prod_{i=1}^{\ell(\mu)} g_{\mu_i}$, where $R^{-1}$ denotes the inverse matrix of $R$. Since $R$ is triangular for the dominance order with a diagonal of $1$, the matrix $R^{-1}$ has integer coefficients, that we will not try to make explicit here.

\begin{remark}\label{rem:genus}
	An extra
        parameter\footnote{The parameter $\hbar$ is noted $\beta$
          in~\cite{AlexandrovChapuyEynardHarnad2020,Okounkov2000a} but
          we prefer to avoid the confusion with $\beta$-ensembles.}
        $\hbar$ can be added to the series $\hat F^G$ (or $\hat
        F_{\infty}$) via the scaling $g_k \mapsto \hbar ^k g_k$ (or
        $u_i\mapsto \hbar u_i$). The exponent $d$ of $\hbar$ is directly related to the Euler
	characteristic of the underlying constellation (or covering) via $\chi = \ell(\lambda)+\ell(\mu)-d$, where $\lambda$ and $\mu$ represent respectively the degrees of faces and of vertices of colour $0$ (or the profiles of the first two ramification points). In particular, the $\hbar$-expansion of $\log \hat F^G$ is a topological expansion, with powers $\hbar^{\ell(\lambda)+\ell(\mu)-\chi}$ appearing in the coefficient of $p_\lambda q_\mu$ for each $\lambda, \mu \vdash n$. We will not need this viewpoint here in general, but we will introduce the variable $\hbar$ in the special cases of the next sections. 
\end{remark}

\begin{remark}
	In the case $b=0$, the interpretation of the series $\hat F^G$ that we gave, in terms of normal constellations, is \emph{not} the standard one. Indeed, in that case it is possible to give an interpretation in terms of factorizations of permutations using \emph{transpositions} (which at the level of coverings correspond to simple branchpoints), with a weighting system that can be made explicit in terms of the $g_k$, which gives rise to the so-called \emph{weighted Hurwitz numbers}, see~\cite{Guay-PaquetHarnad2017}.
	The connection between the two interpretations goes through the group algebra of the symmetric group and the Jucys-Murphy elements, which are specific to $b=0$. 	We leave as an open problem to give a similar interpretation of the series $\hat F^G$ in full generality. In the next section, we will address, however, the case of $b$-weighted analogues of \emph{classical} Hurwitz numbers.

\end{remark}

We observe that the function $\hat F^G$ has the following expression
\begin{align}\label{eq:GProductForm}
\hat F^{G}(t,\mathbf{p},\mathbf{q},g_1,\dots ) = \sum_{n \geq 0}t^n
	\sum_{\la
	\vdash n}\frac{J_\la^{(1+b)}(\pp)J_\la^{(1+b)}(\qq) \prod_{\square \in \la} G(c_{\a}(\square))  }{j_\la^{(1+b)}}.
\end{align}
Indeed, when $g_k=e_k(u_1,\dots)$ this is a consequence of \eqref{eq:contentProductForm} and~\eqref{eq:F=F}. But the fact that elementary symmetric functions are a basis of the space of symmetric functions implies that this is true with the $g_k$ being independent indeterminates.

\subsection{$b$-Hurwitz numbers, $G(z)=\exp(\hbar z)$.}
\label{subsec:bHurwitz}

\begin{definition}
	For $n, \ell \geq 1$ and $\lambda, \mu$ two partitions of $n$, the (connected) $b$-Hurwitz number $H^\ell(\lambda,\mu)(b)$ is defined as the
	polynomial
	$$
	H^\ell(\lambda,\mu)(b):= \sum_{(\bM,c)} b^{\nu_\rho(\bM,c)},
	$$
	where the sum is taken over rooted connected $\ell$-constellations $\bM$ of size $n$ with full profile 
	\begin{align}\label{eq:profileHurwitz}
		(\lambda, \mu, \underbrace{[2,1^{n-2}], \dots, [2,1^{n-2}]}_{\ell \mbox{ times}}).
	\end{align}
	Equivalently, this sum is taken over rooted generalized branched coverings  of degree $n$ of the sphere  by a connected surface, with  $\ell+2$  numbered ramification points, the first two with profiles $\lambda$, $\mu$, and all the other ones being simple.
\end{definition}

Since each rooted connected constellation of size $n$ has $(n-1)$
non-root corners, we note that
$\frac{(n-1)!}{n!}H^\ell(\lambda,\mu)(0) = \frac{1}{n}H^\ell(\lambda,\mu)(0)$ is nothing but the usual (orientable) double Hurwitz number of parameters $\lambda, \mu, \ell$. For $\mu=[1^n]$, we recover single Hurwitz numbers. Also note that $H^\ell(\lambda,\mu)(b)-H^\ell(\lambda,\mu)(0)$ gives the contribution of coverings from non-orientable surfaces. In both cases, the Euler characteristic of the underlying surface can be recovered by~\eqref{eq:EulerCharacteristic}, namely	$$
	\chi = \ell(\mu)+\ell(\lambda)-\ell.
	$$

We now form the corresponding generating function for non-necessarily connected objects,
$$
\hat F_{\mathrm{Hurwitz}}\equiv \hat F_{\mathrm{Hurwitz}}
(t,\mathbf{p},\mathbf{q}, \hbar) := 1 + \sum_{n\geq 1} \frac{t^n}{2^{n}n!} \sum_{\lambda, \mu \vdash n\atop \ell \geq 0 } \sum_{\bM}\frac{\tilde\rho(\bM)}{2^{-cc(\bM)}(1+b)^{cc(\bM)}} \frac{\hbar^\ell}{\ell!} p_\lambda q_\mu,
$$
where $\hbar$ is a new indeterminate and where the last sum is taken over \emph{labeled}, connected or not, $\ell$-constellations $\bM$ of size $n$ with full profile~\eqref{eq:profileHurwitz}. 
We have
\begin{theorem}\label{thm:FHurwitz}
	The generating function $\hat F_{\mathrm{Hurwitz}}$ is equal to the function $\hat F^G$ with $G(z) = \exp_\hbar (z):= \exp \hbar z$, {\it i.e.}
	\begin{align}\label{eq:bHurwitzGen}
	\hat F_{\mathrm{Hurwitz}}  = \hat F^{\exp_\hbar}.
	\end{align}
	The function $(1+b)\frac{t \partial}{\partial t} \log \hat F_{\mathrm{Hurwitz}}$ is the generating function of rooted connected coverings, explicitly given by
	\begin{align}\label{eq:bHurwitzGenConn}
	(1+b)\frac{t \partial}{\partial t} \log \hat F_{\mathrm{Hurwitz}}
		=\sum_{n\geq 1} t^n \sum_{\lambda, \mu \vdash n\atop \ell \geq 0 } H^\ell(\lambda,\mu)(b) \frac{\hbar^\ell}{\ell!} p_\lambda q_\mu
		.
	\end{align}

\end{theorem}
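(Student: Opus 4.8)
The plan is to deduce both statements from the combinatorial description of $\hat F^G$ in \cref{thm:weighted}, specialized to $G=\exp_\hbar$, the one genuinely new ingredient being a short symmetric-function computation showing that the $\exp_\hbar$-weighting annihilates every non-simple ramification profile and assigns weight $\hbar^\ell/\ell!$ to the collection of $\ell$ simple ones.

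First I would recall from \cref{thm:weighted} that $\hat F^G$ is the (labelled, possibly disconnected) generating function of normal infinite constellations in which a constellation $\bM$ with defect partition $\mathfrak v(\bM)=\iota$ carries the scalar marking $f_\iota(g_1,g_2,\dots)$, the $\pp$-, $\qq$-, $b$- and $t$-data being as in \eqref{eq:expansionUnconnected}, and that $(1+b)\,t\partial_t\log\hat F^G$ is the analogous sum over \emph{rooted connected} normal constellations. Since $G=\exp_\hbar$ means $g_k=\hbar^k/k!$, everything reduces to evaluating $f_\iota(\hbar/1!,\hbar^2/2!,\dots)$.

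The key computation is the following. Let $\phi\colon\Symm\to\QQ[\hbar]$ be the algebra homomorphism determined by $\phi(p_1)=\hbar$ and $\phi(p_k)=0$ for $k\ge2$. Then $\phi\big(\sum_k e_k z^k\big)=\phi\big(\exp\sum_{k\ge1}\tfrac{(-1)^{k-1}}{k}p_kz^k\big)=\exp(\hbar z)$, so $\phi(e_k)=\hbar^k/k!=g_k$; because $f_\iota$ by definition expresses $m_\iota$ in the elementary symmetric functions and $\phi$ is a homomorphism, $f_\iota(\hbar^\bullet/\bullet!)=\phi(m_\iota)$. Now $\phi(p_\mu)=\hbar^{|\mu|}$ if $\mu=1^{|\mu|}$ and $0$ otherwise, so $\phi$ simply reads off $\hbar^d$ times the coefficient of $p_{1^d}$ in the power-sum expansion of its argument; and since $\{m_\iota\}$ and $\{h_\mu\}$ are dual bases with $h_{1^d}=p_1^d$, that coefficient equals $\tfrac1{\ell!}\delta_{\iota,1^\ell}$. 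Hence
\[ f_\iota\!\left(\tfrac{\hbar}{1!},\tfrac{\hbar^2}{2!},\dots\right)=\frac{\hbar^\ell}{\ell!}\,\delta_{\iota,\,1^\ell}, \]
so only constellations all of whose non-initial ramifications are simple survive, each weighted by $\hbar^\ell/\ell!$ where $\ell$ is their number.

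Finally I would assemble the two claims. For \eqref{eq:bHurwitzGenConn}, substituting this vanishing into the connected form of \cref{thm:weighted} leaves only rooted connected normal constellations with $\mathfrak v(\bM)=1^\ell$; these are canonically the rooted connected $\ell$-constellations with full profile \eqref{eq:profileHurwitz} (the $\ell$ simple colours sitting in the sorted positions $1,\dots,\ell$), so grouping by the face partition $\lambda$ and the colour-$0$ partition $\mu$ turns the inner sum into $\sum_{\lambda,\mu}H^\ell(\lambda,\mu)(b)\,p_\lambda q_\mu$, which is exactly \eqref{eq:bHurwitzGenConn} for $\hat F^{\exp_\hbar}$. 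For \eqref{eq:bHurwitzGen} I would instead match the \emph{unconnected} expansions: the same vanishing turns the first statement of \cref{thm:weighted} into the labelled sum over (possibly disconnected) constellations with only simple ramifications, weighted by $\tilde\rho(\bM)/\big((1+b)^{cc(\bM)}2^{n-cc(\bM)}n!\big)$ and by $\hbar^\ell/\ell!$, which is verbatim the defining expansion of $\hat F_{\mathrm{Hurwitz}}$ (the two notions of $b$-weight agreeing by the footnote on fixed-$\ell$ constellations). As a byproduct, \eqref{eq:GProductForm} specializes to the Jack expansion $\hat F^{\exp_\hbar}=\sum_n t^n\sum_{\la\vdash n}J_\la^{(1+b)}(\pp)J_\la^{(1+b)}(\qq)\,e^{\hbar\sum_{\square\in\la}c_\a(\square)}/j_\la^{(1+b)}$, reflecting that each simple ramification acts through the Laplace--Beltrami eigenvalue $\sum_\square c_\a(\square)$. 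I expect the main obstacle to be not the symmetric-function vanishing (which is conceptually the heart but short once $\phi$ is introduced) but the bookkeeping: identifying normal infinite constellations with $\mathfrak v=1^\ell$ precisely with the numbered $\ell$-constellations counted by $H^\ell(\lambda,\mu)(b)$, and checking that the factorials and powers of $2$ in the two unconnected expansions match exactly.
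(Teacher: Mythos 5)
Your proof is correct, but it takes a genuinely different route from the paper's. The paper obtains $\hat F_{\mathrm{Hurwitz}}$ as a limit of the finite-$k$ functions: it sets $u_1=\dots=u_k=\hbar/k$ in $\hat F^{(k)}$, groups monomials according to the number $p$ of distinct $u_i$'s appearing, uses the asymptotics $\tfrac{1}{k^\ell}\binom{k}{p}\to \mathbf{1}_{p=\ell}\tfrac{1}{\ell!}$ to show that coefficientwise $\lim_k \hat F^{(k)}\big|_{u_i=\hbar/k}=\sum_\ell \tfrac{\hbar^\ell}{\ell!}[u_1\cdots u_\ell]\hat F^{(\infty)}=\hat F_{\mathrm{Hurwitz}}$, and then identifies this limit with $\hat F^{\exp_\hbar}$ via the characterization by the decomposition equation~\eqref{eq:TutteThetaInfiniteG}, since $\big(1+\tfrac{\hbar z}{k}\big)^k\to e^{\hbar z}$. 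You instead bypass the limit entirely: you specialize $g_k\mapsto \hbar^k/k!$ directly in \cref{thm:weighted} and prove, via the exponential specialization homomorphism $\phi$ (with $\phi(p_1)=\hbar$, $\phi(p_k)=0$ for $k\geq 2$, so $\phi(e_k)=\hbar^k/k!$ and $f_\iota(\hbar/1!,\hbar^2/2!,\dots)=\phi(m_\iota)=\tfrac{\hbar^\ell}{\ell!}\delta_{\iota,1^\ell}$ by $m$-$h$ duality and $h_{1^\ell}=p_1^\ell$), that only profiles all of whose non-initial ramifications are simple survive. Your key computation is correct, and the identification of normal infinite constellations with $\mathfrak{v}(\bM)=1^\ell$ with $\ell$-constellations of full profile~\eqref{eq:profileHurwitz} (including the matching of the factors $2^{n-cc(\bM)}n!(1+b)^{cc(\bM)}$ and of the $b$-weights via the footnote on finite underlying $k$) goes through as you describe. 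What each approach buys: your argument is shorter and gives a cleaner conceptual explanation of why exactly the simple ramifications survive (the exponential specialization kills $p_k$ for $k\geq 2$), at the cost of leaning on the change of basis built into \cref{thm:weighted}; the paper's limiting argument is closer to the classical orientable-case derivation of Hurwitz numbers from constellations and needs only the symmetry of the coefficients in the $u_i$ together with the decomposition-equation characterization, not the $m_\iota$-in-$e_k$ expansion. Your byproduct — that \eqref{eq:GProductForm} specializes to $\hat F^{\exp_\hbar}=\sum_n t^n\sum_{\la\vdash n}J_\la^{(1+b)}(\pp)J_\la^{(1+b)}(\qq)\,e^{\hbar\sum_{\square\in\la}c_\a(\square)}/j_\la^{(1+b)}$ — is also correct and is essentially how the paper then proves the cut-and-join equation.
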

\begin{theorem}[$b$-Cut and Join equation]\label{thm:cutAndJoin}
The series $\hat F_{\mathrm{Hurwitz}} \equiv \hat F_{\mathrm{Hurwitz}} (t,\mathbf{p},\mathbf{q}, \hbar)$ satisfies the equation
	\begin{align}\label{eq:cutAndJoin}
	\frac{\partial}{\partial \hbar}\hat F_{\mathrm{Hurwitz}} = D_\alpha \hat F_{\mathrm{Hurwitz}},
	\end{align}
	where $\alpha=1+b$ and $D_\alpha$ is the Laplace-Beltrami operator~\eqref{eq:Laplace-Beltrami}.
\end{theorem}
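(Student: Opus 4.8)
The plan is to reduce the statement to the eigenvalue property of the Laplace--Beltrami operator $D_\alpha$ on Jack functions, using the explicit Jack expansion already at our disposal. First I would invoke \cref{thm:FHurwitz}, which identifies $\hat F_{\mathrm{Hurwitz}}$ with $\hat F^{G}$ for $G(z)=\exp(\hbar z)$, and substitute this $G$ into the product formula \eqref{eq:GProductForm}. Since $G(z)=e^{\hbar z}$ factors additively over the boxes of $\lambda$, one has $\prod_{\square\in\lambda}G(c_\a(\square))=\exp\big(\hbar\sum_{\square\in\lambda}c_\a(\square)\big)$, so that
\begin{equation*}
\hat F_{\mathrm{Hurwitz}}(t,\pp,\qq,\hbar)=\sum_{n\ge 0}t^n\sum_{\lambda\vdash n}\frac{J_\lambda^{(1+b)}(\pp)\,J_\lambda^{(1+b)}(\qq)}{j_\lambda^{(1+b)}}\exp\Big(\hbar\sum_{\square\in\lambda}c_\a(\square)\Big).
\end{equation*}

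Next I would differentiate this expression term by term with respect to $\hbar$, which simply brings the scalar $\sum_{\square\in\lambda}c_\a(\square)$ down in front of each summand. On the other hand, applying $D_\alpha$---which acts only on the $\pp$ variables---to the same expansion and using the eigenvalue relation \eqref{eq:jackEigenvector}, namely $D_\alpha J_\lambda^{(\alpha)}(\pp)=\big(\sum_{\square\in\lambda}c_\a(\square)\big)J_\lambda^{(\alpha)}(\pp)$ from \cref{defprop:Jack}, produces exactly the same scalar factor in front of each summand. Comparing the two computations term by term yields $\frac{\partial}{\partial\hbar}\hat F_{\mathrm{Hurwitz}}=D_\alpha\hat F_{\mathrm{Hurwitz}}$, as desired.

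The argument is essentially immediate once the Jack expansion is in hand, so there is no genuine obstacle at this stage: all the substance has been absorbed into establishing \eqref{eq:GProductForm} and the eigenvalue property of $D_\alpha$. The only points requiring a word of care are that $D_\alpha$ commutes with the $\qq$- and $t$-dependence (it differentiates in $\pp$ only, hence passes through the sum and acts factorwise on $J_\lambda^{(1+b)}(\pp)$), and that these termwise manipulations are legitimate at the level of formal power series in $t$, since each coefficient of $t^n$ is a \emph{finite} sum over $\lambda\vdash n$, so both $\partial/\partial\hbar$ and $D_\alpha$ act within finite-dimensional graded pieces. I would also remark that the same computation shows, more generally, that the content-weighting encoded by $G$ is diagonal in the Jack basis, the exponential choice $G(z)=e^{\hbar z}$ being precisely the one that turns this weighting into the flow generated by $D_\alpha$.
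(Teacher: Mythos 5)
Your proof is correct and takes essentially the same route as the paper: the paper's own proof is a one-line observation that the statement follows from \eqref{eq:GProductForm} with $G=\exp_\hbar$ together with the eigenvalue relation \eqref{eq:jackEigenvector}, which is precisely the argument you have spelled out (including the termwise differentiation in $\hbar$ and the action of $D_\alpha$ on the $\pp$-variables only).
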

We observe that the function $\hat F^{\exp_\hbar}$ is well defined as a formal power series, since the $\hbar$-grading makes the substitution $g_k \mapsto \frac{\hbar^{k}}{k!}$ well defined.
\begin{proof}[Proof of \cref{thm:FHurwitz}]
	We will obtain the generating function of Hurwitz numbers as a suitable limit of $\hat F^{(k)}$ for $k\rightarrow \infty$. This idea is inspired from the orientable case where a similar argument is classical~\cite{BousquetSchaeffer2000,GouldenJackson2008}.

	Let us study the expansion of $\hat F^{(k)}\equiv \hat F^{(k)} (t,\mathbf{p}, \mathbf{q},u_1,\dots,u_k)$ for $u_1=\dots=u_k = \frac{\hbar}{k}$, by grouping monomials $u_1^{n_1}\dots u_k^{n_k}$ according to the number $p$ of nonzero exponents.
	It will be convenient to use (only) in this proof the
        following notation: $[\dots]$ denotes the coefficient extraction with respect to the $u_i$-variables only, as if the variables $t, \hbar, p_i, q_j$ were constants.
	We have,
	\begin{align}
	\left. \hat F^{(k)} \right|_{u_i = \frac{\hbar}{k} }
		&= \sum_{\ell \geq 0} \frac{\hbar^\ell}{k^\ell} \sum_{p\geq 0}  \sum_{n_{1}+\dots+n_{k}=\ell, n_{i} \geq 0 \atop |\{i,n_i > 0\}|=p}
	[u_{1}^{n_1}\dots u_k^{n_k}] \hat F^{(k)}
		\nonumber
		\\ 
		&=\sum_{\ell \geq 0} \frac{\hbar^\ell}{k^\ell}
		\sum_{0\leq p \leq \ell}  \sum_{n_{1}+\dots+n_{p}=\ell\atop n_{i} > 0 }
		{k \choose p} [u_{1}^{n_1}\dots u_p^{n_p}] \hat F^{(k)},
		\label{eq:int23}
	\end{align}
where the second inequality uses the fact that coefficients of $\hat F^{(k)}$ are symmetric functions in the $u_i$.
	We now consider the limit of~\eqref{eq:int23} when $k$ goes to infinity.
	First remark that for fixed $\ell$ and $p\leq \ell$, we have when $k$ goes to infinity:
$$
	\frac{1}{k^\ell} {k \choose p} \longrightarrow \mathbf{1}_{p=\ell} \frac{1}{\ell!}.
$$
	Therefore, for each fixed coefficient in $\hbar$, when $k$ goes to infinity, only the term $p=\ell$ contributes to~\eqref{eq:int23} at the first order. Further, for $p=\ell$ only the term $n_1=\dots=n_\ell=1$ is possible in~\eqref{eq:int23}. We thus get the (coefficientwise) limit
$$
\lim_k \left. \hat F^{(k)} \right|_{u_i = \frac{\hbar}{k} }
		= \sum_{\ell\geq 0}  
	\frac{\hbar^\ell}{\ell!} [u_{1}^{1}\dots u_\ell^{1}] \hat F^{(\infty)},
$$
which, by definition, is equal to $\hat F_{\mathrm{Hurwitz}}$.

Now, when $k$ goes to infinity, we also have the limit: 
$$\prod_{i=1}^k (1+ z u_i) \big|_{u_i = \frac{\hbar}{k}}  = \left(1+\frac{z \hbar}{k}\right)^k\longrightarrow \exp (\hbar z).$$
	Because the series $\hat F^{(k)}$ satisfies the decomposition equation~\eqref{eq:TutteThetaInfiniteG} with $G(z)=\prod_{i=1}^k (1+u_i z)$, and because all coefficients (in $t$) of the series $\hat F^{(k)}$ are symmetric polynomials in the $u_i$, this implies that the series $\lim_k \left. \hat F^{(k)} \right|_{u_i = \frac{\hbar}{k} }$ satisfies the decomposition equation~\eqref{eq:TutteThetaInfiniteG} with $G(z)=\exp(\hbar z)$. Therefore $\lim_k \left. \hat F^{(k)} \right|_{u_i = \frac{\hbar}{k} } = \hat F^{\exp_\hbar}$ and the proof is finished.
\end{proof}
\begin{proof}[Proof of \cref{thm:cutAndJoin}]
	This is a direct consequence of~\eqref{eq:GProductForm} with $G=\exp_\hbar$ and of the fact~\eqref{eq:jackEigenvector} that $J_\la^{(\alpha)}(\mathbf{p})$ is an eigenvector of $D_\alpha$ with eigenvalue $\sum_{\square \in \la} c_\alpha(\square)$.
      \end{proof}

      We conclude this subsection by proving piecewise
      polynomiality of the $b$-weighted double Hurwitz numbers. 
      In the case $b=0$ our result  is slightly weaker than the 
      result of Goulden, Jackson and Vakil~\cite{GouldenJacksonVakil2005}
     which states that the classical double Hurwitz number 
      $\frac{1}{|\la|}H^\ell(\lambda,\mu)(0)$ is a piecewise polynomial.
      The extra factor of $|\la|$ comes from the fact that we need to average over the choice of the root to define the $b$-weight, and we do not know if the stronger result holds for general $b$.
      \begin{theorem}[Piecewise polynomiality]
        Let us fix $\ell, m, n$ and consider $H^\ell(\lambda,\mu)(b)$
        as a function on partitions $\lambda,\mu$ whose number of
        parts is equal to $m$ and $n$ respectively. Then
        $H^\ell(\lambda,\mu)(b)$ is a polynomial in $b$, whose
        coefficients are piecewise polynomial functions in $\lambda_1,\dots,\la_m,\mu_1,\dots,\mu_n$.
      \end{theorem}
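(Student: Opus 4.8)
The plan is to reduce the statement to a polynomiality property of the matrix elements of the Laplace--Beltrami operator $D_\alpha$, and then to transfer that property from disconnected to connected (rooted) coverings. Throughout, the polynomiality in $b$ is already guaranteed by our main results (\cref{thm:FHurwitz,thm:mainInSection5}), so the genuinely new content is the piecewise polynomiality of the $b$-coefficients in the parts; note also that, once $\ell, m, n$ are fixed, the Euler characteristic of the underlying surface is fixed through~\eqref{eq:EulerCharacteristic}, so we work within a single topological type.

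First I would combine the cut-and-join equation (\cref{thm:cutAndJoin}) with the Jack--Cauchy identity $\sum_{\nu}\frac{J_\nu^{(\alpha)}(\pp)J_\nu^{(\alpha)}(\qq)}{j_\nu^{(\alpha)}}=\exp\!\big(\tfrac{1}{1+b}\sum_{k\ge 1}\frac{p_kq_k}{k}\big)$ to write the disconnected generating function as $\hat F_{\mathrm{Hurwitz}}=e^{\hbar D_\alpha}K$, where $K$ is this Cauchy kernel and $t$ tracks the $\pp$-degree. Extracting the coefficient of $\hbar^\ell$ and then of the monomials $q_\mu$ and $p_\lambda$ expresses the disconnected $b$-Hurwitz numbers through the single quantity $[p_\lambda]D_\alpha^{\ell}\,p_\mu$, up to the explicit factors $z_\mu$ and $(1+b)^{\ell(\mu)}$ produced by $[q_\mu]K=\frac{1}{(1+b)^{\ell(\mu)}z_\mu}p_\mu$.

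The combinatorial heart is the lemma: for fixed $\ell$ and fixed lengths $\ell(\lambda)=m$, $\ell(\mu)=n$, the matrix element $[p_\lambda]D_\alpha^{\ell}p_\mu$ is a polynomial in $b$ whose coefficients are piecewise polynomial in $\lambda_1,\dots,\lambda_m,\mu_1,\dots,\mu_n$. To prove it I would read off from~\eqref{eq:Laplace-Beltrami} the decomposition $D_\alpha=\tfrac12(D^{\mathrm{join}}+D^{\mathrm{split}}+b\,D^{\mathrm{diag}})$ and expand $D_\alpha^{\ell}$ as a sum over words of length $\ell$ in these three elementary operators. Acting on a power-sum $p_\rho$, each letter either merges two parts, splits one part, or fixes the partition, multiplying by a coefficient that is a monomial in the affected part-sizes times a falling factorial of the corresponding multiplicities $m_i(\rho)$. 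For a fixed word, together with a finite amount of discrete data recording which part is affected at each step, the trajectory $\mu=\rho^{(0)}\to\cdots\to\rho^{(\ell)}=\lambda$ has all its intermediate part-sizes equal to fixed integer-linear forms in the $\lambda_i,\mu_j$, and the contribution is a polynomial in these forms and in the multiplicities. The requirements that all intermediate parts be positive integers and that a prescribed coincidence pattern hold are finitely many linear equalities and inequalities, whose walls are the resonance hyperplanes $\sum_{i\in I}\lambda_i=\sum_{j\in J}\mu_j$ and the diagonals $\lambda_i=\lambda_{i'}$, $\mu_j=\mu_{j'}$. On each resulting chamber the set of contributing data and all the multiplicities $m_i$ are constant, so the sum is a genuine polynomial.

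Finally I would pass to the connected rooted numbers via~\eqref{eq:bHurwitzGenConn}: setting $W=\log \hat F_{\mathrm{Hurwitz}}$, the cut-and-join equation gives the nonlinear ``connected'' equation $\partial_\hbar W=D_\alpha W+\frac{1+b}{2}\sum_{i,j}p_{i+j}\,ij\,(\partial_{p_i}W)(\partial_{p_j}W)$, together with $H^{\ell}(\lambda,\mu)(b)=(1+b)\,|\lambda|\,\ell!\,[\hbar^{\ell}p_\lambda q_\mu]W$. One then argues by induction on $\ell$ (and on $m+n$) that each $H^{\ell}(\lambda,\mu)(b)$ is piecewise polynomial, using the lemma for the linear $D_\alpha$-term and products of lower numbers for the quadratic term. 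I expect this last transfer to be the \emph{main obstacle}: the disconnected numbers carry the non-polynomial factors $1/z_\mu$ and inverse powers of $1+b$ (indeed they are not even polynomials in $b$), and only the precise combination defining the rooted connected numbers is piecewise polynomial. Making this rigorous amounts to checking that the prefactor $(1+b)|\lambda|$ clears exactly one power of $1+b$ and one degree factor, and that the apparent denominators $1/(n_1n_2)$ arising in the quadratic term cancel against the factors $i\,m_i(\rho)$ once one sums over the merged part, via the identity $\sum_i i\,m_i(\rho)=|\rho|$. This is the analogue, in the absence of a semi-infinite-wedge formalism for general $b$, of the cancellations that render classical connected Hurwitz numbers polynomial, and it is the step demanding the most care.
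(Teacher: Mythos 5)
Your first two steps are essentially sound: writing $\hat F_{\mathrm{Hurwitz}}=e^{\hbar D_\alpha}K$ with $K$ the Jack--Cauchy kernel is a correct repackaging of \cref{thm:cutAndJoin} together with \eqref{eq:GProductForm}, and the matrix elements $[p_\lambda]D_\alpha^{\ell}p_\mu$ can indeed be analyzed by expanding $D_\alpha^{\ell}$ into words in the join/split/diagonal pieces of \eqref{eq:Laplace-Beltrami}. One inaccuracy there: it is \emph{not} true that, once the discrete data is fixed, all intermediate part-sizes are fixed integer-linear forms in $\lambda_i,\mu_j$ --- a split followed by a later re-merge leaves the split size a genuinely free parameter. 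This is fixable: the trajectories for fixed discrete data are the lattice points of a polytope with walls linear in the parts, and one sums a polynomial over them (this lattice-point device is exactly what the paper's own proof uses), so the lemma survives.

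The genuine gap is the disconnected-to-connected transfer, exactly where you flag it, and the mechanism you propose does not repair it. Extracting $[\hbar^{\ell}p_\lambda q_\mu]$ from your PDE for $W=\log\hat F_{\mathrm{Hurwitz}}$ and substituting $H^{\ell'}(\nu,\kappa)(b)=(1+b)\,|\kappa|\,\ell'!\,[\hbar^{\ell'}p_\nu q_\kappa]W$ (which is \eqref{eq:bHurwitzGenConn}), the quadratic term contributes sums of
\begin{equation*}
\frac{|\mu|\,ij}{|\mu^{(1)}|\,|\mu^{(2)}|}\;H^{\ell_1}\big(\lambda^{(1)}\cup\{i\},\mu^{(1)}\big)\,H^{\ell_2}\big(\lambda^{(2)}\cup\{j\},\mu^{(2)}\big)
\end{equation*}
(times multiplicity constants), where $i=|\mu^{(1)}|-|\lambda^{(1)}|$ and $j=|\mu^{(2)}|-|\lambda^{(2)}|$ are forced by homogeneity. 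The prefactor $(1+b)|\mu|$ does clear all powers of $(1+b)$, but since $|\mu|=|\mu^{(1)}|+|\mu^{(2)}|$ it only converts the denominator into $\tfrac{1}{|\mu^{(1)}|}+\tfrac{1}{|\mu^{(2)}|}$: one factor equal to the \emph{size of a connected piece} always survives. Your identity $\sum_i i\,m_i(\rho)=|\rho|$ cannot cancel it, because the factor multiplying $i$, namely $H^{\ell_1}(\lambda^{(1)}\cup\{i\},\mu^{(1)})$, depends on $i$; the cancellation occurs only in the degenerate case $\lambda^{(1)}=\emptyset$, where $i=|\mu^{(1)}|$ is forced. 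What your induction actually needs as input is piecewise polynomiality of the \emph{unrooted} quantities $H^{\ell_1}(\cdot,\cdot)(b)/n$ --- precisely the strengthening that the paper states it cannot prove for general $b$ (see the remark preceding the theorem; for $b=0$ this is Goulden--Jackson--Vakil, and it even fails in the unstable case $\ell=0$, $m=n=1$, where $H^0((k),(k))/k=1/k$). So the induction does not close as written; closing it would require simultaneously establishing this rooting-free statement (with delicate treatment of size-one pieces and base cases), which is a substantial missing idea rather than a matter of care. This is also why the paper takes a completely different route: it never passes through $\log\hat F$, but inducts on $\ell$ directly at the level of \emph{connected rooted} generating functions via the decomposition equation \eqref{eq:TutteThetaConnected}, where the lower-order connected series enter \emph{linearly} through the operators $\R_{\ell'}$; consequently no quadratic term, and no quotient by the size of a piece, ever appears.
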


	    \begin{proof}
Our proof is inspired by the combinatorial proof of Goulden, Jackson, and Vakil for the case $b=0$, but instead of working directly at the level of combinatorial objects it works inductively at the level of generating functions, using the decomposition equation.

        We will prove the result by induction on $\ell$.
        Let $\hat{H}^{[d]}_\rho$ and $\hat{\widetilde{H}}^{[d]}_\rho$
        denote the projective limit (over $\ell$) of $H^{[d]}_\rho(\frac{1}{u_1\cdots
                                   u_\ell};\pp,\qq,u_1^{-1},\dots,u_\ell^{-1})$
                                 and $\widetilde
            {H}_\rho^{[d]}(\frac{1}{u_1\cdots u_\ell};\pp,\qq,
            u_1^{-1},\dots,u_\ell^{-1})$, respectively. We define
            $\hat{\vec H}_\rho$ similarly. Note that for any function
		    $H \in\{ \hat{H}^{[d]}_\rho, \hat{\widetilde{H}}^{[d]}_\rho,
		    \hat{\vec H}_\rho\}$ and for any sequence $i_1<\dots <i_\ell$
            one has $[u_{i_1}\cdots u_{i_\ell}]H = [u_{1}\cdots
            u_{\ell}]H = [e_i(\uu)]H$ since $H$ is symmetric in
            $\uu$. In particular
        \begin{align*}
          H^\ell(\lambda,\mu)(b) &= [u_1\cdots u_\ell]\Theta_Y \hat{\vec
          H}_\rho= \sum_{d \geq
            1}[u_1\cdots
          u_\ell]q_d\cdot \hat{H}_\rho^{[d]}.
            \end{align*}
        Let us define $h_\ell: \N^{m+n}\to \N[b]$ as the coefficient
		    $$h_\ell(\lambda_1,\dots,\lambda_m,\mu_1,\dots,\mu_{n-1},d) = 
		    [p_{\lambda_1}\dots p_{\lambda_m} q_{\mu_1}\dots q_{\mu_{n-1}}] [u_1\cdots u_\ell] \hat{H}_\rho^{[d]}.$$ 
        We will prove by induction on $\ell$ that for any $n$ and $m$,  $h_\ell: \N^{m+n}\to
        \N[b]$ is polynomial in $b$ with coefficients which are
        piecewise polynomials, which implies the result we want. 

        \cref{cor:TutteThetaConnected} implies that
		    \[\hat{H}_\rho^{[d]} = t^d p_d +\text{(monomials involving
		    variables $\uu$)},\]
        therefore 
		    $h_0(\lambda_1,\dots,\lambda_m,\mu_1,\dots,\mu_{n-1},d)$ is equal to $1$ if $d=\lambda_i\geq 1$ for some $i$ and all other variables are equal to zero, and it is equal to zero otherwise. This is a piecewise
      polynomial function. %

Moreover \cref{cor:TutteThetaConnected} implies that
\begin{equation}
  \label{eq:pomocnicze}
  [u_1\cdots u_\ell]\hat{H}_\rho^{[d]}=
		\Theta_Y [u_1\cdots
                u_\ell]\Big(Y_+\prod_{i}\big(1+u_i(\Lambda^{[1]}_{Y}+\Lambda^{[2]}_{Y}+\Lambda^{[3]}_{Y}
                + \R)\big)\Big)^{d}  (y_0),
                \end{equation}
                where
\begin{align*}
	\Lambda^{[1]}_{Y} &=        (1+b)\sum_{i,j\geq 1}
                     y_{j+i-1}\frac{i\partial}{\partial
		     p_{i}}\frac{\partial}{\partial y_{j-1}},                       &\Lambda^{[2]}_{Y} =        \sum_{i,j\geq
                                          1}
                                          y_{j-1}p_i\frac{\partial}{\partial
                                          y_{i+j-1}},\\
                                           \Lambda^{[3]}_{Y} &=
                                                               b\sum_{i\geq
                                                               1}
                                                               y_{i}\frac{i\partial}{\partial
                                                               y_{i}},
							      &\R = \sum_{i,j\geq 1} y_{j+i-1}\hat{\widetilde
            {H}}_\rho^{[i]}\frac{\partial}{\partial y_{j-1}}.
                                                               \end{align*}
		    For $\ell'\geq 0$ let $\R_{\ell'}:=[u_1\cdots u_{\ell'}]\R  = [e_{\ell'}(\uu)]\R$. Expanding the
            R.H.S. of~\eqref{eq:pomocnicze} we can express it as the
            following linear combination:
            \begin{align*}
              \Theta_Y\sum_{s \geq 1}\sum_{I_1,\dots,I_d \subset
              [1..\ell]}\Bigg[\prod_{i \in
              \left(\bigcup_{i=1}^d I_i\right)^c}u_i\Bigg]\prod_{i=1}^dY_+(\Lambda^{[1]}_{Y}+\Lambda^{[2]}_{Y}+\Lambda^{[3]}_{Y}+\R)^{|I_i|}(y_0),
            \end{align*}
            where we sum over all pairwise disjoint (possibly empty)
            subsets of $[1..\ell]$, whose union is non-empty.
            Expanding the product we find a linear combination of quantities of the form
            \begin{equation}
              \label{eq:kombinacja}
              \Theta_YY_+^{k_1}w_1\cdots Y_+^{k_s}w_s(y_0),
              \end{equation}
            for some $1 \leq s \leq \ell$, and positive integers $k_1,\dots,k_s$
            whose sum is equal to $d$, where
            \begin{itemize}
              \item $w_k$ is a non-empty word in
                $\Lambda^{[1]}_{Y},\Lambda^{[2]}_{Y},\Lambda^{[3]}_{Y},\R_{0},\dots,\R_{\ell-1}$,
                \item the total length of words satisfies
                  $\ell(w_1)+\cdots+\ell(w_s) \leq \ell$,
                  \item the total sum of indices of the variables
                    $\R_{0},\dots,\R_{\ell-1}$ appearing in $w_1,\dots,w_s$ is equal to $\ell-\sum_{i=1}^s\ell(w_i)$.
                  \end{itemize}
                  For a fixed sequence of words $w_1,\dots,w_s$, the
                  element of the form~\eqref{eq:kombinacja} appears in the
                  R.H.S. of~\eqref{eq:pomocnicze} with 
                  coefficient
                  $\binom{\ell}{\ell(w_1),\dots,\ell(w_s)}$, which does not depend on
                  $k_1,\dots,k_s$.
		  Therefore, since the number of choices for $s$ and  $(w_i)_{1\leq i \leq s}$ is finite, it is enough to show that for each such choice, the quantity		  
		  \begin{align}\label{eq:fixedWord}
			  [p_{\lambda_1} p_{\lambda_2}\dots p_{\lambda_m} q_{\mu_1}q_{\mu_2}\dots q_{\mu_{n-1}}] \sum_{k_1+\dots+k_s=d} \Theta_YY_+^{k_1}w_1\cdots Y_+^{k_s}w_s(y_0)
		  \end{align}
		  is a piecewise polynomial in $(\lambda_1,\dots,\lambda_m,\mu_1,\dots, \mu_{n-1},d)$.

To see this, we notice that to compute the wanted coefficient  in~\eqref{eq:fixedWord}, one needs to sum over all ``trajectories'' of the monomials in $p_i,q_j, y_k$ appearing from right to left along the product of operators. Such a trajectory consists in a tuple  of monomials
$$(p_{\lambda^{(0)}}q_{\mu^{(0)}}y_{i_{0}},p_{\lambda^{(1)}}q_{\mu^{(1)}}y_{i_{1}}, \dots, p_{\lambda^{(2s)}}q_{\mu^{(2s)}}y_{i_{2s}}),$$
where $p_{\lambda^{(2r)}}q_{\mu^{(2r)}}y_{i_{2r}}$ (resp. $p_{\lambda^{(2r-1)}}q_{\mu^{(2r-1)}}y_{i_{2r-1}}$) is the monomial appearing to the right (resp. left) of the operator $w_r$ for $1\leq r \leq s$, with 
$p_{\lambda^{(2s)}}q_{\mu^{(2s)}}y_{i_{2s}}=y_0$ and
$p_{\lambda^{(0)}}q_{\mu^{(0)}}p_{i_{0}}=p_{\lambda_1} p_{\lambda_2}\dots p_{\lambda_m} q_{\mu_1}q_{\mu_2}\dots q_{\mu_{n-1}}.
$

Note that the nature of the operators $\Lambda^{[i]}_{Y}$ and $\R_{i}$ implies that the possible values of the indices of successive monomials appearing along this tuple are constrained by a finite number of linear equalities and inequalities. In other words, the set of valid trajectories is parametrized by tuples of integers
\begin{align}\label{eq:tuples}
(\lambda^{(i)}_j, 1\leq j \leq \ell(\lambda^{(i)});
\mu^{(i)}_j, 1\leq j \leq \ell(\mu^{(i)});
 i_j;
	k_j)_{1\leq j \leq s}
\end{align}
subject to linear constraints, {\textit i.e.} by integer points in a polytope. Moreover, note that for fixed $\ell, n, m$, the maximum number of parts appearing in any monomial is bounded, so this polytope is finite-dimensional. We include the equality
$$
k_1+\dots +k_s =d,
$$
which involves the parameter $d$, in the linear constraints defining this polytope.

Finally, each $r \in [1..s]$ such that $w_r$ is equal to $\R_{\ell'}$ for some $\ell'\in [0..\ell-1]$ corresponds to the fact that one passes from the monomial $p_{\lambda^{(2r)}}q_{\mu^{(2r)}}y_{i_{2r}}$ to $p_{\lambda^{(2r-1)}}q_{\mu^{(2r-1)}}y_{i_{2r-1}}$ by using the operator 
$$\R_{\ell'}=[u_1\cdots u_{\ell'}]  y_{j+i-1}\hat{\widetilde
            {H}}_\rho^{[i]}\frac{\partial}{\partial y_{j-1}}. 
$$
Therefore, assuming that the trajectory is valid, the coefficient 
\begin{align}\label{eq:piecewisePolyStep}
	[u_1\cdots u_{\ell'}][p_{\lambda^{(2r-1)}\setminus \lambda^{(2r)}}] [q_{\mu^{(2r-1)}\setminus \mu^{(2r)}}] \hat{\widetilde {H}}_\rho^{[i_{2r-1}-i_{2r}]} 
\end{align}
is collected along the way. By the induction hypothesis, and since
$\ell'<\ell$, the quantity~\eqref{eq:piecewisePolyStep} is a
polynomial, whose coefficients are piecewise polynomials in all parameters $\lambda^{(2r-1)}_j, \mu^{(2r-1)}_j, i_{2r-1}, \lambda^{(2r)}_j, \mu^{(2r)}_j, i_{2r}$ involved.

In conclusion, we have proved the following fact: the coefficients of
the quantity~\eqref{eq:fixedWord} are the sums over integer
trajectories of the form~\eqref{eq:tuples}, constrained to live in a
finite-dimensional polytope, of products of quantities of the
form~\eqref{eq:piecewisePolyStep}, which are piecewise polynomials in
the coordinates. Therefore the coefficients of~\eqref{eq:fixedWord} are piecewise polynomials, which is what we wanted to prove.
\end{proof}

      \begin{remark}
	      In the case $b=0$, piecewise polynomiality-type
        results were an important motivation to look for a hidden geometric explanation
        in the spirit of the
        ELSV-formula~\cite{EkedahlLandoShapiroVainshtein2001}. Our
        result gives one more motivation to explore the underlying, yet to be found,
        geometric structure describing the $b$-deformation.
        \end{remark}

\subsection{Dessins d'enfants, the $b$-conjecture, and $\beta$-ensembles}
\label{subsec:betaEnsembles}

The case $k=1$ of our main results, or equivalently the case $G(z)=(1+u_1z)$ of $b$-weighted Hurwitz numbers, corresponds to coverings with three ramification points, or combinatorially, to $1$-constellations. 
In the orientable case, $1$-constellations are called \emph{dessins d'enfants}, \emph{bipartite maps}, or \emph{hypermaps}. See~\cite{LandoZvonkin2004}.

Bipartite maps on non-orientable surfaces have been considered before. They  can be encoded combinatorially by matchings, or equivalently by products in the double coset algebra of the Gelfand pair $(\mathfrak{S}_{2n}, H_n)$, see~\cite{HanlonStanleyStembridge1992, GouldenJackson1996}.
In~\cite{GouldenJackson1996} the following formal power series is introduced:
\begin{align}\label{eq:bFunction}
  B(t;\pp,\qq,\rr) :&=
	\sum_{n \geq 0}t^n\sum_{\la
	\vdash n}\frac{1}{j_\la^{(\alpha)}} J_\la^{(\alpha)}(\pp)J_\la^{(\alpha)}(\qq) J_\la^{(\alpha)}(\rr),
   \end{align}
where $\rr=(r_i)_{i\geq 1}$ is an infinite family of variables. Of course, this function becomes equal to our function $\tau^{(1)}_b(t;\pp,\qq,u_1)$ under the specialization $\rr=\underline{u_1}$.
In the same paper~\cite{GouldenJackson1996}, Goulden and Jackson
state the $b$-conjecture\footnote{In the same paper Goulden and
  Jackson also state the closely related \emph{Matching-Jack
    conjecture}, which deals with non-necessarily connected bipartite
  maps. Our statements for non-necessarily connected
  $1$-constellations are related to this conjecture in the same way as
  our statements for rooted connected $1$-constellations are related
  to the $b$-conjecture. We choose to focus the discussion on the
  latter.
  }, namely that one can write
\begin{align}\label{eq:bConjecture}
	(1+b)\frac{t \partial}{\partial t} B(t;\pp,\qq,\rr) :&=
                                                               \sum_{n
                                                               \geq 1}\sum_{\lambda, \mu, \iota \vdash n} \sum_{(\bM,c)} t^n  p_\lambda q_\mu r_\iota b^{s(\bM,c)},
\end{align}
where the last sum is taken over rooted $1$-constellations (rooted bipartite maps in the language of~\cite{GouldenJackson1996}) on a connected surface (orientable or not), of size $n$ and of full profile $(\lambda, \mu, \iota)$, and where $s(\bM,c)$ is an (unspecified) integer parameter attached to $(\bM,c)$ which is zero if and only if the surface is orientable.
As they show, this statement is true for $b\in\{0,1\}$,  which is
proved using the connection between Schur (or zonal, respectively)
polynomials and representation theory of $\mathfrak{S}_n$ (or of the
Gelfand pair $(\mathfrak{S}_{2n},
H_n)$). See~\cite{Macdonald1995,HanlonStanleyStembridge1992,
  GouldenJackson1996}. For general values of $b$, no suitable connection to
representation theory is known, and this conjecture is still open due
to the lack of tools to attack it. However, it is interesting to
remark that both the $b$-conjecture and our main result (say, in the
formulation of \cref{thm:infinite}) involve \emph{three}
infinite families of parameters, respectively $\{\pp,\qq,\rr\}$ and
$\{\pp,\qq,(u_i)_{i\geq 1}\}$. As we already mentioned in the
introduction, our results are fundamental in a three-step program to attack to the $b$-conjecture. The only missing ingredient in this program is a
certain multiplicativity property of the function $\MultiJack$, that
we are unable to prove in full generality, but that we can
however prove in the special cases
$b=0,1$. In particular this leads us to a new proof of the $b$-conjecture in
these special cases, which relies only on the developments of this paper, and not
on the representation theoretical interpretations described above -- which are available only in
these special cases. We plan to continue our research in this direction in the
future and we hope that further studies on generalized branched
coverings will be crucial in finalizing this 
program to prove Goulden and Jackson's conjectures.

Polynomiality over the \emph{rationals} in the $b$- and Matching-Jack conjectures was proved in \cite{DolegaFeray2016,DolegaFeray2017}, but integrality and positivity of coefficients had been proved until now only in a few very special cases. The
case $k=1$ of our main result coincides with the $b$-conjecture in
the case where $\rr=\underline{u_1}$, if we identify the unknown
parameter $s(\bM,c)$ with our parameter $\nu_\rho(\bM,c)$.  In other
words, the $b$-conjecture is now proved when one keeps \emph{two} full
sets of indeterminate variables ($\pp$ and $\qq$). This is by far the best progress towards it and, in particular,
it covers all the special cases proved in the literature so far, as we now quickly explain. 

The most general case proven so far was the case of the simultaneous specialisations $\rr=\underline{u_1}$ and $q_i=\mathbf{1}_{i=2}$ (thus keeping \emph{one} full set of indeterminate variables). This was done by Lacroix~\cite{LaCroix2009}.
This case corresponds to $1$-constellations in which all vertices of colour $0$ have degree $2$, which are in bijection (by lifting these vertices into single edges)  with general, uncoloured, maps on surfaces (the only remaining vertices have colour~$1$ and can be thought of as uncoloured, they are counted with a modified weight $u_1$; vertices of colour $0$ have become edges, with weight $t$; faces of degree $i$ are counted with a weight $p_i$).
Lacroix used a connection between the function $B$ under this specialization and the $\beta$-ensembles of random matrix theory, together with a connection due to Okounkov~\cite{Okounkov1997} between $\beta$-ensembles and Jack polynomials -- for $1+b = 2/\beta$. The paper~\cite{Okounkov1997} enables one to work with certain \emph{linear} combinations of Jack polynomials, which is what~\cite{LaCroix2009} uses.
In our work, we crucially need to consider \emph{bilinear} sums instead (with Jack polynomials in two set of variables $\pp$ and $\qq$) which makes it inaccessible by this method.
Similarly, the equations of~\cite{AdlervanMoerbeke2001} in the context of $\beta$-ensembles deal with a function of \emph{one} infinite family of variables.
It is reasonable to expect that our work could be related to multi-matrix analogues of the $\beta$-ensembles.

Some special cases of the $b$-conjecture had been established for
bipartite maps in some other very restricted cases, for example for
bipartite maps with a unique face and of genus at most $2$,
see~\cite{Dolega2017a}. See also~\cite{KanunnikovVassilieva2016,KanunnikovPromyslovVassilieva2018} for
other partial results, concerning mostly the coefficients of $B$
itself (as in the Matching-Jack conjecture). All these cases are fully covered by the case $k=1$ of our result by taking specializations or extracting coefficients.

\subsection{$b$-Monotone Hurwitz numbers and $\beta$-HCIZ integral}
\label{subsec:HCIZ}

In the case $b=0$, the particular choice of function $G(z)=Z(z):=\frac{1}{1-\hbar z}$ is known to generate the monotone Hurwitz numbers, see~\cite{GouldenGuayPaquetNovak2014}. That is, $\hat F^{Z}\Big|_{b=0}$ has an explicit interpretation as a generating function of factorizations of a product of two permutations (whose cycles lengths are marked by variables $p_i$ and $q_i$) into a product of transpositions whose maximal transposed elements are weakly increasing.

It is also known~\cite{GouldenGuayPaquetNovak2014} that this same function is a $1/N$-expansion of the Harish-Chandra-Itzykson-Zuber (HCIZ) integral, 
$$I_N (t) = \int _{U(N)} e^{t N \mathop{Tr} (A_N U B_N U^{-1})} dU,$$
where $dU$ is the Haar measure over the unitary group $U(N)$.
Here the variables $p_i$ and $q_j$ are the power-sum symmetric functions in the eigenvalues of the diagonal matrices $A_N$ and $B_N$, and $\hbar$ plays the role of $-\frac{1}{N}$ in a double expansion in $t$ and $\frac{1}{N}$ of $I_N(t)$. See~\cite{GouldenGuayPaquetNovak2014} again for the precise meaning of this statement.

The function $\hat F^{Z}$ provides a natural $b$-deformation of the generating function of monotone Hurwitz numbers. This deformation satisfies the equation: 
\begin{equation}
	\frac{t\partial}{\partial t} \hat F^{Z}  = \Theta_Y \sum_{m \geq 1}t^m \cdot q_m \cdot
	\Big(Y_+  \frac{1}{1-\hbar \Lambda_{Y}}\Big)^m  \frac{y_0}{1+b} \hat F^{Z},
\end{equation}
and our main theorem implies in particular that  $(1+b)\frac{t\partial}{\partial t} \log \hat F^Z$ has coefficients which are polynomials in $b$ with an explicit combinatorial interpretation.

Moreover, deformations $I^{\beta}_N(t)$ of the HCIZ integral obtained by deforming the Laplace-Beltrami operator have been considered before, at least since Brézin and Hikami~\cite{BrezinHikami2003} (there are many other references, see e.g.~\cite{BergereEynard2009}). It would be interesting to study if they admit $1/N$ expansions, and if they are related to the $b$-deformed monotone Hurwitz numbers which we introduce here. In other words, can the following diagram be made commutative?
$$
\begin{array}{rcccl}
	&I_N(t) & \stackrel{\frac{1}{N}-\mbox{expansion, \cite{GouldenGuayPaquetNovak2014}}}{\xrightarrow{\hspace*{4cm}}} & \hat F^{Z}(t,\hbar) \Big|_{b=0}& \\
	\stackrel{\mbox{$\beta$-deformation}}{\mbox{e.g. \cite{BrezinHikami2003}}}	&\xdownarrow{6mm} & &\xdownarrow{6mm}& 	\stackrel{\mbox{$b$-deformation}}{\mbox{(this paper)}} \\
	&I^{\beta}_N(t) & \stackrel{\frac{1}{N}-\mbox{expansion ?} }{\xrightarrow{\hspace*{4cm}}} & \hat F^{Z}(t,\hbar)&
\end{array}
$$
This question is beyond the scope of our paper and is left as an open
problem\footnote{Note added during revision: this questions turned out
  to
  have an affirmative answer as described
  in~\cite{BonzomChapuyDolega2022}, where we were studying properties
  of the $b$-monotone Hurwitz numbers}.

	\medskip
	\noindent {\bf Acknowledgements.} We thank Houcine Ben Dali for pointing a mistake in the interpretation of the decomposition equation in the first version of this paper.

\newpage
\appendix

\newcommand{\dop}[1]{\includegraphics{diag#1.pdf}}
\newcommand{\sdop}[1]{\includegraphics[scale=0.7]{diag#1.pdf}}
\newcommand{\bdop}[1]{\includegraphics[scale=1.3]{diag#1.pdf}}

\section{Relations between operators of finite order}
\label{sec:AppendixComputations}

In this appendix, for completeness, we prove 
\cref{lemma:relations} and
\cref{lemma:relations4}.
All the equalities of operators appearing there can be proved by hand, either by computing commutators or by checking the action on a basis. These computations are lengthy but present no difficulty.
Here we present them in a diagrammatic way, which has the advantage of grouping terms together in a way which makes them easier to check -- however each reader may prefer to rely on their own technique to group terms, depending on their taste. The diagrams are especially useful to record the calculations to prove~\eqref{eq:rela1} and~\eqref{eq:rel2bis}. Other relations of the lemmas are quickly checked by any mean.

\medskip
{\bf Diagrammatic conventions.} Operators are represented by tree-like diagrams, read from top to bottom. Each top or bottom vertex of the diagram represents a variable. Variables from the families $\pp, \yy, \yy', \zz,  \zz'$ are represented by white, black, grey circles  and black, grey squares, respectively ($\circ,\bullet, \bdop{GreyCircle},\bdop{BlackSquare},\bdop{GreySquare}$). Inner nodes of the diagram have the effect of merging or splitting variables in all possible ways, conserving the sum of their indices. For example, the diagrams  \dop{1} and \dop{2}  represent respectively the operators 
$$\sum_{i,j}  y_{i+j} \frac{\partial^2}{\partial y_j \partial p_i}
\mbox{ and }
\sum_{i,j}  p_{i+j} \frac{\partial^2}{\partial y_j \partial p_i}.
$$
It is implicit that all sums are taken over combinatorially meaningful values of the parameters, {\it i.e.} indices of variables $\pp$ are positive while indices of other variables are nonnegative.
A fat edge in the diagram means that the contribution is weighted by the index (say $j$) of the variable appearing at the top of this edge, for example \dop{3} represents  
$\displaystyle\sum_{i,j} j p_{i+j} \frac{\partial^2}{\partial y_j \partial p_i}.$
Similarly, we use a fat dotted edge to weight the contribution by a factor of $j(j-1)$.

{\bf Composition.}
Composition of operators corresponds to concatenation of diagrams from top to bottom, respecting the type of variable at the gluing points.
For example the composition \dop{5}$\circ$\dop{4} is equal to \dop{6}+\dop{7}+ \dop{4}~\dop{5}+\dop{41}. Note that since we work on monomials which are at most linear in $\yy$, the second and third term are the null operator on the spaces we consider.

{\bf Example.}
The  operator \dop{6} has two top vertices of types $\yy, \pp$ and two bottom vertices of types $\yy,\pp$, it 
is equal to\footnote{Indeed  for a given choice of $i,k,i',k'$ such that $i+k=i'+k'$, if one assigns variables $p_i,y_k$ and $p_{i'},y_{k'}$ to top and bottom vertices according to their type, 
it is possible to distribute indices at each node if and only if $i'<k$ (as one sees by considering the topmost inner node) and in this case there is a unique way to do it. Moreover the two fat edges have respective weights $k-i'$ and $i$, from left to right.}
$\displaystyle
\sum_{i+k=i'+k'} \bI{i'<k} i (k-i')  y_{k'} p_{i'} \frac{\partial^2}{\partial y_k \partial p_i} .
$

\begin{proof}[Proof of~\eqref{eq:rela1}]
	We compute the commutator $[D_\alpha+D_\alpha',\Lambda_Y]$ by collecting its coefficients as a polynomial in $\alpha$ and $b$, viewed as independent variables. We have
$$
	D_\alpha+D_\alpha'=\sdop{26} \ , \ 
	\Lambda_Y =\sdop{27}. 
$$
We need to evaluate the coefficients of $\alpha^2, \alpha b,  b^2, \alpha, b$, and $1$.
We start with the coefficient of $\alpha$, which is equal to\footnote{In equations below we expand commutators from left to right, e.g. $[a+b,c+d]=[a,c]+[a,d]+[b,c]+[b,d]$ in this order. We make an exception to this rule when, as in~\eqref{eq:commalphab} below, it is convenient to group terms of the R.H.S. in blocks according to the nature of incoming and outgoing variables -- in that case, the left-to-right order of expansion is preserved inside each block. Also, note that unconnected diagrams such as \sdop{4}~\sdop{40} do not appear in commutators, since the two ways to obtain them cancel.}
	\begin{align}
		\sdop{29}. \label{eq:commIalpha1}
	\end{align}
The third and last terms cancel, and by checking the types of top of bottom vertices, we see that what remains is a linear combination of operators of the form $\bI{i'+k'=i+k} p_{i'}y_{k'}\frac{\partial^2}{\partial y_k \partial p_i}$, 
	with coefficients that can be read from the diagrams and are equal to\footnote{In the following equality we assume the equation $i'+k'=i+k$ which is implicit from context. We will do similar assumptions in other computations without recalling it.} 
	\begin{align*}
		&i(k\sminus{}k')\bI{i'>i}
\splus{}i(k\sminus{}i')\bI{i'<k}
\sminus{}ik
\sminus{}i(i\sminus{}i')\bI{i'<i}
\splus{}ik'
		\sminus{}i(k\sminus{}i')\bI{i'<k}\\=&
		i(k\sminus{}k'\splus{}i\sminus{}i')\bI{i'>i}
		\splus{}i(k'\sminus{}k\sminus{}i\splus{}i')
			=0,
	\end{align*}
	where we used $(i\sminus{}i')\bI{i'<i}=(i\sminus{}i')(1\sminus{}\bI{i'>i})$.
Similarly the coefficients $\alpha^2$, $\alpha b$, $b$, and $1$, are respectively equal to 
\begin{align*}
	\sdop{28}, \sdop{30},\\
	\sdop{31}, \sdop{32}.
\end{align*}
These are respectively linear combinations of operators of the form
$$
	\frac{ y_{i+j+k}\partial^3}{\partial y_k \partial p_i \partial p_j} \ , \
	\frac{ y_{i+k}\partial^2}{\partial y_k \partial p_i} \ , \
	\frac{ y_{k}p_i\partial}{\partial y_{i+k}} \ , \
	\frac{ y_k p_i p_j\partial}{\partial y_{i+j+k}}.$$
	The coefficients can be read on the diagrams and are equal\footnote{Note that for the contribution of the second diagram in the coefficient of $\alpha^2$, we have symmetrized in $(i,j)$, namely we substituted $ij (i+k) \rightarrow ij\frac{ (i+k) + (j+k)}{2}$, since we know that the sum we take over $i$ and $j$ is symmetric. We do the same for the very last diagram appearing in the coefficient of $1$.} to, respectively
	\begin{align*}
		\sminus{}ij\tfrac{i\splus{}j}{2} \splus{} ij\tfrac{ (i\splus{}k) \splus{} (j\splus{}k)}{2}\sminus{}ijk = 0 
		\ &, \  
		k^2 i \sminus{}ik(i\splus{}k)\sminus{}\tfrac{i(i\sminus{}1)i}{2}\splus{}\tfrac{i (i\splus{}k)(i\splus{}k\sminus{}1)}{2}\sminus{}\tfrac{ik(k\sminus{}1)}{2}=0,
		\\
		k(k\splus{}i)\sminus{}k^2\splus{}\tfrac{i(i\sminus{}1)}{2} \splus{}\tfrac{k(k\sminus{}1)}{2}\sminus{}
		\tfrac{(k\splus{}i)(k\splus{}i\sminus{}1)}{2}=0
		\ &, \
		\tfrac{1}{2}(i\splus{}j) \splus{} k \sminus{}  \tfrac{(i\splus{}k)\splus{}(j\splus{}k)}{2}=0.
	\end{align*}
	This concludes the proof that $[D_\alpha+D_\alpha',\Lambda_Y]=0$. 
\end{proof}

Before completing the proof of \cref{lemma:relations}, we now prove the main commutation relation of \cref{lemma:relations4}.
\begin{proof}[Proof of~\eqref{eq:rel2bis}]
Assuming other relations it is enough to prove the first equality, namely
	$[\La_{\tilde{Z}}+\Delta,\La_{\tilde{Y}}]=0$.
	We have
	\begin{align*}
		\Lambda_{\tilde{Z}}+\Delta&=\sdop{18}, \\
		\Lambda_{\tilde{Y}}&=\sdop{19}.
	\end{align*}
	We compute the commutator $[\La_{\tilde{Z}}+\Delta,\La_{\tilde{Y}}]$ as in the proof of~\eqref{eq:rel1}, by collecting its coefficients as a polynomial in $\alpha$ and $b$. Thus we need to evaluate the coefficients of $\alpha^2, \alpha b,  b^2, \alpha, b$, and $1$.
	The coefficient of $\alpha^2$ is equal to
	\begin{align}
		\sdop{20}, \label{eq:commalpha2}
	\end{align}
	which is a linear combination of operators of the form $\bI{j'+k'=i+j+k}\frac{y_{j'}z_{k'}\partial^3}{\partial p_{i} \partial y_j \partial z_k}$, with coefficients that can be read from the diagrams\footnote{For example, the contribution of the leftmost diagram is computed as follows. Call $p_i,y_j,z_k$ and $y_{j'} z_{k'}$ the bottom and top variables, respectively. The fat edge gives a factor of $i$. Moreoever, for the indices of variables to be distributed at the inner nodes coherently, one sees that the index $i+k$ of the inner node inherited from the concatenation of the two smaller diagrams, has to be larger than the index $j'$ of the bottom $\yy'$-node. It is easy to see that this is the only constraint, hence a total contribution of $i\bI{j'<i+k}$ for that diagram. Other diagrams are treated similarly.}:
	\begin{align}\label{eq:luckyAgain}
i (-\bI{j'<i+k}+\bI{j'<k}+\bI{j'> k}-\bI{j'> i+k}+\bI{j'=k}-\bI{j'=i+k})
	= i(1-1) = 0.
	\end{align}
	Similarly, the coefficient of $\alpha b$ is equal to
	\begin{align}
		\sdop{21},\!\! \label{eq:commalphab}
	\end{align}
	Note that the first term of~\eqref{eq:commalphab} is the same as~\eqref{eq:commalpha2} with permuted variables, so it is equal to zero. The second term of~\eqref{eq:commalphab} is a linear combination of operators $\bI{j'+k'=j+k}\frac{y_{j'}z_{k'}\partial^2}{\partial y_{j} \partial z_k}$, with coefficient%
	\footnote{In the second term of~\eqref{eq:commalphab}, the coefficient of the third diagram is computed as follows. Call $y_j,z_k$ and $y_{j'}, z_{k'}$ the top and bottom variables, respectively. Then the index (say $u$) of the ``square'' inner node inherited from the concatenation of the two smaller diagrams has to satisfy $u<j$ (from the top part of the diagram) and $u>j'$ (for the bottom part). This requires that $j>j'$, and in this case there are $j\sminus{}j'\sminus{}1$ possible choices for $u$. Similar constraints hold for the ``circle'' inner node but thay are equivalent to the previous ones provided that $j'+k'=j+k$. Therefore the contribution of this diagram is $(j\sminus{}j'\sminus{}1)\bI{j>j'}$. Other cases of the same sort appear in the computations and are treated similarly.}
	\begin{align}
		&	j\bI{j'>k}
	\sminus{}j'\bI{j'>k}
		\sminus{}(j\sminus{}j'\sminus{}1)\bI{j>j'}
	\splus{}j\bI{j'=k}
	\sminus{}j'\bI{j'=k} 
	\sminus{}\bI{j'<j}
	\sminus{}k\bI{j'<k}
	\splus{}k'\bI{j'<k}\label{eq:lucky}
		\\&\hspace{10cm} \nonumber
		\splus{}(k\sminus{}k'\sminus{}1)\bI{j'>j}
		\splus{}\bI{j'>j}   \\
		&=
		(j\sminus{}j')\bI{j'\geq k}
		\splus{}(k'\sminus{}k)(1\sminus{}\bI{j'\geq k})
		\splus{}(j'\sminus{}j)\bI{j>j'}
		\splus{}(k\sminus{}k')(1\sminus{}\bI{j\geq j'}) \nonumber
		\\
		&=0\cdot \bI{j'\geq k}
		\splus{}0\cdot \bI{j\geq j'}
		\splus{}k'\sminus{}k\splus{}k\sminus{}k' =0, \nonumber
	\end{align}
	Similarly, the coefficient of $b^2$ is equal to
	\begin{align}
		\sdop{22}, \label{eq:commb2}	
	\end{align}
	which is a linear combination of operators $\bI{j'\splus{}k'=j\splus{}k}\frac{y'_{j'}z'_{k'}\partial^2}{\partial y'_{j} \partial z'_k}$, with coefficient
	\begin{align*}
		&k'\bI{j'<k}
	\sminus{}k\bI{j'<k}
	\splus{}j\bI{j'>k}
	\sminus{}j'\bI{j'>k}
		\splus{}(k'\sminus{}k\sminus{}1)\bI{j'<j}
		\sminus{}(j'\sminus{}j\sminus{}1)\bI{j'>j}
	\splus{}j\bI{j'=k}
	\sminus{}j'\bI{j'=k}
		\\&\hspace{12cm} \nonumber
	\splus{}\bI{j'<k}
	\sminus{}\bI{j'<j},
	\end{align*}
	which is zero by the same computation we performed on~\eqref{eq:lucky} (up to exchanging prime and non-primes and reversing inequalities).

The coefficient of $\alpha$ is 
	\begin{align}
		\sdop{25},  \label{eq:commalpha1}
	\end{align}
which is a linear combination of operators of the form 
$\bI{j'\splus{}k'=j\splus{}k}\frac{y_{j'}z_{k'}\partial^2}{\partial y_{j} \partial z_k}$,
$\bI{j'\splus{}k'=j\splus{}k}\frac{y'_{j'}z'_{k'}\partial^2}{\partial y'_{j} \partial z'_k}$,
$\bI{i'\splus{}j'\splus{}k'=j\splus{}k}\frac{p_iy'_{j'}z'_{k'}\partial^2}{\partial y_{j} \partial z_k}$,
$\bI{j'\splus{}k'=i\splus{}j\splus{}k}\frac{y_{j'}z_{k'}\partial^3}{\partial p_i\partial y'_{j} \partial z'_k}$.
The first and third  have respective coefficients
$$
(j\sminus{}j')\bI{j'<j}\sminus{}
(j\sminus{}j'\sminus{}1)\bI{j'<j}
\sminus{}\bI{j<j}
\sminus{}(k\sminus{}k')\bI{k'<k}
\splus{}(k\sminus{}k'\sminus{}1)\bI{k'<k}
\splus{}\bI{k'<k}=0,
$$
$$
\bI{j'<k}
-\bI{j'<k-i'}
+\bI{j'>k}
-\bI{j'>k-i'}
+\bI{j'=k}
-\bI{j'=k-i'}=1-1=0,
$$
while the second is the same as the first up to permutation of colours and the fourth is analogous to the third.

The coefficient of $b$ is
	\begin{align}
		\sdop{23}. \label{eq:commb1}	
	\end{align}
The first term is a linear combination of operators $\bI{i'\splus{}j'\splus{}k'=j\splus{}k}\frac{p_{i'}y'_{j'}z'_{k'}\partial^2}{\partial y'_{j} \partial z'_k}$, with coefficient
$$
\bI{j'>k}-\bI{j'>k-i'}+\bI{j'=k}-\bI{j'=k-i'}+\bI{j'<k}-\bI{j'<k-i'}=1-1=0.
$$
(in fact this computation is the same as~\eqref{eq:luckyAgain} with diagrams upside-down), while the second term is again similar to the second term of~\eqref{eq:commalphab} and is zero by the same computation we performed on~\eqref{eq:lucky}.
Finally, the coefficient of $1$ is
	\begin{align}
		\sdop{24}, \label{eq:comm1}	
	\end{align}
	which is the same as the first term of \eqref{eq:commb1} up to an exchange of variables, so is equal to zero too. This concludes the proof that $[\La_{\tilde{Z}}+\Delta,\La_{\tilde{Y}}]=0$.
\end{proof}

The remaining computations are much shorter and can easily be done without any diagrams. We include them here for completeness. We introduce further notations: increment of the index of the variable is represented by an arrow, for example $Y_+=\dop{11}$.

\begin{proof}[Proof of~\eqref{eq:rela2}]
	We directly compute the commutator $[D_\alpha\splus{} D_\alpha',Y_+]$
\begin{align*}
	&\big[\dop{10}, \dop{11}\big]=\dop{12}\\
	&=\alpha  \sum_{i,k} [i(k\splus1)\sminus ik  ] \frac{y_{i+k+1}\partial^2}{\partial p_i y_k}
	 + \sum_{i,k} [ (k\splus1)\sminus k ] \frac{p_i y_{k+1}\partial}{\partial y_{i+k}}
	 + \frac{b}{2}\sum_{k} [(k\splus1)k \sminus k (k\sminus1)  ] \frac{y_{k+1}\partial}{\partial y_{k}}\\
	 &=Y_+ \Lambda_Y. \qedhere
\end{align*}
\end{proof}

\begin{proof}[Proof of~\eqref{eq:rela3}]
We want to prove that $[D_\alpha, \Theta_Y]=\Theta_Y D'_\alpha$. We have
	\begin{flalign*}
		[D_\alpha, \Theta_Y] = [\dop{14},\dop{13}]=\dop{15},\\
		\ \Theta_Y D'_\alpha = \dop{16}.
	\end{flalign*}
	To conclude it is enough to notice that \dop{17}, {\it i.e.} $\sum_{i,j} i \frac{p_{i+j}\partial^2}{\partial p_i \partial p_j} = \frac{1}{2}\sum_{i,j} (i+j) \frac{p_{i+j}\partial^2}{\partial p_i \partial p_j}$.
\end{proof}

\begin{proof}[Proof of~\eqref{eq:rel1}]
	We compute the operators $\La_{\tilde{Z}}\Delta$ and $\Delta \La_{\tilde{Y}}$, which are equal respectively to
	\begin{align*}
		\sdop{33}
	\end{align*}
	and
	\begin{align*}
		\sdop{34}.
	\end{align*}
We conclude by noticing that the two quantities differ only by the colour of inner vertices and reordering of terms, more precisely both are equal to	
	\begin{align*}
		\sdop{35}. 
	\end{align*}
\end{proof}

\begin{proof}[Proof of~\eqref{eq:rel3} and~\eqref{eq:rel3bis}]
	It is enough to prove the first one, namely $[\La_{\ZZ},\YY_+] = \YY_+ \Delta$. The commutator $[\La_{\ZZ},\YY_+]$ is equal to
		\begin{align*}
			&	\sdop{36}\\
			&=\sum_{i'\splus{}k'=i\splus{}k\splus{}1}
			\Big(\alpha  (\bI{k'<i\splus{}1}\sminus{}\bI{k'<i})  \frac{y'_{i'}z'_{k'}\partial^2}{\partial y_i z_k}
	 \splus{}  (\bI{k'<i\splus{}1}\sminus{}\bI{k'<i})  \frac{y_{i'}z_{k'}\partial^2}{\partial y'_i z'_k}
		\splus{}  b  (\bI{k'<i\splus{}1}\sminus{}\bI{k'<i})  \frac{y'_{i'}z'_{k'}\partial^2}{\partial y'_i z'_k}
			\Big)	\\
	 &
			=\sum_{i,k}
			\alpha  \frac{y'_{k+1}z'_{i}\partial^2}{\partial y_i z_k}
	 \splus{}    \frac{y_{k+1}z_{i}\partial^2}{\partial y'_i z'_k}
		\splus{}  b    \frac{y'_{k+1}z'_{i}\partial^2}{\partial y'_i z'_k}
			=\YY_+ \Delta,
	\end{align*}
	where for the last equality we used that $\bI{k'<i\splus{}1}\sminus{}\bI{k'<i} =\bI{k'=i}$ and that $k'=i$ and  
$i'\splus{}k'=i\splus{}k\splus{}1$ imply that $i'=k+1$.
\end{proof}

\begin{proof}[Proof of~\eqref{eq:rel4}] It is enough to prove the first equality $\Theta_{\tilde{Z}}\La_{\tilde{Y}} = \La_{Y}\Theta_{\tilde{Z}}$, {\it i.e.} $\Theta_{\tilde{Z}}(\La_{\tilde{Y}}-\La_Y) = [\La_{Y},\Theta_{\tilde{Z}}]$.
Now $[\La_{Y},\Theta_{\tilde{Z}}]$ and $\Theta_{\tilde{Z}}(\La_{\tilde{Y}}-\La_Y) $ are respectively equal to	
	\begin{align*}
		\sdop{37}
	\end{align*}
	and
	\begin{align*}
		\sdop{38}.
	\end{align*}
	The difference of these two quantities is a linear combinations of operators 
	$\frac{y_{i+j+k}\partial^3}{\partial p_i \partial y'_j \partial z'_k}$,
	$\bI{i'+j'=j+k}\frac{p_{i'}y_{j'}\partial^2}{\partial y'_j \partial z'_k}$,
	$\frac{y_{i+j}\partial^2}{\partial y'_i \partial z'_j }$,
	$\frac{y_{i+j}\partial^2}{\partial y_i \partial z_j }$.
The coefficients of these operators in this difference are respectively equal to
	\begin{align*}
		\alpha i  - \alpha i =0 \ , \  
		1 - \bI{i'\leq j} -\bI{i'>j}=0 \ ,\  
		b(i+j) -bi -bj =0 \ ,\  
		\alpha j - \alpha j=0.
	\end{align*}

\end{proof}

\begin{proof}[Proof of~\eqref{eq:rel5}]
	It is enough to prove the first equality
$\Theta_{\tilde{Z}}\tilde{Y}_+ = Y_+\Theta_{\tilde{Z}}$, which is straightforward
	since both are equal to
	$	\sum_{i,j}p_i y_{j+1}\frac{\partial^2}{\partial
          y_j\partial z_i} +
        \sum_{i,j}y_{i+j+1}\frac{\partial^2}{\partial y'_i z'_j}$ with
        the convention that $p_0=1$. To be consistent with the rest we still provide the diagrammatic interpretation:
	\begin{align*}
		\sdop{39}.
	\end{align*}
\end{proof}

\bibliographystyle{amsalpha}

\bibliography{biblio2015}

\end{document}